\newenvironment{definition}[1][{\bf Definition.}]
{
\medskip\goodbreak
%
%

\refstepcounter{theorem}
\noindent$\blacktriangleleft$~\textbf{\thetheorem}~\textbf{#1~}}
{\hfill$\blacktriangleright$ 
%

}
\newenvironment{lemma}[1][{\bf Lemma.}]
{
\medskip\goodbreak

\refstepcounter{theorem}
\noindent$\blacktriangleleft$~\textbf{\thetheorem}~\textit{#1~}}
{\hfill$\blacktriangleright$\vskip.1cm

}
\newenvironment{corollary}[1][{\bf Corollary.}]
{
\medskip\goodbreak

\refstepcounter{theorem}
\noindent$\blacktriangleleft$~\textbf{\thetheorem}~\textit{#1~}}
{\hfill$\blacktriangleright$\vskip.1cm

}
\newenvironment{conjecture}[1][{\bf Conjecture.}]
{
\medskip\goodbreak

\refstepcounter{theorem}
\noindent$\blacktriangleleft$~\textbf{\thetheorem}~\textit{#1~}}
{\hfill$\blacktriangleright$\vskip.1cm

}
\newenvironment{THM}[1][{\bf Theorem.}]
{
\medskip\goodbreak

\refstepcounter{theorem}
\noindent$\blacktriangleleft$~\textbf{\thetheorem}~\textit{#1~}}
{\hfill$\blacktriangleright$\vskip.1cm

}
\newenvironment{proposition}[1][{\bf Proposition.}]
{
\medskip\goodbreak

\refstepcounter{theorem}
\noindent$\blacktriangleleft$~\textbf{\thetheorem}~\textit{#1~}}
{\hfill$\blacktriangleright$\vskip.1cm

}
\newenvironment{example}[1][Example.]
{
\smallskip\goodbreak

\refstepcounter{theorem}
\noindent$\blacktriangleleft$~\textbf{\thetheorem}~\textit{#1~}}
{\hfill$\blacktriangleright$\smallskip

}
\newenvironment{remark}[1][Remark.]
{
\smallskip\goodbreak

\refstepcounter{theorem}
\noindent$\blacktriangleleft$~\textbf{\thetheorem}~\textit{#1~}}
{\hfill$\blacktriangleright$\smallskip

}
\newcounter{PROB}
\newcounter{SUBPROB}[PROB]
\newcommand{\rmref}[1]{{\rm(\ref{#1})}}
\newcommand{\bydef}{\stackrel{\mbox{\tiny\textnormal{\raisebox{0ex}[0ex][0ex]{def}}}}{=}}
\newcommand{\Sb}{{\mathbb S}}
\newcommand{\R}{{\mathbb R}}
\newcommand{\A}{{\mathbb A}}
\newcommand{\CC}{{\mathbb C}}
\newcommand{\Z}{{\mathbb Z}}
\newcommand{\D}{{\mathbb{D}}}
\newcommand{\N}{{\mathbb{N}}}
\newcommand{\cB}{{\mathcal B}}
\newcommand{\cG}{{\mathcal G}}
\newcommand{\cL}{\mathcal{L}} 
\newcommand{\cM}{{\mathscr M}}
\newcommand{\M}{{\mathscr M}}
\newcommand{\wM}{ \widehat{\mathscr M}}
\newcommand{\cZ}{{\mathscr Z}}
\newcommand{\cT}{{\mathscr T}}
\newcommand{\cU}{{\mathscr U}}
\newcommand{\cV}{{\mathscr V}}
\newcommand{\cH}{{\mathscr H}}
\newcommand{\cS}{{\mathscr S}}
\newcommand{\cN}{{\mathscr N}}
\newcommand{\cJ}{{\mathscr J}}
\newcommand{\cHH}{{\mathscr H}_{\rm reg}}
\newcommand{\x}{\textsc{x} }
\newcommand{\f}{{\mathscr L}}
\newcommand{\y}{\textsc{y}}
\renewcommand{\u}{\textsc{u}}
\newcommand{\p}{\textsc{p}}
\newcommand{\q}{\textsc{q}}
\newcommand{\rel}{{~{\rm rel}~}}
\newcommand{\oH}{{{\overline H}}}
\newcommand{\rD}{{\mathbb D}^2_{b}}
\newcommand{\bH}{{\rm HB}}
\renewcommand{\P}{{\rm Crit}}
\newcommand{\Cr}{{\rm Cross}}
\renewcommand{\H}{{\mathcal H}}
\newcommand{\eps}{\varepsilon}
\newcommand{\uu}{{\bf v}}
\newcommand{\cre}{{\partial_{J,H}}}
\newcommand{\Int}{{\rm int}}
\newcommand{\Sp}{{\rm Sp}}
\newcommand{\ind}{{\rm ind}}
\newcommand{\bO}[1]{\overline{\Omega}^{#1}}
\newcommand{\bOn}{\bO{n}}
\newcommand{\bh}{{ h}}
\newcommand{\loc}{{\text{\textup{loc}}}}
\newcommand{\symp}{{{\rm Symp}}}
\newcommand{\ham}{{{\rm Ham}}}
\newcommand{\cl}{{{\rm cl}}}
\renewenvironment{proof}[1][]{\noindent{\it Proof{#1}.
\hspace{1em}}}{\mbox{ }\hfill $\Box$\par}
\newcommand{\mat}{\begin{pmatrix}}
\newcommand{\rix}{\end{pmatrix}}
\newcommand{\tmat}{\left(\begin{smallmatrix}}
\newcommand{\trix}{\end{smallmatrix}\right)}
\renewcommand{\emph}[1]{{\bfseries\itshape #1}}
\renewcommand{\r}{\right}
\renewcommand{\l}{\left}
\numberwithin{equation}{subsection}
\numberwithin{figure}{section}
\newcommand{\style}[1]{{\sc{#1}}}    
\begin{document}

\begin{sloppypar}

\title[Braid Floer homology]{}

\author[J.B. van den Berg,
R. Ghrist, R.C. Vandervorst and W. W\'ojcik]{}

\maketitle \noindent {\huge {\bf Braid Floer homology}\footnote{RG supported in part by DARPA
HR0011-07-1-0002 and by ONR N000140810668.
RV supported in part by
NWO Vidi 639.032.202.}} \vskip.8cm

\noindent
J.B. van den Berg{$^a$},
R. Ghrist{$^b$}, R.C. Vandervorst{$^a$} and W. W\'ojcik{$^a$}
\vskip.3cm

\noindent {\it {$^a$}Department of Mathematics, VU Universiteit
Amsterdam, the Netherlands.}

\noindent {\it {$^b$}Departments of Mathematics and
Electrical/Systems Engineering, University of Pennsylvania,
Philadelphia, USA.}

\vskip1cm

\noindent {\bf Abstract.} Area-preserving diffeomorphisms of a 2-disc
can be regarded as time-1 maps of (non-autonomous) Hamiltonian
flows on $\Sb^1\times \D^2$, periodic flow-lines of which define braid
(conjugacy) classes, up to full twists. We examine the dynamics
relative to such braid classes and define a braid Floer homology. This
refinement of the Floer homology originally used for the Arnol'd
Conjecture yields a Morse-type forcing theory for periodic points of
area-preserving diffeomorphisms of the 2-disc based on braiding.

Contributions of this paper include (1) a monotonicity lemma for the
behavior of the nonlinear Cauchy-Riemann equations with respect to
algebraic lengths of braids; (2) establishment of the topological
invariance of the resulting braid Floer homology; (3) a shift theorem
describing the effect of twisting braids in terms of shifting the braid
Floer homology; (4) computation of examples; and (5) a forcing
theorem for the dynamics of Hamiltonian disc maps based on braid
Floer homology.

\vspace{0.1in}
\noindent {\em AMS Subject Class:} 37B30, 57R58, 37J05.

\noindent{\em Keywords:} Floer homology, braid, symplectomorphism, Hamiltonian dynamics.


 \section{Motivation}
 \label{sec:motivation}

The interplay between dynamical systems and algebraic topology is
traceable from the earliest days of the qualitative theory: it is no
coincidence that Poincar\'e's investigations of invariant manifolds and
(what we now know as) homology were roughly coincident. Morse
theory, in particular, provides a nearly perfect mirror in which
qualitative dynamics and algebraic topology reflect each other. Said
by Smale to be the most significant single contribution to
mathematics by an American mathematician, Morse theory gives a
relationship between the dynamics of a gradient flow on a space $X$
and the topology of this space. This relationship is often expressed as
a homology theory \cite{Milnor,Schwarz}. One counts (nondegenerate) fixed
points of $-\nabla f$ on a closed manifold $M$ (with $\Z_2$
coefficients), grades them according to the dimension of the
associated unstable manifold, then constructs a boundary operator
based on counting heteroclinic connections. Careful but
straightforward analysis shows that this boundary operator yields a
chain complex whose corresponding (Morse) homology ${\rm HM}_*(f)$
is isomorphic to $H_*(M;\Z_2)$, the (singular, mod-2) homology of
$M$, a topological invariant.

Morse's original work established the finite-dimensional theory and
pushed the tools to apply to the gradient flow of the energy function
on the loop space of a Riemannian manifold, thus using closed
geodesics as the basic objects to be counted. The problem of
extending Morse theory to a fully infinite-dimensional setting with a
strongly indefinite functional remained open until, inspired by the
work of Conley and Zehnder on the Arnol'd Conjecture, Floer
established the theory that now bears his name.

Floer homology considers a formal gradient flow and studies its set of
bounded flowlines. Floer's initial work studied the elliptic nonlinear
Cauchy-Riemann equations, which occur as a formal $L^2$-gradient
flow of a (strongly indefinite) Hamiltonian action. The key idea is
that no locally defined flow is needed: generically, the space of
bounded flow-lines has the structure of an invariant set of a gradient
flow. As in the construction of Morse homology one builds a complex
by grading the critical points via the Fredholm index and constructs a
boundary operator by counting heteroclinic flowlines between points
with difference one in index. The homology of this complex --- Floer
homology ${\rm HF}_*$ --- satisfies a continuation principle and
remains unchanged under suitable (large) perturbations. Floer
homology and its descendants have found use in the solution of the
Arnol'd Conjecture \cite{Floer1}, in instanton homology \cite{Floer5},
elliptic systems \cite{AV}, heat flows \cite{SalWeb}, strongly
indefinite functionals on Hilbert spaces \cite{AM1}, contact topology
and symplectic field theory \cite{EGH}, symplectic homology
\cite{FH1}, \cite{Oancea}, \cite{V1}, and invariants of knots, links, and
3-manifolds \cite{OS1}.

The disconnect between practitioners of Floer theory and applied
mathematicians is substantial, in large part due to the lack of
algorithms for computing what is in every respect a truly
infinite-dimensional construct. We suspect and are convinced that better
insights into the computability of Floer homology will be
advantageous for its applicability. It is that long-term goal that
motivates this paper.

The intent of this paper is to define a Floer homology related to the
dynamics of time-dependent Hamiltonians on a 2-disc. We build a
relative homology, for purposes of creating a dynamical forcing
theory in the spirit of the Sharkovski theorem for 1-d maps or Nielsen
theory for 2-d homeomorphisms \cite{Jiang}. Given one or more
periodic orbits of a time-periodic Hamiltonian system on a disc, which
other periodic orbits are forced to exist? Our answer, in the form of a
Floer homology, is independent of the Hamiltonian. We define the
Floer theory, demonstrate topological invariance, and connect the
theory to that of braids.

Any attempt to establish Floer theory as a tool for applied dynamical
systems must address issues of computability. This paper serves as a
foundation for what we predict will be a computational Floer theory
--- a highly desirable but challenging goal. By combining the results of
this paper with a spatially-discretized braid index from \cite{GVV}, we
hope to demonstrate and implement algorithms for the computation
of braid Floer homology. We are encouraged by the potential use of
the Garside normal form to this end: Sec.\ \ref{subsec:FMC} outlines
this programme.

\section{Statement of results}
\label{sec:intro-floer}

\subsection{Background and notation}
\label{subsec:area1}

Recall that a smooth orientable 2-manifold $M$ with area form
$\omega$ is an example of a symplectic manifold, and an
area-preserving diffeomorphism between two such surfaces is an
example of a symplectomorphism. Symplectomorphisms of
$(M,\omega)$ form a group $\symp(M,\omega)$
with respect to composition. The standard unit 2-disc in the plane
$\D^2 = \{x\in \R^2~|~ |x| \le 1\}$ with area form $\omega_0=
dp\wedge dq$ is the canonical example, with the
area-preserving diffeomorphisms as $\symp(\D^2,\omega_0)$.

Hamiltonian systems on a symplectic manifold are defined as follows.
Let $X_H(t,\cdot)$ be 1-parameter family of vector fields given  via
the relation $\iota_{X_H} \omega = -dH$, where $H(t,\cdot):M\to \R$
is a smooth family of functions, or Hamiltonians, with the property
that $H$ is constant on $\partial M$. This boundary condition is
equivalent to $i^*\iota_{X_H}\omega=0$ where $i: \partial M \to M$ is
the  inclusion. As a consequence $X_H(t,x)\in T_x\partial M$ for $x\in
\partial M$, and the differential equation
 \begin{equation}
 \label{eqn:HE}
 {dx(t)\over dt} = X_H(t,x(t)),\quad x(0) = x,
 \end{equation}
defines diffeomorphisms $\psi_{t,H}:M\to M$ via $\psi_{t,H}(x) \bydef
x(t)$ with $\partial M$ invariant.  Since $\omega$ is closed it holds that
$\psi_{t,H}^*\omega = \omega$ for any $t$, which implies that
$\psi_{t,H}\in\symp(M,\omega)$.
Symplectomorphisms of the form $\psi_{t,H}$ are called Hamiltonian,
and the subgroup of such is denoted $\ham(M,\omega)$.

The dynamics of Hamiltonian maps are closely connected to the
topology of the domain. Any map of $\D^2$ has at least one fixed
point, via the Brouwer theorem. The content of the Arnol'd
Conjecture is that the number of fixed points of a Hamiltonian map of
a (closed) $(M,\omega)$ is at least $\sum_k\dim H_k(M;\R)$, the sum
of the Betti numbers of $M$. Periodic points are more delicate still. A
general result by Franks \cite{Franks1} states that an area-preserving
map of the 2-disc has either one or infinitely many periodic points
(the former case being that of a unique fixed point, e.g., irrational
rotation about the center). For a large class of closed symplectic
manifolds $(M,\omega)$ a similar result was proved by Salamon and
Zehnder \cite{SalZehn1} under appropriate non-degeneracy
conditions; recent results by Hingston for tori \cite{Hingston} and
Ginzburg  for closed, symplectically aspherical manifolds show that
any Hamiltonian symplectomorphism has infinitely many
geometrically distinct periodic points \cite{Ginzburg}. These latter
results hold without non-degeneracy conditions.

In this article we develop a more detailed Morse-type theory for
periodic points utilizing the fact that orbits of non-autonomous
Hamiltonian systems form links in $\Sb^1\times \D^2$. Let  $A_m
=\{y^1,\cdots,y^m\} \subset \D^2$ be a discrete invariant set for a
Hamiltonian map which preserves the set $A_m$: are there additional
periodic points? We build a forcing theory based of Floer homology to
answer this question.

\subsubsection{Closed braids and Hamiltonian systems}
\label{subsec:Ham1}
We focus our attention on $\D^2_m$, the unit disc with $m$ disjoint
points removed from the interior.

\begin{lemma}
\label{lem:hamdisc} [Boyland \cite{Boyland2} Lemma 1(b)]
For every area-preserving diffeomorphism $f$ of $\D^2_m$ there
exists a Hamiltonian isotopy $\psi_{t,H}$ on $\D^2$ such that
$f=\psi_{t,H}$.
\end{lemma}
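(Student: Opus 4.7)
My plan is three steps: (i) extend $f$ across the $m$ removed points to an area-preserving diffeomorphism of the full disc, (ii) connect the extension to the identity by a symplectic isotopy, and (iii) promote the symplectic isotopy to a Hamiltonian one via a primitive, using $H^1_{dR}(\D^2)=0$.

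For (i), write $A_m := \{y^1,\dots,y^m\} \subset \inte \D^2$, so $\D^2_m = \D^2\setminus A_m$. Since $f$ is area-preserving on the punctured disc, its behavior near each $y^i$ is rigid: in Darboux coordinates centered at $y^i$ (and at $f(y^i) \in A_m$) the germ of $f$ extends smoothly by a standard symplectic removable-singularity argument. The result is a symplectomorphism $\bar f \in \symp(\D^2,\omega_0)$ that permutes $A_m$ and preserves $\partial\D^2$.

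For (ii)--(iii): by Smale's theorem that $\mathrm{Diff}^+(\D^2)$ is contractible, $\bar f$ is smoothly isotopic to $\mathrm{id}$ through a family $\bar f_t$ of boundary-preserving diffeomorphisms. The pulled-back area forms $\omega_t := \bar f_t^{\,*}\omega_0$ are cohomologous to $\omega_0$ and have the same total area, so Moser's lemma produces $\phi_t$ with $\phi_t^{\,*}\omega_t = \omega_0$; then $\tilde f_t := \bar f_t\circ \phi_t$ is a symplectic isotopy from $\mathrm{id}$ to $\bar f$. Differentiating, the generator $X_t := \dot{\tilde f}_t \circ \tilde f_t^{-1}$ satisfies $d(\iota_{X_t}\omega_0)=0$; since $\D^2$ is simply connected, $\iota_{X_t}\omega_0 = -dH_t$ for a smooth family $H_t$. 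Tangency of $X_t$ to $\partial\D^2$ forces $H_t|_{\partial \D^2}$ to be locally constant, and normalizing by a function of $t$ alone yields the boundary hypothesis imposed just before the lemma. Setting $H(t,x) := H_t(x)$, one checks $\psi_{1,H} = \bar f$, hence $f = \psi_{1,H}|_{\D^2_m}$.

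The main obstacle is step (i): the smooth extension across each puncture. This is delicate because one must rule out oscillatory or spiraling behavior near $y^i$, and relies on the symplectic rigidity of area-preserving germs together with Darboux's theorem. A secondary subtlety in step (ii) is arranging Moser's correction $\phi_t$ to respect $\partial\D^2$, which is handled in standard fashion by cutting off the $\omega_t$-deformation in a boundary collar so that $\tilde f_t$ remains in the group of boundary-preserving symplectomorphisms throughout.
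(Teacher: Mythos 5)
Your proof is organized around a step that the lemma neither needs nor, as a general statement, admits: the extension of $f$ across the punctures. As stated, the claim in your step~(i) is false --- an area-preserving diffeomorphism of the punctured disc need not extend continuously across a puncture. For example, in polar coordinates centered at a puncture, $(r,\theta)\mapsto(r,\theta+\sin(1/r))$ is smooth, area-preserving, and a diffeomorphism of $\D^2\setminus\{0\}$, yet has no limit at the origin; there is no ``symplectic removable-singularity'' theorem that rules this out. What rescues you is that $\D^2_m$ in this paper (and in Boyland) means the disc with $m$ \emph{marked} points, not removed points: $f$ is already a diffeomorphism of the closed $\D^2$ that permutes the distinguished set $A_m$. (This is what the appendix notation $\D^2_{b,m}$ confirms.) So step~(i) should simply be deleted, not proved; carrying it along as a genuine assertion is the one real error.

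With $f\in\symp(\D^2,\omega_0)$ in hand, your steps~(ii)--(iii) constitute a correct proof that is \emph{simpler} than the paper's, and this is the substantive difference between the two arguments. The paper's appendix (Lemma~\ref{lem:iso1}, Propositions~\ref{prop:mapclass1}, \ref{prop:hamiso1}, \ref{prop:ham-class2}) is designed to cover $\D^2_b$ with $b\geq 1$ inner boundary circles removed. There $H^1\neq 0$, and passing from a closed $1$-form $\iota_{X_t}\omega_0$ to a Hamiltonian requires the Hodge decomposition into an exact part and a normal harmonic $1$-field; the paper also factors a general $f$ through a ``braiding model'' $g$ so that $g^{-1}f$ is in the identity component, and controls the braid traced out. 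For the lemma in question $b=0$: $\D^2$ is simply connected, so closed $\Rightarrow$ exact holds directly, and there is no need for the mapping-class decomposition. You buy a shorter argument by restricting to the case at hand; the paper's machinery buys generality (and braid control) it later uses. Two minor fixes you should make: (a) Smale's theorem gives contractibility of $\mathrm{Diff}(\D^2,\partial\D^2)$, not of $\mathrm{Diff}^+(\D^2)$ (the latter is homotopy equivalent to $S^1$), though all you need is that $\pi_0\,\mathrm{Diff}^+(\D^2)=0$, which is true; (b) in the Moser step you need the two-parameter argument that the paper carries out (via $\chi_t^s$) to guarantee $\phi_0=\phi_1=\mathrm{id}$, so that $\tilde f_1$ really equals $\bar f$ --- this follows because $\omega_0$ and $\omega_1$ both equal $\omega_0$, making the Moser vector field vanish at the endpoints, but it is worth saying.
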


A detailed proof of this is located in the appendix, for completeness.
We note that (contrary to the typical case in the literature)
$\partial\D^2_m$ is assumed invariant, but not pointwise fixed. As a
consequence of the proof, the Hamiltonian $H(t,x)$ can be assumed
$C^\infty$, $1$-periodic in $t$, and vanishing on $\partial\D^2$. We
denote this class of Hamiltonians by $\cH$.

\begin{figure}[hbt]
\label{fig:braids}
\begin{center}
\includegraphics[width=6in]{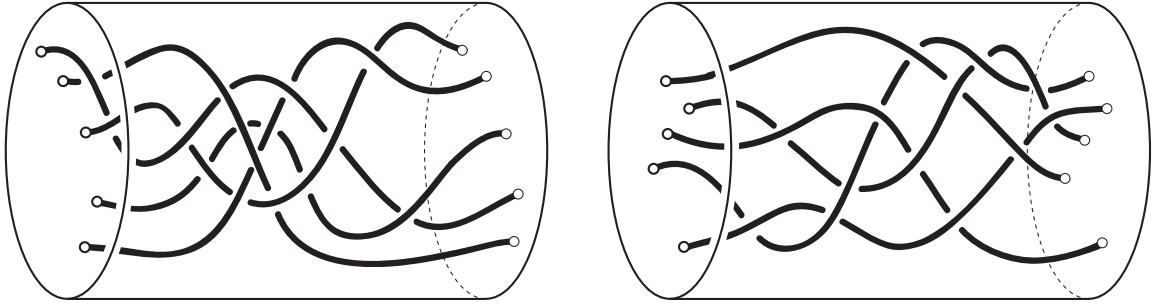}
\caption{A pair of braids on five strands are in the same connected component
of $\Omega^5$.}
\end{center}
\end{figure}

Periodic orbits of a map of $\D^2$ are described in terms of
configuration spaces and braids. The \style{configuration space}
$C^n(\D^2)$ is the space of all subsets of $\D^2$ of cardinality $n$,
with the topology inherited from the product $(\D^2)^n$. The free
loop space $\Omega^n$ of maps $\Sb^1\to C^n(\D^2)$ captures the
manner in which periodic orbits `braid' themselves in the Hamiltonian
flow on $\Sb^1 \times \D^2$, as in Fig. \ref{fig:braids}. Recall that
the classical \style{braid group} $\cB_n$ on $n$ strands is
$\pi_1(C^n(\D^2))$ (pointed homotopy classes of loops). Although
elements of $\Omega^n$ are not themselves elements of $\cB_n$, we
abuse notation and refer to such as braids.

To build a forcing theory, we work with \style{relative braids} ---
braids split into `free' and `skeletal' sub-braids. Denote by
$\Omega^{n,m}$ the embedded image of $\Omega^n \times
\Omega^m \hookrightarrow \Omega^{n+m}$. Such a relative braid is
denoted by $\x\rel\y$; its braid class $[\x\rel\y]$ is the connected
component in $\Omega^{n,m}$. A \style{relative braid class fiber}
$[\x]\rel \y$ is defined to be the subset of $\x'\in\Omega^n$ for which
$\x'\rel\y\in[\x\rel\y]$. This class represents all possible free braids
which stay in the braid class, keeping the \style{skeleton} $\y$ fixed:
see Fig. \ref{fig:relative}.

\subsubsection{The variational approach}
\label{subsec:var1}
Fix a Hamiltonian $H\in\cH$ and assume that $\y\in\Omega^n$ is a
(collection of) periodic orbit(s) of the Hamiltonian flow. We will
assign a Floer homology to relative braid classes $[\x]\rel\y$. Define
the \style{action} of $H\in \cH$ via
\begin{equation}
\label{eqn:action1a}
 \f_{H}(\x)
 =
 \int_0^1 \theta(\x(t))
 =
 \int_0^1 \overline \alpha_0 (\x_t(t)) dt - \int_0^1 \oH(t,\x(t))dt,
\end{equation}
where $\overline \alpha_0 = \sum_k p^kdq^k$, $\oH(t,\x(t)) = \sum_k
H(t,x^k(t))$; $\x_t = \frac{d\x}{dt}$; and $\theta = \overline \alpha_0
- \overline H dt$. The set of critical points of $\f_{H}$ in $[\x]\rel\y$
is denoted by $\P_H([\x]\rel\y)$.

\begin{figure}[hbt]
\label{fig:relative}
\begin{center}
\includegraphics[width=6in]{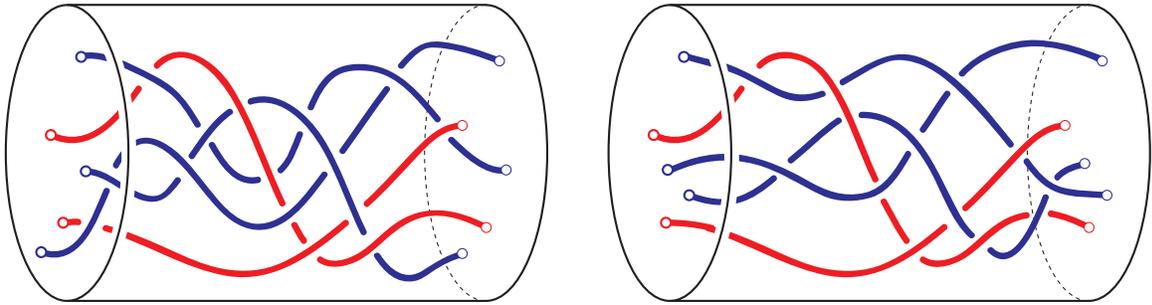}
\caption{Shown are two relative braids in the fiber $[\x]\rel\y$:
the fixed strands $\y$ are called the skeleton.}
\end{center}
\end{figure}

We investigate critical points of $\f_H$ via nonlinear Cauchy-Riemann
equations as an $L^2$-gradient flow of $\f_{H}$ on $[\x]\rel\y$.
Choose a smooth $t$-dependent family of compatible almost-complex
structures $J$ on $(\D^2,\omega_0)$: each being a mapping
$J:T\D^2\to T\D^2$, with $J^2 = -{\rm Id}$,
$\omega_0(J\cdot,J\cdot) = \omega_0(\cdot,\cdot)$ and
$g(\cdot,\cdot) = \omega_0(\cdot,J\cdot)$ a metric on $\D^2$. We
denote the set of $t$-dependent almost complex structures on $\D^2$
by $\cJ = \cJ(\Sb^1\times \D^2)$. For functions $u:\R\times \R \to
\D^2$ the nonlinear \style{Cauchy-Riemann equations} (CRE) are
defined as
\begin{equation}
\label{eqn:CR1}
\bigl(\partial_{J,H}(u)\bigr) (s,t)
\bydef
\frac{\partial u(s,t)}{\partial s}
-
J(t,u(s,t))\frac{\partial u(s,t)}{\partial t}
-
\nabla_g H(t,u(s,t)),
\end{equation}
where $\nabla_g$ is the gradient with respect to the metric $g$.
Stationary, or $s$-independent solutions $u(s,t) = x(t)$ satisfy  Eq.\
\rmref{eqn:HE} since $J\nabla_g H = X_H$. In order to find closed
braids as solutions we lift the CRE to the space of closed braids: a
collection $\u(s,t) =\{u^k(s,t)\}$ satisfies the CRE if each component
$u^k$ satisfies Eq.\ (\ref{eqn:CR1}).

\subsection{Result 1: Monotonicity}
There is a crucial link between bounded solutions of CRE and
algebraic-topological properties of the associated braid classes: {\em
braids decrease in word-length over time}. For $\x\in \Omega^n$, the
associated braid can be represented as a \style{conjugacy class} in
the braid group $\cB_n$, using the standard generators
$\{\sigma_i\}_{i=1}^{n-1}$. The \style{length} of the braid word (the
sum of the exponents of the $\sigma_i$'s) is well-defined and is a
braid invariant. Geometrically, this length is the total \style{crossing
number} $\Cr(\x)$ of the braid: the algebraic sum of the number of
crossings of strands in the standard planar projection (see Fig.
\ref{fig:generators}).

\begin{figure}[hbt]
\label{fig:generators}
\begin{center}
\includegraphics[width=6in]{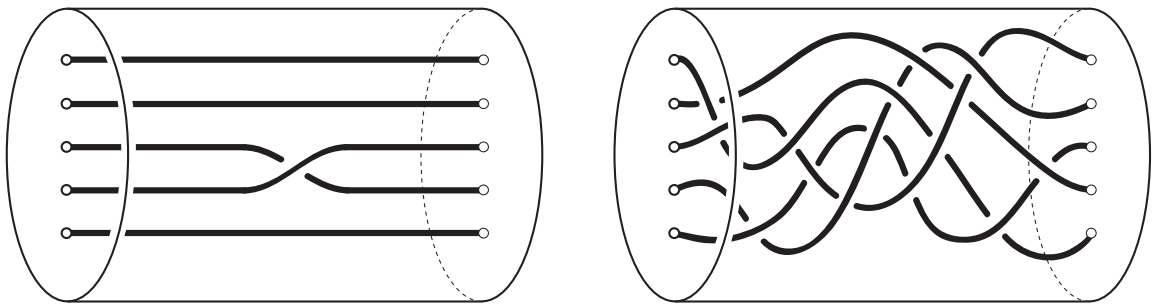}
\caption{The standard positive generator $\sigma_i$ for the braid group
$\cB_n$ sends the $i^{th}$ strand over the $(i+1)^{st}$.
Pictured in $\cB_5$ is (left) $\sigma_2$ and (right) the braid
$\sigma_4^{-1}\sigma_3\sigma_1\sigma_3\sigma_2^{-1}\sigma_1\sigma_2\sigma_3^{-1}
\sigma_4^{-1}\sigma_1\sigma_2\sigma_3\sigma_4^{-1}\sigma_1\sigma_2^{-1}$. This
braid has length (algebraic crossing number) equal to $+3$.}
\end{center}
\end{figure}

To make sense of this statement, we first assemble braid classes into
a completion. Denote by $\overline\Omega^n$ the space of maps
$\Sb^1\to(\D^2)^n/S_n$, the configuration space of $n$ {\em not
necessarily distinct} unlabeled points in $\D^2$. The discriminant of
\style{singular braids}
$\Sigma^n=\overline\Omega^n\backslash\Omega^n$ partitions
$\overline\Omega^n$ into braid classes; the relative versions of these
(i.e.\ $\x\rel\y$) arise when the skeleton $\y$ is fixed.

The Monotonicity Lemma \ref{lem:dislyap3} says, roughly, that the
flow-lines --- bounded solutions ---  of the CRE are transverse to the
singular braids in a manner that decreases crossing number.

\begin{lemma}[{\bf Monotonicity Lemma.}]
 \label{lem:dislyap3}
Let $\u(s) \in \bOn$ be a local solution of
the Cauchy-Riemann equations. If $\u(s_0,\cdot) \in \Sigma^n \rel
\y\bydef\bOn\backslash \Omega^n\rel \y$, then there exists an
$\epsilon_0>0$ such that
\begin{equation}
    \Cr(\u(s_0-\epsilon,\cdot))>\Cr(\u(s_0+\epsilon,\cdot))
    \quad
    \quad
    \forall \, 0<\epsilon\le \epsilon_0 .
\end{equation}
\end{lemma}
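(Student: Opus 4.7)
The plan is to reduce the monotonicity statement to the local behavior of pairwise differences $w^{kl} \bydef u^k - u^l$ between strands and apply the Carleman similarity principle for perturbed Cauchy--Riemann equations (the two-dimensional positivity of intersections for pseudo-holomorphic curves).

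First I would rewrite the crossing number as a sum over ordered pairs,
\begin{equation*}
\Cr(\u(s,\cdot)) = \sum_{k<l} \deg\bigl(w^{kl}(s,\cdot)/|w^{kl}(s,\cdot)| : \Sb^1 \to \Sb^1\bigr),
\end{equation*}
i.e.\ the algebraic count of zeros of $w^{kl}(s,\cdot) : \Sb^1 \to \CC$. Because $\u(s_0,\cdot) \in \Sigma^n \rel \y$, at least one pair $(k,l)$ --- with both strands free, or one free and one belonging to the skeleton $\y$ --- admits a coincidence $w^{kl}(s_0,t_0) = 0$ for some $t_0 \in \Sb^1$. For all other pairs, the winding is locally constant in $s$, so it suffices to track the change of winding over each coinciding pair.

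For such a pair, subtracting the two copies of \rmref{eqn:CR1} satisfied by $u^k$ and $u^l$ and writing the differences $J(t,u^k) - J(t,u^l)$ and $\nabla_g H(t,u^k) - \nabla_g H(t,u^l)$ as matrix-valued multiples of $w^{kl}$ --- via the fundamental theorem of calculus along the segment in $\D^2$ joining $u^l$ to $u^k$ --- yields a linear perturbed Cauchy--Riemann equation
\begin{equation*}
\partial_s w^{kl} - J(t,u^l)\,\partial_t w^{kl} + A(s,t)\,\partial_t w^{kl} + B(s,t)\, w^{kl} = 0,
\end{equation*}
with continuous coefficients $A,B$. After trivializing $J(t,u^l(s_0,t_0))$ to the standard complex structure on $\CC$ in a chart around $(s_0,t_0)$, this takes the first-order elliptic form $\bar\partial w + Pw = 0$. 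The Carleman similarity principle then furnishes a local factorization $w^{kl}(s,t) = \Phi(s,t)\,\sigma(s,t)$, with $\Phi$ a continuous, $\CC$-linearly invertible matrix function and $\sigma$ holomorphic and nonconstant. Consequently the zero set of $w^{kl}$ near $(s_0,t_0)$ is isolated and has strictly positive local winding number around $0$.

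The conclusion follows by summing contributions: each pair coinciding at $s = s_0$ contributes a strictly positive integer (the total order of vanishing of the associated $\sigma$) to $\Cr(\u(s_0-\eps,\cdot)) - \Cr(\u(s_0+\eps,\cdot))$, while non-coinciding pairs contribute $0$. I expect the main obstacle to be the combinatorial book-keeping when several distinct pairs coincide simultaneously at $s = s_0$, or when a single pair produces a higher-order tangency in which several zeros of $w^{kl}$ on the slice $\{s = s_0 - \eps\}$ could in principle cancel. The holomorphic factorization is exactly what rules this out: positivity of the local winding of $\sigma$ around $0$ forces every coincidence to produce a strict net drop of at least one unit in the algebraic crossing number, proving the claim for all sufficiently small $\eps > 0$.
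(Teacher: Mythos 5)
Your overall strategy is exactly the paper's: reduce to pairwise differences $w = u^k - u^{k'}$, linearize the two copies of the CRE along the segment joining the two strands, apply a Carleman-type similarity principle to see that the zero set of $w$ is isolated with a definite sign, and then sum the local contributions.  The one genuine gap is a sign error that would, as written, prove the wrong inequality.  The paper's Cauchy--Riemann operator is $\partial_{J,H}(u) = u_s - J\,u_t - \nabla_g H$ (Eq.\ \rmref{eqn:CR1}), so after linearizing, the difference satisfies $w_s - J\,w_t - A\,w = 0$, which in a holomorphic chart with $z = (s-s_0) + i(t-t_0)$ is $\partial_z w = B w$, \emph{not} $\bar\partial_z w + Pw = 0$ as you assert.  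Conjugating, $\bar w$ satisfies the standard $\bar\partial$-type similarity equation, so the local factorization is $w(z) = \Phi(z)\,\overline{h(z)}$ with $h$ holomorphic (as in Lemma~\ref{lem:dislyap1a}, following \cite[Appendix~A.6]{HZ}).  Consequently the local degree of $w$ at an isolated zero is $\deg(\bar h) = -\deg(h) \le -1$, \emph{negative}, not positive.

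This sign is not cosmetic: by Stokes' theorem (the paper's Lemma~\ref{lem:dislyap1}, with the $t$-boundary contributions cancelling by periodicity of $w_{\pi_j}$),
$$
W_{s_0+\epsilon}(w) - W_{s_0-\epsilon}(w) = \deg(w, G, 0),
$$
so positive local degree would make the winding, hence $\Cr$, \emph{increase} from $s_0 - \epsilon$ to $s_0 + \epsilon$ --- the opposite of the claim.  You need the anti-holomorphic factor precisely so that every coincidence has strictly negative degree; only then does the Stokes relation force a strict drop in the crossing number.  Your final assertion that each collision contributes a strictly positive integer to $\Cr(\u(s_0-\epsilon,\cdot)) - \Cr(\u(s_0+\epsilon,\cdot))$ is the right target, but it does not follow from the stated premises---you would need to (i) correct the $\partial$-vs.-$\bar\partial$ identification, and (ii) make explicit the Stokes computation linking local degree to the jump in winding, keeping track of the $(s,t)$-orientation of the rectangle.  (You also implicitly gloss over the fact that for a pair on which the permutation acts nontrivially, $w^{kl}(s,\cdot)$ is not $1$-periodic in $t$; the paper handles this by forming $w_{\pi_j}$ on the extended period $[0,|\pi_j|]$ so that the $t$-boundary terms indeed cancel.)
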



This consequence of the maximum principle follows from the
positivity of intersections of $J$-holomorphic curves in
almost-complex 4-manifolds \cite{McDuff1,McDuff2}; other
expressions of this principle arise in applications to heat equations in
one space dimension, see e.g. \cite{Ang3,Ang2}.

As a consequence, the local flowlines induced by the CRE are
topologically transverse to the singular braids off of the (maximally
degenerate) set of collapsed singular braids. This leads to an isolation
property for certain relative braid classes which makes a
Morse-theoretic approach viable. Denote by $\cM([\x]\rel\y)$ the set
of bounded solutions $\u(s,t)$ of the CRE contained a relative braid
class $[\x]\rel\y$. Consider braid classes which satisfy the following
topological property: for any representative $[\x]\rel\y$ the strands in
$\x$ cannot collapse onto strands   in $\y$, or onto strands in $\x$,
nor can the strands in $\x$ collapse onto the boundary $\partial
\D^2$. Such braid classes are called \style{proper}: see Fig.
\ref{fig:proper}. From elliptic regularity and the Monotonicity Lemma
we obtain compactness and isolation: for a proper relative braid
class, the set of bounded solutions is compact and isolated in the
topology of uniform convergence on compact subsets in $\R^2$.

\begin{figure}[hbt]
\label{fig:proper}
\begin{center}
\includegraphics[width=6in]{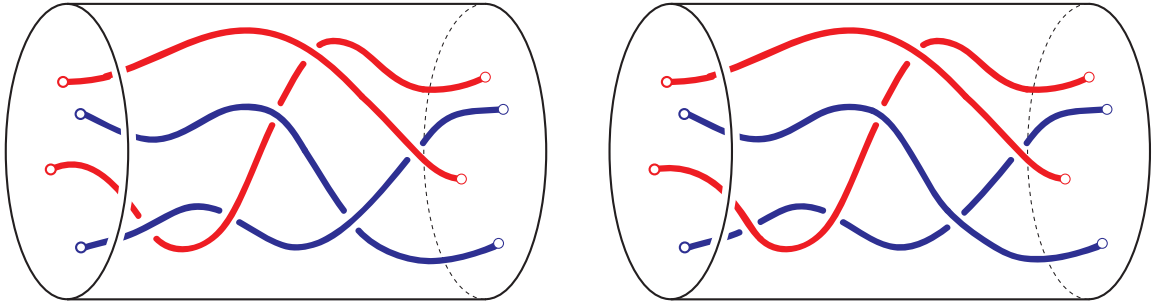}
\caption{Proper (left) and improper (right) relative braid classes.
On the right, the free strands can collapse onto the skeleton.}
\end{center}
\end{figure}

\begin{proposition}[{\bf Isolation and Compactness Theorem.}]
\label{prop:compiso1}
Let $[\x]\rel\y$ be a proper braid class.
Then, in the topology of uniform convergence on compact subsets in
$\R^2$, the set of bounded solutions $\cM([\x]\rel\y)$ is compact
and isolated in $[\x]\rel\y$.
\end{proposition}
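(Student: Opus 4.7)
The plan is to combine elliptic compactness for the Cauchy--Riemann equations with the Monotonicity Lemma~\ref{lem:dislyap3} and the topological input of properness. Monotonicity controls how $s$-trajectories of $\cre\u=0$ interact with the singular locus $\Sigma^n\rel\y$, while properness ensures that the closure of $[\x]\rel\y$ in $\bOn\rel\y$ is separated from the three forbidden degenerations (strand-on-strand, strand-on-$\y$, and strand-on-$\partial\D^2$).

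For compactness, I take a sequence $\u_n\in\cM([\x]\rel\y)$ and extract a $C^\infty_{loc}$-convergent subsequence. Each $\u_n$ is valued in the compact disc $\D^2$, so there is a uniform $C^0$ bound; the Hamiltonian $H\in\cH$ vanishes on $\partial\D^2$, making the boundary flow-invariant, and $\pi_2(\D^2)=0$ excludes sphere bubbling. Standard Floer-type bootstrap for $\cre\u=0$ then yields uniform $C^k_{loc}$ bounds for every $k$, producing a limit $\u_\infty$ that again solves CRE with $\u_\infty(s,\cdot)\in\overline{[\x]\rel\y}$ for all $s$. Suppose for contradiction $\u_\infty(s_0,\cdot)\in\Sigma^n\rel\y$. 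Lemma~\ref{lem:dislyap3} then forces $\Cr(\u_\infty(s_0-\eps,\cdot))>\Cr(\u_\infty(s_0+\eps,\cdot))$, but $\Cr$ is locally constant on $\Omega^n\rel\y$ and equals the braid invariant $\Cr([\x]\rel\y)$ on every slice lying in $[\x]\rel\y$; by properness the slices $\u_\infty(s_0\pm\eps,\cdot)$ cannot exit $[\x]\rel\y$ through any forbidden collapse, so both carry the same $\Cr$, contradicting the strict drop. Hence $\u_\infty$ avoids $\Sigma^n\rel\y$ entirely, and properness upgrades this to $\u_\infty\in\cM([\x]\rel\y)$.

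For isolation, properness furnishes a positive $C^0$-distance from $\overline{[\x]\rel\y}$ to each forbidden collapse stratum in $\Sigma^n\rel\y$ (using compactness of $\Sb^1$ together with the fact that no continuous family in $[\x]\rel\y$ can collapse). I choose a uniform open neighborhood $N\subset\bOn\rel\y$ of $[\x]\rel\y$ whose closure is disjoint from these strata and whose intersection with $\Omega^n\rel\y$ consists solely of the connected component $[\x]\rel\y$. If $\u$ is any bounded CRE solution with every slice in $\overline N$, then $\u$ never meets $\Sigma^n\rel\y$, so each slice $\u(s,\cdot)$ lies in $\Omega^n\rel\y\cap\overline N=[\x]\rel\y$; hence $\u\in\cM([\x]\rel\y)$, and $N$ is an isolating neighborhood.

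The main difficulty is in the compactness step: a $C^\infty_{loc}$-limit of bounded solutions could a priori pass through the singular locus, and one must rule this out without contradicting the braid invariance of $\Cr$. Properness is precisely the hypothesis that makes this dichotomy sharp---without it, the limit could slip into a collapse-type singular braid in $\overline{[\x]\rel\y}$ and compactness would fail. All other steps (Schauder bootstrap, absence of bubbling, extraction of an isolating neighborhood) are routine once this topological separation is in hand.
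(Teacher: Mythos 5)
Your overall strategy is the same as the paper's (Proposition~\ref{prop:compiso1} is proved in the paper as the combination of the Floer compactness Proposition~\ref{prop:CR4} with the braid--class argument of Proposition~\ref{prop:isol1}), and the ingredients you identify --- elliptic bootstrap to produce a $C^\infty_{\loc}$ limit, monotonicity of the crossing number, and properness --- are the right ones. There are, however, two places where your write-up glosses over a step that actually carries weight.

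First, the Monotonicity Lemma as proved in the body of the paper (Lemma~\ref{prop:dislyap3}) presents a \emph{dichotomy}: at a tangency $u^k(s_0,t_0)=u^{k'}(s_0,t_0)$, either the crossing number drops strictly across $s_0$, \emph{or} the two strands coincide identically, $u^k\equiv u^{k'}$. Your argument only invokes the first alternative. The second alternative is not a contradiction with braid-class invariance of $\Cr$ (since $\Cr$ is never defined along such a solution), and must instead be excluded by part (ii) of the definition of properness: a fully collapsed limit would lie in $\Sigma^n_-\rel\y$, whereas $\cl([\x]\rel\y)\cap(\Sigma^n_-\rel\y)=\varnothing$. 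The cases of collapse onto a skeletal strand and onto the boundary (via Proposition~\ref{prop:dislyap4}) require similar separate treatment. You hint at this when you parenthetically invoke ``forbidden collapse,'' but your opening sentence asserting that properness separates the closure from ``strand-on-strand'' and ``strand-on-$\y$'' degenerations is misleading as stated: if these refer to \emph{tangential} (transverse) contacts rather than full coincidence, then the closure of $[\x]\rel\y$ certainly contains such singular braids --- they make up the topological boundary of a braid class --- and it is the monotonicity lemma, not properness, that forbids bounded solutions from reaching them.

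Second, and for the same reason, the isolation step is set up in a way that cannot quite work as stated. You construct $N\subset\bOn\rel\y$ with $\overline N$ disjoint from the collapse strata and with $\overline N\cap\Omega^n\rel\y=[\x]\rel\y$, and then claim that a bounded solution with every slice in $\overline N$ ``never meets $\Sigma^n\rel\y$.'' This does not follow from the properties of $N$: since $\overline N$ necessarily meets $\Sigma^n\rel\y\setminus\Sigma^n_-\rel\y$, one must again run the monotonicity argument, exactly as in the compactness step, to exclude the singular slices. The paper bypasses the isolating-neighborhood construction altogether: once $\cS^{J,H}$ is shown to be a compact subset of the open set $[\x]\rel\y$ with $|\u|<1$, the positive distance to $\Sigma^n\rel\y$ and to $\{|\x|=1\}$ is automatic, and that is precisely what Proposition~\ref{prop:isol1} records as ``isolated.'' So your $N$ construction is both superfluous and, as written, circular. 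With these two points repaired (handle the collapse alternative explicitly, and derive isolation from compactness rather than from a hand-picked $N$), your argument coincides with the paper's.
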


The proof is a combination of Proposition \ref{prop:CR4} and
Proposition \ref{prop:isol1}.

\subsection{Result 2: Braid Floer Homology}
The above proposition is used to define a Floer homology (cf. \cite{Floer1})
for proper relative braid classes.
Define $\cN = [\x]\rel\y$, then by Proposition \ref{prop:compiso1} the
set of bounded solutions is in $\cN$ is compact and $|\u|<1$ for all
$\u \in \cM(\cN)$. We say that $\cM(\cN)$ is isolated. In order to
define Floer homology for $\cN$, the system needs to be embedded
into a generic family of systems. The usual approach is to establish
that (for generic choices of Hamiltonians $H\in \cH$, for which $\y\in
\P_H$) the critical points of the action are non-degenerate and the
sets of connecting orbits $\cM_{\x_-,\x_+}([\x]\rel\y)$ are finite
dimensional manifolds.

The Fredholm theory for CRE yields an index function $\mu$ on
stationary points of the action $\f_H$ and
 $\dim \cM_{\x_-,\x_+}([\x]\rel\y) = \mu(\x_-)-\mu(\x_+)$.
Following Floer \cite{Floer1} we can define a chain complex
$C_k([\x]\rel\y) = \bigoplus \Z_2 \langle \x\rangle$, where the direct
sum is taken over critical points  of index $k$. The boundary operator
$\partial_k: C_k \to C_{k-1}$ is the linear operator generated by
counting orbits (modulo 2) between critical points of the correct
indices. The structure of the space of bounded solutions reveals that
$(C_*,\partial_*)$ is a chain complex. The homology of this chain
complex --- the \style{braid Floer homology} --- is denoted ${\rm
HF}_*([\x] \rel \y;J,H)$. This is finite dimensional for all $k$ and
nontrivial for only finitely many values of $k$. Independence of
choices is our first major result. Any Hamiltonian dynamics which
leaves $\y$ invariant yields the same Floer homology, ${\rm
HF}_*([\x] \rel \y;J,H)$ is independent of  $J$ and $H$; homotopies of
$\y$ within the braid class also leave the braid Floer homology
invariant. To be more precise, for $\y,\y' \in [\y]$, the Floer homology
groups for the relative braid classes fibers $[\x]\rel\y$ and
$[\x']\rel\y'$ of $[\x\rel\y]$ are isomorphic, ${\rm
HF}_k([\x]\rel\y)\cong {\rm HF}_k([\x']\rel\y')$. This allows us to
assign the Floer homology to the entire product class $[\x\rel\y]$.

\begin{THM}[{\bf Braid Floer Homology Theorem.}]
 The braid Floer homology of a
proper relative braid class,
\begin{equation}
\label{eqn:defn-floer-1}
    \bH_*([\x\rel\y])\bydef {\rm HF}_*([\x]\rel\y),
\end{equation}
is a function of the braid class $[\x\rel\y]$ alone, independent of
choices for $J$ and $H$, and the representative skeleton $\y$.
\end{THM}

\subsection{Result 3: Shifts \& Twists}
The braid Floer homology $\bH_*$ entwines topological braid data
with dynamical information about braid-constrained Hamiltonian
systems. One example of the braid-theoretic content of $\bH_*$
comes from an examination of twists. Recall that the braid group
$\cB_n$ has as its group operation concatenation of braids. This does
not extend to a well-defined product on conjugacy classes; however,
$\cB_n$ has a nontrivial center $Z(\cB_n)\cong\Z$ generated by
$\Delta^2$, the \style{full twist} on $n$ strands. Thus, products with
full twists are well-defined on conjugacy classes. These full twists
have a well-defined impact on the braid Floer homology (Sec.
\ref{subsec:twists}). Twists shift the grading: see Fig. \ref{fig:twist}.

\begin{THM}[{\bf Shift Theorem.}]
\label{thm:garside-floer1}
Let $[\x\rel\y]$ denote a braid class with $\x$ having $n$ strands. Then
\[
    \bH_*\bigl([(\x\rel\y)\cdot \Delta^{2}]\bigr) \cong
    \bH_{*-2n}([\x\rel\y]) .
\]
\end{THM}

%
\begin{figure}[hbt]
\label{fig:twist}
\begin{center}
\includegraphics[width=6in]{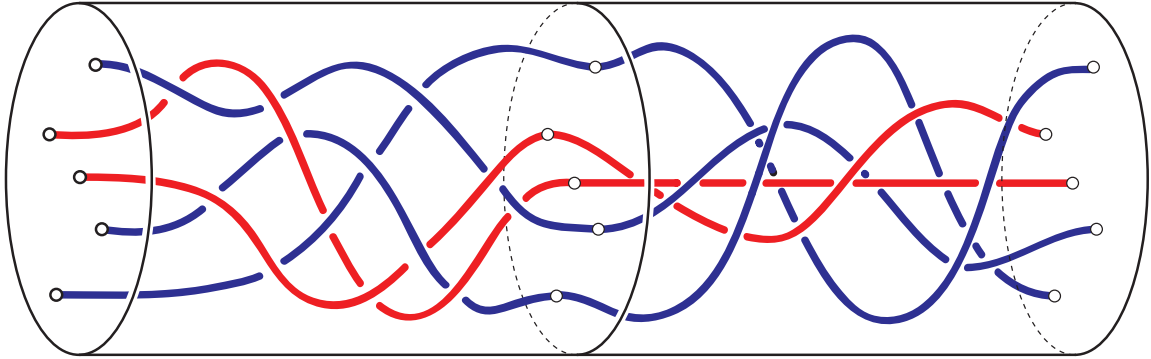}
\caption{The addition of a full positive twist (on right) to a proper
relative braid class (on left) shifts the Floer homology up by degree two.}
\end{center}
\end{figure}

In any Floer theory, computable examples are nontrivial. We compute
examples in Sec. \ref{sec:cyclic}, including the following.

\begin{example}
\label{ex:rotations-1} Consider a skeleton $\y$ consisting of two
braid components $\{\y^1,\y^2\}$, with $\y^1$ and $\y^2$  defined by
$$
\y^1 =   \Bigl\{  r_1 e^{\tfrac{2\pi n}{m} i t},\cdots, r_1 e^{\tfrac{2\pi n}{m} i (t-m+1)}\Bigr\},\quad
\y^2 =   \Bigl\{  r_2 e^{\tfrac{2\pi n'}{m'} i t},\cdots, r_2 e^{\tfrac{2\pi n'}{m'} i (t-m'+1)}\Bigr\}.
$$
where $0<r_1<r_2\le 1$, and $(n,m)$ and $(n',m')$ are relatively
prime integer pairs with $n\not = 0$, $m\ge 2$, and $m'>0$. A free
strand is given by $\x=\{x^1\}$, with $x^1(t) = re^{2\pi \ell it}$,
for $r_1<r<r_2$ and some $\ell\in\Z$, with either $n/m<\ell<n'/m'$ or $n/m>\ell>n'/m'$,
depending on the ratios of $n/m$ and
$n'/m'$. The relative braid class
$[\x\rel\y]$ is defined via the representative $\x\rel\y$.
The associated braid class is proper, and
$\bH_*([\x\rel\y])$ is non-zero in exactly two dimensions:
$2\ell$ and $2\ell\pm 1$, see Sec. \ref{sec:cyclic}.
\end{example}

\begin{figure}[hbt]
\label{fig:examples}
\begin{center}
\includegraphics[width=5.5in]{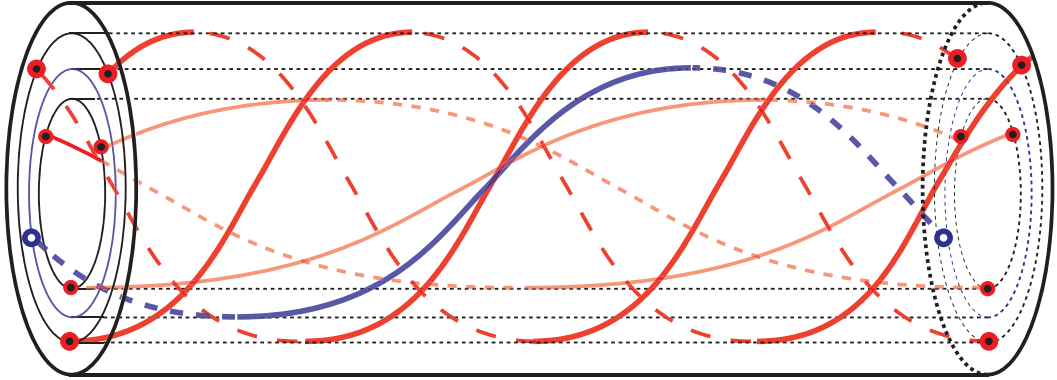}
\caption{The braid class of Example \ref{ex:rotations-1} has one free strand
surrounded by a pair of skeletal braids whose rotation numbers bound from above and below.}
\end{center}
\end{figure}

This example agrees with a similar computation of a
(finite-dimensional) Conley index of positive braid classes in
\cite{GVV}. Indeed, we believe that the braid Floer  homology agrees
with that index on positive braid classes. We anticipate using
Theorem \ref{thm:garside-floer1} combined with Garside's Theorem on
normal forms for braids as a means of algorithmically computing
$\bH_*$, see e.g. Sec.\ \ref{subsec:FMC}.

\subsection{Result 4: Forcing}
\label{subsec:mth1}
Floer braid homology $\bH_*$ contains information about the
existence of periodic points or invariant sets $A_m$ of
area-preserving diffeomorphisms $f$. Recall that an invariant set
$A_n = \{y^1,\cdots,y^n\}$ for $f$ determines a braid class $[\y]$ via
$\y(t) = \left(\psi_{t,H}(y^1),\cdots,\psi_{t,H}(y^n)\right)$. The
representation as an element $\beta(\y)$ in the braid group $\cB_n$,
given by a choice of a Hamiltoian $H$, is uniquely determined modulo
full twists.

\begin{THM}[{\bf Braid Forcing Theorem.}]
Let $f\in \symp(\D^2,\omega_0)$ have an invariant set $A_m$
representing the $m$-strand braid class $[\y]$. Then, for any proper
relative braid class $[\x\rel\y]$ for which  $\bH_*([\x\rel\y]) \not = 0$,
there exists an invariant set $A_n'$ for $f$ such that the union
$A_m\cup A_n'$  represents the relative braid class $[\x\rel\y]$.
\end{THM}

\begin{example}
\label{ex:rotations-2} Consider the braid class $[\x\rel\y]$ defined in
Ex. \ref{ex:rotations-1}. For any area-preserving diffeomorphism $f$
of the (closed) disc with invariant set represented (up to full twists)
by the braid class $[\y]$, there exist infinitely many distinct periodic
points. To prove this statement we invoke the invariant $\bH_*$,
computed in Ex. \ref{ex:rotations-1}, and use Theorem \ref{thm:garside-floer1}.
In particular, this result implies that if $f$ has
a fixed point at the boundary and  a periodic point of period larger
than two in the interior, then $f$ has periodic points of arbitrary
period, and thus infinitely many periodic points. In Sec.\
\ref{sec:cyclic} we give more details: the main results are presented in
Theorem \ref{thm:fixed1}.
\end{example}


\section{Background: configuration spaces and braids}
\label{sec:closedbraids}

Experts who want to get to the Floer homology are encouraged to skip
over the following background sections to Sec. \ref{sec:dislyap}.

\subsection{Configuration spaces and braid classes}
\label{subsec:config}

The \style{configuration space} $C^n(\D^2)$ is the space of subsets of
$\D^2$ of cardinality $n$, with the topology inherited from the
product $(\D^2)^n/S_n$, where $S_n$ is the permutation group.
Configuration spaces lead naturally to braids \cite{Birman}. For any
fixed basepoint, $\pi_1C^n(\D^2)\cong \cB_n$, Artin's braid group on
$n$ strands. For our purposes (finding periodic orbits of Hamiltonian
maps of the disc), the basepoint is unnatural, and we work with free
loop spaces.

The loop space $\Omega^n$ is the space of continuous mappings $\x:
\R/\Z \to C^n(\D^2)$ under the standard (strong) metric topology
induced by  $C^0(\R/\Z;\D^2)$. We abuse notation to indicate points
in $C^n(\D^2)$ and loops in $\Omega^n$ by the same symbol $\x$,
often referred to as a braid. In some contexts we denote loops by
$\x(t)$ and consider them as a union of disjoint `strands' $x^k(t)$.
Two loops $\x(t)$ and $\tilde\x(t)$ are close in the topology of
$\Omega^n$ if and only if for some permutation $\theta\in S_n$ the
strands $x^{\theta(k)}$ and $\tilde x^k$ are $C^0$-close for all $k$
and $\tilde \sigma = \theta^{-1} \sigma\theta$.

The \style{braid class} of $\x$ is the connected component $[\x] \in
\pi_0(\Omega^n)$. These braid classes can be completed to $\bOn$,
the $C^0$-closure of $\Omega^n$ in $(\D^2)^n/S_n$. The
discriminant $\Sigma^n = \bOn\backslash \Omega^n$ defines the
singular braids. A special subset of singular braids are those that can
be regarded as in $\overline \Omega^{n'}$ with $n'<n$. Such collapsed
braids are denoted by $\Sigma_-^n\subset \Sigma^n$.

Given a fixed skeleton braid $\y\in\Omega^m$, we define the relative
braids as follows. There is a natural inclusion
$\iota_{\y}:\Omega^n\hookrightarrow \overline\Omega^{n+m}$ taking
$\x$ to the union of $\x$ and $\y$ --- a potentially singular braid.
Braids rel $\y$ are partitioned by the singular relative braids
$\Sigma\rel\y = \iota_{\y}^{-1}(\Sigma^{n+m})$ into connected
components. These connected components in $\Omega^{n,m}$ are
the relative braid classes $[\x\rel\y]$; in this class, both the strands
of $\x$ and $\y$ are permitted to deform (though without
intersecting themselves or each other). The \style{relative braid
class} $[\x]\rel \y$ is defined as the fiber
$\iota_{\y}^{-1}(\Omega^{n+m})\cap[\x]$; in this fiber, strands of
$\x$ may deform, but not strands of $\y$.

\subsection{The Hamiltonian action}
\label{subsec:action1}
For $H\in \cH$ the Hamiltonian 1-form on $\Sb^1\times \D^2$ is
defined by $\theta = pdq - Hdt $. For a loop $\x\in \Omega^1$ one
defines the \style{action} functional
\begin{equation}
\label{eqn:action1}
 \f_H(x) = \int_0^1 \theta(x(t)) = \int_0^1 \alpha_0 (x_t(t)) dt - \int_0^1 H(t,x(t))dt,
\end{equation}
where $\alpha_0 = pdq$ and $x_t = \frac{dx}{dt}$. The first variation
with respect to variations $\xi\in   C^1$ defines an exact 1-form
$d\f_H$:
\begin{eqnarray*}
d\f_H(x)\xi &=& -\int_0^1 \omega_0\bigl(x_t(t),\xi(t)\bigr)dt - \int_0^1 dH(t,x(t))\xi(t) dt\\
&=& -\int_0^1 \omega_0\Bigl(x_t(t)-X_{t,H}(t,x(t)),\xi(t)\Bigr)dt,
\end{eqnarray*}
which gives  the variational principle
for Eq.\  (\ref{eqn:HE}).
For closed $n$-braids we can extend the Hamiltonian action to
$\bOn$. Given any $\x\in \bOn$ define its action by $\f_{H} (\x) =
\sum_{k=1}^n \f_H(x^k)$. We may  regard $\f_H(\x)$ as the action for
a Hamiltonian system on $(\D^2)^n$, with $\overline\omega_0 =
\omega_0\times \cdots \times \omega_0$, and $\overline H(t,\x) =
\sum_k H(t,x^k)$, i.e.\ an uncoupled system with coupled boundary
conditions  $x^k(t+1) = x^{\sigma(k)}(t)$ for all $t$ and all
$k=1,\cdots,n$, where $\sigma\in S_n$ is a permutation. We abuse
notation by   denoting the action for the product system by $\f_H(\x)$
and the action is well-defined for all $\x \in \bOn\cap C^1$. The
stationary, or critical closed braids, including singular braids, are
denoted by $ \P_H(\bOn) = \bigl\{\x\in \bOn\cap C^1~|~d\f_H(\x)
=0\bigr\}. $ By the boundary conditions  given above, the first
variation of the action yields that the individual strands satisfy Eq.\
(\ref{eqn:HE}). For the critical points of $\f_H$ on $\bOn$ the
following compactness property holds.
\begin{lemma}
\label{lem:br4}
The set $\P_H(\bOn)$ is compact in $\bOn$.
As a matter of fact $\P_H(\bOn)$ is compact in
 the $C^r$-topology for any $r\ge 1$.
\end{lemma}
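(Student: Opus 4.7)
The plan is to combine Arzel\`a--Ascoli with continuous dependence of ODE solutions, and then bootstrap regularity via the equation itself. Fix a sequence $\{\x_\nu\}_{\nu\in\N}\subset \P_H(\bOn)$. Each $\x_\nu$ is represented by $n$ strands $x_\nu^k:\R\to \D^2$ satisfying the Hamiltonian ODE $\dot x_\nu^k=X_H(t,x_\nu^k)$ (from the first variation computation at the start of Section~\ref{subsec:action1}), together with a closure permutation $\sigma_\nu\in S_n$ so that $x_\nu^k(t+1)=x_\nu^{\sigma_\nu(k)}(t)$. Since $S_n$ is finite, I would first pass to a subsequence on which $\sigma_\nu\equiv \sigma$ is constant; this reduces the problem to a labelled one in $C^0(\Sb^1;(\D^2)^n)$.

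Next I would use the hypothesis $H\in \cH$: $H$ is smooth and constant on $\partial\D^2$, so $X_H$ is a smooth $1$-periodic vector field on the compact manifold $\Sb^1\times\D^2$ that is tangent to $\partial\D^2$. Consequently $M:=\sup\|X_H\|<\infty$, each strand $x_\nu^k$ is $M$-Lipschitz, and solutions remain in $\D^2$ for all times. Thus $\{\x_\nu\}$ is uniformly bounded and equicontinuous in $C^0(\Sb^1;(\D^2)^n/S_n)$, and Arzel\`a--Ascoli provides a further subsequence converging uniformly to some $\x_\infty\in\bOn$. Passing to the limit in the integral form
\[
    x_\nu^k(t)=x_\nu^k(0)+\int_0^t X_H(s,x_\nu^k(s))\,ds,
\]
the uniform continuity of $X_H$ and dominated convergence yield the same relation for $x_\infty^k$, so each limit strand is $C^1$ and solves Eq.~\rmref{eqn:HE}; the closure condition $x_\infty^k(t+1)=x_\infty^{\sigma(k)}(t)$ is preserved under uniform convergence. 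Hence $\x_\infty\in\P_H(\bOn)$ and $\P_H(\bOn)$ is $C^0$-compact.

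To upgrade to $C^r$-compactness I would bootstrap using the equation. From $\x_\nu\to \x_\infty$ in $C^0$ and uniform continuity of $X_H$ on $\Sb^1\times \D^2$ one obtains $\dot\x_\nu = X_H(\cdot,\x_\nu)\to X_H(\cdot,\x_\infty)=\dot\x_\infty$ uniformly, which is $C^1$-convergence. Differentiating the ODE gives
\[
    \ddot x_\nu^k=\partial_t X_H(t,x_\nu^k)+D_x X_H(t,x_\nu^k)\cdot \dot x_\nu^k,
\]
whose right-hand side converges uniformly by the $C^1$-step and smoothness of $X_H$, yielding $C^2$-convergence. Iterating, every derivative of the limit sequence converges uniformly, establishing $C^r$-compactness for every $r\ge 1$.

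The only mild subtlety is the unlabelled formulation in the quotient $(\D^2)^n/S_n$, which is dealt with cleanly by the finiteness of $S_n$; the rest is standard compactness for ODEs on a compact manifold, and the essential use of $H\in \cH$ is that $H$ is constant on $\partial\D^2$, which both makes $X_H$ tangent to the boundary (so strands do not escape $\D^2$) and ensures $X_H$ is uniformly bounded.
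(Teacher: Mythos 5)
Your proof is correct and follows essentially the same route as the paper: an a priori $W^{1,\infty}$ bound from boundedness of $X_H$, Arzel\`a--Ascoli for $C^0$ convergence, passing to the limit in the Hamiltonian ODE to show the limit is again a critical braid, and bootstrapping via repeated differentiation of Eq.~\rmref{eqn:HE} for $C^r$-compactness. The extra care you take with the closure permutation (passing to a subsequence on which $\sigma_\nu$ is constant) is a welcome detail the paper leaves implicit, but the argument is the same.
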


\begin{proof} From Eq.\  \rmref{eqn:HE} we derive that $|x_t^k|
= |\nabla H(t,x^k)| \le C$, by the assumptions on $H$. Since
$|x^k|\le 1$, for all $k$, we obtain the a priori estimate $ \Vert
\x\Vert_{W^{1,\infty}}\le C, $ which holds for all $\x\in \P_H(\bOn)$.
Via compact embeddings (Arzela-Ascoli) we have that a sequence
$\x_n\in \P_H$ converges in $C^0$, along a subsequence, to a limit
$\x\in\bOn$. Using the equation we obtain the convergence in $C^1$
and $\x$ satisfies the equation with the boundary conditions given
above. Therefore $\x\in \P_H(\bOn)$, thereby establishing the
compactness of $\P_H(\bOn)$ in $\bOn$. The $C^r$-convergence is
achieved by differentiating Eq.\ \rmref{eqn:HE} repeatedly. This
concludes the compactness of $\P_H(\bOn)$ in $C^r$.
\end{proof}

\begin{remark}
\label{br5} The critical braids in $\P_H(\bOn)$ have one additional
property that plays an important role. For the strands $x^k$ of $\x\in
\P_H(\bOn)$ it holds that either $|x^k(t)|=1$, for all $t$, or
$|x^k(t)|<1$, for all $t$. This is a consequence of the uniqueness of
solutions for the initial value problem for \rmref{eqn:HE}. We say that
a braid $\x$ in supported in $\Int(\D^2)$ if $ |x^k(t)|<1, $
 for all $t$  and for all $k$.
\end{remark}

In the same spirit, we can define the subset of stationary braids
restricted to a braid class $[\x]$, notation $\P_H([\x])$, or in the case
of a relative braid class $\P_H([\x]\rel\y)$.


\section{Background: Cauchy-Riemann equations}
\label{sec:CR1}

\subsection{Almost complex structures}
\label{subsec:almost}

The standard almost complex matrix $J_0$ is defined by the relation
$\langle \cdot,\cdot\rangle = \omega_0(\cdot,J_0\cdot)$, where $
\langle \cdot,\cdot\rangle$ is the standard inner product defined by
$dp\otimes dp + dq\otimes dq$. Therefore, $J_0$ defines an  almost
complex structure on $\D^2$ which corresponds to complex
multiplictation with $i$ in $\CC$. In general an almost complex
structure on $\D^2$ is a mapping $J:T\D^2 \to T\D^2$, with the
property that $J^2=-{\rm id}$. An almost complex structure is
compatible with $\omega_0$ if $\omega_0(\cdot,J\cdot)$ defines a
metric on $\D^2$ and $g=\omega_0(\cdot,J\cdot)$ is $J$-invariant.
The space of $t$-families of almost complex structures is denoted by
$\cJ = \cJ(\Sb^1\times\D^2)$.

In terms of the standard inner product $ \langle \cdot,\cdot\rangle$
the metric $g$ is given by $g(\xi,\eta) = \langle -J_0 J
\xi,\eta\rangle$, where $-J_0J$ is a positive definite symmetric
matrix function. With respect to the metric $g$ it holds that $J
\nabla_gH = X_H$.

\subsection{Compactness}
\label{subsec:compactness}

In order to study 1-periodic  solutions of  Eq.\  (\ref{eqn:HE}) the
variational method due to Floer and Gromov explores the perturbed
nonlinear Cauchy-Riemann equations (\ref{eqn:CR1}) which can be
rewritten as
$$
\cre(u) = u_s - J(t,u)\bigl[ u_t - X_{H}(t,u)\bigr] =0,
$$
for functions $u: \R\times \R \to \D^2$ (short hand notation). In the
case of 1-periodic solutions we invoke the boundary conditions
$u(s,t+1)=u(s,t)$.

In order to find closed braids as critical points of $\f_H$ on $\bOn$ we
invoke the CRE for $u^k$ in $\u=\{u^k\}$. A collection of
$C^1$-functions $\u(s,t) =\{u^k(s,t)\}$ is said to satisfy the CRE if its
components $u^k$ satisfy Eq.\ (\ref{eqn:CR1}) for all $k$ and the
periodicity condition
\begin{equation}
\label{e:percond}
  \{u^1(s,t+1),\dots, u^n(s,t+1) \} = \{u^1(s,t) ,\dots, u^n(s,t) \}.
\end{equation}
for all $s,t$. We use the almost complex structure $\overline J$
defined via $\overline g(\cdot,\cdot) = \overline
\omega_0(\cdot,\overline J \cdot)$, $\overline \omega_0 = \omega_0
\times \cdots \times \omega_0$, and $\overline g(\cdot,\cdot) =
\overline \omega_0(\cdot,\overline J \cdot)$, with $\overline H(t,\x)
= \sum_k H(t,x^k)$ as per Sect. \ref{subsec:var1}. The equations
become
\begin{equation}
\label{eqn:CR3}
\partial_{\overline J,\overline H}(\u) =
\u_s - \overline J(t,\u) \bigl[ \u_t - X_{\oH} (t,\u)\bigr] =0,
\end{equation}
where  $X_{\oH}$ is defined via the relation $\iota_{X_{t,\oH}}
\overline \omega_0= -d\oH$ and the periodicity condition in
(\ref{e:percond}). This requirement is fulfilled precisely by braids
$\u(s,\cdot) \in \bOn$ for all $s$. We therefore define the space of
bounded solutions in the space of $n$-braids by:
\begin{equation*}
\M^{J,H} = \M^{J,H}(\bOn) =
\bigl\{\u = \{u^k\}\in C^1\bigl(\R\times \R/\Z;C^n(\D^2)\bigr)
~|~
\partial_{J,H}(u^k) =0,~~\forall k
\bigr\}
\end{equation*}
Note that solutions in $\M^{J,H}$ extend to $C^1$-functions $u^k:
\R\times\R\to \D^2$ by periodic extension in $t$. If there is no
ambiguity about the dependence on $J$ and $H$ we abbreviate
notation by writing  $\M$ instead of $\M^{J,H}$.

The following statement is Floer's compactness theorem
adjusted to the present situation.

\begin{proposition}
\label{prop:CR4} The space $\M^{J,H}$ is compact in the topology of
uniform convergence on compact sets in $(s,t)\in \R^2$, with
derivatives up to any order.  The action $\f_H$ is uniformly bounded
along trajectories $\u\in \M^{J,H}$, and
\begin{eqnarray*}
& &\lim_{s\to\pm \infty} |\f_H(\u(s,\cdot))| = |c_\pm (\u)| \le C(J,H),\\
& &\int_\R\int_0^1 |\u_s|_g^2 dt ds = \sum_{k=1}^n\int_\R\int_0^1 |u_s^k|_g^2 dt ds \le C'(J,H),
\end{eqnarray*}
for all $\u\in \M^{J,H}$ and some  constants $c_\pm(\u)$. The constants
$C,\, C'$ depend only on $J\in \cJ$ and $H\in \cH$.
\end{proposition}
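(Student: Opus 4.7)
The plan is to carry out Floer's compactness argument adapted to the $\D^2$ setting, proceeding in five steps: (i) the a priori $L^\infty$ bound, (ii) a uniform $C^1$ bound via no bubbling, (iii) elliptic bootstrap to $C^k_{\loc}$ for all $k$, (iv) the action and energy estimates via the gradient-flow identity, and (v) subsequential $C^\infty_{\loc}$ convergence by Arzela--Ascoli. First, since each strand of $\u \in \M^{J,H}$ takes values in $\D^2$, one has $|u^k(s,t)|\le 1$ automatically.

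Second, the key step: I would prove a uniform $C^1$ bound on $\M^{J,H}$ by contradiction. Suppose a sequence $(\u_n, s_n, t_n)$ satisfies $R_n := |d\u_n(s_n,t_n)|\to\infty$. Rescale by
\[
v_n(\sigma,\tau) := \u_n\bigl(s_n + R_n^{-1}\sigma,\; t_n + R_n^{-1}\tau\bigr).
\]
In the limit $R_n\to\infty$ the Hamiltonian term in the rescaled CRE is suppressed, so any $C^1_{\loc}$-limit $v$ of a subsequence is a non-constant $\overline J_\infty$-holomorphic plane $\CC\to \D^2$ of finite energy. By removal of singularities, $v$ extends to a $\overline J_\infty$-holomorphic sphere $\tilde v:\Sb^2 \to \D^2$; since $\overline\omega_0 = d\overline\alpha_0$ is exact on $\D^2$, one has $\int_{\Sb^2}\tilde v^*\overline\omega_0 = 0$, contradicting positivity of the symplectic area for a non-constant $\overline J_\infty$-holomorphic sphere. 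Hence $|d\u|_{L^\infty}$ is uniformly bounded on $\M^{J,H}$. Third, with a uniform gradient bound in hand, I would run the standard interior elliptic bootstrap on the CRE $\partial_{\overline J,\overline H}(\u) = 0$, viewed as a semilinear elliptic first-order system, obtaining uniform bounds on $\M^{J,H}$ in $C^k_{\loc}$ for every $k \ge 1$.

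Fourth, the uniform $C^1$ bound yields $|\u_t|\le C_1$, so
\[
\bigl|\f_H(\u(s,\cdot))\bigr| \le \int_0^1 |\overline\alpha_0(\u_t)|\,dt + \int_0^1 |\oH(t,\u)|\,dt \le n C_1 + n \|H\|_{L^\infty} =: C(J,H).
\]
A direct computation using that the CRE is the $L^2$-gradient flow of $\f_H$ with respect to $\overline g$ yields
\[
\frac{d}{ds}\f_H\bigl(\u(s,\cdot)\bigr) = - \int_0^1 |\u_s|_g^2\,dt \le 0,
\]
so $\f_H(\u(s,\cdot))$ is monotone non-increasing and bounded. The limits $c_\pm(\u) := \lim_{s\to \pm\infty}\f_H(\u(s,\cdot))$ therefore exist and $|c_\pm(\u)|\le C(J,H)$. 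Integrating the gradient-flow identity over $\R$ gives
\[
\int_\R\int_0^1 |\u_s|_g^2 \, dt \, ds = c_-(\u) - c_+(\u) \le 2C(J,H) =: C'(J,H).
\]
Fifth, for any sequence $\u_n \in \M^{J,H}$, the uniform $C^k_{\loc}$ bounds and Arzela--Ascoli produce a subsequence converging in $C^\infty$ on compact subsets of $\R \times \R/\Z$; the limit solves the CRE and, after possibly relabeling strands by a permutation $\theta\in S_n$, satisfies (\ref{e:percond}), hence lies in $\M^{J,H}$.

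The main obstacle is Step 2. The $\D^2$-setting is what makes the no-bubbling step painless: exactness of $\overline \omega_0$ kills the symplectic area of any $\overline J$-holomorphic sphere, so no sphere bubble can appear. The remaining subtlety is to rule out disc bubbles forming at $\partial \D^2$, which is handled by the fact that $\partial \D^2$ is invariant under the Hamiltonian flow (since $H$ is constant on $\partial \D^2$), together with exactness. One must also set up the rescaling carefully so that the limiting PDE is the autonomous $\overline J_\infty$-holomorphic curve equation and that the hypotheses of removal of singularities are met.
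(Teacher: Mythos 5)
Your proposal is essentially the same Floer compactness argument the paper uses, and your conclusion is correct. The paper's proof is terse: it rewrites the CRE as $\partial_J u^k = \nabla_g H(t,u^k) = f^k$, notes that $|u^k|\le 1$ and the smoothness of $H$ give $f^k \in L^\infty(\R^2)$, and then delegates the step ``$L^\infty$ estimates $\Rightarrow C^{1,\lambda}$ estimates and compactness'' to the references \cite{Sal,Sal2}. Your Steps~2 and~3 spell out what lies behind that citation: the uniform gradient bound comes from ruling out bubbling (rescaling at a putative blow-up point to produce a finite-energy $\overline J$-holomorphic plane, extending by removal of singularities to a sphere in $\D^2$, and killing it with exactness of $\overline\omega_0$), after which interior elliptic bootstrap yields $C^k_{\loc}$ bounds for all $k$. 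This is consistent with the paper's Remark that $C^2$-regularity of $H$ suffices for $C^1$ compactness and that higher regularity follows from smoothness of $H$. Your Steps~4 and~5 (the gradient-flow identity $\frac{d}{ds}\f_H(\u(s,\cdot)) = -\int_0^1 |\u_s|_g^2\,dt$, the uniform action bound using $|\u_t|\le C_1$ and $|\overline H|\le \|H\|_{L^\infty}$, integration to bound the total energy, and Arzela--Ascoli with strand relabeling) match the paper's computations line for line. The only thing you treat slightly differently than the paper signals is the boundary behavior: you flag the need to rule out disc bubbles at $\partial\D^2$ and handle it via invariance of the boundary plus exactness, which is a legitimate concern but one the paper handles elsewhere (Proposition~\ref{prop:dislyap4}, showing components either lie entirely on $\partial\D^2$ or entirely in the interior). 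Overall: same approach, with you supplying the details the paper outsources.
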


\begin{proof}
Define the operators
$$
\partial_J  = \frac{\partial}{ \partial s} - J \frac{\partial}{ \partial t},\quad
\bar\partial_J  = \frac{\partial}{ \partial s} + J \frac{\partial}{ \partial t}.
$$
Eq.\  \rmref{eqn:CR1} can now be written as $\partial_J u^k =
\nabla_g H(t,u^k) \bydef f^k(s,t)$ for all $k$. By the hypotheses on
$H$ and the fact that $|u^k|\le 1$ for all $k$ we have that $f^k(s,t)
\in L^\infty(\R^2)$.  The latter follows from the fact that the
solutions $u^k$ can be regarded as functions on $\R^2$ via periodic
extension in $t$. Using these crucial a priori estimates, the remainder
follows as in Floer's compactness proof: see \cite{Sal, Sal2}. In brief,
the $L^\infty$-estimates yield $C^{1,\lambda}$-estimates, which
then give the desired compactness. By the smoothness of $J$ and $H$
such estimates can be found in any $C^{r,\lambda}$.

Due to the a priori bound in $C^{1,\lambda}$ it holds that
$|\f_H(\u(s,\cdot))| \le C(J,H)$ and since
$$
\frac{d}{ds} \f_H(\u(s,\cdot)) = -\int_0^1 |\u_s|_g^2 dt\le 0,
$$
it follows  that the limits $\lim_{s\to \pm \infty} \f_H(\u(s,\cdot)) =
c_\pm$ exist and are a priori bounded by the same constant $C(J,H)$.
Finally, for any $T_1,T_2>0$,
\begin{equation*}
\int_{-T_1}^{T_2}\int_0^1 |\u_s|_g^2 dt ds =
\sum_{k=1}^n\int_{-T_1}^{T_2} \int_0^1 |u_s^k|_g^2 dt ds =
\f_H(\u(T_2,\cdot)) -\f_H(\u(-T_1,\cdot)).
\end{equation*}
By the uniform boundedness of the action along all orbits $\u\in
\M_{J,H}$ we obtain the estimate $ \int_\R\int_0^1 |\u_s|_g^2 dt ds
\le 2 C(J,H), $ which completes our proof.
\end{proof}

\begin{remark}
\label{rmk:CR5a} In the above proof we only use $C^2$-regularity of
the Hamiltonian $H$ in order to obtain compactness in $C^1$. Since
we assume Hamiltonians to be $C^\infty$-smooth we can improve the
compactness result to hold up to derivatives of any order.
\end{remark}

\subsection{Additional compactness}
\label{subsec:comp2} Consider the non-autonomous Cauchy-Riemann
equations:
\begin{equation}
\label{eqn:CR2s}
u_s - J(s,t,u) u_t - \nabla_{g_s} H(s,t,u) =0,
\end{equation}
where $s\mapsto J(s,\cdot)$ is a smooth path in $\cJ$ and $s\mapsto
H(s,\cdot,\cdot)$ is a smooth path in $\cH$. Both  paths are assumed
to have the property that the  limits as $s\to \pm \infty$ exists. The
path of metrics $s\mapsto g_s$ is defined via the relation
$g_s(\cdot,\cdot) = \omega_0(\cdot,J(s,\cdot)\cdot)$. Assume that
$|H_s| \le \kappa(s) \to 0$  as $s\to\pm\infty$ uniformly in $(t,x) \in
\R/\Z\times \D^2$, with $\kappa\in L^1(\R)$. For the equation
$\partial_{J} v  = f(s,t)$, the analogue of Proposition \ref{prop:CR4}
holds via the $L^\infty$-estimates on the right hand side, see
\cite{Sal,Sal2}. We sketch the main idea.

Define $s\mapsto \f_H(s,x)$ as the action path with Hamiltonian path
$s\mapsto H(s,\cdot,\cdot)$. The first variation with respect to $s$
can be computed as before:
\begin{eqnarray*}
\frac{d}{ds} \f_H(s,\u(s,\cdot)) &=& \frac{\partial \f_H}{\partial s}
+ \sum_{k=1}^n \int_0^1 \omega_0\bigl( u_t^k-X_H(t,u^k),u^k_s\bigr) dt\\
&=&  \frac{\partial \f_H}{\partial s} -  \sum_{k=1}^n \int_0^1\vert u_t^k-X_H(t,u^k)\vert^2_{g_s} dt\\
&=&  \frac{\partial \f_H}{\partial s} -   \int_0^1\vert \u_s \vert^2_{g_s} dt.
\end{eqnarray*}
The partial derivative with respect to $s$ is given by
$$
 \frac{\partial \f_H}{\partial s}
 =
 \sum_{k=1}^n \int_0^1 \frac{\partial H}{\partial s}\left(s,t,u^k(s,t)\right)dt,
 $$
and $\left| \frac{\partial \f_H}{\partial s}\right| \le C \kappa(s) \to 0$
as $s\to\pm\infty$. For a non-stationary solution $\u$ it holds that
$\int_0^1\vert \u_s \vert^2_{g_s} dt>0$, and thus for $|s|$
sufficiently large $\frac{d}{ds} \f_H(s,\u(s,\cdot)) - \frac{\partial}{\partial s} \f_H<0$, proving
that the limits $\lim_{s\to\pm\infty}   \f_H(s,\u(s,\cdot)) = c_\pm$
exist. Since $\kappa\in L^1(\R)$ we also obtain the integral
$\int_\R\int_0^1 \vert \u_s \vert^2_{g_s} dtds \le C(J,H)$. This
non-autonomous CRE will be used to establish continuation for Floer
homology.


\section{Crossing numbers, a priori estimates and isolation}
\label{sec:dislyap}

\subsection{The crossing number}
\label{subsec:cross}
We begin with an important property of the (linear) Cauchy-Riemann
equations in dimension two. We consider Eq.\ \rmref{eqn:CR1}, or
more generally Equation \rmref{eqn:CR2s}, and local solutions of the
form $u: G\subset \R^2 \to \R^2$, where $G = [\sigma,\sigma']\times
[\tau,\tau']$. For two local solutions $u,u':G\to \R^2$ of
\rmref{eqn:CR1} assume that
$$
u(s,t) \not =u'(s,t),~~~{\rm for~all}~~
(s,t) \in \partial G.
$$
Intersections of $u$ and $u'$, where $u(s_0,t_0)=u'(s_0,t_0)$ for
some $(s_0,t_0)\in G$, have constrained evolutions. Consider the
difference function $w(s,t) = u(s,t) - u'(s,t)$. By the assumptions on
$u$ and $u'$ we have that $w|_{\partial G}\not = 0$, and
intersections are given by $w(s_0,t_0)=0$. The following lemma is a
special feature of CRE in dimension two and is a manifestation of the
well-known positivity of intersection of $J$-holomorphic curves in
almost complex 4-manifolds \cite{McDuff1,McDuff2}.
\begin{lemma}
\label{lem:dislyap1a} Let $u,u'$ and $G$ be as defined above. Assume
that $w(s_0,t_0)=0$ for some $(s_0,t_0)\in G$. Then $(s_0,t_0)$ is
an isolated zero and $\deg(w,G,0)<0$.
\end{lemma}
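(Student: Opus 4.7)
The plan is to derive a linear elliptic first-order PDE of Cauchy--Riemann type for the difference $w := u - u'$, and then invoke the similarity principle (Carleman--Hartman--Wintner--Vekua) to analyze $w$ near the intersection point. This reduces the question to the standard behavior of (anti-)holomorphic functions near an isolated zero.

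First I would subtract the two Cauchy--Riemann equations. Writing $J(t,u)u_t - J(t,u')u'_t = J(t,u)\,w_t + [J(t,u)-J(t,u')]u'_t$, and using smoothness of $J$ and $\nabla_g H$ in the spatial variable (so that $J(t,u) - J(t,u')$ and $\nabla_g H(t,u) - \nabla_g H(t,u')$ can be rewritten, via integral Taylor formulae, as continuous matrix-valued coefficients acting on $w$), I obtain a linear elliptic system
\[
w_s - A(s,t)\,w_t - C(s,t)\,w \;=\; 0,
\]
where $A(s,t) = J(t,u(s,t))$ satisfies $A(s_0,t_0)^2 = -I$ and $C$ is a bounded continuous matrix-valued function.

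Next I would apply the similarity principle to this system. In a small disc about $(s_0,t_0)$ the Korn--Lichtenstein theorem provides isothermal coordinates $\zeta$ in which $A$ becomes the standard structure $J_0$ and the equation for $w$ becomes a perturbation of $\bar\partial$ by a zero-order term. The classical Vekua factorization then yields
\[
w(s,t) \;=\; M(s,t)\,\phi(\zeta(s,t)),
\]
with $M$ continuous and invertible near $(s_0,t_0)$ and $\phi$ a nontrivial holomorphic function of $\zeta$ vanishing at the origin; this is the same device underlying positivity of intersections for $J$-holomorphic curves \cite{McDuff1,McDuff2}. Since $\phi$ has an isolated zero of some finite order $k \geq 1$ at $\zeta = 0$ and $M$ is nonsingular nearby, $(s_0,t_0)$ is an isolated zero of $w$.

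Finally, to fix the sign of the local degree I would track orientation conventions carefully. The paper's operator $\partial_s - J\partial_t$ corresponds, with $J$ normalized to $J_0$ and lower-order terms frozen, to solutions $u = p + iq$ that are \emph{anti}-holomorphic in $\zeta = s + it$: the homogeneous equation gives $p_s = -q_t$, $q_s = p_t$, i.e.\ $\partial_z u = 0$. Consequently $\phi$ has leading behavior $\phi(\zeta) = c\,\bar\zeta^k + o(|\zeta|^k)$ with $c \neq 0$, whose winding around the origin is $-k$. Choosing the isothermal chart to be orientation-preserving makes $\det M(s_0,t_0) > 0$, so $\deg(w, G', 0) = -k < 0$ on a small subdomain $G' \ni (s_0,t_0)$, which by excision gives the claim. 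The main obstacle is precisely this orientation bookkeeping: the similarity principle is classical, but recognizing that the paper's sign convention produces \emph{anti}-holomorphic (not holomorphic) asymptotics, and hence a negative winding, is the delicate point.
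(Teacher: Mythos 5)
Your strategy matches the paper's: subtract the two Cauchy--Riemann equations, linearize to get a first-order elliptic system for $w = u - u'$, and use a similarity-principle factorization to reduce to the zero structure of a (conjugated) holomorphic function. The paper's appeal to Hofer--Zehnder, Appendix A.6, is the same device you invoke via Carleman--Vekua: there exist a continuous $\Phi$ with $\det\Phi>0$ and $J\Phi = \Phi i$, and a holomorphic $h$, with $w(z) = \Phi(z)\,\overline{h(z)}$.

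Two problems in your writeup. The first is an internal contradiction: you declare $\phi$ to be ``a nontrivial holomorphic function of $\zeta$ vanishing at the origin'' (hence of the form $c\zeta^k + o(|\zeta|^k)$), yet two sentences later write $\phi(\zeta) = c\,\bar\zeta^k + o(|\zeta|^k)$, which is anti-holomorphic leading behavior; these cannot both hold. The source of the confusion is your claim that after normalizing $J$ to $J_0$ the equation becomes ``a perturbation of $\bar\partial$'' --- in fact $\partial_s - J_0\partial_t = 2\partial_\zeta$, a perturbation of $\partial_\zeta$, whose solutions are anti-holomorphic at leading order. The correct factorization reads $w = M\,\bar h$ with $h$ holomorphic (equivalently $w = M\phi$ with $\phi$ a holomorphic function of $\bar\zeta$), as the paper has it. With the factorization stated correctly, $\deg(w, B_\epsilon(z_0), 0) = \deg(\bar h, B_\epsilon(z_0), 0) = -\deg(h, B_\epsilon(z_0), 0) = -k \le -1$ (using $\det M > 0$), which is exactly the paper's degree count and yields $\deg(w,G,0)<0$ after summing over the finitely many zeros. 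The second problem is a genuine gap: you assert $\phi$ is ``nontrivial'' with no argument. One must use the standing hypothesis $w|_{\partial G} \neq 0$: if $h$ vanishes identically on the small disc, then $w$ vanishes there, and iterating the local factorization (unique continuation) propagates $w\equiv 0$ to all of $G$, contradicting the boundary condition. The paper carries out this dichotomy explicitly; your proof needs it to rule out the degenerate case.
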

\begin{proof}
Taylor expand: $\nabla_g H(t,u') = \nabla_g H(t,u) +
R_1(t,u,u'-u)(u'-u)$, where $R_1$ is continuous. Substitution yields
$$
w_s - J(s) w_t - A(s,t) w =0,\quad w(s_0,t_0) =0,
$$
where $A(s,t) = R_1(t,u,-w)$ is continuous on $G$. Define complex
coordinates $z=s-s_0 + i(t-t_0)$. Then by \cite[Appendix A.6]{HZ},
there exists a $\delta<0$, sufficiently small, a disc
$D_\delta=\{z~|~|z|\le \delta\}$, a holomorphic map $h: D_\delta \to
\CC$, and  a continuous mapping $\Phi: D_\delta \to {\rm
GL}_{\R}(\CC)$ such that
$$
\det~\Phi(z)>0,
\qquad J(z) \Phi(z) = \Phi(z) i,\qquad w(z) = \Phi(z) \bar h(z),
$$
for all $z\in D_\delta$. Clearly, $\Phi$ can be represented by
a real $2\times 2$ matrix function of invertible matrices.

Since $w=\Phi \bar h$, it holds that the condition $w(z_0) =0$ implies
that $\bar h(z_0) = h(z_0)=0$. The analyticity of $h$ then implies
that either $z_0$ is an isolated zero in $D_\delta$, or $h\equiv 0$ on
$D_\delta$. If the latter holds, then also $w\equiv 0$ on $D_\delta$.
If we repeat the above arguments we conclude that $w\equiv 0$ on
$G$ (cf. analytic continuation), in contradiction with the boundary
conditions. Therefore, all zeroes of $w$ in $G$ are isolated, and
there are finitely many zeroes $z_i\in \Int(G)$.

For the degree we have that, since $\det \Phi(z)>0$,
$$
\deg(w,G,0) = \sum_{i=1}^m \deg(w,B_{\epsilon_i}(z_i),0)
=\sum_{i=1}^m \deg(\bar h,B_{\epsilon_i}(z_i),0)
= -\sum_{i=1}^m \deg(h,B_{\epsilon_i}(z_i),0),
$$
and for an analytic function with an isolated zero $z_i$ it holds that
$\deg(h,B_{\epsilon_i}(z_i),0) = n_i\ge 1$; thus $\deg(w,G,0)<0$.
\end{proof}

For a curve $\Gamma:I \to \R^2 \setminus \{(0,0)\}$, with $I$ a
bounded interval, one can define the \style{winding number} about
the origin by
$$
  W(\Gamma,0) \bydef  \frac{1}{2\pi}   \int_I \Gamma^* \alpha
  =\frac{1}{2\pi}\int_\Gamma \alpha,
$$
for $\alpha = (-qdp+pdq)/(p^2+q^2)$. In particular, for curves
$w(s,\cdot):[\tau,\tau'] \to \R^2\setminus \{(0,0)\}$ and for
$s=\sigma,\sigma'$ the (local) winding number is
$$
W(w(s,\cdot),0) \bydef \frac{1}{2\pi} \int_{[\tau,\tau']} w^*\alpha = \frac{1}{2\pi} \int_w \alpha.
$$
We denote these winding numbers by $W_\sigma^{[\tau,\tau']}(w)$
and $W_{\sigma'}^{[\tau,\tau']}(w)$ respectively. In the case that
$[\tau,\tau']=[0,1]$ we simply write $W_\sigma(w) \bydef
W_\sigma^{[0,1]}(w)$. Similarly, we have winding numbers for the
curves $w(\cdot,t):[\sigma,\sigma']\to \R^2\setminus\{(0,0)\}$ and for
$t=\tau,\tau'$, which we denote by $W_\tau^{[\sigma,\sigma']}(w)$
and $W_{\tau'}^{[\sigma,\sigma']}(w)$ respectively. These local
winding numbers are related to the degree of the map $w:G\to \R^2$.
\begin{lemma}
\label{lem:dislyap1} Let $u,u': G \to \R^2$ be local solutions of
Equation \rmref{eqn:CR1}, with   $w|_{\partial G}\not = 0$. Then
\begin{equation}
\label{eqn:dislyap2}
    \Bigl[W_{\sigma'}^{[\tau,\tau']}(w) -W_{\sigma}^{[\tau,\tau']}(w)\Bigr] -
    \Bigl[W_{\tau'}^{[\sigma,\sigma']}(w) -W_{\tau}^{[\sigma,\sigma']}(w)\Bigr]
    = \deg(w,G,0).
\end{equation}
In particular, for each zero $(s_0,t_0)\in \Int(G)$, there exists an
$\epsilon_0>0$ such that
$$
W_{s_0+\epsilon}^{[\tau,\tau']}(w) - W_{s_0-\epsilon}^{[\tau,\tau']}(w)
< W_{\tau'}^{[s_0-\epsilon,s_0+\epsilon]}(w) -W_{\tau}^{[s_0-\epsilon,s_0+\epsilon]}(w),
$$
for all $0<\epsilon\le \epsilon_0$.
\end{lemma}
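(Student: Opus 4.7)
The plan is to prove the identity \rmref{eqn:dislyap2} by identifying $\deg(w,G,0)$ with the winding number of $w|_{\partial G}$ about the origin, and then to deduce the local strict inequality by applying this identity to a thin vertical slab around $(s_0,t_0)$ and invoking Lemma \ref{lem:dislyap1a}.

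For the identity, I will use that the hypothesis $w|_{\partial G}\neq 0$ makes both sides well defined and the classical degree formula yields $\deg(w,G,0) = \tfrac{1}{2\pi}\int_{\partial G} w^*\alpha$, with $\partial G$ oriented counterclockwise. Splitting $\partial G$ into its four edges---bottom ($s$ runs $\sigma\to\sigma'$ at $t=\tau$), right ($t$ runs $\tau\to\tau'$ at $s=\sigma'$), top ($s$ runs $\sigma'\to\sigma$ at $t=\tau'$), left ($t$ runs $\tau'\to\tau$ at $s=\sigma$)---and reading off the contributions gives
$$\deg(w,G,0) \;=\; W_\tau^{[\sigma,\sigma']}(w) + W_{\sigma'}^{[\tau,\tau']}(w) - W_{\tau'}^{[\sigma,\sigma']}(w) - W_{\sigma}^{[\tau,\tau']}(w),$$
the two minus signs arising from the reversed parametrizations of the top and left edges. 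Rearranging is exactly \rmref{eqn:dislyap2}.

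For the local statement I will apply \rmref{eqn:dislyap2} to the thin vertical slab $G'=[s_0-\epsilon, s_0+\epsilon]\times[\tau,\tau']$. By Lemma \ref{lem:dislyap1a} every zero of $w$ in $G$ is isolated, and since zeros cannot accumulate on $\partial G$ they form a finite set, so their projection to the $s$-axis is finite. I choose $\epsilon_0>0$ small enough that for $0<\epsilon\le\epsilon_0$ the vertical lines $s=s_0\pm\epsilon$ miss all zeros of $w$ in $G$; the horizontal segments of $\partial G'$ inherit $w\neq 0$ from the hypothesis on $\partial G$. Then $w|_{\partial G'}\neq 0$ and \rmref{eqn:dislyap2} applies to $G'$; its right-hand side equals $\deg(w,G',0)$, which is the finite sum of local degrees of $w$ at zeros lying in $\Int(G')$. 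Each such local degree is strictly negative by Lemma \ref{lem:dislyap1a}, and $(s_0,t_0)\in\Int(G')$ contributes one of them, so $\deg(w,G',0)<0$. Rearranging delivers the desired strict inequality.

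The only real obstacle is the bookkeeping of orientations on the four edges of $\partial G$ so that the four signed winding-number contributions combine with the signs required in \rmref{eqn:dislyap2}; no further analytic input is needed beyond Lemma \ref{lem:dislyap1a}.
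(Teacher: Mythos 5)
Your proof is correct and follows essentially the same route as the paper: identify $\deg(w,G,0)$ with the winding of $w|_{\partial G}$ about the origin, read off the four signed edge contributions to get \rmref{eqn:dislyap2}, and then apply the identity on a thin vertical slab around $(s_0,t_0)$, using Lemma \ref{lem:dislyap1a} to guarantee finitely many zeros (hence a valid choice of $\epsilon_0$) and negativity of the resulting degree. The paper carries out the edge decomposition via a complex contour integral $\frac{1}{2\pi i}\oint_{w(\gamma)}\frac{dz}{z}$, but the bookkeeping is the same.
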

\begin{proof}
We abuse notation by regarding $w$ as a map from the complex plane
to itself. Let the contour $\gamma = \partial G$ be positively
oriented (counterclockwise in the $(s,t)$ plane).
The winding number of the
contour $w(\gamma)$ about $0\in \CC$ in complex notation is given
by
$$
W(w(\gamma),0) = \frac{1}{2\pi i} \oint_{w(\gamma)} \frac{dz}{z} = \deg(w,G,0),
$$
which is equal to the degree of $w: G\to \R^2$ with respect to the
value $0$. Using the special form of the contour $\gamma$ we can
write out the the Cauchy integral using the 1-form $\alpha$:
\begin{eqnarray*}
\frac{1}{2\pi i} \oint_{w(\gamma)} \frac{dz}{z} &=&
\frac{1}{2\pi} \int_{w(\sigma',\cdot)} \alpha -
 \frac{1}{2\pi} \int_{w(\cdot,\tau')} \alpha -
 \frac{1}{2\pi} \int_{w(\sigma,\cdot)} \alpha +
 \frac{1}{2\pi} \int_{w(\cdot,\tau)} \alpha \\
& = &
 \Bigl[W_{\sigma'}^{[\tau,\tau']}(w) -W_{\sigma}^{[\tau,\tau']}(w)\Bigr] -
\Bigl[W_{\tau'}^{[\sigma,\sigma']}(w) -W_{\tau}^{[\sigma,\sigma']}(w)\Bigr],
\end{eqnarray*}
which proves the first statement.

Lemma \ref{lem:dislyap1a}  states that all zeroes of $w$ are isolated
and have negative degree. Therefore, there exists an $\epsilon_0>0$
such that $G_{\epsilon}=[s_0-\epsilon,s_0+\epsilon]\times
[\tau,\tau']$ contains no zeroes on the boundary, for all
$0<\epsilon\le\epsilon_0$, from which the second statement follows.
\end{proof}

On the level of comparing two local solutions of Equation
\rmref{eqn:CR1}, the winding number behaves like a discrete
Lyapunov function with respect to the time variable $s$. This can be
further formalized for solutions of the CRE on $\bOn$. For a closed
braid $\x \in \Omega^n$, one defines the \style{total crossing
number}
\begin{equation}
\Cr(\x) \bydef  \sum_{k , k'} W\bigl( x^k-x^{k'},0 \bigr) =
2 \sum_{\{k,k'\}\atop k\neq k'}W\bigl( x^k-x^{k'},0 \bigr),
\end{equation}
where the second sum is over all unordered pairs $\{k,k'\}$, using the
fact that the winding number is invariant under the inversion $(p,q)
\to (-p,-q)$. The number $\Cr(\x)$ is equal to the total
linking/self-linking number of all components in a closed braid $\x$.
The local winding number as introduced above is not necessarily an
integer. However, for closed curves the winding number is integer
valued. It is clear that the number $\Cr(\x)$ as defined above is also
an integer, one interpretation of which is via the associated braid
diagrams as the \style{algebraic crossing number}:

\begin{lemma}
\label{lem:cross1} The number $\Cr(\x)$ is an integer, and
\begin{eqnarray*}
\Cr(\x) &=& \#\{\hbox{positive crossings}\} - \#\{\hbox{negative
crossings}\}.
\end{eqnarray*}
This is a braid class invariant; i.e., $\Cr(\x)=\Cr(\x')$ for all $\x,\x'\in
[\x]$.
\end{lemma}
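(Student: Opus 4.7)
The plan is to prove (1) integrality of $\Cr(\x)$, then (3) braid-class invariance by a continuity argument, and finally (2) by reduction to a standard braid-word representative. Identify $\D^2$ with the closed unit disc in $\CC$ via $(q,p)\mapsto q+ip$, so each pairwise difference $d_{kk'}(t)=x^k(t)-x^{k'}(t)$ takes values in $\CC^*$.

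For (1), consider the squared-Vandermonde curve
\[
P(t)=\prod_{k<k'}\bigl(x^k(t)-x^{k'}(t)\bigr)^{2}\in\CC^*.
\]
At $t=1$ the strands are permuted by some $\sigma\in S_n$, so the naive Vandermonde $\prod_{k<k'}(x^k-x^{k'})$ picks up a factor $\mathrm{sign}(\sigma)$; the square kills this sign, whence $P(1)=P(0)$ and $P$ is a closed loop in $\CC^*$. Therefore $W(P,0)\in\Z$. Additivity of $\arg$ over products together with $W(f^2,0)=2W(f,0)$ yields
\[
W(P,0)=\sum_{k<k'}2W(x^k-x^{k'},0)=\Cr(\x),
\]
using the paper's formula $\Cr(\x)=2\sum_{\{k,k'\}}W(x^k-x^{k'},0)$ and the inversion invariance of $W$. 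Thus $\Cr(\x)\in\Z$.

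For (3), let $\x_\lambda$ be a continuous path in $\Omega^n$ from $\x$ to another $\x'\in[\x]$. Since strands never collide, each $d^\lambda_{kk'}$ stays in $\CC^*$, so $\lambda\mapsto W(d^\lambda_{kk'},0)$ is continuous (the potential relabeling of strands along a loop in the unordered configuration space is resolved by locally lifting to the ordered configuration space, where $\Cr$ is label-symmetric). Hence $\lambda\mapsto\Cr(\x_\lambda)$ is continuous and, by (1), $\Z$-valued, therefore constant.

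For (2), by (3) it suffices to verify the identity on one representative per braid class. Given a braid word $\sigma_{i_1}^{\epsilon_1}\cdots\sigma_{i_r}^{\epsilon_r}\in\cB_n$ for $[\x]$, realize it as a closed braid by performing the standard narrow geometric $\sigma_{i_j}^{\epsilon_j}$ on $[\tfrac{j-1}{r},\tfrac{j}{r}]$, each swap localized in a small neighborhood missing all other strand positions. On each subinterval the pair $\{i_j,i_j+1\}$ contributes $\epsilon_j$ to $\Cr$ by direct inspection of the half-turn of $x^{i_j}-x^{i_j+1}$, while for each stationary strand $c$ the combined contribution of $\{i_j,c\}$ and $\{i_j+1,c\}$ equals the winding of the closed product $(x^{i_j}(t)-x^c(t))(x^{i_j+1}(t)-x^c(t))$ over the subinterval, which vanishes when the swap is narrow enough. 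Summing gives $\Cr(\x)=\sum_j\epsilon_j=\#\{\text{positive}\}-\#\{\text{negative}\}$. The main bookkeeping obstacle is this vanishing claim, verifying that ``narrow enough'' suffices to make the paired product curve winding-neutral; it is an elementary continuity argument but must be set up carefully. Everything else reduces to standard winding-number and continuity manipulations.
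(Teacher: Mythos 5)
Your proof is correct, and it takes a genuinely different route from the paper's at two points. For integrality, the paper partitions unordered pairs $\{k,k'\}$ into orbits $\pi_j$ of the boundary permutation $\sigma$ and observes that each $w_{\pi_j}=x^k-x^{k'}$ becomes a closed loop over $[0,2|\pi_j|]$ (with the factor of $2$ absorbed by inversion invariance), so that $\Cr(\x)=\sum_j W^{[0,2|\pi_j|]}(w_{\pi_j},0)$ is an integer sum of linking/self-linking numbers. Your squared-Vandermonde loop $P(t)=\prod_{k<k'}(x^k(t)-x^{k'}(t))^2$ compresses that whole bookkeeping into a single closed curve: $\sigma$ permutes the factors, the square kills $\mathrm{sign}(\sigma)$, and additivity of winding under products gives $\Cr(\x)=W(P,0)\in\Z$ immediately. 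This is shorter and cleaner, at the cost of losing the link-theoretic interpretation (total self-/inter-component linking number) that the paper's decomposition makes visible. For the crossing-count identity the paper argues directly that $W(x^k-x^{k'},0)$ equals (half of) the signed crossing count between strands $k,k'$ ``by inspection''; you instead establish braid-class invariance first and then evaluate on a standard geometric realization of a braid word, isolating each generator $\sigma_{i_j}^{\epsilon_j}$ in a narrow time slab. This reverses the logical order (paper: crossing-count formula $\Rightarrow$ invariance as an afterthought; you: invariance $\Rightarrow$ reduce to a model), and your localization argument --- that the pair $\{i_j,i_j+1\}$ contributes $\epsilon_j$ while contributions with any spectator strand $c$ cancel because $(x^{i_j}-x^c)(x^{i_j+1}-x^c)$ is a small closed loop missing the origin --- is a precise way to make the paper's ``by inspection'' rigorous. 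Your continuity argument for braid-class invariance (local lift to the ordered configuration space over the simply connected square $[0,1]^2$, plus label-symmetry of $\Cr$) is essentially the same as the paper's one-line appeal to homotopy invariance of winding number, just spelled out. The ``bookkeeping obstacle'' you flag is genuinely elementary and not a gap: for a swap localized in a disc of radius $\rho$ around the two exchanged positions, with all other strands at distance $\ge d \gg \rho$, the product curve stays within $O(\rho)$ of a nonzero constant and hence has zero winding for $\rho$ small enough.
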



This result is standard: we include a self-contained proof.

\begin{proof} The expression for $\Cr(\x)$ is twice the sum of $\tfrac{n!}{2!(n-2)!}$
local winding numbers. On the unordered pairs $\{k,k'\}$ there exists
the following equivalence relation. Two pairs $\{k,k'\}$ and $\{h,h'\}$
are equivalent if for some integer $d\ge 0$, $\{x^k(d),x^{k'}(d)\} =
\{x^h(0),x^{h'}(0)\}$ as unordered pairs. The equivalence classes of
unordered pairs $\{k,k'\}$ are denoted by $\pi_j$ and the number of
elements in $\pi_j$ is denoted by $|\pi_j|$. For each class $\pi_j$
define $w_{\pi_j} = x^k-x^{k'}$, for some representative $\{k,k'\} \in
\pi_j$. For $t\in  [0,2|\pi_j|]$, the functions $w_{\pi_j}(t)$ represent
closed loops in $\R^2$ regardless of the choice of the representative
$\{k,k'\}\in \pi_j$. Namely, note that
$\{x^k(|\pi_j|),x^{k'}(|\pi_j|)\}= \{x^k(0),x^{k'}(0)\}$ as unordered
pairs, which imlies that
\begin{equation}
\label{e:xk}
  x^k(|\pi_j|)=x^k(0)\qquad \text{or}\qquad x^k(|\pi_j|)=x^{k'}(0) .
\end{equation}
For the crossing number we have
\begin{equation}
\label{e:CrW}
\Cr(\x) = 
 2 \sum_j \Bigl( \sum_{\{k,k'\}\in \pi_j} W\bigl( x^k-x^{k'},0 \bigr)\Bigr)
= 2 \sum_j W^{[0,|\pi_j|]}(w_{\pi_j},0) = \sum_j W^{[0,|2\pi_j|]}(w_{\pi_j},0),
\end{equation}
where the (outer) sum is over all equivalence classes $\pi_j$.
For the final equality we have used (\ref{e:xk}) and the invariance
of the winding number under the inversion $w \to -w$.
Since the latter winding numbers are winding numbers for closed
loops about $0$ (linking numbers), they are all integers, and
thus $\Cr(\x)$ is an integer.

As for the expression in terms of positive and negative crossings we
argue as follows. By inspection, $W\bigl(x^k-x^{k'},0 \bigr)$ equals all
positive minus negative crossings between the two strands. The
invariance of $\Cr(\x)$ with respect to $[\x]$ follows from the
homotopy invariance of the winding number.
\end{proof}

Using the representation of the crossing number for a braid in terms
of winding numbers, we can prove a Lyapunov property. Note that
elements $\u$ of $\M$ are not necessarily in $\Omega^n$ for all $s$.
Therefore $\Cr(\u(s,\cdot))$ is only well-defined whenever
$\u(s,\cdot)\in \Omega^n$.

Combining these results leads to the crucial step in setting up a Floer
theory for braid classes.

\begin{lemma}
\label{prop:dislyap3}
{\bf [Monotonicity Lemma]} For $\u\in \M$, $\Cr(\u(s,\cdot))$ is (when well-defined)
non-increasing in $s$. To be more precise, if $u^k(s_0,t_0) =
u^{k'}(s_0,t_0)$ for some $(s_0,t_0)\in \R\times \R/\Z$, and $k\not =
k'$, then either there exists an $\epsilon_0>0$ such that
$$
\Cr(\u(s_0-\epsilon,\cdot)) > \Cr(\u(s_0+\epsilon,\cdot)),
$$
for all $0<\epsilon\le \epsilon_0$, or $u^k \equiv u^{k'}$.
\end{lemma}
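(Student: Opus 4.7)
The plan is to reduce the statement to a pairwise analysis of the strands via the decomposition of the crossing number in terms of winding numbers of the differences $w^{kk'}(s,t)=u^k(s,t)-u^{k'}(s,t)$ developed in Lemma \ref{lem:cross1}. Subtracting the Cauchy--Riemann equations satisfied by $u^k$ and $u^{k'}$, and linearizing $\nabla_g H$ at $u^k$ along the segment from $u^k$ to $u^{k'}$ (exactly as in the proof of Lemma \ref{lem:dislyap1a}), one sees that each $w=w^{kk'}$ satisfies a linear Cauchy--Riemann type equation of precisely the form treated in Lemma \ref{lem:dislyap1a}.

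First I would invoke the dichotomy built into that lemma at the point $(s_0,t_0)$: either $w\equiv 0$ on some small disc about $(s_0,t_0)$, or $(s_0,t_0)$ is an isolated zero and $\deg(w,G,0)<0$ for every sufficiently small rectangle $G$ containing it. In the first case, iterating the analytic-continuation argument from the proof of Lemma \ref{lem:dislyap1a} propagates the vanishing first across the strip $\R\times[0,1]$ and then, by the periodicity of $u^k$ and $u^{k'}$ in $t$, across all of $\R\times\R/\Z$, giving the exceptional conclusion $u^k\equiv u^{k'}$.

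Otherwise we are in the generic case: only finitely many pairs $\{k,k'\}$ cross at time $s_0$, and all the corresponding zeros of $w^{kk'}$ are isolated. Choose $\epsilon_0>0$ so small that no new crossings occur in the slab $|s-s_0|\le\epsilon_0$ and every relevant $w^{kk'}$ is nowhere zero on the vertical lines $s=s_0\pm\epsilon$ for $0<\epsilon\le\epsilon_0$. Group the unordered pairs into the equivalence classes $\pi_j$ from the proof of Lemma \ref{lem:cross1}, take a representative $w_{\pi_j}$ on $[0,2|\pi_j|]$, and apply Lemma \ref{lem:dislyap1} to the rectangle $G_\epsilon^j=[s_0-\epsilon,s_0+\epsilon]\times[0,2|\pi_j|]$. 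Because of (\ref{e:xk}) the curve $w_{\pi_j}(\cdot,\cdot)$ closes up as a loop in $t$, so the two horizontal boundary contributions in (\ref{eqn:dislyap2}) cancel and
\[
W_{s_0+\epsilon}^{[0,2|\pi_j|]}(w_{\pi_j})-W_{s_0-\epsilon}^{[0,2|\pi_j|]}(w_{\pi_j})=\deg(w_{\pi_j},G_\epsilon^j,0)\le 0,
\]
with strict inequality for the class $\pi_j$ containing $\{k,k'\}$ by Lemma \ref{lem:dislyap1a}. Summing over $j$ and using the representation (\ref{e:CrW}) gives
\[
\Cr(\u(s_0+\epsilon,\cdot))-\Cr(\u(s_0-\epsilon,\cdot))<0,
\]
which is the asserted strict drop. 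Away from the (discrete) set of crossing times $\Cr(\u(s,\cdot))$ is locally constant, so global monotonicity in $s$ follows immediately.

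The main obstacle is the bookkeeping in this equivalence-class decomposition: one must verify both that the horizontal-boundary winding contributions really cancel for each representative $w_{\pi_j}$ (this uses the two cases in (\ref{e:xk}) and the invariance of the winding number under $w\mapsto -w$) and that the zero of $w^{kk'}$ at $(s_0,t_0)$ contributes strictly negative degree to exactly one class $\pi_j$ without being cancelled by other terms. Both points are essentially combinatorial once Lemmas \ref{lem:dislyap1a} and \ref{lem:dislyap1} are in hand; the genuine analytic content of the monotonicity has already been packaged into those two lemmas via the positivity-of-intersections factorization $w=\Phi\bar h$.
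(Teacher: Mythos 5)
Your proof is correct and follows essentially the same path as the paper's: reduce to the equivalence-class representatives $w_{\pi_j}$ from Lemma~\ref{lem:cross1}, invoke the isolated-zero-versus-identical-strands dichotomy from Lemma~\ref{lem:dislyap1a}, use periodicity (and inversion invariance of the winding number) to kill the horizontal boundary terms, and then read off the strict drop from the degree identity in Lemma~\ref{lem:dislyap1}. If anything, you are slightly more scrupulous than the paper on two small points: you explicitly account for the fact that $w_{\pi_j}(s,t+|\pi_j|)$ might equal $-w_{\pi_j}(s,t)$ rather than $+w_{\pi_j}(s,t)$ (the paper glosses this by writing plain periodicity), and you note that $\epsilon_0$ must be taken small enough to keep all of $\partial G^j_\epsilon$ away from the finitely many zeros on the slice $\{s_0\}\times\R/\Z$, whereas the paper only states isolation in a small box around $(s_0,t_0)$.
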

\begin{proof}
Given $\u=\{u^k\}\in\M$, $\Cr(\u(s,\cdot))$ is well-defined for all
$s\in\R$ for which $\u(s,\cdot) \in \Omega^n$. As in the proof of
Lemma \ref{lem:cross1} we define $w_{\pi_j}(s,t) =
u^k(s,t)-u^{k'}(s,t)$ for some representative $\{k,k'\}\in \pi_j$. From
the proof of Lemma \ref{lem:dislyap1a} we know that $(s_0,t_0)$ is
either isolated, or $u^k \equiv u^{k'}$. In the case that $(s_0,t_0)$ is
an isolated zero there exists an $\epsilon_0>0$, such that $(s_0,t_0)$
is the only zero in $[s_0-\epsilon,s_0+\epsilon]\times
[t_0-\epsilon,t_0+\epsilon]$, for all $0<\epsilon \le \epsilon_0$. By
periodicity it holds that $w_{\pi_j}(s,t+|\pi_j|) = w_{\pi_j}(s,t)$, for
all $(s,t)\in \R^2$, and therefore $W_{t_0-\epsilon
+|\pi_j|}^{[\sigma,\sigma']}(w_{\pi_j}) =
W_{t_0-\epsilon}^{[\sigma,\sigma']}(w_{\pi_j})$, for any
$\sigma<\sigma'$. From Lemma \ref{lem:dislyap1} it then follows that
$$
W_{s_0-\epsilon}^{[t_0-\epsilon,t_0-\epsilon+|\pi_j|]}(w_{\pi_j})
>
W_{s_0+\epsilon}^{[t_0-\epsilon,t_0-\epsilon+|\pi_j|]}(w_{\pi_j}),
$$
and, since these terms make up the expression for $\Cr(\u(s,\cdot))$
in Equation~(\ref{e:CrW}), we obtain the desired inequality.
\end{proof}

\subsection{A priori bounds}
\label{subsec:aprioiri}
From Lemma \ref{lem:dislyap1} we can also derive the following
a priori estimate for solutions of the Cauchy-Riemann equations.

\begin{proposition}
\label{prop:dislyap4} Let $u: G \to \D^2$ be a local solution of
Equation \rmref{eqn:CR1}, then either
$$
|u(s,t)| =1,\qquad {\rm or}\qquad  |u(s,t)|<1,
$$
for  all $(s,t)\in G$. In particular, solutions $\u \in \M$ have the
property that components $u^k$ either lie entirely  on $\partial
\D^2$, or entirely in the interior of $\D^2$.
\end{proposition}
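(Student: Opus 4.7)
The plan is to compare $u$ pointwise with a reference Hamiltonian orbit lying on $\partial\D^2$ and invoke the similarity principle already used in the proof of Lemma~\ref{lem:dislyap1a}. The crucial geometric input is that every $H \in \cH$ is constant on $\partial\D^2$, so the Hamiltonian vector field $X_H(t,\cdot)$ is tangent to $\partial\D^2$; hence any solution of Eq.~\rmref{eqn:HE} through a boundary point stays on $\partial\D^2$ for all time.

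Suppose $u(s_0,t_0) = p$ with $|p|=1$ for some $(s_0,t_0)\in G$. Let $x(t)$ denote the unique solution of Eq.~\rmref{eqn:HE} with $x(t_0)=p$; by the tangency just noted, $x(t)\in \partial\D^2$ for all $t$. Viewed as the $s$-independent function $\tilde x(s,t) := x(t)$, this is a local solution of the CRE, since $J\nabla_g H = X_H$. Then $w(s,t) := u(s,t) - x(t)$ satisfies, by Taylor-expanding $\nabla_g H$ as in the proof of Lemma~\ref{lem:dislyap1a}, a linear equation
$$
w_s - J(t,u) w_t - A(s,t) w = 0, \qquad w(s_0,t_0)=0,
$$
to which the similarity principle applies: locally $w(z) = \Phi(z)\bar h(z)$ with $h$ holomorphic and $\Phi$ an invertible matrix-valued function.

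A dichotomy emerges. \emph{Case A:} $h \equiv 0$ on a neighborhood of $(s_0,t_0)$; then $w \equiv 0$ on that neighborhood, and by iterating the similarity principle at boundary points of the coincidence set (exactly the analytic-continuation argument of Lemma~\ref{lem:dislyap1a}) one propagates $u \equiv x$ throughout the connected rectangle $G$, so $|u|\equiv 1$. \emph{Case B:} $(s_0,t_0)$ is an isolated zero, so $h(z) \sim c\,z^n$ for some $n\ge 1$ and $c\neq 0$, and hence $\bar h$, and thus $w = \Phi\bar h$, covers a full punctured neighborhood of $0\in\R^2$ on any small disc around $(s_0,t_0)$. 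In particular there exist points arbitrarily close to $(s_0,t_0)$ at which $w$ points in the outward normal direction $+p$; since $u = x + w$ and $|x|=1$, this forces $|u|>1$ at such a point, contradicting $u:G\to\D^2$.

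The main obstacle is the scaling bookkeeping in Case~B, where one must compare the leading-order size $|w| \sim |z|^n$ of the image with the $O(|z|)$ deviation of $x(t)$ from $p$. Writing $|u|^2 \le 1$ as $x(t)\cdot w + \tfrac12 |w|^2 \le 0$ and using $x(t) = p + O(|z|)$ gives the inward half-plane constraint $p\cdot w \le O(|z|\,|w|)+O(|w|^2) = O(|z|^{n+1})$, which is incompatible with $p\cdot w \sim |z|^n$ being attained along the $+p$ direction in the image of $w$. This yields the contradiction and establishes the first statement. The second statement follows by applying the first to each component $u^k$ of $\u\in\M$ on every bounded rectangle $G \subset \R\times\R/\Z$.
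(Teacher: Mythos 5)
Your proof is correct, and it diverges from the paper's argument at the crucial step. Both set up the same comparison: take the stationary boundary orbit $x(t)$ through $p = u(s_0,t_0)$, note that $w = u - x$ (the paper uses $x-u$; only a sign) solves a linearized CRE, and apply the similarity principle $w = \Phi\bar{h}$ from Lemma~\ref{lem:dislyap1a} to reach the dichotomy: either $w\equiv 0$ near $(s_0,t_0)$, whence $u\equiv x$ on $G$ by unique continuation, or $(s_0,t_0)$ is an isolated zero. In the isolated case the paper argues \emph{globally}: it picks a rectangular contour $\gamma=\partial G$, uses Lemma~\ref{lem:dislyap1} to get $W(w(\gamma),0)=\deg(w,G,0)\le -1$, and then invokes the dog-on-a-leash lemma — with the hypothesis $|u|<1$ on all of $\gamma$ — to force $W(w(\gamma),0)=W(x(\gamma),0)=0$, a contradiction. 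You instead argue \emph{locally}: the leading-order form $w(z)=\Phi(0)\bar{c}\,\bar{z}^n + o(|z|^n)$ sweeps through every direction as $z\to 0$, and at a ray where $w$ points outward along $+p$ the identity $|u|^2 = 1 + 2\,x\cdot w + |w|^2$ (with $|x|=1$, $x = p+O(|z|)$, $|w|=O(|z|^n)$) gives $|u|>1$ at a single nearby point, impossible. The trade-off: the paper's contour argument is slicker but quietly relies on the uniform bound $|u|<1$ along the entire contour — a hypothesis of the dog-on-a-leash lemma that isn't otherwise justified — while your local asymptotic argument produces one explicit contradictory point and sidesteps any such uniform estimate, at the modest cost of spelling out the scaling $|z|^n$ versus $|z|^{n+1}$ (and, in fact, since $|w|^2\ge 0$ only helps, you need only $x\cdot w>0$, which is even simpler than your bookkeeping suggests). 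Both routes are sound; yours is more self-contained.
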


\begin{proof}
By assumption, the boundary of the disc is invariant for $X_H$
and thus consists of solutions $x(t)$ with $|x(t)|=1$. Assume that
$u(s_0,t_0) = x(t_0)$ for some $(s_0,t_0)$ and some boundary
trajectory $x(t)$. For convenience, we write $u'(s,t) = x(t)$ and we
consider the difference $w(s,t) = u'(s,t)-u(s,t)=x(t)-u(s,t)$. By the
arguments presented in the proof of Lemma~\ref{lem:dislyap1a}, we
know that either all zeroes of $w$ are isolated, or $w \equiv 0$. In
the latter case $u\equiv x$, hence $|u(s,t)| \equiv 1$. Consider the
remaining possibility, namely that $(s_0,t_0)$ is an isolated zero of
$w$, which  leads to a contradiction.

Indeed, choose a rectangle $G = [\sigma,\sigma']\times [\tau,\tau']$
containing $(s_0,t_0)$, such that $w|_{\partial G} \not = 0$. With
$\gamma=\partial G$ positively oriented, we derive from Lemma
\ref{lem:dislyap1} that
$$
W(w(\gamma),0) = \deg(w,G,0)\le -1.
$$
The latter is due to the assumption that $G$ contains a zero. Consider
on the other hand the loops $u(\gamma)$ and $u'(\gamma)$. By
assumption $|(u'-w)(\gamma)|= |u(\gamma)|<|u'(\gamma)|=1$. If
we now apply the `Dog-on-a-Leash' Lemma\footnote{
If two closed planar paths $\Gamma(t)$ (dog) and $\Gamma'(t)$ (walker) satisfy $|\Gamma'(t)-\Gamma(t)|<|\Gamma'(t)-0|$ (i.e., 
the leash is shorter then the walkers distance to the origin)
then $W(\Gamma,0) = W(\Gamma',0)$. } from the theory of winding numbers \cite{Fulton}, we conclude that $ -1\ge W(w(\gamma),0)  = W(u'(\gamma),0) = 0, $
which  contradicts the assumption that $u$ touches $\partial \D^2$.
Hence $|u(s,t)|<1$ for all $(s,t)$.
\end{proof}

As a consequence of this proposition we have following result for
connecting orbit spaces. For $\x_\pm \in \P(\bOn)$, define
$$
\M_{\x_-,\x_+}^{J,H}
=
\bigl\{\u\in \M^{J,H}~|~\lim_{s\to\pm \infty} \u(s,\cdot)
=
\x_\pm\bigr\}.
$$

\begin{corollary}
\label{cor:dislyap5} For $\u\in \M_{\x_-,\x_+}$, with $|\x_\pm|<1$, it
holds that $ |\u(s,t)|<1, $ for all $(s,t)\in \R\times \R/\Z$.
\end{corollary}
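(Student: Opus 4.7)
The plan is to reduce the statement to the dichotomy already established in Proposition \ref{prop:dislyap4}, which says that each component of a solution in $\M$ lies either entirely on $\partial\D^2$ or entirely in $\Int(\D^2)$. The only work is to use the asymptotic condition to rule out the boundary alternative.

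First, I would write $\u = \{u^k\}_{k=1}^n$ and, regarding each $u^k$ as a local solution of the Cauchy-Riemann equation \rmref{eqn:CR1} on $\R \times \R/\Z$, invoke Proposition \ref{prop:dislyap4} componentwise. This yields the dichotomy: for each $k$, either $|u^k(s,t)| = 1$ for all $(s,t)$, or $|u^k(s,t)| < 1$ for all $(s,t)$.

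Next, I would exploit the boundary condition $\lim_{s\to\pm\infty}\u(s,\cdot) = \x_\pm$ together with the hypothesis $|\x_\pm| < 1$ (in the sense of Remark \ref{br5}, meaning $|x_\pm^k(t)| < 1$ for all $t$ and all $k$). Since the convergence is uniform in $t$ on the compact circle (by Proposition \ref{prop:CR4}, the convergence actually holds in $C^1$), for any fixed $k$ and any $t_0 \in \R/\Z$ we have $|u^k(s,t_0)| \to |x_\pm^k(t_0)| < 1$ as $s\to\pm\infty$. This rules out the alternative $|u^k| \equiv 1$, forcing $|u^k(s,t)| < 1$ for all $(s,t) \in \R \times \R/\Z$.

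Taking the conclusion over all $k = 1,\ldots,n$ yields $|\u(s,t)| < 1$ for all $(s,t)$, as required. There is no real obstacle here; the entire content of the corollary is already packaged into Proposition \ref{prop:dislyap4}, and the asymptotic hypothesis $|\x_\pm| < 1$ merely selects the interior branch of the dichotomy.
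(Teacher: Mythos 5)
Your argument is correct and is exactly the route the paper intends: the corollary follows immediately from the componentwise dichotomy of Proposition \ref{prop:dislyap4}, with the asymptotic condition $|\x_\pm|<1$ excluding the boundary branch. The paper does not write out a proof, treating it as an immediate consequence, and your write-up simply makes that explicit.
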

This is an isolating property of the connecting orbit spaces.

\subsection{Isolation for proper relative braid classes}
\label{sec:isol-2} In order to assign topological invariants to relative
braid classes we consider proper braid classes as introduced in Sect.\
\ref{sec:intro-floer}. To be more precise:
\begin{definition}
\label{defn:proper-1} A relative braid class $[\x\rel\y]$ is called
\style{proper} if for any fiber $[\x]\rel\y$ it holds that (i):
$|x^k(t)|\not \equiv 1$, and (ii): ${\rm cl}([\x]\rel\y) \cap
(\Sigma_-^n\rel\y)=\varnothing$. The elements of a proper braid class
are called proper braids.
\end{definition}

Under the flow of the Cauchy-Riemann equations, proper braid classes
isolate the set of bounded CRE solutions inside a relative braid class.
Following Floer \cite{Floer1} we define the set of bounded solutions
inside a proper relative braid class $[\x]\rel \y$ by
$$
\M^{J,H}\bigl([\x]\rel \y\bigr) \bydef
\Bigl\{\u\in \M^{J,H}(\bOn)~|~ \u(s,\cdot) \in [\x]\rel \y,
~~\forall~s\in \R\Bigr\}.
$$
We are also interested in the paths traversed (as a function of $s$) by
these bounded solutions in phase space. Hence we define
$$
\cS^{J,H}\bigl([\x]\rel \y\bigr)\bydef
\Bigl\{ \x  = \u(0,\cdot)~|~\u\in \M^{J,H}\bigl([\x]\rel \y\bigr)\Bigr\}.
$$
If there is no ambiguity about the relative braid class we write
$\cS^{J,H}$. Recall that $\M$ carries the $C^r_{\loc}(\R \times
[0,1];\D^2)^n$ topology, while $\cS^{J,H}$ is endowed with the
$C^r([0,1],\D^2)^n$ topology, since $H$ is $C^\infty$.
\begin{proposition}
\label{prop:isol1} For any fiber $[\x]\rel \y$  of a proper relative braid
class $[\x\rel\y]$ the set $\M^{J,H}\bigl([\x]\rel \y\bigr)$ is compact,
and $\cS^{J,H}$ is a compact isolated set in $[\x]\rel \y$, i.e. (i)
$|\u(s,t)|<1$, for all $s,t$ and (ii) $\u(s,\cdot) \cap \Sigma^n\rel\y =
\varnothing$, for all $s$.
\end{proposition}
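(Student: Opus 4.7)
The plan is to prove compactness of $\M^{J,H}([\x]\rel\y)$ and isolation of $\cS^{J,H}$ in one stroke by ruling out the two ways a bounded solution could exit the proper relative braid class: collision with the singular set $\Sigma^n\rel\y$ or escape to the boundary $\partial\D^2$. The main tools are the Monotonicity Lemma \ref{prop:dislyap3} for the former and the a priori estimate \ref{prop:dislyap4} for the latter, with the two properness conditions of Definition \ref{defn:proper-1} plugging the exceptional scenarios (degenerate coincidence of strands, and boundary strands, respectively).

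First I would pick an arbitrary sequence $\u_n\in\M^{J,H}([\x]\rel\y)$ and extract, via Floer's Compactness Theorem (Proposition \ref{prop:CR4}), a subsequence converging in $C^r_{\loc}(\R\times\R/\Z;\D^2)^n$ to some $\u\in\M^{J,H}$. By continuity of evaluation, $\u(s,\cdot)\in\cl([\x]\rel\y)\subset\bOn$ for every $s\in\R$. Property (ii) of Definition \ref{defn:proper-1} directly excludes $\u(s,\cdot)\in\Sigma^n_-\rel\y$, so no two strands of the limit may coincide identically on $[0,1]$. To rule out the remaining non-collapsed singular case, I would assume toward contradiction that $\u(s_0,\cdot)\in\Sigma^n\rel\y$ for some $s_0$. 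The Monotonicity Lemma then supplies $\epsilon_0>0$ with $\Cr(\u(s_0-\epsilon,\cdot))>\Cr(\u(s_0+\epsilon,\cdot))$ for $0<\epsilon\le\epsilon_0$; on the other hand $\Cr$ is a braid invariant constant on $[\x]\rel\y$ (Lemma \ref{lem:cross1}), and the $C^0$-continuity of $\Cr$ on $\Omega^n\rel\y$ combined with $\u_n\to\u$ forces $\Cr(\u(s,\cdot))=\Cr([\x\rel\y])$ at any $s$ for which $\u(s,\cdot)$ is non-singular. Since singular instants are nowhere dense in $s$ (Lemma \ref{lem:dislyap1a} delivers the isolated-zero structure of pairwise intersections, while the collapsed case is already excluded), I can shrink $\epsilon$ so that both $s_0\pm\epsilon$ are non-singular, producing the contradiction and establishing item (ii).

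For item (i), Proposition \ref{prop:dislyap4} forces each component $u^k$ of $\u$ to lie entirely on $\partial\D^2$ or entirely in $\mathrm{int}(\D^2)$. A boundary-carrying limit would be a representative of $\cl([\x]\rel\y)$, but the set of braids with some strand satisfying $|x^k(t)|\equiv 1$ is closed and disjoint from $[\x]\rel\y$ by properness condition (i); hence $|\u(s,t)|<1$ throughout. Combining (i) and (ii), $\u(s,\cdot)\in[\x]\rel\y$ for every $s$ (openness of $[\x]\rel\y$ as a path-component of $\Omega^n\rel\y$ and the $C^0$-convergence $\u_n(s,\cdot)\to\u(s,\cdot)$ trap the limit in the same component), so $\u\in\M^{J,H}([\x]\rel\y)$ and this set is compact in $C^r_{\loc}$. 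Compactness of $\cS^{J,H}$ in $C^r([0,1];\D^2)^n$ then follows by applying the continuous evaluation $\u\mapsto\u(0,\cdot)$ and invoking the higher-regularity estimates from Proposition \ref{prop:CR4}.

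The delicate point I expect to be the main obstacle is the singular-intersection argument: the local-in-$s$ strict decrease of the crossing number must be promoted to a global contradiction with the $s$-constancy of $\Cr(\u_n(s,\cdot))$ inherited by the limit. One has to verify that $s_0\pm\epsilon$ can indeed be chosen with $\u(s_0\pm\epsilon,\cdot)\in\Omega^n\rel\y$, and that the limit braid class cannot silently drop to a lower-crossing stratum of the closure. Properness condition (ii), excluding collapsed limits, together with the isolated-intersection structure of Lemma \ref{lem:dislyap1a}, are precisely what make this reduction legitimate.
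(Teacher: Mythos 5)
Your proof is correct and follows essentially the same structure as the paper's: extract a $C^r_\loc$-convergent subsequence via Floer compactness, then exclude the boundary case with Proposition~\ref{prop:dislyap4} plus properness~(i), exclude collapsed singular braids with properness~(ii), and exclude non-collapsed singular intersections via the Monotonicity Lemma together with the local constancy of the crossing number on $[\x]\rel\y$, so that a strict drop in $\Cr$ at $s_0\pm\epsilon$ (both non-singular, justified by the isolated-zero structure of Lemma~\ref{lem:dislyap1a}) is contradictory. The only cosmetic difference is that you package the choice of non-singular $s_0\pm\epsilon$ as a nowhere-density observation, whereas the paper reads it off from the crossing-number statement of the Monotonicity Lemma; the underlying argument is identical.
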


\begin{proof}
The set $\M^{J,H}\bigl([\x]\rel \y\bigr)$ is contained in the compact
set $\M^{J,H}(\bOn)$ (Proposition \ref{prop:CR4}). Let
$\{\u_m\}\subset  \M^{J,H}\bigl([\x]\rel \y\bigr)$ be a  sequence, then
for any compact interval  $I$, the limit $\u=\lim_{m'\to \infty}
\u_{m'}$ lies in $\M^{J,H}(\bOn)$ and has the property that
$\u(s,\cdot) \in  \cl\bigl([\x]\rel \y\bigr)$, for all $s \in I$. We will
show now that $\u(s,\cdot)$ is in the relative braid class $[\x] \rel
\y$, by eliminating the possible boundary behaviors.

If $|u^k(s_0,t_0)|=1$, for some $(s_0,t_0)$ and $k$, then
Proposition~\ref{prop:dislyap4} implies that $|u^k| \equiv 1$, hence
$|u^k_{m'}| \to 1$ as $m' \to \infty$ uniformly on compact sets in
$(s,t)$. This contradicts the fact that $[\x]\rel \y$ is proper, and
therefore the limit satisfies $|\u|<1$.

If $u^k(s_0,t_0) = u^{k'}(s_0,t_0)$ for some $(s_0,t_0)$ and some
pair $\{k,k'\}$, then by Proposition~\ref{prop:dislyap3} either
$\Cr(\u(s_0-\epsilon,\cdot)) > \Cr(\u(s_0+\epsilon,\cdot))$, for some
$0<\epsilon\le \epsilon_0$, or $u^k\equiv u^{k'}$. The former case
will be dealt with a little later, while in the latter case $\u \in
\Sigma_-^n\rel \y$, contradicting that $[\x]\rel\y$ is proper as before.

If $u^k(s_0,t_0) = y^{\ell}(t_0)$ for some $(s_0,t_0)$ and $k$ and
$y^\ell\in \y$, then by Proposition~\ref{prop:dislyap3} either
$\Cr(\u(s_0-\epsilon,\cdot)\cup \y) > \Cr(\u(s_0+\epsilon,\cdot)\cup
\y)$, for some $0<\epsilon\le \epsilon_0$, or $u^k\equiv y^{\ell}$.
Again, the former case will be dealt with below, while in the latter
case $\u \in \Sigma_-^n\rel \y$, contradicting that $\x\rel\y$ is
proper.

Finally, the two statements about the crossing numbers imply that
both $\u(s_0-\epsilon,\cdot), \u(s_0+\epsilon,\cdot) \in \Omega^n\rel
\y$, and thus $\u(s_0-\epsilon,\cdot), \u(s_0+\epsilon,\cdot)\in
[\x]\rel \y$. On the other hand, since at least one crossing number at
$s_0-\epsilon$ has strictly decreased at $s_0+\epsilon$, the braids
$\u(s_0-\epsilon,\cdot)$ and $\u(s_0+\epsilon,\cdot)$ cannot belong
to the same relative braid class, which is a contradiction. As a
consequence $\u(s,\cdot) \rel \y \in [\x]\rel \y$ for all $s$, which
proves that $\M^{J,H}\bigl([\x]\rel \y\bigr)$ is compact, and therefore
also $\cS^{J,H} \subset [\x]\rel \y$ is compact and isolated in
$[\x]\rel\y$.
\end{proof}


\section{The Maslov index for braids and Fredholm theory}
\label{sec:maslov} The action $\f_H$ defined on $\bOn$ has the
property that stationary braids have a doubly unbounded spectrum,
i.e.,  if we consider the $d^2\f_H(\x)$ at a stationary braid $\x$, then
$d^2\f_H(\x)$ is a self-adjoint operator whose (real) spectrum
consists of isolated eigenvalues unbounded from above or below. The
classical Morse index for stationary braids is therefore not
well-defined. The theory of the Maslov index for Lagrangian subspaces
is used to replace the classical Morse index \cite{Floer1,RobSal2,
RobSal1}, via Fredholm theory.

\subsection{The Maslov index}
Let $(E,\omega)$ be a (real) symplectic vector space of dimension
$\dim E = 2n$, with compatible almost complex structure $J\in
\text{Sp}^+(E,\omega)$.  An $n$-dimensional subspace $V\subset E$
is called  \style{Lagrangian} if $\omega(v,v') =0$ for all $v,v' \in V$.
Denote the space of Lagrangian subspaces of $(E,\omega)$ by
$\cL(E,\omega)$, or $\cL$ for short.

It is well-known that
a subspace $V\subset E$ is Lagrangian if and only if $V =
\text{range}(X)$  for some linear map $X: W \to E$ of rank $n$ and
some $n$-dimensional (real) vector space $W$, with $X$ satisfying
\begin{equation}
\label{eqn:lagr1}
    X^T J X =0,
\end{equation}
where the transpose is defined via the inner product $\langle
\cdot,\cdot \rangle \bydef \omega(\cdot, J \cdot)$.
%

The map $X$ is called a \style{Lagrangian frame} for $V$. If we
restrict to the special case $(E,\omega)  = (\R^{2n},\overline
\omega_0)$, with standard $J_0$, then for a point $x$ in $\R^{2n}$
one can choose symplectic coordinates
$x=(p^1,\cdots,p^n,q^1,\cdots, q^n)$ and the standard symplectic
form is given by $\overline \omega_0 = dp^1\wedge dq^1 +\cdots +
dp^n\wedge dq^n$. In this case a subspace $V\subset \R^{2n}$ is
Lagrangian if $X = \left(\begin{array}{c}P \\
Q\end{array}\right)$, with $P,Q$  $n\times n$ matrices satisfying
$P^T Q = Q^T P$, and $X$ has rank $n$. The condition on $P$ and $Q$
follows immediately from Eq.\ \rmref{eqn:lagr1}.

For any fixed $V\in \cL$, the space $\cL$ can be decomposed into
strata $\Xi_k(V)$:
$$
\cL = \bigcup_{k=0}^n \Xi_k(V).
$$
The strata $\Xi_k(V)$ of Lagrangian subspaces $V'$ which intersect
$V$ in a subspace of dimension~$k$ are submanifolds of co-dimension
$k(k+1)/2$. The \style{Maslov cycle} is defined as
$$
\Xi(V) = \bigcup_{k=1}^n \Xi_k(V).
$$
Let $\Lambda(t)$ be a smooth curve of Lagrangian subspaces and
$X(t)$  a smooth Lagrangian frame for $\Lambda(t)$. A crossing is a
number  $t_0$ such that $\Lambda(t_0) \in \Xi(V)$, i.e., $ X(t_0)w =
v \in V$, for some $w\in W$, $0\neq v\in V$. For a curve $\Lambda: [a,b] \to
\cL$, the set of crossings is compact, and for each crossing $t_0\in
[a,b]$ we can define the crossing form on $\Lambda(t_0)\cap V$:
\begin{eqnarray*}
\Gamma(\Lambda,V,t_0)(v) \bydef \omega\bigl(X(t_0)w,X'(t_0)w\bigr).
\end{eqnarray*}
A crossing $t_0$ is called \style{regular} if $\Gamma$ is a
nondegenerate form. If $\Lambda: [a,b] \to \cL$ is a Lagrangian curve
that has only regular crossings then the \style{Maslov index} of the
pair $(\Lambda,V)$ is defined by
$$
\mu(\Lambda,V) = \frac{1}{2} {\rm sign}~\Gamma(\Lambda,V,a) +
\sum_{a<t_0<b} {\rm sign}~\Gamma(\Lambda,V,t_0) +
\frac{1}{2} {\rm sign}~\Gamma(\Lambda,V,b),
$$
where $\Gamma(\Lambda,V,a)$ and $\Gamma(\Lambda,V,b)$ are
zero when $a$ or $b$ are not crossings. The notation `sign' is the
signature of a quadratic form, i.e. the number of positive minus the
number of negative eigenvalues and the sum is over the crossings
$t_0\in (a,b)$. Since the Maslov index is homotopy invariant and
every path is homotopic to a regular path the above definition
extends to arbitrary continuous Lagrangian paths, using property (iii)
below. In the special case of $(\R^{2n},\overline \omega_0)$ we have
that
\begin{eqnarray*}
\Gamma(\Lambda,V,t_0)(v) &=& \overline \omega_0\bigl(X(t_0)w,X'(t_0)w\bigr)\\
&=& \langle P(t_0)w,Q'(t_0)w\rangle -
\langle P'(t_0)w,Q(t_0)w\rangle.
\end{eqnarray*}
A list of properties of the Maslov index can be found (and is proved) in
\cite{RobSal2}, of which we mention the most important ones:
\begin{enumerate}
\item[(i)] for any $\Psi\in \Sp(E)$, $\mu(\Psi \Lambda,\Psi V) =
    \mu(\Lambda,V)$;\footnote{This property shows that we can
    assume $E$ to be the standard symplectic space without loss of
    generality.}
\item[(ii)] for $\Lambda: [a,b] \to \cL$ it holds that $\mu(\Lambda,V) =
    \mu(\Lambda|_{[a,c]},V)+\mu(\Lambda|_{[c,b]},V)$, for any
    $a<c<b$;
\item[(iii)]  two paths $\Lambda_0,\Lambda_1: [a,b] \to \cL$ with
    the same end points are homotopic if and only if
 $\mu(\Lambda_0,V) = \mu(\Lambda_1,V)$;
\item[(iv)] for any path $\Lambda: [a,b] \to \Xi_k(V)$ it holds that
    $\mu(\Lambda,V) =0$.
\end{enumerate}

The same can be carried out for pairs of Lagrangian curves
$\Lambda,\Lambda^\dagger: [a,b] \to \cL$. The crossing form on
$\Lambda(t_0)\cap \Lambda^\dagger (t_0)$ is then given by
$$
\Gamma(\Lambda,\Lambda^\dagger ,t_0)
\bydef
\Gamma(\Lambda,\Lambda^\dagger(t_0),t_0)
- \Gamma(\Lambda^\dagger,\Lambda(t_0),t_0).
$$
For pairs $(\Lambda,\Lambda^\dagger)$ with only regular crossings
the Maslov index $\mu(\Lambda,\Lambda^\dagger)$ is defined in the
same way as above using the crossing form for Lagrangian pairs. By
setting $\Lambda^\dagger(t) \equiv V$ we retrieve the previous case,
and $\Lambda(t) \equiv V$ yields $\Gamma(V,\Lambda^\dagger,t_0) =
-\Gamma(\Lambda^\dagger,V,t_0)$. Consider the symplectic space
$(\overline{E},\overline{\omega}) = (E\times E, (-\omega) \times
\omega)$, with almost complex structure $(-J) \times J$. A~crossing
$\Lambda(t_0)\cap \Lambda^\dagger(t_0) \not = \varnothing$ is
equivalent to a crossing
 $(\Lambda \times \Lambda^\dagger)(t_0)\in\Xi(\Delta)$,
where $\Delta \subset \overline{E}$ is the diagonal Lagrangian plane,
and $\Lambda\times\Lambda^\dagger$ a Lagragian curve in
$\overline{E}$,
which follows from Equation \rmref{eqn:lagr1} using the Lagrangian
frame $\overline{X}(t) =\left(\begin{array}{c}X(t)
\\X^\dagger(t)\end{array}\right)$. Let $\overline{v} = (v,v)
=\overline{X}(t_0) w$, then
\begin{eqnarray*}
\Gamma(\Lambda\times\Lambda^\dagger,\Delta,t_0)(\overline{v}) &=&
\overline{\omega}\bigl(\overline{X}(t_0)w,\overline{X'}(t_0)w\bigr)\\
&=& -\omega\bigl({X}(t_0)w,{X'}(t_0)w\bigr) + \omega\bigl({X^\dagger}(t_0)w,{X^\dagger}'(t_0)w\bigr)\\
&=& -\Gamma(\Lambda,\Lambda^\dagger(t_0),t_0)(v)
+ \Gamma(\Lambda^\dagger,\Lambda(t_0),t_0)(v).
\end{eqnarray*}
This justifies the identity
\begin{equation}
\label{eqn:prod1}
\mu(\Lambda,\Lambda^\dagger) = \mu(\Delta,\Lambda\times\Lambda^\dagger).
\end{equation}
Equation \rmref{eqn:prod1} is used to define the Maslov index for
continuous pairs of Lagrangian curves, and is a special case of the
more general formula below. For $\Psi: [a,b] \to \Sp(E)$ a symplectic
curve,
\begin{equation}
\label{eqn:prod2}
\mu(\Psi\Lambda,\Lambda^\dagger) = \mu({\rm gr}(\Psi),\Lambda\times\Lambda^\dagger),
\end{equation}
where ${\rm gr}(\Psi) = \{(x,\Psi x)~|~x\in E \}$ is the graph of $\Psi$.
The curve ${\rm gr}(\Psi)(t)$ is a Lagrangian curve in
$(\overline{E},\overline{\omega})$ and $X_\Psi(t) =
\left(\begin{array}{c}{\rm Id} \\\Psi(t)\end{array}\right)$ is a
Lagrangian frame for ${\rm gr}(\Psi)$. Indeed, via \rmref{eqn:lagr1}
we have
$$
\left(\begin{array}{cc}
{\rm Id} & \Psi^T(t)
\end{array}\right)
\left(\begin{array}{cc}
-J & 0 \\0 & J
\end{array}\right)
\left(\begin{array}{c}
{\rm Id} \\\Psi(t)
\end{array}\right)
= \Psi^T(t) J \Psi(t) -J =0,
$$
which proves that ${\rm gr}(\Psi)(t)$ is a Lagrangian curve in
$\overline{E}$. Via $\overline{E}\times \overline{E}$ the crossing form
is given by
\begin{equation*}
\Gamma\left({\rm gr}(\Psi),\Lambda\times\Lambda^\dagger,t_0\right)=
 \Gamma\left({\rm gr}(\Psi),(\Lambda\times\Lambda^\dagger)(t_0),t_0\right)
- \Gamma\left(\Lambda\times\Lambda^\dagger,{\rm gr}(\Psi)(t_0),t_0\right).
\end{equation*}
and upon inspection consists of the three terms making up the
crossing form of $(\Psi\Lambda,\Lambda^\dagger)$ in $\overline{E}$.
More specifically, let $\xi = X_\Psi(t_0)\xi_0 =
\overline{X}(t_0)\eta_0=\eta$, so that $\Psi X\eta_0=\Psi \xi_0 =
X^\dagger \eta_0$, which yields
\begin{eqnarray*}
\Gamma\left({\rm gr}(\Psi),(\Lambda\times\Lambda^\dagger)(t_0),t_0\right)(\xi)
&=& \omega(\Psi(t_0)\xi_0,\Psi'(t_0)\xi_0)\\
&=& \omega(\Psi(t_0)X(t_0)\eta_0,\Psi'(t_0)X(t_0)\eta_0),
\end{eqnarray*}
and
\begin{eqnarray*}
\lefteqn{\Gamma\left(\Lambda\times\Lambda^\dagger, {\rm gr}(\Psi)(t_0),t_0\right)(\eta)}\\
&=& -\omega\bigl({X}(t_0)\eta_0,{X'}(t_0)\eta_0\bigr) +
  \omega\bigl({X^\dagger}(t_0)\eta_0,{X^\dagger}'(t_0)\eta_0\bigr) \\
&=& -\omega\bigl(\Psi(t_0){X}(t_0)\eta_0,\Psi(t_0){X'}(t_0)\eta_0\bigr)
+ \omega\bigl({X^\dagger}(t_0)\eta_0,{X^\dagger}'(t_0)\eta_0\bigr)
\end{eqnarray*}
which proves Equation \rmref{eqn:prod2}. The crossing form for a
more  general Lagrangian pair $({\rm gr}(\Psi),\Lambda)$, where
$\overline{\Lambda}(t)$ is a Lagrangian curve in $\overline{E}$, is
given by $\Gamma\left({\rm gr}(\Psi),\overline{\Lambda},t_0\right)$
as described above. In the special case that
$\overline{\Lambda}(t)\equiv V \times V$, then
\[
\Gamma\left({\rm gr}(\Psi),\overline{\Lambda},t_0\right)(\overline{v})
=
\omega(\Psi(t_0)w,\Psi'(t_0)w),
\]
where $\overline{v} = X_\Psi(t_0) w$.

A particular example of the Maslov index for symplectic paths is the
\style{Conley-Zehnder index} on $(E,\omega) = (\R^{2n},\overline
\omega_0)$, which is defined as $\mu_{CZ}(\Psi) \bydef \mu\left({\rm
gr}(\Psi),\Delta\right)$ for paths $\Psi: [a,b] \to \Sp(2n,\R)$, with
$\Psi(a)  = {\rm Id}$ and ${\rm Id}-\Psi(b)$ invertible. It holds that $
\Psi'  = \overline J_0 K(t)\Psi$, for some smooth path $t\mapsto K(t)$
of symmetric matrices. An intersection of ${\rm gr}(\Psi)$ and
$\Delta$ is equivalent to the condition $\det(\Psi(t_0) - {\rm Id}) =0$,
i.e. for $\xi_0 \in \ker~(\Psi(t_0) - {\rm Id})$ it holds that
$\Psi(t_0)\xi_0 = \xi_0$. The crossing form is given by
\begin{eqnarray*}
\Gamma\left({\rm gr}(\Psi),\Delta,t_0\right)(\overline{\xi}_0)
&=& \overline \omega_0(\Psi(t_0)\xi_0,\Psi'(t_0)\xi_0)\\
&=& \langle \Psi(t_0)\xi_0,K(t_0)\Psi(t_0)\xi_0\rangle \\
&=& \langle \xi_0,K(t_0)\xi_0\rangle.
\end{eqnarray*}
In the case of a symplectic path $\Psi: [0,\tau]\to \Sp(2n,\R)$, with
$\Psi(0) = {\rm Id}$, the extended Conley-Zehnder index is defined as
$\mu_{CZ}(\Psi,\tau) = \mu({\rm gr}(\Psi),\Delta)$.

\subsection{The permuted Conley-Zehnder index}
We now define a variation on the Conley-Zehnder index suitable for
the application to braids. Consider the symplectic space
$$
 E = \R^{2n}\times \R^{2n},
 \qquad
 \omega = (-\overline \omega_0)\times\overline \omega_0.
$$
In $E$ we choose coordinates $(x,\tilde x)$, with $x =
(p^1,\cdots,p^n,q^1,\cdots, q^n)$ and $ \tilde x=(\tilde
p^1,\cdots,\tilde p^n,\tilde q^1,\cdots, \tilde q^n)$ both in
$\R^{2n}$. Let $\sigma\in S_n$ be a permutation, then the permuted
diagonal $\Delta_\sigma$ is defined by:
\begin{equation}
\label{e:sigmapq}
  \Delta_\sigma
  \bydef
  \bigl\{ (x,\tilde x)~|~ (\tilde p^k,\tilde q^k)
  =
  (p^{\sigma(k)},q^{\sigma(k)}),~~1\le k\le n\bigr\}.
\end{equation}
It holds that $\Delta_\sigma = {\rm gr}(\bm{\sigma})$, where
$\bm{\sigma} = \left(\begin{array}{cc}\sigma & 0 \\0 &
\sigma\end{array}\right)$ and the permuted diagonal $\Delta_\sigma$
is a Lagrangian subspace of $E$. Let  $\Psi: [0,\tau] \to \Sp(2n,\R)$ be
a symplectic path with $\Psi(0) = {\rm Id}$. A crossing $t=t_0$ is
defined by the condition $\ker~(\Psi(t_0) -\bm{\sigma}) \not = \{0\}$
and the crossing form is given by
 \begin{eqnarray}
\Gamma\left({\rm gr}(\Psi),\Delta_\sigma,t_0\right) (\xi_0^\sigma)
&=& \overline \omega_0(\Psi(t_0)\xi_0,\Psi'(t_0)\xi_0) \nonumber\\
&=& \langle \Psi(t_0)\xi_0,K(t_0)\Psi(t_0)\xi_0\rangle \nonumber\\
&=& \langle \bm{\sigma}\xi_0,K(t_0)\bm{\sigma}\xi_0\rangle = \langle
\xi_0,\bm{\sigma}^T K(t_0)\bm{\sigma}\xi_0\rangle, \label{e:crossK}
\end{eqnarray}
where $\xi_0^\sigma = X_\sigma\xi_0$, and $X_\sigma$ the frame for
$\Delta_\sigma$. The \style{permuted Conley-Zehnder index} is
defined as
\begin{equation}
\label{eqn:pCZ}
    \mu_\sigma(\Psi,\tau) \bydef \mu({\rm gr}(\Psi),\Delta_\sigma).
\end{equation}

Based on the properties of the Maslov index the following list of basic
properties of the index $\mu_\sigma$ can be derived.
\begin{lemma}
\label{lem:mas1} For $\Psi: [0,\tau]\to \Sp(2n,\R)$ a symplectic
path with $\Psi(0)=\textup{Id}$,  
\begin{enumerate}
\item[(i)] $ \mu_\sigma(\Psi \times \Psi^\dagger,\tau) =
    \mu_\sigma(\Psi,\tau) +  \mu_\sigma(\Psi^\dagger,\tau)$;
\item[(ii)] let $\Phi_k(t): [0,\tau]\to \Sp(2n,\R)$ be a symplectic
    loop (rotation) given by $\Phi_k(t) = e^{\frac{2\pi k}{\tau}
    \overline J_0 t}$, then $ \mu_\sigma(\Phi_k\Psi,\tau) =
    \mu_\sigma(\Psi,\tau) + 2kn$,
\end{enumerate}
\end{lemma}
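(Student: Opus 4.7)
The plan is to reduce both claims to the basic Maslov-index properties (i)--(iv) of Section 6.1, together with an explicit crossing-form computation for $\Phi_k$. Throughout, I would first homotope paths to have only regular crossings (generically available) and extend to arbitrary paths via homotopy invariance.

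For (i), interpret $\Psi \times \Psi^\dagger$ as the block-diagonal direct sum (with the permutation extended in the natural way to the enlarged index set). The graph decomposes as $\mathrm{gr}(\Psi \oplus \Psi^\dagger) = \mathrm{gr}(\Psi) \oplus \mathrm{gr}(\Psi^\dagger)$, and $\Delta_\sigma$ splits correspondingly into $\Delta_{\sigma}^{(1)} \oplus \Delta_{\sigma}^{(2)}$. After a small perturbation to a regular path, crossings of the direct-sum path arise (generically) from a crossing of exactly one factor, and on the relevant kernel the crossing form \eqref{e:crossK} reduces to the crossing form of that factor alone. Additivity of signature under direct sums, together with the endpoint weights $1/2$ which respect the decomposition, yields additivity of $\mu_\sigma$.

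For (ii), the essential observation is that $\Phi_k$ is a symplectic loop at the identity: $\Phi_k(0) = \Phi_k(\tau) = I$, so $\Phi_k \Psi$ and $\Psi$ share endpoints $(I, \Psi(\tau))$. I would first establish
\[
\mu_\sigma(\Phi_k \Psi, \tau) = \mu_\sigma(\Phi_k, \tau) + \mu_\sigma(\Psi, \tau)
\]
by constructing a homotopy (rel endpoints) from the pointwise product $t \mapsto \Phi_k(t)\Psi(t)$ to a reparametrized concatenation tracing $\Phi_k$ on $[0,\tau/2]$ and $\Psi$ on $[\tau/2,\tau]$. Explicitly, take $\Psi_s(t) = \Phi_k(a_s(t))\Psi(b_s(t))$, where $(a_s, b_s)$ is a smooth family of paths in $[0,\tau]^2$ from the diagonal $(t,t)$ at $s=0$ to the L-shape through $(\tau,0)$ at $s=1$, fixing the endpoints $(0,0)$ and $(\tau,\tau)$. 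Since $\Phi_k(\tau) = I$, each $\Psi_s$ is a symplectic path from $I$ to $\Psi(\tau)$; homotopy invariance (property (iii)) gives equality of $\mu_\sigma(\Psi_s)$ across $s$, and the splitting property (ii) applied to $\Psi_1$ yields the additive decomposition.

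To complete (ii), compute $\mu_\sigma(\Phi_k, \tau) = 2kn$ directly. Decompose $\sigma$ into disjoint cycles of lengths $m_1, \ldots, m_r$ with $\sum m_j = n$. Both $\Phi_k$ and $\bm{\sigma}$ are block-diagonal in this cycle decomposition, so the crossing form splits across cycles. On one cycle of length $m$, the equation $\Phi_k(t_0)\xi_0 = \bm{\sigma}\xi_0$ reduces to the cyclic relation $R_{2\pi k t_0/\tau}\xi_0^{j_\ell} = \xi_0^{j_{\ell+1}}$, forcing $R_{2\pi k m t_0/\tau} = I$ on the lead component, hence crossings at $t_0 = j\tau/(km)$ for $j = 0, 1, \ldots, km$, each with a two-dimensional kernel. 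Since $\Phi_k' = \tfrac{2\pi k}{\tau}\overline{J}_0 \Phi_k$, at each crossing the form \eqref{e:crossK} becomes $\tfrac{2\pi k}{\tau}\langle \xi_0, \bm{\sigma}^T\bm{\sigma}\xi_0\rangle = \tfrac{2\pi k}{\tau}\|\xi_0\|^2 > 0$, positive definite with signature $+2$. Weighting endpoints by $1/2$ gives $\tfrac{1}{2}(2) + (km-1)(2) + \tfrac{1}{2}(2) = 2km$ per cycle, totalling $\sum_j 2 m_j k = 2kn$. The principal technical obstacle is verifying that $\Psi_s$ indeed traces a valid smooth homotopy of graph Lagrangian curves with uniform endpoints so that property (iii) applies; the rest is a block-decomposition calculation (part (i)) and the cycle-by-cycle crossing count above.
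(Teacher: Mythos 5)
Your argument is correct and follows the paper's strategy for the reduction but diverges in the final computation, which is worth noting.

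For part (i), your block-decomposition argument is the same as the paper's one-line observation that ``the equations for the crossings uncouple.'' The genericity/regular-perturbation hedge you add is actually unnecessary: even at a simultaneous crossing of both factors, the kernel of $\Psi\times\Psi^\dagger - \bm{\sigma}\times\bm{\sigma}$ is the direct sum of the two kernels, the crossing form is the orthogonal direct sum of the two crossing forms, and signatures are additive --- no perturbation required.

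For part (ii), your homotopy from the pointwise product $\Phi_k(t)\Psi(t)$ to the concatenation ($\Phi_k$ then $\Psi$), holding endpoints $I$ and $\Psi(\tau)$ fixed, is the same construction as the paper's $\Psi_\lambda$ on the doubled interval, just parametrized differently; both then invoke homotopy invariance and catenation additivity. Where you genuinely depart is in evaluating $\mu_\sigma(\Phi_k,\tau)=2kn$. The paper invokes Remark 2.6 of Robbin--Salamon: since $\mathrm{gr}(\Phi_k)$ is a Lagrangian \emph{loop}, its Maslov index is independent of the reference Lagrangian, so $\mu(\mathrm{gr}(\Phi_k),\Delta_\sigma)=\mu(\mathrm{gr}(\Phi_k),\Delta)$, and the latter has $k+1$ crossings each with full $2n$-dimensional kernel, giving $\tfrac12(2n)+(k-1)(2n)+\tfrac12(2n)=2kn$. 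You instead compute directly against $\Delta_\sigma$ by decomposing $\sigma$ into cycles: a cycle of length $m$ contributes $km+1$ crossings each with $2$-dimensional kernel and positive-definite crossing form $\tfrac{2\pi k}{\tau}\|\xi_0\|^2$, so $\tfrac12(2)+(km-1)(2)+\tfrac12(2)=2km$ per cycle, and $\sum_j 2km_j=2kn$ over all cycles. Both are correct and both handle $k\le 0$ without modification. Your route is more elementary and self-contained --- it never leaves the explicit $\Delta_\sigma$-crossing count --- at the cost of a somewhat longer case analysis; the paper's route is cleaner but imports an external fact about loop Maslov indices. Either is acceptable; I would point out that the two computations cross-check one another, which is a useful sanity test of the permuted Conley--Zehnder normalization.
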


\begin{proof}
Property (i) follows from the fact that the equations for the crossings
uncouple.
As for (ii), consider the symplectic curves (using $\Psi(0)=\text{Id}$)
$$
\Psi_0(t) =   \begin{cases}
   \Phi_k(t)\Psi(t)   & \text{ } t\in [0,\tau]\\
      \Psi(\tau) & \text{} t\in [\tau,2\tau],
\end{cases}\quad \Psi_1(t) = \begin{cases}
 \Phi_k(t)     & \text{ } t\in [0,\tau] \\
    \Psi(t-\tau)  & \text{ } t\in [\tau,2\tau].
\end{cases}
$$
The curves $\Psi_0$ and $\Psi_1$ are homotopic via the homotopy
$$
\Psi_\lambda(t) = \begin{cases}
   \Phi_k(t)\Psi((1-\lambda) t)   & \text{ } t\in [0,\tau]\\
      \Psi\bigl(\tau +\lambda (t-2\tau)\bigr) & \text{} t\in [\tau,2\tau],
\end{cases}
$$
with $\lambda \in [0,1]$, and $\mu_\sigma(\Psi_0,2\tau) =
\mu_\sigma(\Psi_1,2\tau) $. By definition of $\Psi_0$ it follows that
$\mu_\sigma(\Phi_k\Psi,\tau)= \mu_\sigma(\Psi_0,2\tau)$. Using
property (iii) of the Maslov index above, we obtain
\begin{eqnarray*}
\mu_\sigma( \Phi_k\Psi,\tau)
&=&
\mu_\sigma(\Psi_0,2\tau) =  \mu_\sigma(\Psi_1,2\tau) = \mu\left({\rm gr}(\Psi_1),\Delta_\sigma\right) \\
&=&
\mu\left({\rm gr}(\Phi_k)|_{[0,\tau]},\Delta_\sigma\right) +
\mu\left({\rm gr}(\Psi(t-\tau))|_{[\tau,2\tau]},\Delta_\sigma\right) \\
&=&
\mu\left({\rm gr}(\Phi_k)|_{[0,\tau]},\Delta_\sigma\right)+\mu_\sigma(\Psi,\tau).
\end{eqnarray*}
It remains to evaluate $\mu\left({\rm
gr}(\Phi_k)|_{[0,\tau]},\Delta_\sigma\right)$. Recall from
\cite{RobSal2}, Remark 2.6, that for a Lagrangian loop $\Lambda(t+1)
= \Lambda(t)$ and any Lagrangian subspace $V$ the Maslov index is
given by
$$
\mu(\Lambda,V)
=
\frac{\alpha(1)-\alpha(0)}{\pi},
\quad
\det\left( P(t) + iQ(t)\right) = e^{i\alpha(t)},
$$
where $X= (P,Q)^t$ is a unitary Lagrangian frame for $\Lambda$. In
particular, the index of the loop is independent of the Lagrangian
subspace $V$. From this we derive that
$$
\mu\left({\rm gr}(\Phi_k)|_{[0,\tau]},\Delta_\sigma\right)  =
\mu\left({\rm gr}(\Phi_k)|_{[0,\tau]},\Delta\right),
$$
and the latter is computed as follows. Consider the crossings of
$\Phi_k$: $\det\left( e^{{2\pi k/\tau}\overline J_0 t_0} - {\rm
Id}\right) =0$, which holds for $t_0 = {\tau n/k}$, $n=0,\cdots, k$.
Since $\Phi_k$ satisfies $\Phi_k' = {2\pi k/\tau} \overline J_0 \Phi_k$,
the crossing form is given by $\Gamma\left( {\rm
gr}(\Phi_k),\Delta,t_0\right)\xi_0 = \left<\xi_0,{2\pi k/\tau}
\xi_0\right> = {2\pi k/\tau}|\xi_0|^2$, with $\xi_0\in
\ker\left(\Psi(t_0) -{\rm Id}\right) \not = \{0\}$, and ${\rm
sign}~\Gamma\left( {\rm gr}(\Phi_k),\Delta,t_0\right) = 2n$ (the
dimension of the kernel is $2n$). From this we derive that
$\mu\left({\rm gr}(\Phi_k)|_{[0,\tau]},\Delta\right) = 2kn$ and
consequently $\mu\left({\rm
gr}(\Phi_k)|_{[0,\tau]},\Delta_\sigma\right) = 2kn$.
\end{proof}

\subsection{Fredholm theory and the Maslov index for closed braids}
The main result of this section concerns the relation between the
permuted Conley-Zehnder  index $\mu_\sigma$ and the Fredholm
index of the linearized Cauchy-Riemann operator
$$
 \partial_{K,\Delta_\sigma}
 =
 \frac{\partial}{\partial s}  - \overline J_0\frac{\partial}{\partial t} - K(s,t),
$$
where $K(s,t)$ is a family of symmetric
$2n\times 2n$ matrices parameterized by $\R\times \R/\Z$ and the
matrix $\overline J_0$ is standard.
The operator $\partial_{K,\Delta_\sigma}$ acts on functions satisfying
the non-local boundary conditions $\bigl(\xi(s,0),\xi(s,1)\bigr) \in
\Delta_\sigma$, or in other words $\xi(s,1) = \bm{\sigma} \xi(s,0)$.
On $K$ we impose the following hypotheses:
\begin{enumerate}
    \item[(k1)] there exist continuous functions $K_\pm: \R/\Z \to
        M(2n,\R)$
        such that $\lim_{s\to\pm \infty} K(s,t) = K_\pm(t)$, uniformly
        in $t\in [0,1]$;
    \item[(k2)] the solutions $\Psi_\pm$ of the initial value problem
        $$
            \frac{d}{dt} \Psi_\pm - \overline J_0 K_\pm(t) \Psi_\pm =0,
            \quad \Psi_\pm(0) = {\rm Id},
        $$
        have the property that ${\rm gr}\bigl(\Psi_\pm (1)\bigr)$ is
        transverse to $\Delta_\sigma$.
\end{enumerate}
Hypothesis (k2) can be rephrased as
$\det\bigl(\Psi_\pm(1)-\bm{\sigma}\bigr) \not = 0$. It follows from the
proof below that this is equivalent to saying that the mappings
$L_\pm = \overline J_0\frac{d}{dt} + K_\pm(t)$ are invertible.

In \cite{RobSal1} the following result was proved. Define the function
spaces
\begin{eqnarray*}
W^{1,2}_\sigma([0,1];\R^{2n})
&\bydef&
\bigl\{ \eta \in W^{1,2}(  [0,1])
~|~\bigl(\eta(0),\eta(1)\bigr) \in \Delta_\sigma\bigr\}\\
W^{1,2}_\sigma(\R\times [0,1];\R^{2n})
&\bydef&
\bigl\{ \xi \in W^{1,2}(\R\times [0,1]) ~|~
\bigl(\xi(s,0),\xi(s,1)\bigr) \in \Delta_\sigma\bigr\}.
\end{eqnarray*}
\begin{proposition}
\label{prop:mas2} Suppose that Hypotheses (k1) and (k2) are
satisfied. Then the operator $\bar\partial_{K,\Delta_\sigma}:
W^{1,2}_{\sigma} \to L^2$ is Fredholm and the Fredholm index
is given by
$$
    \ind ~\partial_{K,\Delta_\sigma} =  \mu_\sigma(\Psi_-,1) -  \mu_\sigma(\Psi_+,1).
$$
As a matter of fact $\bar\partial_{K,\Delta_\sigma} $ is a Fredholm
operator  from $W^{1,p}_\sigma$ to $L^p$, $1<p<\infty$, with the
same Fredholm index.
\end{proposition}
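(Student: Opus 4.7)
The plan is to apply the Robbin--Salamon framework \cite{RobSal1} for Cauchy--Riemann operators with Lagrangian boundary conditions, with our relevant Lagrangian in the doubled space $(\overline{E},\overline{\omega}) = (E\times E,-\overline{\omega}_0\oplus\overline{\omega}_0)$ being $\Delta_\sigma$ in place of the ordinary diagonal $\Delta$. The argument would proceed in two stages: first establish the Fredholm property, then identify the index with a spectral flow and finally with the difference of the permuted Conley--Zehnder indices.

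For Fredholmness, I would first observe that $\partial_{K,\Delta_\sigma}$ is elliptic in the interior and that the condition $(\xi(s,0),\xi(s,1))\in\Delta_\sigma$ is Lagrangian in $(\overline{E},\overline{\omega})$, so the boundary value problem on the strip $\R\times[0,1]$ is well-posed in the sense of Shapiro--Lopatinskii. The remaining issue is the behavior at $s\to\pm\infty$, which is controlled by invertibility of the asymptotic operators $L_\pm\eta \bydef \overline{J}_0\dot{\eta} + K_\pm(t)\eta$ on $W^{1,2}_\sigma([0,1];\R^{2n})$. Any $\eta\in\ker L_\pm$ has the form $\eta(t)=\Psi_\pm(t)\eta(0)$, so the twisted boundary condition forces $(\Psi_\pm(1)-\bm{\sigma})\eta(0)=0$. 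Hypothesis (k2), the transversality of $\textup{gr}(\Psi_\pm(1))$ to $\Delta_\sigma$, is equivalent to $\det(\Psi_\pm(1)-\bm{\sigma})\neq 0$, so $\ker L_\pm=\{0\}$; the cokernel I would treat analogously via the formal adjoint, whose asymptotic structure is the same. Standard Lockhart--McOwen type estimates for elliptic operators on cylindrical ends then yield the Fredholm property for all $1<p<\infty$, with $p$-independent index by interpolation.

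For the index, I would write $\partial_{K,\Delta_\sigma} = \partial_s - A(s)$, where $A(s) = \overline{J}_0 \partial_t + K(s,\cdot)$ is viewed as an unbounded self-adjoint operator on $L^2([0,1];\R^{2n})$ with domain $W^{1,2}_\sigma$. Hypothesis (k2) makes $A(\pm\infty) = L_\pm$ invertible, so by the Robbin--Salamon spectral flow theorem the Fredholm index of $\partial_s - A(s)$ equals the spectral flow of the path $\{A(s)\}_{s\in\R}$. Zero-eigenvalues of $A(s)$ correspond exactly to non-trivial intersections $\textup{gr}(\Psi_s(1))\cap\Delta_\sigma\neq\{0\}$, where $\Psi_s$ solves $\Psi'=\overline{J}_0 K(s,\cdot)\Psi$ with $\Psi(0)=\textup{Id}$; a direct comparison of the crossing form counting zero-eigenvalues with the Maslov crossing form (\ref{e:crossK}) would then show that the spectral flow equals the Maslov index of the path $s\mapsto \textup{gr}(\Psi_s(1))$ relative to $\Delta_\sigma$. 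The catenation property (ii) of the Maslov index together with the definition (\ref{eqn:pCZ}) then gives $\ind\,\partial_{K,\Delta_\sigma} = \mu_\sigma(\Psi_-,1) - \mu_\sigma(\Psi_+,1)$. The main technical obstacle I anticipate is justifying the spectral-flow identification in the presence of the twisted boundary condition, but since $\Delta_\sigma$ is a genuine Lagrangian in $(\overline{E},\overline{\omega})$ and $A(s)$ is self-adjoint on the twisted domain, the Robbin--Salamon machinery should apply without essential modification; everything beyond this is bookkeeping of signs and crossings.
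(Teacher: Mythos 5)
Your proposal follows essentially the same route as the paper's proof: both cast the problem as $\partial_s - A(s)$ with $A(s) = \overline{J}_0\partial_t + K(s,\cdot)$ self-adjoint on the $\Delta_\sigma$-twisted domain, invoke the Robbin--Salamon spectral-flow theorem for the Fredholm index, translate zero eigenvalues of $A(s)$ into crossings of $\textup{gr}(\Psi_s(1))$ with $\Delta_\sigma$ via a crossing-form comparison, and then reassemble the contour around the cylinder (your ``catenation'' step) to express the spectral flow as $\mu_\sigma(\Psi_-,1)-\mu_\sigma(\Psi_+,1)$. One small detail worth making explicit, since the paper does: the catenation argument leaves a residual term $\mu(\Delta,\Delta_\sigma)$ from the $t=0$ edge of the cylinder (where $\Psi_s(0)\equiv\textup{Id}$), and you need the observation that a constant pair of Lagrangian curves has vanishing Maslov index to kill it.
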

\begin{proof}
In \cite{RobSal1} this result is proved that under Hypotheses (k1) and
(k2) on the operator $\partial_{K,\Delta_\sigma} $. We will sketch the
proof  adjusted to the special situation here. Regard the linearized
Cauchy-Riemann operator as an unbounded operator
$$
    D_L =  \frac{d}{ds} - L(s),
$$
on $L^2\bigl(\R;L^2([0,1];\R^{2n})\bigr)$, where $L(s) = \overline
J_0\frac{d}{dt} + K(s,t)$ is a family of unbounded, self-adjoint
operators on $L^2([0,1];\R^{2n})$, with (dense) domain
$W^{1,2}_\sigma([0,1];\R^{2n})$. In this special case the result
follows from the spectral flow of $L(s)$: for the path $s\mapsto L(s)$
a number $s_0\in \R$ is a crossing  if ${\rm ker} ~L(s) \not = \{0\}$. On
${\rm ker} ~L(s) $ we have the crossing form
$$
    \Gamma(L,s_0)\xi \bydef ( \xi, L'(s) \xi )_{L^2} =
    \int_0^1 \Bigl\langle \xi(t),\frac{\partial K(s,t)}{\partial s} \xi(t)\Bigr\rangle dt,
$$
with $\xi\in {\rm ker} ~L(s)$. If the path $s\mapsto L(s)$ has only
regular crossings --- crossings for which $\Gamma$ is non-degenerate
--- then the main result in \cite{RobSal1} states that $D_L$ is
Fredholm with
$$
     \ind~D_L = -\sum_{s_0} \text{sign}~\Gamma(L,s_0) \bydef -\mu_{\rm spec}(L).
$$

Let $\Psi(s,t)$ be the solution of the $s$-parametrized family of ODEs
$$
  \left\{ \begin{array}{l} L(s)\Psi(s,t) = 0, \\ \Psi(s,0)=\text{Id}.
   \end{array} \right.
$$
Note that $\xi\in \ker~L(s)$ if and only if $\xi(t) =\Psi(s,t)\xi_0$ and
$\Psi(s,1)\xi_0=\bm{\sigma}\xi_0$, i.e., $\xi_0\in
\ker(\Psi(s,1)-\bm{\sigma})$. The crossing form for $L$ can be related
to the crossing form for $({\rm gr}(\Psi),\Delta_\sigma)$. We have
that $L(s) \Psi(s,\cdot) =0$ and thus by differentiating
$$
    \frac{\partial K(s,t)}{\partial s} \Psi(s,t) + K(s,t)\frac{\partial \Psi(s,t)}{\partial s} =
    -\overline J_0 \frac{\partial^2\Psi(s,t)}{\partial s\partial t}.
$$
From this we derive
\begin{eqnarray*}
    \lefteqn{-  \Bigl\langle \Psi(s,t)\xi_0,\frac{\partial K(s,t)}{\partial s} \Psi(s,t)\xi_0\Bigr\rangle }\\
    & & = \Bigl\langle \Psi(s,t)\xi_0,K(s,t)\frac{\partial \Psi(s,t)}{\partial s} \xi_0\Bigr\rangle
    +  \Bigl\langle \Psi(s,t)\xi_0,\overline J_0\frac{\partial^2 \Psi(s,t)}{\partial s\partial t} \xi_0\Bigr\rangle\\
    & & = \Bigl\langle K(s,t)\Psi(s,t)\xi_0,\frac{\partial \Psi(s,t)}{\partial s} \xi_0\Bigr\rangle
    +  \Bigl\langle \Psi(s,t)\xi_0,\overline J_0\frac{\partial^2 \Psi(s,t)}{\partial s\partial t} \xi_0\Bigr\rangle\\
    & & = - \Bigl\langle \overline J_0\frac{\partial \Psi(s,t)}{\partial t} \xi_0,\frac{\partial\Psi(s,t)}{\partial s} \xi_0\Bigr\rangle
    +  \Bigl\langle \Psi(s,t)\xi_0,\overline J_0\frac{\partial^2 \Psi(s,t)}{\partial s\partial t} \xi_0\Bigr\rangle,
\end{eqnarray*}
which yields that
$$
    - \Bigl\langle \Psi(s,t)\xi_0,\frac{\partial K(s,t)}{\partial s} \Psi(s,t)\xi_0\Bigr\rangle
    = \frac{\partial}{\partial t} \Bigl\langle \Psi(s,t)\xi_0,\overline J_0\frac{\partial \Psi(s,t)}{\partial s} \xi_0\Bigr\rangle.
$$
We substitute this identity in the integral crossing form for $L(s)$ at a
crossing $s=s_0$:
\begin{eqnarray*}
    \Gamma(L,s_0)(\xi) &=&
     \int_0^1 \Bigl\langle \xi(t),\frac{\partial K(s,t)}{\partial s} \xi(t)\Bigr\rangle dt\\
    &=& \int_0^1 \Bigl\langle \Psi(s,t)\xi_0,\frac{\partial K(s,t)}{\partial s} \Psi(s,t)\xi_0\Bigr\rangle dt\\
    &=& -\Bigl\langle \Psi(s,t)\xi_0,\overline J_0\frac{\partial \Psi(s,t)}{\partial s} \xi_0\Bigr\rangle\Bigl|_0^1
    = -\Bigl\langle \Psi(s,1)\xi_0,\overline J_0\frac{\partial \Psi(s,1)}{\partial s} \xi_0\Bigr\rangle\\
    &=&  \overline \omega_0\Bigl( \Psi(s,1)\xi_0,\frac{\partial \Psi(s,1)}{\partial s} \xi_0\Bigr)
    =  \Gamma\bigl({\rm gr}(\Psi(s,1),\Delta_\sigma,s_0\bigr)(\xi_0^\sigma).
\end{eqnarray*}
The boundary term at $t=0$ is zero since $\Psi(s,0) = {\rm Id}$ for all
$s$. The relation between the crossing forms proves that the curves
$s\mapsto L(s)$ and $s\mapsto \Psi(s,1)$ have the same crossings and
$\mu({\rm gr}(\Psi(s,1)),\Delta_\sigma) =  \mu_{\rm spec}(L)$. We
assume that $\Psi(\pm T,t) = \Psi_\pm(t)$, and that the crossings
$s=s_0$ are regular, as the general case follows from homotopy
invariance. The symplectic path along the boundary of the cylinder
$[-T,T]\times \R/\Z \subset \R\times \R/\Z$ yields
$$
-\mu(\Delta,\Delta_\sigma) - \mu_\sigma(\Psi_+,1) + \mu_{\rm spec}(L) + \mu_\sigma(\Psi_-,1)
= 0 .
$$
\begin{figure}[hbt]
\label{fig:fig-contour}
\begin{center}
\includegraphics[width=6in]{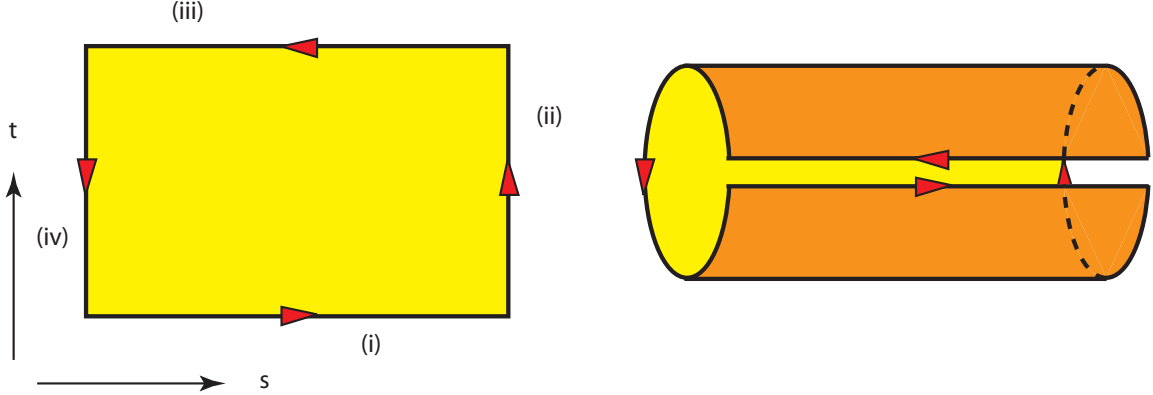}
\caption{The symplectic contour in $\R^2$ and as cylinder $[-T,T]\times \R/\Z$.}
\end{center}
\end{figure}
Indeed, since the loop is contractible the sum of the terms is zero.
The individual terms along the boundary components are found as
follows, see Figure \ref{fig:fig-contour}: (i) for $-T\le s\le T$, it holds
that $\Psi(s,0)= {\rm Id}$, and thus ${\rm gr}(\Psi(s,0)) = \Delta$ and
$\mu({\rm gr}(\Psi(s,0)),\Delta_\sigma) =
\mu(\Delta,\Delta_\sigma)$; (ii) for $0\le t\le 1$, we have $\Psi(T,t) =
\Psi_+(t)$, and therefore $\mu({\rm gr}(\Psi_+),\Delta_\sigma)
=\mu_\sigma(\Psi_+,1)$; (iii) for $-T\le s \le T$ (opposite direction)
the previous calculations with the crossing form for $L(s)$ show that
$\mu({\rm gr}(\Psi(s,1)),\Delta_\sigma) =
 \mu_{\rm spec}(L)$; (iv) for $0\le t\le 1$ (opposite
direction), it holds that $\Psi(-T,t) = \Psi_-(t)$, and therefore
$\mu({\rm gr}(\Psi_-),\Delta_\sigma) =\mu_\sigma(\Psi_-,1)$. Since
${\rm ind}~ D_L = -\mu_{\rm spec}(L)$ we obtain
$$
    \ind~D_L=  \ind ~\partial_{K,\Delta_\sigma} =
   \mu_\sigma( \Psi_-,1) -  \mu_\sigma( \Psi_+,1)
  + \mu(\Delta,\Delta_\sigma).
$$
Since $\Delta_\sigma$ and $\Delta$ are both constant Lagrangian
curves,
it follows that $\mu(\Delta,\Delta_\sigma)=0$, which concludes the
proof of the Theorem.
\end{proof}

We recall from Section \ref{sec:CR1} that the Hamiltonian for
multi-strand braids is defined as $\overline{H}(t,\x(t)) = \sum_{k=1}^n
H(t,x^k(t))$. The linearization  around a braid $\x$ is given by
\begin{equation}
\label{e:Lx}
  L_\x \bydef - d^2 \f_H(\x) = \overline J_0
\frac{d}{dt} + d^2\overline{H}(t,\x) .
\end{equation}
Define the symplectic path $\Psi: [0,1] \to \Sp(2n,\R)$ by
\begin{equation}
\label{e:Psix}
\frac{d \Psi}{dt}  - \overline J_0 d^2\overline{H}(t,\x(t)) \Psi=0,\qquad \Psi(0)= \text{Id}.
\end{equation}
For convenience we write $K(t) =  d^2\overline{H}(t,\x(t))$, so that
the linearized equation becomes $\frac{d}{dt} \Psi - \overline J_0 K(t)
\Psi =0$.

\begin{lemma}
If  $\det\bigl(\Psi(1)-\bm{\sigma}\bigr) \neq 0$, then
$\mu_\sigma(\Psi,1)$ is an integer.
\end{lemma}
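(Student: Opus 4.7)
The plan is to apply the Robbin-Salamon formula for the Maslov index and show that, under the hypothesis, the only possibly non-integer contribution---the half-signature at $t=0$---is actually even. By homotopy invariance (property (iii) of the Maslov index), I may replace $\Psi$ by a homotopic symplectic path $\tilde\Psi$ with the same endpoints $\tilde\Psi(0)={\rm Id}$ and $\tilde\Psi(1)=\Psi(1)$, chosen so that every crossing of $t\mapsto{\rm gr}(\tilde\Psi(t))$ with $\Delta_\sigma$ is regular. Such a $\tilde\Psi$ exists by a standard genericity argument, and $\mu_\sigma(\Psi,1)=\mu_\sigma(\tilde\Psi,1)$.

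Writing out the definition,
\[
\mu_\sigma(\tilde\Psi,1) = \tfrac{1}{2}\,{\rm sign}\,\Gamma_0 + \sum_{0<t_0<1}{\rm sign}\,\Gamma_{t_0} + \tfrac{1}{2}\,{\rm sign}\,\Gamma_1,
\]
each interior signature is automatically an integer. The hypothesis $\det(\Psi(1)-\bm{\sigma})\neq 0$ is equivalent to ${\rm gr}(\tilde\Psi(1))\cap\Delta_\sigma=\{0\}$, so $t=1$ is not a crossing and the third term vanishes. Thus the problem reduces to showing that ${\rm sign}\,\Gamma_0$ is even.

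Since $\tilde\Psi(0)={\rm Id}$, the intersection ${\rm gr}(\tilde\Psi(0))\cap\Delta_\sigma$ is canonically isomorphic to $V_\sigma:=\ker({\rm Id}_{2n}-\bm{\sigma})$. Because $\bm{\sigma}$ acts by the same permutation $\sigma$ separately on the $p$- and $q$-coordinates, $V_\sigma$ splits as two copies of the fixed subspace of $\sigma$ acting on $\R^n$ by coordinate permutation, and the latter has dimension equal to the number $c$ of cycles of $\sigma$. Hence $\dim V_\sigma=2c$ is even. By the computation (\ref{e:crossK}), the crossing form at $t=0$ takes the shape $\langle w,\tilde K(0) w\rangle$ on $V_\sigma$; at a regular crossing this form is non-degenerate, so its signature equals $\dim V_\sigma-2n_-=2c-2n_-$, which is even. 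Therefore $\tfrac{1}{2}\,{\rm sign}\,\Gamma_0\in\Z$, and $\mu_\sigma(\Psi,1)$ is an integer.

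The main obstacle is justifying the perturbation step: one must verify that $\Psi$ can be homotoped to a path with only regular crossings while keeping both endpoints fixed. The sensitive case is the crossing at $t=0$, whose regularity requires the symmetric matrix $\tilde K(0)=-\overline J_0\tilde\Psi'(0)$ to restrict non-degenerately to $V_\sigma$. This is an open and dense condition on $\tilde K(0)$, and since the initial derivative $\tilde\Psi'(0)$ may be adjusted freely without affecting either endpoint of the path, the desired perturbation exists; an analogous perturbation handles any non-regular interior crossings.
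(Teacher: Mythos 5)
Your proof follows essentially the same route as the paper's: isolate the only possibly non-integer contribution as the half-signature of the crossing form at $t=0$, and deduce evenness of that signature from the identification $\ker(\mathrm{Id}_{2n}-\bm{\sigma}) = \ker(\mathrm{Id}_n-\sigma)\times\ker(\mathrm{Id}_n-\sigma)$, which has even dimension. You are somewhat more careful than the paper in that you make explicit the homotopy to a path with regular crossings (so the crossing form at $t=0$ is genuinely non-degenerate and its signature is even), a step the paper invokes only implicitly when it asserts that the signature is ``always even.''
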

\begin{proof}
Since crossings between ${\rm gr}(\Psi)$ and $\Delta_\sigma$ occur
when $\det\bigl(\Psi(t)-\bm{\sigma}\bigr) = 0$, the only endpoint that
may lead to a non-integer contribution is the starting point. There the
crossing form is, as in~(\ref{e:crossK}), given by
$$
  \Gamma({\rm gr}(\Psi),\Delta_\sigma,0)(\xi^\sigma)
      =\langle\xi,\bm{\sigma}^T K(0)\bm{\sigma}\xi\rangle,
$$
for all $\xi \in \ker ( \Psi(0)-\bm{\sigma} )$. The kernel of
$\Psi(0)-\bm{\sigma}=\text{Id}-\bm{\sigma}$ is even dimensional,
since in coordinates~(\ref{e:sigmapq}) it is of the form $
  \ker (\text{Id}_{2n}-\bm{\sigma}) = \ker (\text{Id}_{n}-\sigma) \times
  \ker (\text{Id}_{n}-\sigma).
$ Therefore, $\text{sign}~ \Gamma({\rm gr}(\Psi),\Delta_\sigma,0)$ is
always even, and $\mu_\sigma( \Psi,1)$ is an integer.
\end{proof}

The non-degeneracy condition leads  to an integer valued
Conley-Zehnder index for braids.
\begin{definition}
\label{defn:mas3} A stationary braid $\x$ is said to be non-degenerate
if $\det\bigl(\Psi(1)-\bm{\sigma}\bigr) \neq 0$.
The Conley-Zehnder index of a non-degenerate stationary braid $\x$
is defined by $\mu(\x) \bydef \mu_\sigma( \Psi,1)$, where $\sigma\in
S_n$ is the associated permutation of $\x$.
\end{definition}

\begin{remark}
\label{rmk:withmorse} If $\x=\{x^k(t)\}$ is a stationary non-degenerate
braid, then $\mu(\x) $ can be related to the Morse indices
$\mu_H(x^k)$ provided that the matrix norm of $K=d^2H(x^k)$ is not
too large, e.g. if $\Vert K\Vert<2\pi$. For $\mu_H(\x) \bydef \sum_k
\mu_H(x^k)$,
\begin{equation}
\label{eqn:mf1}
\mu(\x) =
\mu_\sigma( \Psi,1) = \sum_k\l(1-\mu_H(x^k)\r) = n- \mu_H(\x) .
\end{equation}
This relation can be useful in some instances for computing Floer
homology, see Sect.\ \ref{subsec:annulus}. Indeed, in dimension two
$\Psi$ satisfies: $\Psi(t) = \exp{\l(J_0Kt\r)}$, where $K=d^2H(x^k(t))$
is a constant matrix. Then $\mu(x^k) = \mu_{\rm CZ}(\Psi) =
1-\mu^-(K)$, where $\mu^-(K)$ is the number of negative eigenvalues
of eigenvalues. The latter equality follows from Thm.\ 3.3 in
\cite{SalZehn1}.
\end{remark}


\section{Transversality and connecting orbit spaces}
\label{sec:trans} Central to the analysis of the Cauchy-Riemann
equations are various generic non-degeneracy and transversality
properties. The first important statement in this direction involves
the generic non-degeneracy of critical points.

\subsection{Generic properties of critical points}
\label{subsec:points} Define $\P_H\left([\x]\rel\y\right)$ to be those
critical points in $\P_H$ that are contained in the braid class
$[\x]\rel\y$.
\begin{proposition}
\label{prop:trans1} Let $[\x]\rel\y$ be a proper relative braid class.
Then, for any Hamiltonian $H \in \cH$, with $\y\in \P_H(\Omega^{m})$,
there exists a $\delta_*>0$ such that for any $\delta<\delta_*$ there
exists a nearby Hamiltonian $H'\in \cH$ satisfying
\begin{enumerate}
\item [(i)] $\Vert H-H'\Vert_{C^\infty}<\delta$;
\item [(ii)] $\y\in \P_{H'}(\Omega^{m})$,
\end{enumerate}
such that $\P_{H'}\left([\x]\rel\y\right)$ consists of only finitely many
non-degenerate critical points for the action $\f_{H'}$.
\end{proposition}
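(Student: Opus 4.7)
The proposition is the standard Floer-type Sard-Smale transversality result for critical points, adapted to the constraint that perturbations must preserve $\y$ as a critical braid. I take as perturbation class the Banach space
\[
  \cW_\y = \bigl\{ h \in C^\infty(\Sb^1\times\D^2;\R) :
  h|_{\partial\D^2} \equiv 0,\; h \text{ vanishes to infinite order along } \y \bigr\},
\]
equipped with a Floer-type weighted combination of $C^k$-norms making it a separable Banach space whose small elements are $C^\infty$-small. For any $h \in \cW_\y$, the Hamiltonian $H' = H+h$ lies in $\cH$ and satisfies $X_{H'} \equiv X_H$ along every strand of $\y$, so $\y \in \P_{H'}(\Omega^m)$; this immediately secures (i) and (ii).

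\textbf{Universal zero set.} Next, I consider the map
\[
  \cF:\cX\times\cW_\y\to L^p, \qquad \cF(\x,h) = \dot\x - X_{\overline{H+h}}(t,\x),
\]
where $\cX$ is a $W^{1,p}$-Banach manifold of loops representing the relative braid class $[\x]\rel\y$ with the permutation boundary condition $\x(t+1)=\bm{\sigma}\cdot\x(t)$. At any zero $(\x,h)$ the $\x$-linearization $L_\x = \overline J_0\,\frac{d}{dt} + d^2\overline{H+h}(t,\x)$ is Fredholm of index $0$ and self-adjoint (Proposition~\ref{prop:mas2} applied with $\Psi_-=\Psi_+$). For Sard-Smale to apply, the full linearization $D\cF(\x,h):T_\x\cX\oplus\cW_\y\to L^p$ must be surjective at every zero.

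\textbf{Surjectivity --- the main obstacle.} This is the one substantive step, and it is precisely where properness enters. Let $\eta\in\ker L_\x^*=\ker L_\x$ be nonzero. Since $\eta$ solves a linear first-order ODE on $[0,1]$ with twisted periodicity, uniqueness of solutions forces $\eta(t)\neq 0$ for every $t$, so at some $t_0$ one has $\eta^k(t_0)\in T_{x^k(t_0)}\D^2\setminus\{0\}$. The \emph{properness} of $[\x]\rel\y$ guarantees that $x^k(t_0)$ is disjoint from both $\partial\D^2$ and every $y^j(t_0)$, so there is an open neighborhood $U\subset\Sb^1\times\D^2$ of $(t_0,x^k(t_0))$ disjoint from $\partial\D^2\cup\y$. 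I then choose a bump perturbation $h'\in\cW_\y$ supported in $U$ with $X_{h'}(t_0,x^k(t_0))=\eta^k(t_0)$, which is possible because $X_{h'}=J_0\nabla h'$ and $\nabla h'$ can be prescribed freely inside $U$. After suitable localization near $t_0$, the pairing $\langle D_h\cF(\x,h)\,h',\eta\rangle_{L^2}$ is a positive multiple of $|\eta^k(t_0)|^2>0$, hence nonzero; varying $\eta$ over a basis of the finite-dimensional cokernel yields surjectivity.

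\textbf{Conclusion.} The Sard-Smale theorem then produces a residual set of regular values $h\in\cW_\y$; any regular $h$ with norm less than $\delta$ makes $H'=H+h$ satisfy (i) and (ii) and renders every element of $\P_{H'}([\x]\rel\y)$ non-degenerate. Finiteness is automatic: stationary solutions of the CRE in the proper class $[\x]\rel\y$ form a compact isolated set by Proposition~\ref{prop:isol1}, so $\P_{H'}([\x]\rel\y)$ is compact; non-degenerate critical points are isolated by the implicit function theorem; and a compact set of isolated points is finite.
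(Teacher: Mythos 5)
Your proof is correct and follows the same Sard--Smale transversality scheme as the paper: a space of Hamiltonian perturbations that fix $\y$ and vanish on $\partial\D^2$, surjectivity of the universal linearization via bumps inserted in regions that properness (together with the critical-point dichotomy) keeps free of skeleton and boundary, and finiteness from compactness of $\P_{H'}$ in the proper class. The only differences are cosmetic --- the paper localizes perturbations in a fixed $\epsilon$-collar $A_\epsilon^c$, which forces it to establish the $\delta_*$-estimate so that perturbed critical sets stay inside $A_{2\epsilon}^c$, while your infinite-order-vanishing class sidesteps that uniform separation step --- but the underlying strategy is the same.
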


We say that the property that $\P_H\left([\x]\rel\y\right)$ consists of
only non-degenerate critical points is a generic property, and is
satisfied by generic Hamiltonians in the above sense.

\begin{proof} Given $H\in \cH$ we start off with defining  a class
of perturbations. For a braid $\y\in \Omega^m$, define the tubular
neighborhood $N_\epsilon(\y)$ of $\y$ in $\R/\Z \times \D^2$ by :
$$
    N_\epsilon(\y) = \bigcup_{\stackrel{k=1,\cdots,m}{t\in [0,1]}} B_\epsilon(y^k(t)).
$$
If $\epsilon>0$ is sufficiently small, then a neighborhood
$N_\epsilon(\y)$ consists of $m$ disjoint cylinders. Let $D_\epsilon =
\{x\in \D^2~|~1-\epsilon < |x|\le 1\}$ be a small neighborhood of the
boundary, and define
$$
    A_\epsilon
    = N_\epsilon(\y) \cup \bigl(\R/\Z\times D_\epsilon\bigr),\quad
    A_\epsilon^c = \bigl(\R/\Z \times \D^2\bigr) \backslash A_\epsilon .
$$

Let $\cT^{J,H}([\x]\rel\y)$ represent the paths in the cylinder traced
out by the elements of $\cS^{J,H}([\x]\rel\y)$:
$$
\cT^{J,H}([\x]\rel\y) \bydef
  \bigl\{ (t,x^k(t)) \bigm| 1\leq k \leq n, \, t \in [0,1],\,  \x \in \cS^{J,H}([\x]\rel\y) \bigr\}.
$$
Since $[\x] \rel \y$ is proper, there exists an  $\epsilon_*>0$, such
that for all $\epsilon \le \epsilon_*$ it holds that
$\cT^{J,H}([\x]\rel\y) \subset \Int ( A_{2\epsilon}^c)$.
Now fix $\epsilon \in (0, \epsilon_*]$.
%
%
On $C^\infty(\R/\Z\times\D^2;\R)$ we define the norm
$$
\Vert \bh\Vert_{C^\infty}\bydef \sum_{k=0}^\infty \epsilon_k\Vert \bh\Vert_{C^k},
$$
for a sufficiently fast decaying sequence $\epsilon_k>0$, such that
$C^\infty$ equipped with this norm is a separable Banach space,
dense in $L^2$.
Let
\begin{eqnarray*}
  \cV_\epsilon
  &\bydef&
  \{ h\in C^\infty(\R/\Z\times \D^2;\R)~|~ \text{supp}\, h \subset A^c_\epsilon \} ,\\
  \cV_{\delta,\epsilon}
  &\bydef&
  \{ h \in \cV_\epsilon ~|~ \Vert  h \Vert_{C^\infty}<\delta \} ,
\end{eqnarray*}
and consider Hamiltonians of the form $H' = H + h_\delta \in \cH$,
with $h_\delta\in \cV_{\delta,\epsilon}$. Then, by construction,
$\y\in \P_{H'}(\bO{m})$, and by Proposition \ref{prop:isol1} the set
$\cS^{J,H'}([\x]\rel\y)$ is compact and isolated in the proper braid
class $[\x]\rel\y$ for all perturbation $h_\delta \in
\cV_{\delta,\epsilon}$. A straightforward compactness argument using
the compactness result of Proposition \ref{prop:isol1} shows that
$\cT^{J,H+h_\delta}([\x]\rel\y)$ converges to $\cT^{J,H}([\x]\rel\y)$
in the Hausdorff metric as $\delta\to 0$. Therefore, there exists a
$\delta_*>0$, such that $\cT^{J,H+h_{\delta}}([\x]\rel\y) \subset \Int
(A_{2\epsilon}^c)$, for all $0\le \delta\le \delta_*$. In particular
$\P_{H+h_{\delta,\epsilon}}\subset \Int (A_{2\epsilon}^c)$, for all
$0\le \delta\le \delta_*$. Now fix $\delta \in (0, \delta_*]$.

The Hamilton equations for $H'$ are $x_t^k - J_0 \nabla H(t,x^k) -
J_0 \nabla h (t,x) = 0$, with periodic boundary conditions in $t$.
Define $\cU_{\epsilon} \subset
W^{1,2}_\sigma([0,1];\R^{2n})$ to be the open subset of functions
$\x=\{x^k\}$ such that  $x^k(t) \in \Int (A_{2\epsilon}^c)$ and define
the nonlinear mapping
$$
\cG: \cU_{\epsilon} \times \cV_{\delta,\epsilon} \to L^2([0,1];\R^{2n}),
$$
which represents the above system of equations and boundary
conditions. Explicitly,
$$
\cG(\x,h) = \overline J_0\x_t + \nabla \overline{H}(t,\x) + \nabla \overline{h}(t,\x),
$$
where $\overline{H}(t,\x) = \sum_k H(t,x^k)$, and likewise for
$\overline{h}$. The mapping $\cG$ is linear in $h$. Since $\cG $ is
defined on $\cU_{\epsilon}$ and both $H$ and $h$ are of class
$C^\infty$, the mapping $\cG$ is of class $C^1$. The derivative with
respect to variations $(\xi,\delta h)\in
W^{1,2}_\sigma([0,1];\R^{2n})\times \cV_\epsilon$ is given by
\begin{eqnarray*}
    d\cG(\x,h)(\xi,\delta h)
    &=&
    \overline J_0 \xi_t +  d^2\overline{H}(t,\x)\xi +  d^2\overline{h}(t,\x)\xi
    +\nabla \delta \overline{h }(t,\x)\\
    &=&
    L_\x \xi + \nabla \delta \overline{h }(t,\x),
\end{eqnarray*}
where $L_\x = \overline J_0\frac{d}{dt} +
d^2\overline{H}(t,\x)+d^2\overline{h}(t,\x)$, by analogy with
Equation (\ref{e:Lx}). We see that there is a one-to-one
correspondence between elements $\Psi$ in the kernel of $L_\x$ and
symplectic paths described by Equation (\ref{e:Psix}) with
$\det\bigl(\Psi(1)-\bm{\sigma}\bigr) = 0$. In other words, the
stationary braid $\x$ is non-degenerate if and only if $L_\x$ has
trivial kernel.

The operator $L_\x$ is a self-adjoint operator on $L^2([0,1];\R^{2n})$
with domain $W^{1,2}_\sigma([0,1];\R^{2n})$ and is Fredholm with
$\ind(L_\x)=0$. Therefore $\cG_h\bydef \cG(\cdot,h)$ is a (proper)
nonlinear  Fredholm operator with
$$
\ind (\cG_h) = \ind(L_\x)=0.
$$
Define the set
$$
\cZ  = \bigl\{ (\x,h)\in \cU_{\epsilon} \times \cV_{\delta,\epsilon}~|~\cG(\x,h) =0\bigr\}=\cG^{-1}(0).
$$
We show that $Z$ is a Banach manifold by demonstrating that
$d\cG(\x,h)$ is surjective for all $(\x,h) \in Z$. Since
$d\cG(\x,h)(\xi,\delta h ) = L_\x \xi -\nabla \delta \overline{h}(t,\x)$,
and the (closed) range of $L_\x$ has finite codimension, we need to
show there is a (finite dimensional) complement of $R(L_\x)$ in the
image of $\nabla \delta \overline{h }(t,\x)$. It suffices to show that
$\nabla \delta \overline{h }(t,\x)$ is dense in $L^2([0,1];\R^{2n})$.

Recall that for any pair $(\x,h)\in Z$, it holds that $\x\in
\P_{H'}\subset \Int(A_{2\epsilon}^c)$. As before consider a
neighborhood $N_{\epsilon}(\x)$, so that $N_{\epsilon}(\x)\subset
\Int(A_{\epsilon}^c)$ and consists of $n$ disjoint cylinders
$N_{\epsilon}(x^k)$. Let $\phi_{\epsilon}^k(t,x) \in
C_0^\infty(N_{\epsilon}(x^k))$, such that $\phi_{\epsilon}^k\equiv 1$
on $N_{\epsilon/2}(x^k)$. Define, for arbitrary $f^k\in
C^\infty(\R/\Z;\R^2)$,
$$
(\delta h) (t,x) = \sum_{k=1}^n \phi_{\epsilon}^k(t,x) \langle f^k(t),x\rangle_{L^2}.
$$
Since $\phi^k_{\epsilon}(t,x^k(t)) \equiv 1$ it holds that $\delta
\overline{h }(t,\x) = \sum_{k=1}^n   \langle f^k(t),x^k\rangle_{L^2}$
for $\x \in \P_{H'}$, and therefore the gradient satisfies $\nabla \delta
\overline{h }(t,\x) = {\mathbf{ f}}=(f^k) \in C^\infty(\R/\Z;\R^{2n})$.
Moreover, $\delta h  \in \cV_\epsilon$ by construction, and because
$C^\infty(\R/\Z;\R^{2n})$ is dense in $L^2([0,1];\R^{2n})$ it follows
that $d\cG(\x,h)$ is surjective.

Consider the projection $\pi: \cZ \to \cV_{\delta,\epsilon}$, defined
by $\pi(\x,h) = h$. The projection $\pi$ is a   Fredholm operator.
Indeed, $d\pi: T_{(\x,h)} \cZ \to \cV_\epsilon$, with
$d\pi(\x,h)(\xi,\delta h ) = \delta h $, and
$$
T_{(\x,h)} \cZ  =
\left\{
(\xi,\delta h )\in W^{1,2}_\sigma\times \cV_\epsilon
~|~
L_\x\xi - \nabla\delta \overline{h } = 0
\right\}.
$$
From this it follows that $\ind(d\pi) = \ind(L_\x) = 0$. The Sard-Smale
Theorem 
implies that the set of perturbations $h\in
\cV_{\delta,\epsilon}^{\rm reg} \subset \cV_{\delta,\epsilon}$ for
which $h$ is a regular value of $\pi$ is an open and dense subset. It
remains to show that $h\in \cV_{\delta,\epsilon}^{\rm reg}$ yields
that $L_\x$ is surjective. Let $h\in \cV_{\delta,\epsilon}^{\rm reg}$,
and $(\x,h)\in Z$, then $d\cG(\x,h)$ is surjective, i.e., for any
$\zeta\in L^2([0,1];\R^{2n})$ there are $(\xi,\delta h )$ such that
$d\cG(\x,h)(\xi,\delta h )=\zeta$. On the other hand, since since
$h$~is a regular value for $\pi$, there exists a $\widehat\xi$ such
that $d\pi(\x,h)(\widehat\xi,\delta h )=\delta h $ ,
$(\widehat\xi,\delta h ) \in T_{(\x,h)} \cZ $, i.e. $L_\x\widehat\xi -
\nabla \delta \overline{h }=0$. Now
\begin{eqnarray*}
    L_\x(\xi-\widehat\xi) &=&d\cG(\x,h)(\xi-\widehat\xi,0)\\
    &=& d\cG(\x,h)\bigl((\xi,\delta h )-(\widehat\xi,\eta)\bigr) = \zeta -0 = \zeta,
\end{eqnarray*}
which proves that for all $h\in \cV_{\delta,\epsilon}^{\rm reg}$ the
operator $L_\x$ is surjective, and hence also injective, implying that
$\x$ is non-degenerate.
\end{proof}

For $\x_\pm \in \P_H$, let  $\M_{\x_-,\x_+}^{J,H}([\x]\rel\y)$ be the
space of all bounded solutions in $\u \in \M^{J,H}([\x]\rel\y)$ such
that $\lim_{s \to \pm \infty} \u(s,\cdot)=\x_{\pm}(\cdot)$, i.e.,
connecting orbits in the relative braid class $[\x]\rel \y$. If
$\x_-=\x_+$, then the set consists of just this one critical point. The
space $\cS_{\x_-,\x_+}^{J,H}([\x]\rel\y)$, as usual, consists of the
corresponding trajectories.
\begin{lemma}
\label{lem:trans2} Let $[\x]\rel\y$ be a proper braid class and let
$H\in \cH$ be a generic Hamiltonian. Then
$$
\M^{J,H}([\x]\rel\y) \subset \bigcup_{\x_\pm \in \P_H} \M_{\x_-,\x_+}^{J,H}([\x]\rel\y),
$$
where $\P_H = \P_H([\x]\rel\y)$.
\end{lemma}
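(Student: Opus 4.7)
The plan is to combine the a priori action bound and monotonicity from Proposition \ref{prop:CR4}, the isolation and compactness from Proposition \ref{prop:isol1}, and the finiteness of $\P_H([\x]\rel\y)$ from Proposition \ref{prop:trans1}. Fix $\u\in \M^{J,H}([\x]\rel\y)$; the task is to construct limits $\x_\pm = \lim_{s\to\pm\infty} \u(s,\cdot)$ and to verify $\x_\pm \in \P_H([\x]\rel\y)$. I treat $s\to +\infty$; the other case is identical.

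First I would extract subsequential limits. By Proposition \ref{prop:CR4} the action $s\mapsto\f_H(\u(s,\cdot))$ is non-increasing with finite limits $c_\pm$, and
\[
\int_\R\int_0^1 |\u_s|_g^2\,dt\,ds \;\le\; c_- - c_+ \;<\;\infty .
\]
For any sequence $s_n\to+\infty$ set $\u_n(s,t):=\u(s+s_n,t)$; each $\u_n$ lies in $\M^{J,H}([\x]\rel\y)$. By the compactness and isolation in Proposition \ref{prop:isol1} (together with the higher regularity of Remark \ref{rmk:CR5a}), a subsequence converges in $C^r_{\loc}(\R\times\Sb^1)$ to a limit $\u^\infty\in\M^{J,H}([\x]\rel\y)$.

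Next I would show that every such $\u^\infty$ is a stationary critical point in $\P_H([\x]\rel\y)$. Translation invariance gives $\f_H(\u_n(s,\cdot))=\f_H(\u(s+s_n,\cdot))\to c_+$, uniformly on compact $s$-intervals, so $\f_H(\u^\infty(s,\cdot))\equiv c_+$. Differentiating yields $0=\tfrac{d}{ds}\f_H(\u^\infty(s,\cdot))=-\int_0^1|\u^\infty_s|_g^2\,dt$, hence $\u^\infty_s\equiv 0$ and $\u^\infty(s,t)=\x_+(t)$ for some stationary $\x_+\in\P_H(\bOn)$. Because $\M^{J,H}([\x]\rel\y)$ is contained in the isolated set $\cS^{J,H}([\x]\rel\y)\subset [\x]\rel\y$ (Proposition \ref{prop:isol1}), we obtain $\x_+\in\P_H([\x]\rel\y)$.

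The main obstacle is the final step: promoting subsequential convergence to genuine convergence, i.e.\ uniqueness of the limit. This is precisely where the genericity of $H$ is used. By Proposition \ref{prop:trans1} the set $\P_H([\x]\rel\y)$ is finite, hence discrete in the $C^0$-topology inherited from $\bOn$. Consider the $\omega$-limit set
\[
\omega(\u) \;\bydef\; \bigcap_{T>0}\overline{\{\u(s,\cdot)\,:\,s\ge T\}} .
\]
Each set $\{\u(s,\cdot):s\ge T\}$ is the continuous image of an interval, hence connected; its closure in the compact space provided by Proposition \ref{prop:isol1} is a compact connected set, and the nested intersection $\omega(\u)$ is therefore non-empty and connected. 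The previous paragraph shows $\omega(\u)\subset\P_H([\x]\rel\y)$, so $\omega(\u)$ is a connected subset of a finite discrete set, hence a single point $\{\x_+\}$. This forces $\u(s,\cdot)\to \x_+$ as $s\to +\infty$. Applying the same reasoning to $s\to -\infty$ yields $\x_-\in\P_H([\x]\rel\y)$, so $\u\in\M^{J,H}_{\x_-,\x_+}([\x]\rel\y)$, which is the desired inclusion.
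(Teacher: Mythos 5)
Your argument is correct and fills in what the paper itself leaves to the reference: the paper's ``proof'' of Lemma~\ref{lem:trans2} is simply the line ``See \cite{Sal} for a detailed proof,'' so there is no in-text argument to compare against. The route you take --- action monotonicity and the finite energy bound from Proposition~\ref{prop:CR4}, compactness/isolation from Proposition~\ref{prop:isol1} to extract $C^r_{\loc}$-convergent translates, vanishing of $\int_0^1|\u^\infty_s|_g^2\,dt$ to show every subsequential limit is a stationary braid lying in $[\x]\rel\y$, and then finiteness of $\P_H([\x]\rel\y)$ from Proposition~\ref{prop:trans1} together with connectedness of the $\omega$-limit set to promote subsequential convergence to genuine convergence --- is exactly the standard Floer-theoretic argument in Salamon's notes. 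The one place worth phrasing more carefully is the penultimate sentence of your third paragraph: $\M^{J,H}([\x]\rel\y)$ consists of maps, not loops, so the precise statement is that the stationary limit $\u^\infty$ satisfies $\u^\infty(s,\cdot)\in[\x]\rel\y$ for all $s$ (by definition of $\M^{J,H}([\x]\rel\y)$), and since $\u^\infty_s\equiv 0$ it is a critical braid in that fiber; this is what you mean, but the invocation of $\cS^{J,H}$ there is a small category slip. Otherwise the proof is complete.
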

See \cite{Sal} for a detailed proof.

\begin{corollary}
\label{cor:isol2} Let $[\x]\rel\y$ be a proper relative braid class and
let $H$ be a generic Hamiltonian with $\y \in \P_H(\bO{m})$. Then
the space of bounded solutions is given by the union $\M^{J,H}([\x]\rel
\y) = \bigcup_{\x_\pm\in\P_H } \M_{\x_-,\x_+}^{J,H}([\x]\rel\y)$.
\end{corollary}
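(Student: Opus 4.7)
The plan is to verify both inclusions, most of the work being the forward direction, which is essentially Lemma \ref{lem:trans2} strengthened by the isolation result of Proposition \ref{prop:isol1}.

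First, the reverse inclusion $\supseteq$ is tautological: any $\u \in \M_{\x_-,\x_+}^{J,H}([\x]\rel\y)$ lies in $\M^{J,H}([\x]\rel\y)$ by construction, since the space $\M_{\x_-,\x_+}^{J,H}([\x]\rel\y)$ is defined as the subset of $\M^{J,H}([\x]\rel\y)$ consisting of bounded solutions with specified limits at $\pm\infty$. So nothing has to be proved here.

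For the forward inclusion, let $\u \in \M^{J,H}([\x]\rel\y)$. I would proceed in three steps. (i) Existence of subsequential limits: by the Floer-type compactness in Proposition \ref{prop:CR4}, the translations $\u(\cdot+s_n,\cdot)$ subconverge in $C^\infty_{\loc}$, and since $\u(s,\cdot) \in [\x]\rel\y$ for all $s$, Proposition \ref{prop:isol1} gives that the limits lie in $\cl([\x]\rel\y)$ and in particular in $\bOn$ with $|\cdot|<1$. (ii) Identification with critical points in the braid class: the usual energy estimate $\tfrac{d}{ds}\f_H(\u(s,\cdot))=-\int_0^1|\u_s|_g^2\,dt \le 0$, combined with the uniform bound on $\f_H$ from Proposition \ref{prop:CR4}, forces $\int_{\R}\int_0^1|\u_s|_g^2\,dt\,ds<\infty$, whence any limit $\x_\pm = \lim_{n\to\infty}\u(s_n,\cdot)$ along $s_n\to\pm\infty$ satisfies the stationary equation and so belongs to $\P_H(\bOn)$; by Proposition \ref{prop:isol1} it in fact belongs to $\cl([\x]\rel\y)$, and then properness of the braid class (no collapse onto $\y$, onto itself, or onto $\partial\D^2$) puts $\x_\pm \in [\x]\rel\y$, hence $\x_\pm \in \P_H([\x]\rel\y)$. (iii) Uniqueness of the limit: this is where genericity is used. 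By Proposition \ref{prop:trans1}, the set $\P_H([\x]\rel\y)$ consists of finitely many non-degenerate critical points, hence it is discrete; combined with the $C^\infty_{\loc}$ subsequential convergence, this upgrades the subsequential limits to genuine limits $\lim_{s\to\pm\infty}\u(s,\cdot) = \x_\pm \in \P_H([\x]\rel\y)$. Therefore $\u \in \M_{\x_-,\x_+}^{J,H}([\x]\rel\y)$ for some pair $\x_\pm \in \P_H$, which is exactly the claim.

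The only nontrivial point is step (iii), the promotion of subsequential to genuine convergence, but because $\P_H$ is finite and non-degenerate under the genericity assumption, and because subsequential limits always exist in $\P_H([\x]\rel\y)$ by the compactness/isolation package, a standard continuity argument rules out the possibility of the trajectory oscillating between distinct critical points. All of these ingredients are already in Lemma \ref{lem:trans2} (cited to \cite{Sal}); the role of the corollary is just to rewrite its conclusion as an equality and to record that the limit critical points actually lie inside $[\x]\rel\y$, which is a direct consequence of properness together with Proposition \ref{prop:isol1}.
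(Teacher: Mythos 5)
Your proposal is correct, but it inverts the logical emphasis of the paper's proof. You treat the inclusion $\supseteq$ as tautological (which is right given the definition of $\M_{\x_-,\x_+}^{J,H}([\x]\rel\y)$ as a subset of $\M^{J,H}([\x]\rel\y)$ with prescribed limits) and then spend your effort re-deriving the forward inclusion $\subseteq$ via Floer compactness, the energy identity, and genericity --- but that forward inclusion \emph{is} Lemma \ref{lem:trans2}, which the paper invokes and cites to \cite{Sal} without re-proof, so your three steps are essentially a sketch of that cited lemma rather than of the corollary's new content. The paper, by contrast, takes the forward inclusion as given and identifies a different nontrivial point as the ``key observation'': since $\x_\pm$ lie in $[\x]\rel\y$, any connecting orbit $\u \in \M_{\x_-,\x_+}^{J,H}$ (taken in $\M^{J,H}(\bOn)$ a priori) must stay inside $[\x]\rel\y$ for all $s$, because the crossing number is non-increasing by monotonicity and agrees at both ends; hence the class-restricted and unrestricted connecting orbit spaces coincide, and the union over $\P_H([\x]\rel\y)$ really exhausts $\M^{J,H}([\x]\rel\y)$. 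That crossing-number argument is the substantive dynamical input that your write-up skips and that justifies writing the decomposition without worrying that a connecting orbit could wander out of the braid class midway. So: your argument proves the stated equality, but by re-deriving the cited lemma rather than by surfacing the new observation the corollary is meant to capture; it would be worth adding the one-line crossing-number remark to make the dependence on the Monotonicity Lemma explicit.
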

\begin{proof} The key observation is that since $\x_\pm \rel \y
\in [\x]\rel\y$, also $\u(s,\cdot)\rel\y \in [\x]\rel\y$, for all $s\in \R$
(the crossing number cannot change). Therefore, any $\u \in
\M_{\x_-,\x_+}^{J,H}([\x]\rel\y)$ is contained in $[\x]\rel\y$, and thus
$\M_{\x_-,\x_+}^{J,H}([\x]\rel\y)\subset \M^{J,H}([\x]\rel\y)$. The
remainder of the proof follows from Lemma \ref{lem:trans2}.
\end{proof}

Note that the sets $\M_{\x_-,\x_+}^{J,H}([\x]\rel\y)$ are not
necessarily compact in $\bOn$. The following corollary gives a more
precise statement about the compactness of the spaces
$\M_{\x_-,\x_+}^{J,H}([\x]\rel\y)$, which will be referred to as
geometric convergence.
\begin{corollary}
\label{cor:geom1} Let $[\x]\rel\y$ be a proper relative braid class and
$H$ be a generic Hamiltonian with $\y \in \P_H(\bO{m})$. Then for
any sequence $\{\u_n\} \subset \M_{\x_-,\x_+}^{J,H}([\x]\rel\y)$
(along a subsequence) there exist  stationary braids $\x^i \in
\P_H([\x]\rel\y)$, $i=0,\dots,m$, orbits $\u^i \in
\M_{\x^i,\x^{i-1}}^{J,H}([\x]\rel\y)$ and times $s_n^i$,
$i=1,\dots,m$, such that
$$
\u_n(\cdot + s_n^i,\cdot)   \longrightarrow \u^i,\quad n\to\infty,
$$
in $C^r_{\rm loc}(\R\times \R/\Z)$, for any $r\ge 1$. Moreover, $\x^0
= \x_+$ and $\x^m = \x_-$ and $\f_H(\x^{i}) >\f_H(\x^{i-1})$ for
$i=1,\dots, m$. The sequence $\u_n$ is said to geometrically
converge to the broken trajectory $(\u^1,\dots,\u^m)$.
\end{corollary}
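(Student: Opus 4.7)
The argument is the braid-class adaptation of the standard Floer-Gromov broken-trajectory compactness theorem \cite{Sal,Sal2}, built from four ingredients already established above: the uniform compactness of $\M^{J,H}$ (Proposition~\ref{prop:CR4}), the isolation of $\M^{J,H}([\x]\rel\y)$ in the braid class (Proposition~\ref{prop:isol1}), the finiteness and non-degeneracy of $\P_H([\x]\rel\y)$ (Proposition~\ref{prop:trans1} and Lemma~\ref{lem:br4}), and the asymptotic statement of Lemma~\ref{lem:trans2}. The plan is to use the action as a selector of times, shift by those times, then extract simultaneous $C^r_{\loc}$-limits.

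First I would enumerate the finitely many action values attained on $\P_H([\x]\rel\y)$ in increasing order as
\begin{equation*}
  \f_H(\x_+) = \alpha_0 < \alpha_1 < \cdots < \alpha_M = \f_H(\x_-),
\end{equation*}
and choose separating levels $\beta_i \in (\alpha_{i-1},\alpha_i)$ for $i=1,\dots,M$. Since $\x_- \neq \x_+$ each $\u_n$ is non-stationary, so $\tfrac{d}{ds}\f_H(\u_n(s,\cdot)) = -\int_0^1 |\u_{n,s}|_g^2\,dt < 0$ gives strict monotonicity, and there is a unique time $s_n^i \in \R$ with $\f_H(\u_n(s_n^i,\cdot))=\beta_i$.

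Next I would apply Proposition~\ref{prop:CR4} to each of the $M$ shifted families $\u_n(\cdot+s_n^i,\cdot)$ and pass, by a diagonal argument, to a single subsequence along which all $M$ shifted sequences converge in $C^r_{\loc}(\R\times\R/\Z)$ to limits $\u^i$. The isolation property of Proposition~\ref{prop:isol1} then places each $\u^i$ in $\M^{J,H}([\x]\rel\y)$, and the normalization $\f_H(\u^i(0,\cdot))=\beta_i\notin\{\alpha_k\}$ certifies that $\u^i$ is non-stationary. Lemma~\ref{lem:trans2} then yields $\u^i \in \M^{J,H}_{\x^i,\x^{i-1}}([\x]\rel\y)$ for some $\x^{i-1},\x^i \in \P_H([\x]\rel\y)$. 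Monotonicity of $\f_H$ along $\u^i$, combined with the absence of critical values in the open interval $(\alpha_{i-1},\alpha_i)$, forces the trapping inequalities $\f_H(\x^i) \ge \alpha_i > \alpha_{i-1} \ge \f_H(\x^{i-1})$, and in particular the strict action drop $\f_H(\x^i) > \f_H(\x^{i-1})$ required by the corollary.

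The step I expect to be the principal obstacle is the chaining, namely showing that the $(-\infty)$-asymptote $\x^i$ of $\u^i$ coincides with the $(+\infty)$-asymptote of $\u^{i+1}$, and that the outermost pieces satisfy $\x^0=\x_+$ and $\x^M=\x_-$. The trapping inequalities already force $\f_H(\x^i)=\alpha_i$ on the nose, reducing the problem to locating a single critical orbit at that level. One completes this either by a mild further generic perturbation of $H$ so that the action values on $\P_H([\x]\rel\y)$ are all distinct, or, without that assumption, by localizing energy on the intervals $[s_n^i,s_n^{i+1}]$: the action of $\u_n$ there drops by $\beta_i-\beta_{i+1}$, and by the energy bound of Proposition~\ref{prop:CR4} the $L^2$-mass of $\u_{n,s}$ concentrates, as $n\to\infty$, in a $C^r_{\loc}$-neighborhood of a single connected component of the critical set at level $\alpha_i$, whereupon non-degeneracy of that critical point identifies the two limits. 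With the matching established, the sequence $(\u^1,\dots,\u^M)$ and the critical points $(\x^0,\dots,\x^M)$ constitute the broken trajectory asserted in the corollary.
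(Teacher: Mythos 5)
The paper defers this corollary entirely to Salamon's notes \cite{Sal}, so what you have written is a reconstruction rather than a comparison, and the toolkit you assemble --- uniform $C^r_{\loc}$-compactness from Proposition~\ref{prop:CR4}, isolation from Proposition~\ref{prop:isol1}, finiteness and non-degeneracy of $\P_H([\x]\rel\y)$, and the level-set normalization via the action --- is the right one. The first three paragraphs are sound as far as they go. The gap is precisely in the chaining step you flag as the principal obstacle, and neither of the two repairs you offer closes it.

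Your construction produces one piece $\u^i$ per critical action value $\alpha_i$ in the window $(\f_H(\x_+),\f_H(\x_-))$, but the actual limiting broken trajectory may skip some of those levels. Take four levels $\alpha_0<\alpha_1<\alpha_2<\alpha_3$ with the sequence geometrically converging to a two-piece trajectory through $\x_+$ (level $\alpha_0$), $\x^*$ (level $\alpha_2$), $\x_-$ (level $\alpha_3$), with no critical point at level $\alpha_1$ participating: the connecting orbit from $\x^*$ to $\x_+$ has strictly decreasing action crossing $\alpha_1$ transversally, and nothing forbids this. Then $|s_n^1-s_n^2|$ stays bounded, so $\u^1$ and $\u^2$ are $s$-translates of the same orbit; the $s\to-\infty$ limit of $\u^1$ is $\x^*$ (level $\alpha_2$) while the $s\to+\infty$ limit of $\u^2$ is $\x_+$ (level $\alpha_0$), and these do not match up to give a single $\x^1$. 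A further generic perturbation separating action values does nothing to this scenario, since distinctness of values is unrelated to which levels are visited by a given trajectory. Your energy-concentration claim is in fact false here: on the interval between $s_n^2$ and $s_n^1$ the energy $\beta_2-\beta_1$ does not go to zero and does not accumulate near any critical point --- it limits to the positive, spread-out energy of the connecting orbit as it passes through the band $(\beta_1,\beta_2)$ at bounded speed, nowhere near the critical set at level $\alpha_1$. The standard repair, which is what \cite{Sal} actually does, is to first group the indices $i$ into maximal blocks on which $|s_n^{i+1}-s_n^i|$ remains bounded along the chosen subsequence, merge the pieces within each block (they are mutual translates of a single orbit), and re-index; after merging, the remaining time gaps diverge, and the matching of asymptotes between consecutive blocks --- and the identifications $\x^0=\x_+$, $\x^m=\x_-$ --- follows from a no-escape-of-energy argument: the $L^2$-mass of $\u_{n,s}$ on $[s_n^i+R,s_n^{i+1}-R]$ tends to zero as $n\to\infty$ and then $R\to\infty$, and isolation plus non-degeneracy of the critical set then pins down a unique common critical limit. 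That merging-plus-energy step is what your sketch is missing.
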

See, again, \cite{Sal} for a proof.

\subsection{Generic properties for connecting orbits}
\label{subsec:orbits} As for critical points, non-degeneracy can also
be defined for connecting orbits. This closely follows the ideas in the
previous subsection. Set $W^{1,p}_\sigma = W^{1,p}_\sigma(\R\times
[0,1];\R^{2n})$ and $L^p= L^p(\R\times [0,1];\R^{2n})$.

Let $\x_-, \x_+\in \P_H(\bOn)$ be non-degenerate stationary braids. A
connecting orbit $\u\in \M_{\x_-,\x_+}^{J,H}$ is said to be
\style{non-degenerate}, or \style{transverse}, if the linearized
Cauchy-Riemann operator
$$
{\partial\over\partial s} - \overline J{\partial\over\partial t}
+ \overline J \overline J_0 d^2 \overline H(t,\u(s,t)):~W^{1,p}_\sigma\to L^p,
$$
is a surjective operator (for all $1<p<\infty$).

As before we equip $C^\infty(\R/\Z\times \D^2;\R)$ with a Banach
structure, cf. Sect.\ \ref{subsec:points}.

\begin{proposition}
\label{prop:trans4} Let $[\x]\rel\y$ be a proper relative braid class,
and $H\in \cH$ be a generic Hamiltonian such that $\y \in
\P_H(\bO{m})$. Then, there exists a $\delta_*>0$ such that for any
$\delta\le \delta_*$ there exists a nearby Hamiltonian $H' \in \cH$
with $\Vert H-H'\Vert_{C^\infty}<\delta$ and $\y \in \P_{H'}(\bO{m})$
such that
\begin{enumerate}
\item[(i)] $ \P_{ H'}([\x]\rel\y) =  \P_{ H}([\x]\rel\y)$ and consists of
    only non-degenerate stationary points for the action $\f_{H'}$;
\end{enumerate}
and for any pair $\x_-,\x_+ \in \P_{H'}([\x]\rel\y)$
\begin{enumerate}
\item[(ii)] $\cS^{J,H'}_{\x_-,\x_+}([\x]\rel\y)$ is isolated in
    $[\x]\rel\y$;
\item[(iii)] $\M_{\x_-,\x_+}^{J,H'}([\x]\rel\y)$ consists of
    non-degenerate connecting orbits;
\item[(iv)] $\M^{J,H'}_{\x_-,\x_+}([\x]\rel\y)$ are smooth manifolds
    without boundary and
\end{enumerate}
$$
\dim  \M^{J,H'}_{\x_-,\x_+} ([\x]\rel\y)= \mu(\x_-) - \mu(\x_+),
$$
where $\mu$ is the Conley-Zehnder index defined in Definition
\ref{defn:mas3}.
\end{proposition}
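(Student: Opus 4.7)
The plan is to combine Proposition \ref{prop:trans1} with a standard Sard--Smale transversality argument for the parametrised Cauchy--Riemann operator along connecting orbits. I would start by invoking Proposition \ref{prop:trans1} to replace $H$ by a nearby Hamiltonian for which $\P_{H}([\x]\rel\y)$ is finite and non-degenerate; call this Hamiltonian $H$ again. Since $\P_H([\x]\rel\y) \subset \Int(A_{2\epsilon}^c)$ is finite, I can further shrink the class of admissible perturbations $\cV_{\delta,\epsilon}$ so that every $h\in \cV_{\delta,\epsilon}$ is supported away from small tubular neighbourhoods $U_{\x_i}$ of each $\x_i\in\P_H([\x]\rel\y)$ as well as away from $\y$ and $\partial\D^2$. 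Any $H' = H + h$ with $h$ in this class then leaves $\P_H([\x]\rel\y)$ and its non-degeneracy untouched, giving (i). Isolation of $\cS^{J,H'}_{\x_-,\x_+}([\x]\rel\y)$ in (ii) is then an immediate consequence of Proposition \ref{prop:isol1}, since connecting orbits sit inside $[\x]\rel\y$ and are uniformly separated from $\Sigma^n\rel\y$ and from $\partial\D^2$.

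For the transversality of connecting orbits, I would fix an ordered pair $(\x_-,\x_+)\in \P_{H'}^2$ and build a universal moduli space. Using exponentially weighted Sobolev spaces $W^{1,p}_{\sigma,\tau}$ (with weight $\tau>0$ smaller than the spectral gap of $L_{\x_\pm}$ at the non-degenerate endpoints), let $\cB_{\x_-,\x_+}$ be a Banach manifold of maps $\u:\R\times \R/\Z\to(\D^2)^n/S_n$ asymptotic to $\x_\pm$ and lying in $[\x]\rel\y$. Define
$$
\cG: \cB_{\x_-,\x_+}\times \cV_{\delta,\epsilon}\;\longrightarrow\; L^p_\tau,
\qquad
\cG(\u,h) \;=\; \u_s - \overline J(t,\u)\bigl[\u_t - X_{\overline{H+h}}(t,\u)\bigr].
$$
At a zero $(\u,h)$ the vertical differential splits as $d\cG(\u,h)(\xi,\delta h) = D_{\u}\xi + J(t,\u)\nabla\overline{\delta h}(t,\u)$, where $D_{\u}$ is the linearised CRE. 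By Proposition \ref{prop:mas2}, applied with $K(s,t)=d^2\overline{H+h}(t,\u(s,t))$ and with limits $K_\pm$ coming from the non-degenerate stationary braids $\x_\pm$, the operator $D_{\u}$ is Fredholm of index $\mu(\x_-)-\mu(\x_+)$.

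The main obstacle is verifying that $d\cG(\u,h)$ is surjective at every zero. I would argue by duality: if $\eta\in L^q_{-\tau}$ annihilates the range, elliptic regularity for the CRE implies $\eta$ is smooth and satisfies the formal adjoint equation, while the pairing
$$
\int_{\R\times\R/\Z}\bigl\langle \eta(s,t),\, J(t,\u)\nabla\overline{\delta h}(t,\u(s,t))\bigr\rangle\,dt\,ds \;=\;0
$$
must hold for every admissible $\delta h\in \cV_\epsilon$. Since $\u$ is a non-constant connecting orbit, it must leave every $U_{\x_i}$, and by the Monotonicity Lemma~\ref{prop:dislyap3} combined with properness, there exist points $(s_0,t_0)$ at which the strands $u^k(s_0,t_0)$ are pairwise distinct and contained in $\Int(A_{2\epsilon}^c)$. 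Localising $\delta h$ by bump functions near $(t_0,u^k(s_0,t_0))$, as in the argument of Proposition~\ref{prop:trans1}, the annihilation condition forces $\eta$ to vanish on an open subset of $\R\times\R/\Z$. Aronszajn's unique continuation theorem for first-order elliptic systems of Cauchy--Riemann type then yields $\eta\equiv 0$, contradicting the hypothesis; hence $d\cG$ is surjective.

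Granting surjectivity, $\cZ_{\x_-,\x_+} = \cG^{-1}(0)$ is a separable Banach manifold and the projection $\pi_{\x_-,\x_+}:\cZ_{\x_-,\x_+}\to\cV_{\delta,\epsilon}$ is a smooth Fredholm map of the same index. By the Sard--Smale theorem, its regular values form a residual subset $\cV^{\rm reg}_{\x_-,\x_+}\subset\cV_{\delta,\epsilon}$. Intersecting over the finitely many pairs $(\x_-,\x_+)\in\P_{H'}^2$ and using a Baire category argument yields a residual set of perturbations $h$ for which each $\M^{J,H'}_{\x_-,\x_+}([\x]\rel\y)$ is simultaneously a smooth manifold of dimension $\mu(\x_-)-\mu(\x_+)$; this establishes (iii) and (iv). The absence of boundary is inherited from Corollary~\ref{cor:geom1}: any sequential limit point of such a moduli space is a broken trajectory lying in a union of lower-dimensional strata $\M^{J,H'}_{\x^i,\x^{i-1}}$ with $\f_{H'}(\x^i)>\f_{H'}(\x^{i-1})$, and therefore does not belong to $\M^{J,H'}_{\x_-,\x_+}$ itself.
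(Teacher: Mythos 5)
Your high-level structure is right: reduce to a generic $H$ for the critical points via Proposition~\ref{prop:trans1}, restrict perturbations to a class $\cV_{\delta,\epsilon}$ supported away from $\y$, $\partial\D^2$ and the stationary braids $\x_\pm$, set up a universal Cauchy--Riemann operator $\cG$, invoke Proposition~\ref{prop:mas2} for the Fredholm index, and pass to regular values via Sard--Smale. All of this matches the paper. The minor stylistic difference (weighted Sobolev spaces versus the paper's affine space $\gamma + W^{1,p}_\sigma$) is harmless.

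The genuine gap is in the surjectivity argument. You assert that, if $\eta$ annihilates the range, then localising $\delta h$ by bump functions near points $(t_0,u^k(s_0,t_0))$ ``forces $\eta$ to vanish on an open subset,'' whereupon Aronszajn gives $\eta\equiv 0$. This is not true, and it is precisely why the paper (following Salamon--Zehnder) needs four separate steps. The obstruction is that $\delta h$ is a function of $(t,x)$ only, not of $s$: the perturbation field $(s,t)\mapsto J\nabla\overline{\delta h}\bigl(t,\u(s,t)\bigr)$ is the pull-back of an $s$-independent object along the orbit. Even after establishing a regular point $(s_0,t_0)$ in the Salamon--Zehnder sense (which you need and do not establish — distinctness of strands from the Monotonicity Lemma is not enough; you need the map $(s,t)\mapsto(t,\u(s,t))$ to be a local embedding with trivial further preimage), the pairing $\int\langle\eta, J\nabla\overline{\delta h}\rangle\,dt\,ds=0$ cannot rule out $\eta(s,t)=\lambda(t)\,\u_s(s,t)$. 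Indeed, if $\eta=\lambda(t)\u_s$ with $\lambda$ $s$-independent, every such pairing vanishes automatically by integration by parts in $s$. So the orthogonality condition only forces $\eta$ to be pointwise proportional to $\u_s$ with a $t$-dependent factor; it does not force $\eta$ to vanish anywhere.

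The actual proof of surjectivity proceeds in the remaining Salamon--Zehnder steps, which your argument omits entirely: (Step 2) use a localised $\chi$ in straightened coordinates to contradict linear \emph{in}dependence of $\eta$ and $\u_s$ at a regular point; (Step 3) use the same kind of perturbation to deduce $\partial_s\lambda=0$; (Step 4) derive a global contradiction because $\int_0^1\langle\u_s,\eta\rangle\,dt=\int_0^1\lambda(t)|\u_s|^2\,dt$ is a positive, $s$-independent constant while $\int_\R\int_0^1|\u_s|^2\,dt\,ds <\infty$ by Proposition~\ref{prop:CR4}. Without this integral argument the proof does not go through, since $\eta$ is in fact genuinely nonzero in the assumed scenario; the contradiction is asymptotic, not local.
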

\begin{proof}
Since $[\x]\rel\y$ is a proper braid class it follows from Proposition
\ref{prop:isol1} that $\cS^{J,H'}_{\x_-,\x_+}$ is isolated in $[\x]\rel\y$
for any $H'\in \cH$ provided $\y \in \P_{H'}$.

As for the transversality properties we follow Salamon and Zehnder
\cite{SalZehn1}, where perturbations in $\R^{2n}$ are considered. We
adapt the proof for Hamiltonians in $\R^2$. The proof is similar in
spirit to the genericity of critical points.

As in the proof of Proposition \ref{prop:trans1} we denote by
$\cV_\epsilon$ the set of perturbations $h\in C^\infty(\R/\Z\times
\D^2;\R)$ whose support is bounded away from $(t,\y(t))$,
$(t,\x_\pm(t))$ and $\partial\D^2$ (this yields a corresponding set
$A_\epsilon$ as in the proof of Proposition \ref{prop:trans1}). If we
choose $h\in \cV_{\delta,\epsilon}$ there exists a $\delta_*$ such
that $\P_{ H'}([\x]\rel\y) = \P_{ H}([\x]\rel\y)$ and consists of only
non-degenerate stationary points for the action $\f_{H'}$. For details
of this construction we refer to the proof of Proposition
\ref{prop:trans1}.

Define the Cauchy-Riemann operator
$$
    \cG(\u,\bh)=\u_{s}-\overline J \u_t
    +\overline{J}\overline J_0\nabla \overline{H}(t,\u)
    + \overline{J}\overline J_0\nabla  \overline \bh(t,\u).
$$
Based on the a priori regularity of bounded solutions of the
Cauchy-Riemann equations we define for $2<p<\infty$ the affine
spaces
\begin{equation}
\label{e:cU}
 \cU^{1,p}(\x_-,\x_+)\bydef \bigl\{  \gamma + \xi~|~\xi \in
 W^{1,p}_{\sigma}(\R\times [0,1];\R^{2n}) \bigr\},
\end{equation}
and balls $\cU_\eps^{1,p} = \{ \u \in \cU^{1,p} \,|\, \Vert
\xi\Vert_{W^{1,p}_\sigma}<\epsilon \}$, where $\gamma(s,t)\in
C^2\bigl(\R\times [0,1];(\D^2)^n\bigr)$ is a fixed connecting path
such that $\lim_{s\rightarrow\pm\infty} \gamma(s,\cdot)=\x_{\pm}$
and $\gamma(s,t) \in \Int (\D^2)^n$ for all $(s,t) \in \R\times [0,1]$.
Therefore, for $p>2$, functions $\u \in
\mathcal{U}_{\epsilon}^{1,p}(\x_-,\x_+)$ satisfy the limits
$\lim_{s\rightarrow\pm\infty}\u(s,\cdot)=\x_{\pm}$ and if
$\epsilon>0$ is chosen sufficiently small then also $\u(s,t) \in \Int
(\D^2)^n$ for all $(s,t) \in \R\times [0,1]$. The mapping
$$
    \cG:  \cU_{\epsilon}^{1,p}(\x_-,\x_+)\times \cV_{\delta,\epsilon}
    \rightarrow L^p(\R\times [0,1];\R^{2n}),
$$
is smooth. Define
$$
    \cZ_{\x_-,\x_+}\bydef\bigl\{(\u,\bh)\in
    \mathcal{U}_{\epsilon}^{1,p}(\x_-,\x_+)\times \cV_{\delta,\epsilon}\
    |\ \cG(\u,\bh)=0\bigr\}=\cG^{-1}(0),
$$
which is Banach manifold  provided that $d\cG(\u,\bh)$ is onto on for
all $(\u,\bh) \in \cZ_{\x_-,\x_+}$, where
$$
d\cG(\u,\bh)(\xi,\delta\bh) = d_1 \cG(\u,\delta\bh) \xi
+ \overline J \overline J_0 \nabla  \overline{\delta \bh}.
$$
Assume that $d\cG(\u,\bh)$ is not onto. Then there exists a non-zero
function $\eta\in L^{q}$ which annihilates the range of $d\cG(\u,\bh)$
and thus also the range of $d_1\cG(\u,\bh)$, which is a Fredholm
operator of index $\mu(\x_-)-\mu(\x_+)$; see
Proposition~\ref{prop:mas2}. The relation $\langle
d_1\cG(\u,\bh)(\xi),\eta\rangle=0$ for all $\xi$ implies that
$$
d_1\cG(\u,\bh)^*\eta = -  \eta_s - \overline J   \eta_t + \overline J \overline J_0 d^2  \overline
H(t,\u)  \eta =0.
$$
Since  $\langle d\cG(\u,\bh)(\xi,\delta\bh),\eta \rangle =0$ it follows
that
\begin{equation}
\label{TransConn:eq:etaorthogonalrange}
    \int_{-\infty}^{\infty}\int_{0}^{1}\langle \eta(s,t),
    \overline J\overline J_0\nabla \delta \overline{ \bh}
    \rangle_{\R^{2n}} dtds=0,\qquad \forall \delta \bh.
\end{equation}
Due to the assumptions on $\bh$ and $H$, the regularity theory for
the linear Cauchy-Riemann operator implies that $\eta$ is smooth. It
remains to show that no such non-zero function $\eta$ exists.

{\it Step $1$}: The function  $\eta$ satisfies the following perturbed
Laplace's equation: $\Delta \eta = \partial_{\overline J} \bar
\partial_{\overline J} \eta = \partial_{\overline J}\overline J\overline J_0
d^2\overline H(t,\u) \eta$. If at some $(s_0,t_0)$ all derivatives of $\eta$
vanish, it follows from Aronszajn's unique
continuation~\cite{aronszajn} that $\eta \equiv  0$ is a neighborhood
of $(s_0,t_0)$. Therefore $\eta(s,t)\neq0$ for almost all $(s,t)\in\R
\times [0,1]$.

{\it Step $2$}: The vectors $\eta(s,t)$ and $\u_{s}(s,t)$ are linearly
dependent for all $s$ and $t$. Suppose not, then these vector are
linearly independent at some point $(s_0,t_0)$. By Theorem 8.2 in
\cite{SalZehn1}   we may assume without loss of generality that
$\u_s(s_0,t_0)\not = 0$ and $\u(s_0,t_0)\not = \x_\pm(t_0)$ --- a
regular point. We now follow the   arguments  as in the proof of
Theorem 8.4 in \cite{SalZehn1} with some modifications. Since
$(s_0,t_0)$ is a regular point there exists a small neighborhood $ U_0
= I_{t_0} \times U_{x^1} \times \dots \times U_{x^n}, $ such that
$V_0 = \{(s,t)~|~(t,\u(s,t)) \in U_0\}$ is a neighborhood $V_0$ of
$(s_0,t_0)$ and $U_0 \cap A_\epsilon = \varnothing$. The sets
$U_{x^k}$ are neighborhoods of $u^k(s_0,t_0)$ and have the
important property that $U_{x^k} \cap U_{x^{k'}} = \varnothing$ for
all $k\not = k'$. The proof in  \cite{SalZehn1} shows that the map
$(s,t) \mapsto (t,\u(s,t))$ from $V_0$ to $U_0$ is a diffeomorphism.
By choosing $V_0$ small enough $\u_s$ and $\eta$ are linearly
independent on $V_0$. As in \cite{SalZehn1} this yields the existence
of coordinates $\phi_t: (\D^2)^n \to \R^{2n}$ in a neighborhood
$(t_0,\u(s_0,t_0))\in U_0$ such that
$$
\phi_t(\u(s,t))=(s-s_0,0,\dots,0), \qquad d\phi_t(\u(s,t))\eta(s,t)=(0,1,0,\dots,0).
$$
Define $g:\R^{2n}\times \Sb^1\rightarrow\R$ via
$g(y_1,\dots,y_{2n},t)=\beta(t-t_0)\beta(y_1)\beta(y_2)y_2$, where
$\beta\ge 0$ is a $C^\infty$ cutoff function such that $\beta=1$ on a ball
centered at zero $B_{\delta_1}(0)$ for sufficiently small positive
$\delta_1$ and $\beta=0$ outside of $B_{2\delta_1}(0)$. We define a
Hamiltonian $\chi:\Sb^1 \times (\D^2)^n \rightarrow\R$ via
$\chi(t,x_1,\dots,x_{2n})=g(\phi(x_1,\dots,x_{2n}),t)$. By
construction $\chi$ vanishes outside $U_0$,
$\chi(t,\u(s,t))=g((s-s_0,0,\dots,0),t)=0$, and $d\chi(t,\u(s,t))
\eta(s,t) =\beta(s-s_0)\beta(t-t_0)$ for all $(s,t) \in V_0$. In order to
have an admissible perturbation we need a Hamiltonian $\delta h \in
\cV_\epsilon$ such that $\chi = \delta\overline h$. Since $\chi$
vanishes outside $U_0$ and since $U_{x^k} \cap U_{x^{k'}} =
\varnothing$ for all $k\not = k'$ we can solve this equation. Set
$(\delta h)(t,u^k(s_0,t)) = \chi(t,\u(s_0,t_0))/n$ and define $\delta
h$ on the disjoint set $U_{x^k}$ as follows:
$$
(\delta h)(t,x^k) = \chi(t,u^1(s_0,t),\dots,x^k,\dots,u^n(s_0,t))
- \frac{n-1}{n} \chi(t,\u(s_0,t)).
$$
Since the sets $U_{x^k}$ are disjoint $\delta h$ is well-defined
$\Sb^1\times \D^2$ and zero outside $I_{t_0} \times (\cup_k U_{x^k})$. With this
choice of perturbation $\delta h$ the integral in Equation
(\ref{TransConn:eq:etaorthogonalrange}) is non-zero which
contradicts the assumption on $\eta$.

The remaining steps are identical to those in the proof of Theorem
8.4 in \cite{SalZehn1}: we outline these for completeness.

{\it Step $3$}: The previous step implies the existence of a function
$\lambda:\R\times[0,1]\rightarrow\R$ such that
$\eta(s,t)=\lambda(s,t)\frac{\partial \u}{\partial s}(s,t)$, for all $s,t$
for which $\eta(s,t) \not = 0$. Using a contradiction argument with
respect to Equation \rmref{TransConn:eq:etaorthogonalrange} yields
$\frac{\partial \lambda}{\partial s}(s,t)=0$, for almost all $(s,t)$. In
particular we obtain that $\lambda$ is $s$-independent and we can
assume that $\lambda(t)\ge \delta>0$ for all $t\in[0,1]$ (invoking
again unique continuation).

{\it Step $4$}: This final step provides a contradiction to the
assumption that $d\cG$ is not onto. It holds that
$$
    \int_0^1\left\langle \frac{\partial \u}{\partial s}(s,t),\eta(s,t)\right\rangle dt=
    \int_0^1\lambda(t)\left|\frac{\partial \u}{\partial s}(s,t)\right|^2dt
    \ge \delta \int_0^1\left|\frac{\partial \u}{\partial s}(s,t)\right|^2dt>0.
$$
The functions $\u_s$  and $\eta$ satisfy the equations
$d_1\cG(\u,\bh)\u_s =0,\quad d_1\cG(\u,\bh)^*\eta=0$, respectively.
From these equations we can derive expressions for $\u_{ss}$ and
$\eta_s$, from which:
$$
    \frac{d}{ds}\int_0^1\left\langle\frac{\partial \u}{\partial
    s}(s,t),\eta(s,t)\right\rangle dt=0.
$$
Combining this with the previous estimate yields that
$\int_{-\infty}^{\infty}\int_0^1| \u_{s}(s,t)|^2dt=\infty$, which,
combined with the compactness properties, contradicts the fact that
$\u\in\M_{\x_-,\x_+}$; thus $d\cG(\u,\bh)$ is onto for all $(\u,\bh) \in
\cZ_{\x_-,\x_+}$.

We can now apply the Sard-Smale theorem as in the proof of
Proposition \ref{prop:trans1}. The only difference here is that
application of the Sard-Smale requires
$(\mu(\x_-)-\mu(\x_+)+1)$-smoothness of $\cG$ which is guaranteed
by the smoothness of $\y$, $H$ and $\bh$.
\end{proof}

We can label a Hamiltonian to be generic now if both
$\P_H([\x]\rel\y)$ and $\cM_{\x_-.\x_+}([\x]\rel\y)$, $\x_\pm \in
\P_H([\x]\rel\y)$, are non-degenerate. The terminology `generic' is
justified since by the taken finitely many intersections for the
different pairs $\x_\pm$ we obtain a dense set of Hamiltonian,
denoted by $\cHH$. For generic Hamiltonians $H\in \cHH$ the
convergence of Corollary \ref{cor:geom1} can be extended with
estimates on the Conley-Zehnder indices of the stationary braids.
\begin{corollary}
\label{cor:geom2} Let $[\x]\rel\y$ be a proper relative braid class and
$H \in \cHH$ be a generic Hamiltonian with $\y \in \P_H(\bO{m})$. If
$\u_n$ geometrically converges to the broken trajectory
$(\u^1,\dots,\u^m)$, with $\u^i \in
\M_{\x^i,\x^{i-1}}^{J,H}([\x]\rel\y)$, $i=1,\dots,m$ and $\x^i \in
\P_H([\x]\rel\y)$, $i=0,\dots,m$, then
$$
   \mu(\x^i) >  \mu(\x^{i-1}),
$$
for $i=1,\dots,m$.
\end{corollary}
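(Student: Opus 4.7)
The plan is to combine the strict action inequality from Corollary~\ref{cor:geom1} with the dimension formula from Proposition~\ref{prop:trans4}(iv), exploiting the free $\R$-action by $s$-translation on nonstationary connecting orbits.

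First, I would note that for each $i=1,\dots,m$ the trajectory $\u^i$ is a genuine (nonstationary) connecting orbit in $\M^{J,H}_{\x^i,\x^{i-1}}([\x]\rel\y)$. Indeed, Corollary~\ref{cor:geom1} gives $\f_H(\x^i)>\f_H(\x^{i-1})$ strictly, which rules out $\x^i = \x^{i-1}$; thus $\u^i$ cannot be constant in $s$, since along any flowline $\frac{d}{ds}\f_H(\u(s,\cdot)) = -\int_0^1|\u_s|_g^2\,dt$ so a stationary trajectory would have equal limits at $\pm\infty$.

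Next, since $H\in\cHH$ is a generic Hamiltonian and $[\x]\rel\y$ is proper, Proposition~\ref{prop:trans4}(iv) applies to each pair $(\x^i,\x^{i-1})$, giving that $\M^{J,H}_{\x^i,\x^{i-1}}([\x]\rel\y)$ is a smooth manifold (without boundary) of dimension
\[
\dim \M^{J,H}_{\x^i,\x^{i-1}}([\x]\rel\y) \;=\; \mu(\x^i) - \mu(\x^{i-1}).
\]
In particular the manifold is nonempty (it contains $\u^i$), so this dimension is $\ge 0$.

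To upgrade $\ge 0$ to the strict inequality $\mu(\x^i)>\mu(\x^{i-1})$, I would invoke the free $\R$-action $\tau\cdot \u(s,t) \bydef \u(s+\tau,t)$ on $\M^{J,H}_{\x^i,\x^{i-1}}([\x]\rel\y)$. This action is free on any nonstationary orbit: if $\u^i(\cdot+\tau,\cdot)=\u^i$ for some $\tau\ne 0$ then $\u^i$ would be $s$-periodic, contradicting the distinct limits $\x^i\ne \x^{i-1}$ at $s=\mp\infty$. Hence the orbit of $\u^i$ is a smoothly embedded copy of $\R$ in the moduli space, forcing $\dim \M^{J,H}_{\x^i,\x^{i-1}}([\x]\rel\y) \ge 1$, and therefore $\mu(\x^i)-\mu(\x^{i-1}) \ge 1$, as required.

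The argument is essentially a direct corollary of the tools already assembled, so I do not anticipate a serious obstacle; the only subtle point to verify carefully is that the translation action is indeed free on the \emph{specific} nonstationary orbits $\u^i$ produced by geometric convergence, which follows immediately from the strict action drop $\f_H(\x^i)>\f_H(\x^{i-1})$ guaranteed by Corollary~\ref{cor:geom1}.
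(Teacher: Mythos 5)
Your proof is correct and is essentially the same argument that Salamon's lecture notes (cited by the paper as \cite{Sal} for this statement) use: each piece $\u^i$ of the broken trajectory is nonconstant in $s$ because $\f_H(\x^i)>\f_H(\x^{i-1})$, so $\u^i_s$ is a nonzero element of the kernel of the linearized operator, and the (generic) dimension formula from Proposition~\ref{prop:trans4}(iv) then forces $\mu(\x^i)-\mu(\x^{i-1})\ge 1$. The paper itself only cites \cite{Sal} rather than spelling out these details.
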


\begin{proof} See \cite{Sal} for a detailed proof of this statement.
\end{proof}

Since Proposition \ref{prop:trans4} provides a dense set of
Hamiltonians $\cHH$ the intersection of dense sets over all pairs
$(\x_-,\x_+)$ yields a dense set of Hamiltonians for which (i)-(iv) in
Proposition  \ref{prop:trans4} holds for all pairs pairs $(\x_-,\x_+)$
and thus for all of $\M^{J,H}([\x]\rel\y)$.

The above proof also carries over to the Cauchy-Riemann equations
with  $s$-dependent Hamiltonians $H(s,\cdot,\cdot)$. Exploiting the
Fredholm index property for the $s$-dependent case we obtain the
following corollary. Let $s \mapsto H(s,\cdot,\cdot)$ be a smooth
path in $\cH$ with the property $H_s = 0$ for $|s|\ge R$. We have
the following non-autonomous version of
Proposition~\ref{prop:trans4}, see~\cite{SalZehn1}.
\begin{corollary}
\label{cor:trans7} Let $[\x\rel\y]$ be a proper relative braid class with
fibers $[\x]\rel\y$, $[\x']\rel\y'$ in $[\x\rel\y]$. Let $s\mapsto
H(s,\cdot,\cdot)$ be a smooth path in $\cH$ as described above with
$H_\pm = H(\pm \infty,\cdot,\cdot) \in \cHH$ and $\y \in \P_{H_-}$,
$\y' \in \P_{H_+}$. Then there exists a $\delta_*>0$ such that for any
$\delta \le \delta_*$ there exist a path of Hamiltonians $s\mapsto
H'(s,\cdot,\cdot)$ in $\cH$, with $H'_s =0$ for $|s|\ge R$, $H(\pm
\infty,\cdot,\cdot) = H_\pm$ and $\Vert H-H'\Vert_{C^\infty}<\delta$
such that
\begin{itemize}
\item[(i)] $\cS^{J,H'} ([\x\rel\y])$ is isolated in $[\x \rel\y]$;
\item[(ii)] $\M^{J,H'}_{\x_-,\x'_+} ([\x\rel\y])$ consist of
    non-degenerate connecting orbits with respect to the
    $s$-dependent CRE;
\item[(iii)] $\M^{J,H'}_{\x_-,\x'_+} ([\x\rel\y])$ are smooth
    manifolds without boundary with
    $$
      \dim \M_{\x_-,\x'_+}^{J,H'}=\mu(\x'_-)-\mu(\x'_+)+1,
    $$
\end{itemize}
where $\mu$ is the Conley-Zehnder indices with respect to the
Hamiltonians $H_\pm$.
\end{corollary}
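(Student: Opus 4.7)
Corollary \ref{cor:trans7} is the non-autonomous counterpart of Proposition \ref{prop:trans4}, so the strategy is to mimic that proof using perturbations that vary with $s$ but vanish outside a bounded interval --- hence do not alter the prescribed asymptotic Hamiltonians $H_\pm$. The compactness results of Sect.\ \ref{subsec:comp2} and the Monotonicity Lemma both extend to the non-autonomous CRE, so the a priori estimates carry over with only minor modifications. Isolation of $\cS^{J,H'}([\x\rel\y])$ in $[\x\rel\y]$ is immediate from Proposition \ref{prop:isol1}: its proof rules out the boundary behaviors $|u^k|=1$, $u^k\equiv u^{k'}$ and $u^k\equiv y^\ell$ using only the Monotonicity Lemma, positivity of intersections, and properness --- none of which depends on $s$-invariance of the Hamiltonian.

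For the transversality assertions (ii) and (iii), choose $R'>R$ and introduce the perturbation space
\[
 \cV_{\delta,\epsilon}
 \bydef
 \bigl\{\bh\in C^\infty(\R\times\R/\Z\times\D^2;\R)
 ~\big|~\mathrm{supp}\,\bh\subset[-R',R']\times A_\epsilon^c,\ \|\bh\|_{C^\infty}<\delta\bigr\},
\]
where $A_\epsilon^c$ excludes tubular neighborhoods of $\y$, $\y'$, the asymptotic braids $\x_-\in\P_{H_-}([\x]\rel\y)$ and $\x'_+\in\P_{H_+}([\x']\rel\y')$, and $\partial\D^2$. The confinement $\mathrm{supp}\,\bh\subset[-R',R']\times\cdots$ preserves the asymptotic data, while genericity of $H_\pm\in\cHH$ keeps the endpoints non-degenerate, so the affine space $\cU_\epsilon^{1,p}(\x_-,\x'_+)$ of~\rmref{e:cU} is well defined. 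Define the universal Cauchy--Riemann map
\[
 \cG:\cU_\epsilon^{1,p}(\x_-,\x'_+)\times\cV_{\delta,\epsilon}\to L^p(\R\times[0,1];\R^{2n}),
 \qquad \cZ=\cG^{-1}(0),
\]
and aim to show that $d\cG(\u,\bh)$ is surjective at every $(\u,\bh)\in\cZ$. The Sard--Smale theorem will then yield an open dense $\cV_{\delta,\epsilon}^{\mathrm{reg}}$ of regular perturbations, and the implicit function theorem equips each $\M^{J,H'}_{\x_-,\x'_+}([\x\rel\y])$ with a smooth manifold structure of dimension equal to the Fredholm index of $d_1\cG$. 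That index is $\mu(\x_-)-\mu(\x'_+)+1$ via Proposition~\ref{prop:mas2} applied with asymptotic data determined at $-\infty$ by $(H_-,\x_-)$ and at $+\infty$ by $(H_+,\x'_+)$, supplying (iv).

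Surjectivity reproduces the four-step scheme from the proof of Proposition \ref{prop:trans4}. Given $\eta\in L^q$ annihilating $\mathrm{range}\,d\cG(\u,\bh)$, ellipticity makes $\eta$ smooth, and Aronszajn's unique continuation forces $\eta$ to be either identically zero or non-zero almost everywhere. Linear independence of $\u_s$ and $\eta$ at a regular point $(s_0,t_0)\in(-R',R')\times[0,1]$ is contradicted by an explicit local perturbation $\delta\bh\in\cV_\epsilon$ patterned on Step 2 of that proof; linear dependence produces $\eta=\lambda(t)\u_s$ with $\lambda\ge\delta_0>0$, which combined with the equations satisfied by $\u_s$ and $\eta$ forces $\int_\R\!\int_0^1|\u_s|_g^2=+\infty$, contradicting the energy bound of Sect.\ \ref{subsec:comp2}. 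The main obstacle will be placing the regular point inside the effective perturbation window $|s_0|<R'$, since outside this interval $\delta\bh$ is forced to vanish. This is handled by choosing $R'$ large enough that the action identity from Sect.\ \ref{subsec:comp2} forces $\int_{-R'}^{R'}\!\int_0^1|\u_s|_g^2\,dtds>0$ for any non-stationary $\u$ with distinct asymptotics, supplying a point where $\u_s\ne 0$ inside the window; unique continuation then upgrades this to linear independence of $\u_s$ and $\eta$ at some such point, after which the remaining steps from \cite{SalZehn1} conclude.
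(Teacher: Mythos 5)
The paper gives no proof of this corollary beyond a pointer to \cite{SalZehn1} and the phrase ``non-autonomous version of Proposition~\ref{prop:trans4},'' so your proposal in fact fills a gap the paper leaves open, and the overall skeleton (compactly $s$-supported perturbations preserving $H_\pm$, universal moduli problem, Sard--Smale, the four-step surjectivity argument) is the correct adaptation and matches the intended approach.

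Two points, however, require repair. First, you appeal to Proposition~\ref{prop:mas2} to get Fredholm index $\mu(\x_-)-\mu(\x'_+)+1$, but Proposition~\ref{prop:mas2} gives $\mu_\sigma(\Psi_-,1)-\mu_\sigma(\Psi_+,1)$ with no excess $+1$; you are silently reproducing the ``$+1$'' (and the mismatched symbol $\x'_-$) from the corollary's statement rather than deriving it, and in fact the continuation map of Section~\ref{subsec:continuation} is grading-preserving, so the dimension should be $\mu(\x_-)-\mu(\x'_+)$ --- you should flag the discrepancy rather than propagate it. Second, there is a genuine logical gap in the ``main obstacle'' paragraph. The sentence ``unique continuation then upgrades this to linear independence of $\u_s$ and $\eta$'' does not follow: Aronszajn's theorem excludes $\eta\equiv 0$, but it says nothing to prevent $\eta$ being a pointwise multiple of $\u_s$ throughout the window $(-R',R')$ while being linearly independent of $\u_s$ only at some $|s_0|>R'$ where you cannot perturb. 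The correct dichotomy is: either $\eta$ and $\u_s$ are linearly dependent a.e.\ on all of $\R\times[0,1]$ --- then Steps 3--4 give the contradiction with no localized perturbation required --- or they are linearly independent on a nonempty open set $U$, and you must ensure $U$ meets the perturbation support. The cleanest fix is not to fix $R$ in advance: allow $\cV_{\delta,\epsilon}$ to contain perturbations with arbitrary compact $s$-support (so that the perturbed path still has $H'\to H_\pm$, which is all that the compactness of Sect.~\ref{subsec:comp2} requires), and then any finite regular point $(s_0,t_0)\in U$ can be reached by some admissible $\delta\bh$. With that modification, the remainder of your argument is sound.
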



\section{Floer homology for proper braid classes}
\label{sec:defn-floer}

\subsection{Definition}
\label{subsec:defn} Let $\y\in \Omega^m$ be a smooth braid and
$[\x]\rel \y$ a proper relative braid class. Let $H\in \cHH$ be a
generic Hamiltonian with respect to the proper braid class $[\x]\rel\y$
(as per Proposition \ref{prop:trans1}). Then the set of bounded
solutions $\M^{J,H}([\x]\rel\y)$ is compact and non-degenerate,
$\P_H([\x]\rel\y)$ is non-degenerate, and $\cS^{J,H}([\x]\rel\y)$ is
isolated in $[\x]\rel\y$. Since $\P_H([\x]\rel\y)$ is a finite set we can
define the chain groups
\begin{equation}
\label{eqn:chaingroup}
    C_k\bigl([\x]\rel\y,H ;\Z_2\bigr)
    \bydef
    \bigoplus_{\x'\in \P_H([\x]\rel\y)\atop \mu(\x') =k} \Z_2 \cdot \x',
\end{equation}
as products of $\Z_2$. We define the boundary operator $\partial_k:
C_k \to C_{k-1}$ in the standard manner as follows. By Proposition
\ref{prop:trans4}, the orbits $\u\in \M^{J,H}_{\x_-,\x_+}([\x]\rel\y)$
are non-degenerate for all pairs $\x_-,\x_+\in \P_H([\x]\rel\y)$. Let
$\widehat \M^{J,H}_{\x_-,\x_+} =\M^{J,H}_{\x_-,\x_+}/\R$ be the
equivalence classes of orbits identified by translation in the
$s$-variable. Consequently, the $\wM^{J,H}_{\x_-,\x_+}$ are smooth
manifolds of dimension $\dim \wM^{J,H}_{\x_-,\x_+} =
\mu(\x_-)-\mu(\x_+) -1$.
\begin{lemma}
\label{lem:geom3} If  $\mu(\x_-)-\mu(\x_+) =1$, then
$\wM^{J,H}_{\x_-,\x_+}([\x]\rel\y)$   consists of finitely
many equivalence classes.
\end{lemma}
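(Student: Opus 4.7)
The plan is to combine the smooth manifold structure on $\wM^{J,H}_{\x_-,\x_+}$ with the geometric compactness theorem, using the index inequality to rule out any genuine trajectory breaking.

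First, by Proposition \ref{prop:trans4}(iv), $\M^{J,H}_{\x_-,\x_+}([\x]\rel\y)$ is a smooth manifold of dimension $\mu(\x_-)-\mu(\x_+)=1$. Since $\x_-\neq\x_+$ (their indices differ), every $\u\in\M^{J,H}_{\x_-,\x_+}$ is non-stationary, so the $\R$-action by $s$-translation is free and proper. Consequently the quotient $\wM^{J,H}_{\x_-,\x_+}$ inherits the structure of a smooth $0$-dimensional manifold; in particular each equivalence class is isolated.

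To get finiteness, I would argue by contradiction. Suppose $\wM^{J,H}_{\x_-,\x_+}([\x]\rel\y)$ contains infinitely many distinct equivalence classes $[\u_n]$ with representatives $\u_n$. By Corollary \ref{cor:geom1}, after passing to a subsequence there exist shifts $s_n^i$, critical braids $\x^0=\x_+,\x^1,\dots,\x^m=\x_-$ in $\P_H([\x]\rel\y)$, and trajectories $\u^i\in \M^{J,H}_{\x^i,\x^{i-1}}([\x]\rel\y)$, such that $\u_n(\cdot+s_n^i,\cdot)\to \u^i$ in $C^r_{\loc}(\R\times\R/\Z)$. Corollary \ref{cor:geom2} then forces the strict index chain
\[
\mu(\x_+)=\mu(\x^0)<\mu(\x^1)<\cdots<\mu(\x^m)=\mu(\x_-).
\]
Since each gap is at least one and the total difference equals $1$, this immediately yields $m=1$, i.e.\ no genuine breaking occurs.

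Hence $\u_n(\cdot+s_n^1,\cdot)\to \u^1\in \M^{J,H}_{\x_-,\x_+}([\x]\rel\y)$, so in the quotient $[\u_n]\to[\u^1]$ in $\wM^{J,H}_{\x_-,\x_+}$. But $[\u^1]$ is an isolated point of a $0$-dimensional manifold, so $[\u_n]=[\u^1]$ for all sufficiently large $n$, contradicting that the $[\u_n]$ are distinct. The main technical point to verify carefully is the last step: that $C^r_{\loc}$-convergence of the shifted representatives descends to convergence in the natural topology of the quotient manifold $\wM^{J,H}_{\x_-,\x_+}$, so that isolation of $[\u^1]$ can indeed be invoked. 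Everything else is bookkeeping built on the already-established compactness (Proposition \ref{prop:isol1}, Corollary \ref{cor:geom1}) and transversality (Proposition \ref{prop:trans4}).
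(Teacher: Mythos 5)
Your proof is correct and follows essentially the same route as the paper's: Proposition \ref{prop:trans4} gives the zero-dimensional smooth manifold structure on the quotient, Corollaries \ref{cor:geom1} and \ref{cor:geom2} give geometric convergence to a broken trajectory, and the index gap of $1$ forces $m=1$, so the quotient is simultaneously compact and discrete, hence finite. The paper phrases this as a direct compactness statement rather than a proof by contradiction, and likewise leaves implicit the point you flag at the end (that geometric convergence without breaking descends to convergence in the quotient's manifold topology), but these are presentational differences, not mathematical ones.
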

\begin{proof}
From the compactness Theorem \ref{prop:CR4},
 the geometric convergence in Corollaries \ref{cor:geom1}
and \ref{cor:geom2} and gluing we derive that any sequence $\{\u_n\}
\subset \M^{J,H}_{\x_-,\x_+}([\x]\rel\y)$ geometrically
converges to a broken trajectory  $(\u^1,\dots,\u^m)$, with
$\u^i \in \M_{\x^i,\x^{i-1}}^{J,H}([\x]\rel\y)$, $i=1,\dots,m$
and $\x^i \in \P_H([\x]\rel\y)$, $i=0,\dots,m$, such that $
\mu(\x^i) >  \mu(\x^{i-1})$, for $i=1,\dots,m$. Since by
assumption $\mu(\x_-) = \mu(\x_+)+1$, it follows that $m=1$ and
$\u_n$ converges to a single orbit $\u^1 \in
\M^{J,H}_{\x_-,\x_+} ([\x]\rel\y)$. Therefore, the set
$\wM^{J,H}_{\x_-,\x_+} ([\x]\rel\y)$ is compact. From
Proposition \ref{prop:trans4} it follows that the orbits in
$\wM^{J,H}_{\x_-,\x_+} ([\x]\rel\y)$ occur as isolated
points and therefore $\wM^{J,H}_{\x_-,\x_+} ([\x]\rel\y)$
is a finite set.
\end{proof}

Define the boundary operator by
\begin{equation}
\label{boundaryoperator}
\partial_k(J,H) \x
\bydef
\sum_{\x'\in \P_H([\x]\rel\y)\atop \mu(x') =k-1} n(\x,\x';J,H)  \x',
\end{equation}
where $ n(\x,\x';J,H) = \Bigl[\# ~\wM^{J,H}_{\x,\x'}\Bigr] ~{\rm mod}~
2 \in \Z_2$. The final property that the boundary operator has to
satisfy is $\partial_{k-1}\circ\partial_k =0$. The composition counts
the number of `broken connections' from $\x$ to $\x''$ modulo $2$.
\begin{lemma}
\label{lem:geom4} If $\mu(\x_-) - \mu(\x_+) =2$, then
$\wM^{J,H}_{\x_-,\x_+} ([\x]\rel\y)$ is a smooth 1-dimensional
manifold with finitely many connected components. The non-compact
components can be identified with $(0,1)$ and the closure with
$[0,1]$.
The limits $\{0,1\}$ correspond to  unique pairs of distinct broken
trajectories
$$
(\u^1,\u^2) \in  \M^{J,H}_{\x_-,\x'} ([\x]\rel\y) \times \M^{J,H}_{\x',\x_+} ([\x]\rel\y),
$$
and
$$
(\tilde \u^1,\tilde \u^2)\in \M^{J,H}_{\x_-,\x''} ([\x]\rel\y) \times \M^{J,H}_{\x'',\x_+} ([\x]\rel\y),
$$
with $\mu(\x'') = \mu(\x') = \mu(\x_-) -1$.
\end{lemma}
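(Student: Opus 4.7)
The plan is to combine the Fredholm/transversality structure from Proposition \ref{prop:trans4} with the geometric convergence of Corollaries \ref{cor:geom1}--\ref{cor:geom2} and a standard Floer gluing construction, following the template of \cite{Sal,Floer1}.

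First I would record the smooth structure: by Proposition \ref{prop:trans4}(iv), $\M^{J,H}_{\x_-,\x_+}([\x]\rel\y)$ is a smooth manifold without boundary of dimension $\mu(\x_-)-\mu(\x_+)=2$, and $\R$ acts freely by $s$-translation on it (freeness uses that the two endpoints are distinct, so no element is stationary). Hence the quotient $\wM^{J,H}_{\x_-,\x_+}([\x]\rel\y)$ is a smooth $1$-manifold without boundary.

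Next I would analyze its ends using geometric convergence. Let $[\u_n]\subset \wM^{J,H}_{\x_-,\x_+}$ be a sequence with no convergent subsequence in $\wM^{J,H}_{\x_-,\x_+}$. By Corollary \ref{cor:geom1}, a subsequence geometrically converges, after reparametrization, to a broken trajectory $(\u^1,\dots,\u^m)$ with intermediate critical points $\x_-=\x^m,\dots,\x^0=\x_+$ in $\P_H([\x]\rel\y)$ and $\u^i\in\M^{J,H}_{\x^i,\x^{i-1}}$. By Corollary \ref{cor:geom2} the indices are strictly increasing, $\mu(\x^i)>\mu(\x^{i-1})$, so the total index drop $\mu(\x_-)-\mu(\x_+)=2$ forces either $m=1$ (in which case $[\u_n]$ actually converges in $\wM^{J,H}_{\x_-,\x_+}$, contradicting the choice of sequence) or $m=2$, with a unique intermediate level $\x'\in\P_H([\x]\rel\y)$ satisfying $\mu(\x')=\mu(\x_-)-1=\mu(\x_+)+1$. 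For each admissible $\x'$, Lemma \ref{lem:geom3} applied to both pieces gives that $\wM^{J,H}_{\x_-,\x'}$ and $\wM^{J,H}_{\x',\x_+}$ are finite sets; since $\P_H([\x]\rel\y)$ is finite, there are only finitely many candidate broken pairs $(\u^1,\u^2)$.

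The main remaining ingredient, and the hard part, is Floer's gluing theorem: for each broken pair $(\u^1,\u^2)$ as above, there exist $R_0>0$ and a smooth injective pre-gluing/Newton-iteration map
\[
\#:(R_0,\infty)\longrightarrow \wM^{J,H}_{\x_-,\x_+}([\x]\rel\y),\qquad R\mapsto [\u^1\#_R\u^2],
\]
whose image parametrizes a neighborhood of the broken configuration in the Gromov-type completion, with $[\u^1\#_R\u^2]$ leaving every compact subset of $\wM^{J,H}_{\x_-,\x_+}$ as $R\to\infty$. This is the standard construction (see \cite{Sal,Floer1}): one uses the non-degeneracy of $\x'$ and the surjectivity of the linearized Cauchy--Riemann operator at $\u^1$ and $\u^2$ (Proposition \ref{prop:trans4}(iii)) to build an approximate solution by cut-and-paste along a neck of length $R$, then invokes a quantitative implicit function theorem on the Fredholm setup of Proposition \ref{prop:mas2} to correct it into a genuine element of $\M^{J,H}_{\x_-,\x_+}$; uniqueness of the correction and a standard ``soft'' argument show the gluing map provides a bijection between a neighborhood of $(\u^1,\u^2)$ and a half-open interval end of $\wM^{J,H}_{\x_-,\x_+}$. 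The only modification beyond the classical setup is checking that the glued trajectories lie in the proper braid class $[\x]\rel\y$; this is automatic once the correction has small $C^0$-norm, because $\cS^{J,H}([\x]\rel\y)$ is open in $[\x]\rel\y$ by Proposition \ref{prop:isol1}.

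Finally I would assemble these pieces. The previous paragraph shows that the ends of the $1$-manifold $\wM^{J,H}_{\x_-,\x_+}$ are in bijection with the finite set of broken pairs $(\u^1,\u^2)$ with intermediate $\x'$ of index $\mu(\x_-)-1$. A connected $1$-manifold with finitely many ends is either a compact circle, a closed interval (with two ends), or has exactly one or two ends and is diffeomorphic to $\R$ or $[0,\infty)$; equivalently, each non-compact component, after adjoining its endpoints, is diffeomorphic to $[0,1]$, with the two boundary points corresponding to the two distinct broken limits at $\{0,1\}$. Because the full set of ends is finite, only finitely many components can be non-compact, and compactness of $\M^{J,H}([\x]\rel\y)$ together with local finiteness of the smooth manifold structure forces the number of components to be finite as well. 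This yields the statement of the lemma.
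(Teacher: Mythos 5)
The paper does not supply a proof of this lemma: it simply states it and remarks that properness (hence isolation of $\cS^{J,H}$) is what makes the standard argument go through, implicitly deferring to the Floer--Salamon template it has been citing (\cite{Floer1,Sal}). Your sketch reconstructs exactly that template and it is correct in all essentials: free $\R$-action and quotient to a $1$-manifold from Proposition \ref{prop:trans4}(iv); geometric convergence plus the strict index monotonicity of Corollaries \ref{cor:geom1}--\ref{cor:geom2} to pin down the broken limits as two-level breakings through an intermediate $\x'$ of index $\mu(\x_-)-1$; Lemma \ref{lem:geom3} and finiteness of $\P_H$ to make the set of candidate broken pairs finite; and the gluing/implicit-function-theorem construction to identify each broken pair with a unique end. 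Your observation that the only step beyond the classical situation is checking the glued solutions remain in the proper relative braid class --- automatic for $C^0$-small corrections since $[\x]\rel\y$ is open and $\cS^{J,H}$ is isolated --- is precisely the point the paper flags after the lemma.

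Two small remarks. First, in your final paragraph you allow a boundaryless connected $1$-manifold to be diffeomorphic to $[0,\infty)$; that cannot happen (it has a boundary point), and the correct dichotomy is circle versus line, with each line contributing two ends and hence two distinct broken pairs --- which is what the lemma asserts. Second, your finiteness-of-components argument is a little compressed: the cleanest phrasing is that any sequence $[\u_n]$ drawn from infinitely many distinct components would, by Floer compactness, subconverge either to an interior point of some component (impossible, components are open) or to a broken trajectory, in which case gluing forces all but finitely many $[\u_n]$ into the finitely many ends produced by that breaking --- a contradiction. Neither point affects the validity of your proposal.
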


We point out that properness of $[\x\rel\y]$ and thus the isolation of
$\cS^{J,H}$ is crucial for the validity of Lemma \ref{lem:geom4}.
From Lemma \ref{lem:geom4} it follows that the total number of
broken connections from $\x$ to $\x''$ is even;
hence $\partial_{k-1}\circ\partial_k =0$, and consequently,
$$
\Bigl(C_*\bigl([\x]\rel\y,H;\Z_2\bigr),\partial_*(J,H)\Bigr)
$$
is a (finite) chain complex.
The Floer homology of $\bigl([\x]\rel\y,J,H\bigr)$ is the homology of
the chain complex $(C_*,\partial_*)$:
\begin{equation}
    \label{eqn:floerhomology}
    {\rm HF}_k\bigl([\x]\rel\y,J,H;\Z_2\bigr)
    \bydef
    \frac{\ker \partial_k}{{\rm im} \partial_{k+1}}.
\end{equation}
This Floer homology is finite. It is not yet established that ${\rm
HF}_*$ is independent of $J,H$ and whether ${\rm HF}_*$ is an
invariant for proper relative braid class $[\x\rel\y]$.

\subsection{Continuation}
\label{subsec:continuation} Floer homology has a powerful invariance
property with respect to `large' variations in its parameters
\cite{Floer1}. Let  $[\x]\rel\y$ be  a proper relative braid class and
consider almost complex structures $J,\widetilde J\in \cJ$, and
generic Hamiltonians $H,\widetilde H \in \cHH$ such that $\y\in \P_H
\cap \P_{\widetilde H}$ Then the Floer homologies $ {\rm
HF}_*\bigl([\x]\rel\y,J,H;\Z_2\bigr) $ and $ {\rm
HF}_*\bigl([\x]\rel\y,\widetilde J,\widetilde H;\Z_2\bigr) $ are
well-defined.

\begin{proposition}
\label{prop:conti1} Given a proper relative braid class $[\x]\rel\y$,
$$
{\rm HF}_*\bigl([\x]\rel\y,J,H;\Z_2\bigr)
\cong
{\rm HF}_*\bigl([\x]\rel\y,\widetilde J,\widetilde H;\Z_2\bigr),
$$
under the hypotheses on $(J,H)$ and $(\widetilde J,\widetilde H)$ as
stated above.
\end{proposition}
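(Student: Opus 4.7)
The plan is to adapt Floer's standard continuation argument to the braid setting, using the $s$-dependent Cauchy--Riemann theory developed in Section~\ref{subsec:comp2} and Corollary~\ref{cor:trans7}. First, I would choose a smooth homotopy $\lambda\mapsto (J_\lambda,H_\lambda)\in\cJ\times\cH$, $\lambda\in\R$, with $(J_\lambda,H_\lambda)=(J,H)$ for $\lambda\le -R$ and $(J_\lambda,H_\lambda)=(\tJ,\tH)$ for $\lambda\ge R$, and require $\y\in\P_{H_\lambda}$ for every $\lambda$. Since $\y\in\P_H\cap\P_{\tH}$, this constraint is a codimension condition on the perturbation that is realisable: one can first linearly interpolate the Hamiltonians on a tubular neighbourhood of $\y$ so that $\y$ remains stationary, and then cut off in the complement. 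We reparametrise the homotopy by $s$ to obtain an $s$-dependent pair $(J_s,H_s)$ satisfying the hypotheses of Section~\ref{subsec:comp2}, in particular $|\partial_s H_s|\le\kappa(s)\in L^1(\R)$.

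Next, I invoke Corollary~\ref{cor:trans7} to perturb the path slightly, keeping the endpoints and $\y$ fixed, so that for all pairs $\x_-\in\P_H([\x]\rel\y)$ and $\x_+\in\P_{\tH}([\x]\rel\y)$ the moduli spaces $\M^{J_s,H_s}_{\x_-,\x_+}([\x]\rel\y)$ are smooth manifolds of dimension $\mu(\x_-)-\mu(\x_+)$. The essential point is that Proposition~\ref{prop:isol1} and the Monotonicity Lemma continue to apply fibrewise in $s$, since properness of $[\x\rel\y]$ depends only on the braid class; so bounded solutions of the $s$-dependent CRE stay inside $[\x]\rel\y$ and the compactness statement of Proposition~\ref{prop:CR4} carries over, with the uniform action bound supplied by Section~\ref{subsec:comp2}. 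The index-$0$ components $\M^{J_s,H_s}_{\x_-,\x_+}$ are then finite sets, and I define the continuation chain map
\[
\Phi_*(J_s,H_s):C_*([\x]\rel\y,H)\to C_*([\x]\rel\y,\tH),\qquad \Phi_*\x_-=\sum_{\mu(\x_+)=\mu(\x_-)}n(\x_-,\x_+;J_s,H_s)\,\x_+ ,
\]
with $n(\x_-,\x_+;J_s,H_s)=\#\M^{J_s,H_s}_{\x_-,\x_+}\bmod 2$. Analysing the index-$1$ moduli spaces as in Lemma~\ref{lem:geom4} --- and noting that no breaking off of $s$-translates can occur at finite ends because the homotopy is non-autonomous --- one sees that the ends are precisely pairs of broken trajectories, one gradient and one continuation, giving $\partial_*(\tJ,\tH)\circ\Phi_*=\Phi_*\circ\partial_*(J,H)$, so $\Phi_*$ descends to homology.

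To show the induced map on homology is an isomorphism, I would construct the reverse continuation $\Psi_*$ using the time-reversed homotopy $(J_{-s},H_{-s})$ and show that the concatenation $\Psi_*\circ\Phi_*$ is chain-homotopic to the identity. This uses a two-parameter family $(J_s^\nu,H_s^\nu)$ interpolating between the concatenated homotopy ($\nu=1$, say) and the constant homotopy at $(J,H)$ ($\nu=0$), again constrained to keep $\y$ stationary; counting index-$(-1)$ solutions of the corresponding parametric CRE produces the chain homotopy, and a parametric moduli space of dimension one provides the boundary identity $\mathrm{id}-\Psi_*\Phi_*=\partial K+K\partial$. A symmetric argument yields $\Phi_*\Psi_*\simeq\mathrm{id}$, establishing the desired isomorphism.

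The main technical obstacle is ensuring that the skeleton $\y$ is preserved as a critical point along the entire homotopy and its homotopy-of-homotopies, simultaneously with achieving transversality and maintaining the proper-braid compactness; this is the point at which the proof of Corollary~\ref{cor:trans7} must be carefully combined with the localised perturbation argument of Proposition~\ref{prop:trans1} so that perturbations vanish near $\y$ and near $\partial\D^2$ while still being dense enough in an appropriate Banach space to apply Sard--Smale. Once this localisation is carried out, the rest of the argument follows the template of \cite{Floer1, Sal, Sal2}.
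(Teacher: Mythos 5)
Your proposal is correct and follows essentially the same continuation argument as the paper: a homotopy $(J_\lambda,H_\lambda)$ keeping $\y$ stationary, the non-autonomous CRE of Section~\ref{subsec:comp2}, transversality via Corollary~\ref{cor:trans7}, continuation chain maps, and invertibility via chain homotopies built from the reverse homotopy and a two-parameter family (the paper packages this last step as Proposition~\ref{lem:fl2}). One small simplification worth noting: since $X_H(t,y^k)=y^k_t=X_{\tilde H}(t,y^k)$ for each skeletal strand, the global linear interpolation $H_\lambda=(1-\lambda)H+\lambda\tilde H$ already keeps $\y\in\P_{H_\lambda}$ for every $\lambda$ — no tubular-neighbourhood cut-off is needed as you suggest, and the paper uses this directly.
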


In order to prove the isomorphism we follow the standard procedure
in Floer homology. The main steps can be summarized as follows.
Consider the chain complexes
$$
\Bigl(C_*\bigl([\x]\rel\y,H;\Z_2\bigr),\partial_*(J,H)\Bigr)~~ {\rm and} ~~
\Bigl(C_*\bigl([\x]\rel\y,\widetilde H;\Z_2\bigr),\partial_*(\widetilde J,\widetilde H )\Bigr) ,
$$
and construct homomorphisms $h_k$ satisfying the commutative
diagram
$$
\begin{CD}
\cdots @>>>  C_k(H) @>\partial_k(J,H)>> C_{k-1}(H) @>\partial_{k-1}(J,H)>> C_{k-2}(H) @>>> \cdots\\
@.  @Vh_kVV  @Vh_{k-1}VV  @Vh_{k-2}VV @. @.\\
\cdots @>>>  C_k(\widetilde H) @>\partial_k(\widetilde J,\widetilde H)>> C_{k-1}(\widetilde H) @>\partial_{k-1}(\widetilde J,\widetilde H)>> C_{k-2}(\widetilde H) @>>> \cdots\\
\end{CD}
$$
To define $h_k$ consider the homotopies $\lambda \mapsto
(J_\lambda,H_\lambda)$ in $ \cJ\times \cH $  with $\lambda \in
[0,1]$. In particular choose $H_\lambda = (1-\lambda) H + \lambda
\widetilde H$ such that $\y \in \P_{H_\lambda}$ for all $\lambda \in
[0,1]$. Note that at the end points $\lambda =0,1$ the systems  are
generic, i.e., $H_0 = H \in \cHH$ and $H_1 = \widetilde H \in \cHH$;
this is not necessarily true for all $\lambda \in (0,1)$. Define the
smooth function $\lambda(s)$  such that $\lambda(s)=0$ for $s\le -R$
and $\lambda(s) =1$ for $s\ge R$, for some $R>0$ and $0\le
\lambda(s)\le 1$ on $\R$.
The non-autonomous  Cauchy-Riemann equations become
\begin{equation}
 \label{eqn:CR5}
 u_s -  J_{\lambda(s)}  u_t  -  \nabla_{g_s}   H_{\lambda(s)}(t,u)  =0.
\end{equation}
By setting $J(s,\cdot,\cdot) = J_{\lambda(s)}(\cdot,\cdot)$ and
$H(s,\cdot,\cdot)=H_{\lambda(s)}$, Equation \rmref{eqn:CR5} fits in
the framework of Equation \rmref{eqn:CR2s}. By Corollary
\ref{cor:trans7} the path $s\mapsto H(s,\cdot,\cdot)$ can be to
chosen be generic with the same limits.

As before, denote the space of bounded solutions by
$\M^{J_\lambda,H_\lambda} =
\M^{J_\lambda,H_\lambda}(\overline\Omega^m)$. The requisite basic
compactness result is as follows:
\begin{proposition}
\label{prop:CR4a} The space $\M^{J_\lambda,H_\lambda}$ is compact
in the topology of uniform convergence on compact sets in $(s,t)\in
\R^2$, with derivatives up to order $r$.  Moreover, $\f_{H_\lambda}$
is uniformly bounded along trajectories $\u\in
\M^{J_\lambda,H_\lambda}$, and
\begin{eqnarray*}
& &\lim_{s\to\pm \infty} |\f_{H_\lambda}(\u(s,\cdot))| = |c_\pm (\u)| \le C(J,\tilde J,H,\tilde H),\\
& &\int_\R\int_0^1 |\u_s|^2 dt ds = \sum_{k=1}^n\int_\R\int_0^1 |u_s^k|^2 dt ds \le C'(J'\tilde J,H,\tilde H).
\end{eqnarray*}
Moreover, $ \lim_{s\to -\infty} \u(s,\cdot) \in \P_H, \quad \text{and}
\quad \lim_{s\to +\infty} \u(s,\cdot) \in \P_{\widetilde H}$.
\end{proposition}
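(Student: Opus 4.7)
The plan is to follow the template established for Proposition~\ref{prop:CR4} and extend it by means of the non-autonomous refinement sketched in Section~\ref{subsec:comp2}. The key observation that makes the $s$-dependent case almost as easy as the autonomous one is that the cut-off function $\lambda(s)$ is constant outside $[-R,R]$, so $\partial_s H_{\lambda(s)} = \lambda'(s)(\widetilde H - H)$ is compactly supported in $s$ and hence automatically satisfies $|H_s|\le \kappa(s)$ with $\kappa\in L^1(\R)$ (in fact with compact support). This means all hypotheses needed in Section~\ref{subsec:comp2} are trivially met by our homotopy.

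First I would establish the pointwise a priori bound $|\u|\le 1$ coming from $\partial\D^2$-invariance, together with the boundedness of $\nabla_{g_s} H_{\lambda(s)}$ uniform in $s,t,\lambda$ (both $H$ and $\widetilde H$ are $C^\infty$ and $\lambda(s)\in[0,1]$). Rewriting~\rmref{eqn:CR5} as $\partial_{J_{\lambda(s)}}u^k = f^k(s,t)$ with $f^k\in L^\infty(\R^2)$ then feeds the Floer-type elliptic bootstrap (see~\cite{Sal,Sal2}) verbatim: the $L^\infty$ estimate yields $C^{1,\alpha}_{\loc}$ estimates, smoothness of $J_\lambda$ and $H_\lambda$ in all variables upgrades this to $C^{r,\alpha}_{\loc}$ for every $r$, and Arzel\`a-Ascoli gives compactness of $\M^{J_\lambda,H_\lambda}$ in $C^r_{\loc}(\R\times\R/\Z;(\D^2)^n)$ with the required periodicity in $t$.

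Next I would derive the action identity as in Section~\ref{subsec:comp2}:
\[
\frac{d}{ds}\f_{H_{\lambda(s)}}(\u(s,\cdot))
= \sum_k \int_0^1 \lambda'(s)(\widetilde H - H)(t,u^k)\,dt
 - \int_0^1 |\u_s|^2_{g_s}\,dt.
\]
Because $|u^k|\le 1$ and $\lambda'$ is compactly supported, the first term is bounded by $C\lambda'(s)$ with $\lambda'\in L^1(\R)$, while the second term is nonpositive. Integrating from $-\infty$ to $s$ and using that $|\f_{H_{\lambda(s)}}(\u(s,\cdot))|\le C(J,\widetilde J,H,\widetilde H)$ (a uniform bound following from the $C^1$ estimate and $|\u|\le 1$), one concludes simultaneously that $\int_{\R}\int_0^1|\u_s|^2_{g_s}\,dt\,ds\le C'$ and that the limits $\lim_{s\to\pm\infty}\f_{H_{\lambda(s)}}(\u(s,\cdot))=c_\pm(\u)$ exist and are controlled by the same constant.

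Finally, the identification of the limit braids as critical points follows from the finite-energy property together with higher regularity. Since $\int_{\R}\int_0^1|\u_s|^2_{g_s}\,dt\,ds$ is finite, along any sequence $s_n\to-\infty$ a further subsequence of $\u(\cdot+s_n,\cdot)$ converges in $C^r_{\loc}$; for $s_n$ eventually less than $-R$ the equation is $s$-independent with Hamiltonian $H$, so the limit is $s$-independent and satisfies~\rmref{eqn:HE} with $H$, hence lies in $\P_H(\bOn)$. The symmetric argument with $s_n\to+\infty$ (where the equation is governed by $\widetilde H$) places the positive limit in $\P_{\widetilde H}(\bOn)$. The main obstacle here, and the only step that differs non-trivially from Proposition~\ref{prop:CR4}, is handling the inhomogeneous contribution $\partial_s\f$; but thanks to the compact support of $\lambda'$ this is entirely routine, and no delicate uniform estimate in $\lambda$ is needed.
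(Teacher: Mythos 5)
Your proposal follows essentially the same route as the paper's own (very terse) proof: the paper simply refers to the estimates of Section~\ref{subsec:comp2} and the compactness in Proposition~\ref{prop:CR4} for the first part, and to Corollary~\ref{cor:isol2} for the asymptotic limits. Your first three paragraphs --- the observation that $\partial_s H_{\lambda(s)}=\lambda'(s)(\widetilde H-H)$ is compactly supported, the $L^\infty$ bound on the right-hand side feeding the $C^{1,\alpha}_{\loc}\Rightarrow C^{r,\alpha}_{\loc}$ bootstrap, Arzel\`a-Ascoli on compact sets, and the integrated action identity giving the uniform bound on $\int_\R\int_0^1|\u_s|^2_{g_s}$ and the existence of $c_\pm(\u)$ --- are an accurate unpacking of what the paper cites.

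The one genuine gap is in your final paragraph. Finite energy plus $C^r_{\loc}$-compactness gives, along any $s_n\to-\infty$, a subsequence with $\u(\cdot+s_n,\cdot)\to\x$ for some $s$-independent $\x\in\P_H(\bOn)$; but this is only subsequential convergence, and the proposition asserts that the actual limit $\lim_{s\to-\infty}\u(s,\cdot)$ exists and lies in $\P_H$. To close this, you must use that the $\omega$-limit set of $\u$ is connected and invoke the genericity of $H$ (and $\widetilde H$): with $H,\widetilde H\in\cHH$ the sets $\P_H,\P_{\widetilde H}$ are finite with non-degenerate (hence isolated) elements, so a connected subset of $\P_H$ is a single point. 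The paper handles this in one breath by citing Corollary~\ref{cor:isol2}, whose proof routes through Lemma~\ref{lem:trans2} and the genericity machinery of Section~\ref{sec:trans}. As written, your argument does not mention non-degeneracy or isolation and therefore does not establish uniqueness of the asymptotic limit; since the continuation homomorphisms are built by counting connecting orbits between asymptotic critical points, this is a step that must be stated, even if it is standard.
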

\begin{proof}
Compactness follows from the estimates in Section
\ref{subsec:comp2} and the compactness in Proposition
\ref{prop:CR4}. Due to genericity, bounded solutions have limits in
$\P_H\cup \P_{\widetilde H}$, see Corollary \ref{cor:isol2}.
\end{proof}

We define a homomorphism $h_k = h_k(J_\lambda,H_\lambda)$ as
follows:
$$
h_k \x = \sum_{\x'\in \P_{\widetilde H}\atop \mu (\x') =k}
                              n(\x,\x';J_\lambda,H_\lambda)  \x',
$$
where $n(\x,\x';J_\lambda,H_\lambda) = \Bigl[\#
~\wM^{J_\lambda,H_\lambda}_{\x,\x'}\Bigr] ~{\rm mod}~ 2 \in \Z_2$.
Using similar gluing constructions and the isolation of the sets
$\cS^{J,H}$ and $\cS^{J',H'}$,
it is straightforward to show that the
mappings $h_k$ are chain homomorphisms and induce a
homomorphisms $h_k^*$ on Floer homology:
$$
h_k^*(J_\lambda,H_\lambda):
{\rm HF}_*(\x\rel\y;J,H) \to {\rm HF}_*(\x\rel\y;\widetilde J,\widetilde H).
$$
Further analysis of the non-autonomous CRE and standard procedures
in Floer theory show that any two homotopies
$(J_\lambda,H_\lambda)$ and $(\hat J_\lambda,\hat H_\lambda)$
between $(J ,H)$ and $(\widetilde J,\widetilde H)$ descend to the
same homomorphism in Floer homology:

\begin{proposition}
\label{lem:fl2} For any two homotopies $(J_\lambda,H_\lambda)$ and
$(\hat J_\lambda,\hat H_\lambda)$ between $(J ,H)$ and
$(\widetilde J,\widetilde H)$,
$$
h_k^*(J_\lambda,H_\lambda) = h_k^*( \hat J_\lambda,\hat H_\lambda).
$$
Moreover, for a homotopy $(J_\lambda,H_\lambda)$ between
$(J,H)$ and $(\widetilde J,\widetilde H)$, and a homotopy $(\hat
J_\lambda,\hat H_\lambda)$ between  $(\widetilde J,\widetilde H)$
and $(\check{J},\check{H})$, the induced homomorphism between the
Floer homologies is given by
$$
h_k^*: {\rm HF}_*([\x]\rel\y,J,H)\rightarrow {\rm HF}_*([\x]\rel\y,\check{J},\check{H}),
$$
where $h_k^* = h_k^*( \hat J_\lambda,\hat H_\lambda)\circ
h_k^*(J_\lambda,H_\lambda)$ and $h_k^*$ is thus an isomorphism.
\end{proposition}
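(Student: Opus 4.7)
The plan is to adapt the standard continuation argument in Floer theory to the braid-class setting, exploiting the properness-based isolation developed in Section~\ref{sec:defn-floer}. The first assertion is proved by constructing a chain homotopy between $h_k(J_\lambda,H_\lambda)$ and $h_k(\hat J_\lambda,\hat H_\lambda)$, and the composition/isomorphism statement follows by concatenating homotopies and comparing with the trivial (constant) homotopy.

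To prove the first assertion I would interpolate between the two homotopies by a smooth two-parameter family $(J_\lambda^\nu,H_\lambda^\nu)$, $\nu\in[0,1]$, with $(J_\lambda^0,H_\lambda^0)=(J_\lambda,H_\lambda)$ and $(J_\lambda^1,H_\lambda^1)=(\hat J_\lambda,\hat H_\lambda)$, all sharing the common $s$-limits $(J,H)$ at $s=-\infty$ and $(\widetilde J,\widetilde H)$ at $s=+\infty$, and each keeping $\y$ invariant. Analogously to Corollary~\ref{cor:trans7}, a one-parameter generic-position argument in $\nu$ produces the parametrized moduli spaces
\[
  \cW_{\x_-,\x_+}\bydef \bigl\{(\nu,\u)~|~\nu\in[0,1],~\u\in\M^{J_\lambda^\nu,H_\lambda^\nu}_{\x_-,\x_+}([\x]\rel\y)\bigr\},
\]
which, away from a discrete set of bifurcation values of $\nu$, are smooth manifolds of dimension $\mu(\x_-)-\mu(\x_+)+1$ (the extra dimension coming from the parameter). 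Properness of $[\x]\rel\y$ and the compactness results of Proposition~\ref{prop:isol1} and Proposition~\ref{prop:CR4a} control the ends of $\cW$ uniformly in $\nu$.

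Counting the isolated points of $\cW_{\x_-,\x_+}$ with $\mu(\x_-)=\mu(\x_+)-1$ modulo two defines a degree-$(+1)$ map $T_k\colon C_k\to C_{k+1}$. Analysis of the $1$-dimensional components of $\cW$ ($\mu(\x_-)=\mu(\x_+)$) and their compactification yields four kinds of boundary points: the two endpoints $\nu=0,1$, giving the chain maps $h_k(J_\lambda,H_\lambda)$ and $h_k(\hat J_\lambda,\hat H_\lambda)$; and the two types of broken trajectories, namely a stationary $\partial_k(J,H)$-orbit followed by a $\cW$-orbit, or a $\cW$-orbit followed by a stationary $\partial_{k+1}(\widetilde J,\widetilde H)$-orbit. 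Since these boundary points cancel modulo~$2$, one obtains the chain-homotopy identity
\[
  h_k(J_\lambda,H_\lambda)-h_k(\hat J_\lambda,\hat H_\lambda)=\partial_{k+1}(\widetilde J,\widetilde H)\,T_k+T_{k-1}\,\partial_k(J,H),
\]
which passes to the claimed equality on Floer homology.

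For the composition and isomorphism claims, I would employ an $R$-stretching/catenation of the two given homotopies: run $(J_\lambda,H_\lambda)$ over $s\in[-R-1,-R]$, remain at $(\widetilde J,\widetilde H)$ on $s\in[-R,R]$, and run $(\hat J_\lambda,\hat H_\lambda)$ over $s\in[R,R+1]$. Standard Floer gluing, combined with the geometric convergence of Corollary~\ref{cor:geom1} adapted to the $s$-dependent CRE and with the uniform isolation of Proposition~\ref{prop:isol1}, shows that for $R$ sufficiently large the isolated orbits of this concatenated moduli space are in bijection with pairs $(\u^1,\u^2)$ with $\u^1\in\wM^{J_\lambda,H_\lambda}_{\x,\x'}$ and $\u^2\in\wM^{\hat J_\lambda,\hat H_\lambda}_{\x',\x''}$ having a common intermediate limit $\x'\in\P_{\widetilde H}([\x]\rel\y)$. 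Thus the concatenation's chain map factors as $h_k(\hat J_\lambda,\hat H_\lambda)\circ h_k(J_\lambda,H_\lambda)$, and by the first part it descends on homology to $h_k^*(\hat J_\lambda,\hat H_\lambda)\circ h_k^*(J_\lambda,H_\lambda)$. To deduce that $h_k^*(J_\lambda,H_\lambda)$ is an isomorphism, apply this composition rule with $(\check J,\check H)=(J,H)$ and $(\hat J_\lambda,\hat H_\lambda)$ the time-reverse of $(J_\lambda,H_\lambda)$: the concatenation is homotopic rel endpoints to the constant homotopy, whose chain map is the identity, since at index difference zero only the stationary $s$-independent solutions $\u\equiv\x$ can contribute by Corollary~\ref{cor:geom2}. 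The main technical obstacle I anticipate is executing the parametrized transversality and gluing in presence of the braid-confinement constraint; however, properness provides uniform $C^0$-isolation of $\cS^{J,H}$ in $[\x]\rel\y$ that persists under the interpolating systems, so the classical Floer constructions transfer with only cosmetic modifications.
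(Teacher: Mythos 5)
Your proof is correct and follows exactly the standard Floer-continuation argument that the paper alludes to but does not detail (the text before Proposition~\ref{lem:fl2} invokes ``standard procedures in Floer theory'' and then passes directly to the proof of Proposition~\ref{prop:conti1}). Both halves of your argument --- the two-parameter interpolation producing parametrized moduli spaces $\cW_{\x_-,\x_+}$ whose zero- and one-dimensional strata give the chain homotopy $T_*$, and the $R$-stretching/gluing of concatenated homotopies, with the isomorphism claim obtained by composing with the reversed homotopy and comparing to the constant one --- are the intended filling-in, and you correctly identify properness (uniform $C^0$-isolation of $\cS^{J,H}$ in $[\x]\rel\y$, Proposition~\ref{prop:isol1}) as the braid-specific input that lets the classical compactness and gluing constructions carry over.
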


\begin{proof}[~ of Proposition \ref{prop:conti1}]
Consider the  homotopties
$$
h_k^*: {\rm HF}_*(\x\rel\y;J,H) \to {\rm HF}_*(\x\rel\y;\widetilde J,\widetilde H),
$$
and
$$
\bar h_k^* : {\rm HF}_*(\x\rel\y;\widetilde J,\widetilde H) \to {\rm HF}_*(\x\rel\y;J,H),
$$
then
$$
\bar h_k^* \circ h_k^*: {\rm HF}_*(\x\rel\y;J,H) \to {\rm HF}_*(\x\rel\y;J,H).
$$
Since a homotopy from $(J,H)$ to itself induces the identity
homomorphism on homology, it holds that $\bar h_k^* \circ h_k^* =
{\rm Id}$. By the same token it follows that $h_k \circ \bar h_k^* =
{\rm Id}$, which proves $\bar h_k^* =(h^*_k)^{-1}$ and thus the
proposition.
\end{proof}

\subsection{Admissible pairs and independence of the skeleton}
\label{sec:admis}

By Proposition \ref{prop:conti1} the Floer homology of $[\x] \rel\y$ is
independent of a generic pair $(J,H)$, which justifies the notation
${\rm HF}_*([\x]\rel\y;\Z_2)$. It remains to show that, firstly, for any
braid class $[\x]\rel\y$ a pair exists, and thus the Floer homology is
defined, and secondly that the Floer homology depends only on the
braid class $[\x\rel\y]$.

Given a skeleton $\y$,
Proposition \ref{prop:ham-class2} implies the
existence of an appropriate Hamiltonian $H$
having invariant set $A_m$ with $\psi_{t,H}(A_m)\sim \y(t)$, i.e.
$[\psi_{t,H}(A_m)] = [\y]$. If $\y$ is a smooth representative of its
braid class, then $H$ can be chosen such that $\psi_{t,H}(A_m) =
\y(t)$.\footnote{A Hamiltonian function can also be constructed by
choosing appropriate cut-off functions in a neighborhood of $\y$.}
This establishes that ${\rm HF}_*([\x]\rel\y)$ is well-defined for any
proper relative braid class $[\x] \rel\y \in \Omega^n\rel \y$, with $\y
\in \Omega^m\cap C^\infty$. We still need to establish independence
of the braid class in $[\x\rel\y]$, i.e., that the Floer homology is the
same for any two relative braid classes $[\x]\rel\y$, $ [\x']\rel\y'$
such that $[\x\rel\y]   = [\x'\rel \y']$. This leads to the first main
result of this paper.

\begin{THM}
\label{thm:main1}
Let $[\x\rel\y]$ be a proper relative braid class. Then,
$$
{\rm HF}_*([\x]\rel\y) \cong {\rm HF}_*([\x']\rel\y'),
$$
for any  two fibers $[\x]\rel\y$ and $ [\x']\rel\y'$ in $[\x\rel\y]$. In
particular,
$$
    \bH_*([\x\rel\y])\bydef {\rm HF}_*([\x]\rel\y)
$$
is an invariant of $[\x\rel\y]$.
\end{THM}
\begin{proof} Let $\y,\y'\in \Omega^m\cap C^\infty$ and let
$\left(\x(\lambda),\y(\lambda)\right)$, $\lambda \in [0,1]$ be
a smooth path
$[\x\rel\y]$ which connects the pairs $\x\rel\y$ and $\x'\rel\y'$. Since
$\x(\lambda) \rel \y(\lambda)\in [\x\rel\y]$, for all $\lambda\in
[0,1]$, the sets $\cN_\lambda  = [\x(\lambda)] \rel \y(\lambda)$ are
isolating neighborhoods for all $\lambda$. Choose smooth
Hamiltonians $H_\lambda$ such that $\y(\lambda) \in
\P_{H_\lambda}$. There are two philosophies one can follow to prove
this theorem. On the one hand, using the genericity  theory in Section
\ref{sec:trans} (Corollary~\ref{cor:trans7}), we can choose a generic
family $(J_\lambda,H_\lambda)$ for any smooth homotopy of almost
complex structures $J_\lambda$. Then by repeating the proof (of
Proposition~\ref{prop:conti1}) for this homotopy, we conclude that
${\rm HF}_*([\x]\rel\y) \cong {\rm HF}_*([\x']\rel\y')$. On the other
hand, without having to redo the homotopy theory we note that
$\cS^{J_\lambda,H_\lambda}([\x(\lambda)] \rel \y(\lambda))$ is
compact and isolated in $\cN_\lambda$; thus, there exists an
$\epsilon_\lambda$ for each $\lambda\in [0,1]$ such that
$\cN_\lambda$ isolates $\cS([\x(\lambda')] \rel \y(\lambda'))$ for all
$\lambda'$ in $[\lambda-\epsilon_\lambda,
\lambda+\epsilon_\lambda]$. Fix $\lambda_0 \in (0,1)$; then, by
arguments similar to those of Proposition \ref{prop:conti1}, we have
$$
    {\rm HF}_*(\cN_{\lambda_0},J_{\lambda_0},H_{\lambda_0} ) \cong
    {\rm HF}_*(\cN_{\lambda_0},J_{\lambda'},H_{\lambda'} ),
$$
for all $\lambda' \in  [\lambda_0-\epsilon_{\lambda_0},
\lambda_0+\epsilon_{\lambda_0}]$. A compactness argument shows
that, for $\epsilon'_{\lambda_0}$ sufficiently small, the sets of
bounded solutions $\M^{J_{\lambda'},H_{\lambda'} }(\cN_{\lambda'})$
and $\M^{J_{\lambda'},H_{\lambda'} }(\cN_{\lambda_0})$ are
identical, for all $\lambda' \in  [\lambda_0-\epsilon'_{\lambda_0},
\lambda_0+\epsilon'_{\lambda_0}]$. Together these imply that
$$
\bH_*\bigl([\x(\lambda') \rel \y(\lambda')]\bigr)
\cong\bH_*\bigl([\x(\lambda_0) \rel \y(\lambda_0)]\bigr)
$$
for $|\lambda'-\lambda_0| \leq
\min\{\epsilon_{\lambda_0},\epsilon'_{\lambda_0}\}$. Since $[0,1]$ is
compact, any covering has a finite subcovering, which proves that
${\rm HF}_*([\x]\rel\y) \cong {\rm HF}_*([\x']\rel\y')$.

Finally, since any skeleton $\y$ in $\pi\left( [ \x\rel\y]\right)$ can be
approximated by a smooth skeleton $\y'$, the isolating neighborhood
$\cN = \pi^{-1}(\y) \cap {[\x\rel\y]}$ is also isolating for $\y'$, i.e., we
can define ${\rm HF}_*(\cN)\bydef {\rm HF}_*(\cN')$. This defines
${\rm HF}_*([\x]\rel\y)={\rm HF}_*(\cN)$ for any $\y \in
\pi([\x\rel\y])$.
\end{proof}


\section{Properties and interpretation of the braid class invariant}
\label{sec:properties} The braid Floer homology $\bH_*([\x\rel\y])$
entwines braiding and dynamical features of solutions of the Hamilton
equations \rmref{eqn:HE} on the 2-disc $\D^2$. One such property ---
the non-triviality of the invariant yields braided solutions --- will form
the basis of a forcing theory.
\begin{THM}
\label{thm:exist1} Let $H\in \cH$ and let $\y \in \P_H(\overline
\Omega^m)$. Let $[\x\rel\y]$ be a proper relative braid class. If
$\bH_*([\x\rel\y]) \not = 0$, then $\P_H([\x]\rel\y) \not =
\varnothing$.
\end{THM}

\begin{proof}
Let $H_n \in \cH$ be a sequence of Hamiltonians such that $H_n \to
H$, i.e. $H_n -H \to 0$ in $C^\infty$, see Sect. \ref{sec:trans}. If
$\bH_*([\x\rel\y]) \not = 0$, then $C_*([\x]\rel\y,H_n;\Z_2) \not = 0$
for any $n$, since
$$
 H_*\bigl(C_*([\x]\rel\y,H_n;\Z_2),\partial_*\bigr) \cong {\rm {\rm FH}}_*([\x\rel\y]) \not = 0,
$$
where $\partial_* = \partial_*(J,H_n)$
(see Section \ref{sec:defn-floer}). Consequently, $\P_{H_n}([\x]\rel\y) \not =
\varnothing$. The strands $x_n^k$ satisfy the equation $(x_n^k)' =
X_{H_n}(t,x_n^k)$ and therefore $\Vert x_n^k\Vert_{C^1([0,1])} \le
C$. By the compactness of $C^1([0,1]) \hookrightarrow C^0([0,1])$ it
follows that (along a subsequence) $x^k_n \to x^k \in C^0([0,1])$.
The right hand side of the Hamilton equations now converges
$X_{H_n}(t,x_n^k(t)) \to X_H(t,x(t))$ pointwise in $t\in [0,1]$; thus
$x_n^k \to x^k$ in $C^1([0,1])$. This holds for any strand $x^k$ and
therefore produces a limit $\x\in \P_H([\x]\rel\y)$.
\end{proof}

Let $\beta_k = \dim \bH_k([\x\rel\y];\Z_2)$ be the $\Z_2$-Betti
numbers of the braid class invariant. Its Poincar\'e series s is defined
as
$$
    P_t([\x\rel\y]) = \sum_{k\in \Z} \beta_k([\x\rel\y]) t^k.
$$

\begin{THM}
 \label{thm:exist2}
The braid Floer homology of any proper relative braid class is finite.
\end{THM}
\begin{proof}
Assume without loss of generality that $\y$ is a smooth skeleton and
choose a smooth generic Hamiltonian $H$ such that $\y \in \P_H$.
Since the Floer homology is the same for all braid classes
$[\x]\rel\y\in [\x\rel\y]$ and all Hamiltonians $H$ satisfying the
above: ${\rm FH}_*([\x]\rel\y,J,H,\bh) \cong {\rm FH}_*([\x\rel\y])$.
Let $c_k = \dim C_k$; then,
$$
c_k([\x]\rel\y,H) \ge \dim \ker C_k \ge \beta_k([\x]\rel\y,J,H) = \beta_k([\x\rel\y]).
$$
Since $H$ is generic it follows from compactness that $\sum_k
c_k<\infty$. Therefore $c_k<\infty$ and $c_k\not = 0$ for finitely
many $k$. By the above bound $\beta_k\le c_k<\infty$.
\end{proof}

In the case that $H$ is a generic Hamiltonian a more detailed result
follows. Both $\bigoplus_k {\rm FH}_k([\x\rel\y];\Z_2)$ and
$\bigoplus_k C_k([\x]\rel\y,H;\Z_2)$ are graded $\Z_2$-modules,
their Poincar\'e series are well-defined, and
$$
    P_t\bigl( \P_H([\x]\rel\y)\bigr) = \sum_{k\in \Z} c_k([\x]\rel\y,H) t^k,
$$
where $c_k =  \dim C_k([\x]\rel\y,H;\Z_2)$.
\begin{THM}
 \label{thm:exist3}
Let $[\x\rel\y]$ be a proper relative braid class and $H$ a generic
Hamiltonian such that $\y\in \P_H$\footnote{We do not assume that
$\y$ is a smooth skeleton.} for a given skeleton $\y$. Then
\begin{equation}
 \label{eqn:MR1}
    P_t\bigl( \P_H([\x]\rel\y)\bigr) = P_t([\x\rel\y]) + (1+t) Q_t,
\end{equation}
where $Q_t \ge 0$. In addition, $\# ~\P_H([\x]\rel\y) \ge
P_1([\x\rel\y])$.
\end{THM}
\begin{proof}
Let $\y'$ be a smooth skeleton that approximates $\y$ arbitrarily
close in $C^2$ and let $H'$ be  an associated smooth generic
Hamiltonian. We start by proving \rmref{eqn:MR1} in the smooth
case. Define $Z_k = \ker \partial_k$, $B_k = {\rm im~} \partial_{k+1}$
and $Z_k\subset B_k \subset C_k([\x]\rel\y',H')$ by the fact that
$\partial_*$ is a boundary map. This yields the following short exact
sequence
$$
\begin{CD}
0 @>{\rm Id}>>  B_k @>i_k>> Z_k @>j_k>> {\rm FH}_k =\displaystyle\frac{Z_k}{B_k} @>0>> 0.\\
\end{CD}
$$
The maps $i_k$ and $j_k$ are defined as follows: $i_k(\x) = \x$ and
$j_k(\x) = \{\x\}$, the equivalence class in ${\rm FH}_k$. Exactness is
satisfied since $\ker i_k = 0 ={\rm im~} {\rm Id}$, $\ker j_k = B_k =
{\rm im~} i_k$ and $\ker 0 = {\rm FH}_k = {\rm im~}j_k$. Upon
inspection of the short exact sequence we obtain
$$
    \dim Z_k = \dim B_k + \dim {\rm FH}_k.
$$
Indeed, by exactness, $Z_k \supset \ker j_k = B_k$ and ${\rm im~}j_k
= {\rm FH}_k$ (onto) and therefore $\dim Z_k = \dim \ker j_k + \dim
{\rm im~} j_k =  \dim B_k + \dim {\rm FH}_k$. Since $C_k \cong Z_k
\oplus B_{k-1}$ it holds that
$$
    \dim C_k = \dim Z_k + \dim B_{k-1}.
$$
Combining these equalities gives $\dim C_k = \dim {\rm FH}_k + \dim
B_{k-1} + B_{k}$. On the level of Poincar\'e series this gives
$$
    P_t\l(\oplus_k C_k\r) =  P_t\l(\oplus_k {\rm FH}_k\r) + (1+t)  P_t\l(\oplus_k B_k\r),
$$
which proves \rmref{eqn:MR1} in the case of smooth skeletons.

Now choose sequences $\y_n \to \y$  and $H_n \to H$ in $C^\infty$
($H_n$ generic). For each $n$ the above identity is satisfied and
since also $H$ is generic it follows from hyperbolicity that $P_t\bigl(
\P_{H_n}([\x]\rel\y_n)\bigr) = P_t\bigl( \P_H([\x]\rel\y)\bigr) $ for $n$
large enough. This then proves \rmref{eqn:MR1}. By substitution
$t=1$ and using the fact that all series are positive  gives the lower
bound on the number of stationary braids.
\end{proof}

An important question is whether $\bH_*([\x\rel\y])$ also contains
information about $\P_H([\x]\rel\y)$ in the non-generic case besides
the result in Theorem \ref{thm:exist1}. In \cite{GVV} such a result
was indeed obtained and detailed study of the spectral properties of
stationary braids will most likely reveal a similar property. We
conjecture that $\# ~\P_H([\x]\rel\y) \ge {\rm
length}\bigl(\bH_*([\x\rel\y])\bigr)$, where ${\rm length}(\bH_*)$
equals the number of monomial terms in $P_t([\x\rel\y])\bigr)$.


\section{Homology shifts and Garside's normal form}
\label{subsec:twists}
\label{sec:garside}
\label{subsec:garside}
In this section we show that composing a braid
class with full twists yields a shift in braid Floer homology.
Consider the symplectic twist $S: [0,1] \to {\rm Sp}(2,\R)$ defined by
$S(t) = e^{2\pi J_0 t}$, which rotates the variables counter clock wise
over $2\pi$ as $t$ goes from $0$ to $1$. On the product $\R^2\times
\cdots \times \R^2 \cong \R^{2n}$ this yields the product rotation
$\overline S(t) = e^{2\pi\overline J_0 t}$ in  ${\rm Sp}(2n,\R)$.

Lifting to the Hamiltonian gives
$\overline S\x \in \P_{\overline S H}$, where the rotated Hamiltonian
$\overline S H \in \cH$ is given by
$\overline S H(t,\overline S\x) = H\bigl(t,\x\bigr) + \pi\bigl(|\overline S x|^2
-1\bigr)$. Substitution yields the transformed Hamilton equations:
\begin{equation}
\label{eqn:ham-trans}
(\overline S \x)_t  - \overline S J_0 \nabla H(t, \x) - 2\pi J_0 \overline S \x = 0,
\end{equation}
which are the Hamilton equations for $\overline S H$. This twisting induces a
a shift between the Conley-Zehnder indices $\mu(\x)$ and
$\mu(\overline S x)$:
\begin{lemma}
 \label{lem:shift1}
For $\x\in\P_H$, $\mu(\overline S \x) = \mu(\x) + 2n$,
where $n$ equal the number of strands in $\x$.
\end{lemma}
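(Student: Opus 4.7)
The plan is to reduce the claim directly to Lemma~\ref{lem:mas1}(ii), which already encodes the $+2n$ shift produced by twisting with $\overline S(t)=e^{2\pi\overline J_0 t}$. The only thing that needs to be verified is that passing from the stationary braid $\x\in\P_H$ to the twisted stationary braid $\overline S\x\in\P_{\overline S H}$ corresponds, on the level of linearizations, to replacing the symplectic path $\Psi$ of~\eqref{e:Psix} by $\overline S\Psi$.

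To set this up, I would first rewrite the twisted Hamiltonian in the coordinate $\tilde x=\overline S x$. From $\overline S H(t,\overline S x)=H(t,x)+\pi(|\overline S x|^2-1)$ one gets $\overline S H(t,\tilde x)=H(t,\overline S^T\tilde x)+\pi(|\tilde x|^2-1)$, whose Hessian is
\begin{equation*}
d^2\overline S H(t,\tilde x)=\overline S\,d^2H(t,\overline S^T\tilde x)\,\overline S^T+2\pi I.
\end{equation*}
Evaluating at $\tilde\x(t)=\overline S(t)\x(t)$ and substituting into~\eqref{e:Psix}, the symplectic path $\tilde\Psi$ associated with $\overline S\x$ in the twisted system satisfies
\begin{equation*}
\tilde\Psi_t=\overline J_0\bigl(\overline S\,d^2\overline H(t,\x)\,\overline S^T+2\pi I\bigr)\tilde\Psi,\qquad \tilde\Psi(0)=\textup{Id}.
\end{equation*}

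Next I would verify that $\tilde\Psi=\overline S\Psi$ solves this ODE. Using that $\overline S$ commutes with $\overline J_0$ and that $\Psi$ satisfies~\eqref{e:Psix},
\begin{equation*}
(\overline S\Psi)_t=2\pi\overline J_0\overline S\Psi+\overline J_0\overline S\,d^2\overline H(t,\x)\Psi=\overline J_0\bigl(\overline S\,d^2\overline H\,\overline S^T+2\pi I\bigr)(\overline S\Psi),
\end{equation*}
and $\overline S(0)\Psi(0)=\textup{Id}$, so uniqueness gives $\tilde\Psi=\overline S\Psi$. Since $\overline S(1)=e^{2\pi\overline J_0}=\textup{Id}$, we have $\tilde\Psi(1)=\Psi(1)$, so the permutation $\sigma\in S_n$ and the non-degeneracy condition $\det(\tilde\Psi(1)-\bm\sigma)\neq 0$ attached to $\overline S\x$ coincide with those of $\x$.

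Finally, Lemma~\ref{lem:mas1}(ii) applied with $\tau=1$ and $k=1$, so that $\Phi_1=\overline S$, yields
\begin{equation*}
\mu(\overline S\x)=\mu_\sigma(\overline S\Psi,1)=\mu_\sigma(\Psi,1)+2n=\mu(\x)+2n.
\end{equation*}
I do not anticipate any real obstacle here: the substantive computation — the Maslov-index shift produced by a full symplectic rotation — was already carried out in Lemma~\ref{lem:mas1}(ii), and the present lemma is essentially the translation of that identity from symplectic paths to nonlinear stationary braids.
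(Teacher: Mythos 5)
Your proof is correct and follows essentially the same route as the paper: linearize the twisted Hamilton equations at $\overline S\x$, identify the resulting symplectic path as $\overline S\Psi$, and invoke Lemma~\ref{lem:mas1}(ii) with $k=1$, $\tau=1$. The paper states the linearized ODE for $\overline S\Psi$ directly without writing out the Hessian of $\overline S H$ or checking $\tilde\Psi=\overline S\Psi$ by substitution and uniqueness, so your version is somewhat more detailed; you also make explicit (which the paper leaves implicit) that $\overline S(1)=\textup{Id}$, so that $\overline S\Psi(1)=\Psi(1)$ and the permutation $\sigma$ and the non-degeneracy condition are preserved under the twist.
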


\begin{proof}
In Definition \ref{defn:mas3} the
Conley-Zehnder index of a stationary braid $\x\in \P_H$ was given
as the permuted Conley-Zehnder index of the symplectic path $\Psi:
[0,1] \to {\rm Sp}(2n,\R)$ defined by
\begin{equation}
\label{eqn:lin1}
\frac{d \Psi}{dt}  - \overline J_0 d^2\overline{H}(t,\x(t)) \Psi=0,\qquad \Psi(0)= {\rm Id}.
\end{equation}
In order to compute the Conley-Zehnder index of $\overline S \x$ we linearize
Eq.\ \rmref{eqn:ham-trans} in $\overline S \x$, which yields
$$
\frac{d(\overline S(t) \Psi)}{dt}  - \overline S(t) J_0 d^2\overline{H}(t, \x(t))
 \Psi - 2\pi \overline J_0 \overline S(t) \Psi = 0,
 \qquad \overline S(0)\Psi(0)= {\rm Id}.
$$
From Lemma \ref{lem:mas1}(ii)  and the fact that $\mu(e^{2\pi J_0 t})
=2$ it follows that
\begin{eqnarray*}
\mu(\overline S\x) &=&  \mu_\sigma(\overline S \Psi,1)\\
&=& \mu_\sigma( \Psi,1) + \mu(\overline S) = \mu_\sigma(  \Psi,1) + n\mu(e^{2\pi J_0 t}) \\
&=& \mu(\x) + 2n,
\end{eqnarray*}
which proves the lemma.
\end{proof}

We relate the Floer homologies of
$[\x\rel\y]$ with Hamiltonian  $H$ and $[\overline S \x \rel \overline S\y]$ with
Hamiltonian $\overline S H$ via the index shift in Lemma
\ref{lem:shift1}. Since the Floer homologies do not depend on the
choice of Hamiltonian we obtain the following:
\begin{THM}
\label{thm:shift5} {\bf [Shift Theorem]} Let $[\x\rel\y]$ denote a
braid class with $\x$ having $n$ strands. Then
\[
    \bH_*\bigl([(\x\rel\y)\cdot \Delta^{2}]\bigr) \cong
    \bH_{*-2n}([\x\rel\y]) .
\]
\end{THM}
\begin{proof}
It is clear that the application of $\overline S$ acts on braids by
concatenating with the full positive twist $\Delta^2$. As $\Delta^2$
generates the center of the braid group, we do not need to worry
about whether the twist occurs before, during, or after the braid. It
therefore suffices to show that $\bH_*([\overline S\x\rel\overline S\y])
\cong \bH_{*-2n}([\x\rel\y])$.

The Floer homology for $[\x\rel\y]$ is defined by choosing a generic
Hamiltonian $H$. From Lemma \ref{lem:shift1} we have that
$\mu(\overline S\x) = \mu(\x) + 2n$ and therefore
$$
C_k([\overline S\x]\rel\overline S\y,\overline S H;\Z_2) = C_{k-2n}([\x]\rel\y,H;\Z_2).
$$
Since the solutions in $\M^{J,\overline S H}$
are obtained via $\overline S$, it also also holds that
$$
\partial_k(J,\overline S H) = \partial_{k-2n}(J,H),
$$
and thus $FH_k([\overline S\x]\rel\overline S\y) \cong FH_{k-2n}([\x]\rel\y)$.
\end{proof}

Recall that a positive braid is one all of whose crossings are of the
same (`left-over-right') sign; equivalently, in the standard (Artin)
presentation of the braid group $\cB_n$, only positive powers of
generators are utilized. Positive braids possess a number of remarkable
and usually restrictive properties. Such is not the case for braid Floer
homology.

\begin{corollary}
\label{cor:garside}
Positive braids realize, up to shifts, all possible braid Floer homologies.
\end{corollary}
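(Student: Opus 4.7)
The plan is to combine the Shift Theorem (Theorem \ref{thm:shift5}) with a standard fact from Garside's theory of braid groups: every braid $\beta \in \cB_N$ admits a representation $\beta = \Delta^{2m} \cdot P$ where $P$ is a positive word in the Artin generators and $m \in \Z$. Equivalently, for any braid class there exists an integer $k \geq 0$ such that $\Delta^{2k}\cdot \beta$ is a positive braid. This is the single algebraic input I will need from braid theory; the rest of the argument is a direct application of the results already developed in the paper.

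First I would take an arbitrary proper relative braid class $[\x\rel\y]$ with $\x$ having $n$ strands and $\y$ having $m$ strands, and view the total braid as an element of $\cB_{n+m}$. Invoking Garside's normal form as above, I choose $k \geq 0$ large enough that the concatenation $(\x\rel\y) \cdot \Delta^{2k}$ is a positive braid word. This produces a candidate positive braid class $[(\x\rel\y) \cdot \Delta^{2k}]$ whose Floer homology I want to relate to that of $[\x\rel\y]$.

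Second, I would verify that the new class is again a proper relative braid class in the sense of Definition \ref{defn:proper-1}. This is the one step requiring care: the full twist $\Delta^{2}$ is realized by the rigid symplectic rotation $\overline S(t) = e^{2\pi \overline J_0 t}$ acting on all strands simultaneously, so it preserves both the disjointness of strands among $\x \cup \y$ and the non-collapsibility of free strands onto skeletal ones or onto $\partial \D^2$. Hence propriety is inherited from $[\x\rel\y]$ by $[(\x\rel\y)\cdot \Delta^{2k}]$.

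Finally, iterating the Shift Theorem $k$ times yields
\[
\bH_*\bigl([(\x\rel\y)\cdot \Delta^{2k}]\bigr) \;\cong\; \bH_{*-2nk}\bigl([\x\rel\y]\bigr),
\]
so the Floer homology of the arbitrary class $[\x\rel\y]$ is, up to a degree shift by $2nk$, realized by the positive braid class $[(\x\rel\y)\cdot \Delta^{2k}]$. Since $[\x\rel\y]$ was arbitrary, this proves the corollary. The only real obstacle, as noted, is confirming that propriety survives multiplication by central full twists; once that is in place, the conclusion is immediate from the Shift Theorem.
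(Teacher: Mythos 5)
Your proof is correct and follows essentially the same route as the paper's: appeal to Garside's theorem to obtain a positive representative after multiplying by a central full-twist power $\Delta^{2k}$, then iterate the Shift Theorem to produce the degree-$2nk$ shift. The explicit check that properness of $[\x\rel\y]$ is inherited by $[(\x\rel\y)\cdot\Delta^{2k}]$ (because the rigid rotation $\overline S$ is a fiberwise bijection of $\D^2$ preserving $\partial\D^2$ and the collapsed-strand set $\Sigma_-^n$) is a useful observation that the paper leaves implicit.
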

\begin{proof}
It follows from \style{Garside's Theorem} \cite{Garside,Birman} that
every braid $\beta\in\cB_n$ has a unique presentation as the product
of a positive braid along with a (minimal) number of {\em negative}
full twists $\Delta^{-2g}$ for some $g\geq 0$. From Theorem
\ref{thm:shift5}, the braid Floer homology of any given relative braid
class is equal to that of its (positive!) Garside normal form, shifted to
the left by degree $2gn$, where $n$ is the number of free strands.
\end{proof}

This reduces the problem of computing braid Floer homology to the subclass
of positive braid pairs. We believe this to be a considerable simplification.


\section{Cyclic braid classes and their Floer homology}
 \label{sec:cyclic}
In this section we compute examples of braid Floer homology for
cyclic type braid classes. The cases we consider can be computed by
continuing the skeleton and the Hamiltonians to a Hamiltonian
system for which the space of bounded solutions can be determined
explicitly: the integrable case.

\subsection{Single-strand rotations and symplectic polar coordinates}
 \label{subsec:single}
Choose complex coordinates $x=p+iq$ and consider Hamiltonians of
the form
\begin{equation}
\label{eqn:can-1}
    H(x) = F(|x|) + \phi_\delta(|x|)G\bigl(\arg(x)\bigr),
\end{equation}
where $\arg(x)=\theta$ is the argument and $G(\theta+2\pi) =
G(\theta)$. The cut-off function $\phi_\delta$ is chosen such that
$\phi_\delta(|x|) =0$ for $ |x|\le \delta$ and $|x|\ge 1-\delta $, and
$\phi_\delta(|x|) =1$ for $2\delta \le |x|\le 1-2\delta$. In the special
case that $G(\theta)\equiv 0$, then the Hamilton equations are given
by
$$
    x_t = i \nabla H(x) = i \frac{F'(|x|)}{|x|} x,
$$
Solutions of the Hamilton equations are given by $x(t) = r
\exp{\l(i\tfrac{F'(r)}{r} t\r)}$, where $r=|x|$. This gives the period $T = \frac{2\pi r}{F'(r)}$. Since
$H$ is autonomous all solutions of the Hamilton equations occur as
circles of solutions. The CRE are given by
\begin{equation}
\label{eqn:CR-aut}
u_s -i u_t - \nabla H(u) = 0.
\end{equation}
Consider the natural change to symplectic polar coordinates
$(I,\theta)$ via the relation $p = \sqrt{2I}\cos(\theta)$, $q =
\sqrt{2I}\sin(\theta)$, and define $\hat H(I,\theta) = H(p,q)$. In
particular, $ \hat H(I,\theta) = F(\sqrt{2I}) +
\phi_\delta(\sqrt{2I})G(\theta)$. The CRE become
\begin{eqnarray*}
\label{eqn:CR7}
    I_s +2I \theta_t - 2I\hat H_I (I,\theta)&=& 0,\\
    \theta_s - \frac{1}{2I} I_t - \frac{1}{2I} \hat H_\theta (I,\theta)&=& 0.
\end{eqnarray*}
If we restrict $x$ to the annulus $\A_{2\delta} = \{x\in \D^2:
2\delta\le |x| \le 1-2\delta\}$, the particular choice of $H$ described
above yields
\begin{eqnarray*}
    I_s + 2I \theta_t - \sqrt{2I} F'(\sqrt{2I})  &=&0,\\
    \theta_s - \frac{1}{2I} I_t - \frac{1}{2I} G'(\theta) &=& 0.
\end{eqnarray*}
Before giving a general result for braid classes for which $\x$ is a
single-strand rotation we employ the above model to get insight into
the Floer homology of the annulus.

\subsection{Floer homology of the annulus}
\label{subsec:annulus}
Consider an annulus $\A=\A_\delta$ with Hamiltonians $H$ satisfying
the hypotheses:
\begin{enumerate}
\item[(a1)] $H\in C^\infty(\R\times \R^2;R)$;
\item[(a2)] $H(t+1,x) = H(t,x)$ for all $t\in \R$ and all $x\in \R^2$;
\item[(a3)] $H(t,x) =0$ for all $x\in \partial\A$ and all $t\in \R$.
\end{enumerate}
This class of Hamiltonians is denoted by $\cH(\A)$. We consider Floer
homology of the annulus in the case that $H$ has prescribed behavior
on $\partial\A$. The boundary orientation is the canonical Stokes
orientation and the orientation form on $\partial \A$ is given by
$\lambda = i_{\mathbf n}\omega$, with ${\mathbf n}$ the outward
pointing normal. We will consider the Floer homology of the annulus
in the case that $H$ has prescribed behavior on $\partial \A$:
\begin{enumerate}
\item[(a4$^+$)] $i_{X_H}\lambda >0$ on $\partial \A$;
\item[(a4$^-$)] $i_{X_H}\lambda <0$ on $\partial \A$.
\end{enumerate}
The class of Hamiltonians that satisfy (a1)-(a3), (a4$^+$) is denoted
by $\cH^+$ and those satisfying (a1)-(a3), (a4$^-$) are denoted by
$\cH^-$. For Hamiltonians in $\cH^+$ the boundary orientation
induced by $X_H$ is coherent with the canonical orientation of
$\partial\A$, while for Hamiltonians in $\cH^-$ the boundary
orientation induced by $X_H$ is opposite to the canonical orientation
of $\partial\A$.

For pairs $(J,H) \in \cJ\times \cH^+(\A)$ let ${\rm HF}_*^+(\A;J,H)$
denote the Floer homology for contractible loops in $\A$. Similarly,
for $H \in \cH^-$ the Floer homology is denoted by ${\rm
HF}_*^-(\A;J,H)$. For Hamiltonians of the form \rmref{eqn:can-1} it
can also be interpreted as the Floer homology of the space of single
strand braids $\x$ that wind zero times around the annulus, i.e. any
constant strand is a representative. In view of (a4$^\pm$) this braid
class is proper.
\begin{THM}
 \label{thm:annulus1}
The Floer homology ${\rm HF}_*^+(\A;J,H)$ is independent of the pair
$(J,H) \in \cJ\times \cH^+(\A)$ and is denoted by ${\rm HF}_*^+(\A)$.
There is a natural isomorphism
$$
{\rm HF}_k^+(\A) \cong H_{k+1}(\A,\partial \A) =\begin{cases}
  \Z_2    & \text{for  } k=0,1 \\
    0  & \text{otherwise}.
\end{cases}
$$
Similarly, the Floer homology ${\rm HF}_*^-(\A;J,H)$ is independent
of the pairs $(J,H) \in \cJ\times \cH^-(\A)$ and is denoted by ${\rm
HF}_*^-(\A)$ and there is a natural isomorphism
$$
{\rm HF}_k^-(\A) \cong H_{k+1}(\A) =\begin{cases}
  \Z_2    & \text{for  } k=-1,0 \\
    0  & \text{otherwise},
\end{cases}
$$
where $H_*$ denotes the singular homology with coefficients in
$\Z_2$.
\end{THM}
\begin{proof}
Let us start with Hamiltonians in the class $\cH^+$. Consider $\A =
\A_\delta$ and choose $H = F + \phi_\delta G$, with $F(r) = \frac{1}{2}
\l(r-\frac{1}{2}\r)^2 - \frac{1}{2} \l(\delta-\frac{1}{2}\r)^2$ and
$G(\theta)  = \epsilon \cos(\theta)$. Using symplectic polar
coordinates we obtain that
$$
\nabla_g \hat H(I,\theta)
=
\left(\begin{array}{c}2I - \frac{1}{2} \sqrt{2I}
+ \epsilon \sqrt{2I} \phi_\delta'(\sqrt{2I}) \cos(\theta)
\\ -\frac{\epsilon}{2I} \phi_\delta(\sqrt{2I})\sin(\theta)
\end{array}\right).
$$
For  $\frac{1}{2}\delta^2\le I \le 2\delta^2$ and for
$\frac{1}{2}(1-2\delta)^2 \le I \le \frac{1}{2} (1-\delta)^2$ it holds
that $|\sqrt{2I} - \frac{1}{2}  | \ge \frac{1}{2}-2\delta$ and thus if we
choose $\epsilon<\frac{1}{4\delta}-1$ all zeroes of $\nabla_g \hat H$
lie in the annulus set $\A_{2\delta}\subset \A_\delta$. The zeroes of
$\nabla_g\hat H$ are found at $I =\frac{1}{8}$ and $\theta = 0,\pi$,
which are both non-degenerate critical points. Linearization yields
$$
d\nabla_g \hat H({1/8},0) = \left(\begin{array}{cc}1 & 0 \\0 & -4\epsilon\end{array}\right),
\quad d\nabla_g \hat H\l({1/8},\pi\r) = \left(\begin{array}{cc}1 & 0 \\0 & 4\epsilon\end{array}\right),
$$
i.e. a saddle point (index 1)  and a minimum (index 0) of $H$
respectively. For $\tau\le 1$  it follows from Remark
\ref{rmk:withmorse} that the Conley-Zehnder indices of the
associated symplectic paths defined by $\Psi_t = J_0 d\nabla_g \hat
H\Psi$ are given by $\mu_\sigma(\Psi,\tau) = 0,1$. Therefore the
index $  \mu_\sigma(\Psi,\tau) =  1- \mu_H = 0,1$ for $(I,\theta)$
equal to $(\frac{1}{8},0)$ and $(\frac{1}{8},\pi)$ respectively.

Next consider Hamiltonians of the form $\tau H$ and the associated
CRE $u_s - J_0 u_t - \tau \nabla H(u) =0$. Rescale $\tau s \to s$,
$\tau t \to t$ and $u(s/\tau,t/\tau) \to u(s,t)$; then $u$ satisfies
\rmref{eqn:CR-aut} again with periodicity $u(s,t+\tau) = u(s,t)$. The
1-periodic solutions of the CRE with $\tau H$ are transformed to
$\tau$-periodic solutions of \rmref{eqn:CR-aut}. Note that if $\tau$ is
sufficiently small then all $\tau$-periodic solutions of the stationary
CRE are independent of of $t$ and thus critical points of $H$.

If we linearize around $t$-independent solutions of
\rmref{eqn:CR-aut}, then $\frac{d}{ds} -d\nabla H(u(s))$ is Fredholm
and thus also
$$
 \partial_{K,\Delta} = \frac{\partial}{\partial s} - J\frac{\partial}{\partial t} - K,
$$
with $K = d\nabla H(u(s))$, is Fredholm, see \cite{SalZehn1}. We
claim that if $\tau$ is sufficiently small then all contractible
$\tau$-periodic bounded solutions $u(s,t+\tau) = u(s,t)$ of
\rmref{eqn:CR-aut} are $t$-independent, i.e. solutions of the
equation $u_s  = \nabla H(u)$. Let us sketch the argument following
\cite{SalZehn1}. Assume by contradiction that there exists a sequence
of $\tau_n\to 0$ and bounded solutions $u_n$ of Equation
\rmref{eqn:CR-aut}. If we embed $\A$ into the 2-disc $\D^2$ we can
use the compactness results for the 2-disc. One can assume without
loss of generality that $u_n \in \cM^{J_0,H}(\A;\tau) = \cM^{J_0,\tau
H}(\A)$. Following the proof in \cite{SalZehn1} we conclude that for
$\tau>0$ small all solutions in $\M$ are $t$-independent. The system
$(J,H)$ can be continued to $(J_0,\tau H)$ for which we know
$\M^{J_0,\tau H}(\A)$ explicitly via $u_s - \tau \nabla H =0$ and
therefore the desired homology is found as follows.

Note that $\A$ is an isolating neighborhood for the gradient flow
generated by $u_s = \tau \nabla H(u) =f(u)$, and for $H\in \cH^+$ the
exit set is $\partial \A$. Using the Morse  relations for the Conley
index we obtain for any generic $H\in \cH^+$ that
$$
\sum_{x\in {\rm Fix}(f)} t^{{\rm ind}(x)} = P_t(\A,\partial\A) + (1+t)Q_t,
$$
where ${\rm ind}(x) = \dim W^u(x)$. Using \cite{Sal} the
Poincar\'e polynomial follows, provided we choose the appropriate grading. By
the previous considerations on the Conley-Zehnder index in Remark
\ref{rmk:withmorse} we have that $ \mu_\sigma(\Psi,\tau) =  1 -
\mu_H(x) = 1 - (2- {\rm ind}(x)) = {\rm ind}(x) -1$. This yields ${\rm
ind}(x) = k+1$, where $k$ is the grading of Floer homology and
therefore $H_k(C,\partial;J_0,\tau H) \cong H_{k+1}(\A,\partial \A)$,
which proves the first statement.

As for Hamiltonians in $\H^-$ we choose $F(r) =  -\frac{1}{2}
\l(r-\frac{1}{2}\r)^2 + \frac{1}{2} \l(\delta-\frac{1}{2}\r)^2$. The proof
is identical to the previous case except for the indices of the
stationary points. The Morse indices of $(\frac{1}{8},0)$ and
$(\frac{1}{8},\pi)$ are $1$ and $2$ and $\mu_\sigma(\Psi,\tau) =
1-\mu_H =  0,-1$ for $(I,\theta)$ equal to $(\frac{1}{8},0)$ and
$({1}{8},\pi)$ respectively. As before $\A$ is an isolating
neighborhood for the gradient flow of $\tau H$ and for $H\in \cH^-$
the exit set is $\varnothing$. This yields the slightly different Morse
relations
$$
\sum_{x\in {\rm Fix}(f)} t^{{\rm ind}(x)} = P_t(\A) + (1+t)Q_t.
$$
By the same grading as before we obtain that
$H_k(C,\partial;J_0,\tau H) \cong H_{k+1}(\A)$, which proves the
second statement.
\end{proof}

\subsection{Floer homology for single-strand cyclic braid classes}
\label{subsec:cyclic2}

We apply the results in the previous subsection to compute the Floer
homology of families of cyclic braid classes $[ \x\rel\y]$. The
skeletons $\y$ consist of two braid components $\y^1$ and $\y^2$,
which are given by (in complex notation)
\begin{equation}
\label{eqn:def1}
\y^1 =   \Bigl\{  r_1 e^{\tfrac{2\pi n}{m} i t},\cdots, r_1 e^{\tfrac{2\pi n}{m} i (t-m+1)}\Bigr\},\quad
\y^2 =   \Bigl\{  r_2 e^{\tfrac{2\pi n'}{m'} i t},\cdots, r_2 e^{\tfrac{2\pi n'}{m'} i (t-m'+1)}\Bigr\},
\end{equation}
where $0<r_1<r_2\le 1$, $m,m'\in \N$, $n,n'\in \Z$ and $n\not = 0$,
$m\ge 2$. Without loss of generality we take both pairs $(n,m)$ and
$(n',m')$ relatively prime. In the braid group $\cB_m$ the braid
$\y^1$ is represented by the word $\beta^1 = (\sigma_1 \cdots
\sigma_{m-1})^n$, $m\ge 2$, and $n\in \Z$, and similarly for $\y^2$.
In order to describe the relative braid class $[\x\rel\y]$ with the
skeleton defined above we consider a single strand braid $\x =
\{x^1(t)\}$ with
$$
    x^1(t)=re^{2\pi \ell i t}
$$
where $0<r_1 <r<r_2<1$ and $\ell\in \Z$. We now consider two cases
for which $\x\rel\y$ is a representative.

\subsubsection{The case $\frac{n}{m}<\ell<\frac{n'}{m'}$}
The relative braid class $[\x\rel\y]$ is a proper braid class since the
inequalities are strict.
\begin{lemma}
\label{lem:braids1}
The Floer homology is given by
$$
\bH_k([\x\rel\y],\Z_2) = \begin{cases}
  \Z_2    & \text{for  } k= 2\ell,2\ell +1 \\
   0   & \text{otherwise}.
\end{cases}
$$
The Poincar\'e polynomial is given by $P_t([\x\rel\y]) =
t^{2\ell}+t^{2\ell +1}$.
\end{lemma}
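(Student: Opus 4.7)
\textbf{Proof plan for Lemma~\ref{lem:braids1}.} The idea is to reduce to the annulus Floer homology of Theorem~\ref{thm:annulus1} by first killing the rotation $\ell$ via the Shift Theorem, then realizing the resulting braid class by the integrable model Hamiltonian of Section~\ref{subsec:single}.

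First, apply the Shift Theorem~\ref{thm:garside-floer1} with $n=1$ free strand to the product $[(\x\rel\y)\cdot \Delta^{-2\ell}]$. This produces an equivalent proper relative braid class $[\x'\rel\y']$ in which the free strand $\x'$ has rotation number $0$ (so $\x'$ is a constant loop), the inner skeleton has rotation number $\frac{n}{m} - \ell < 0$, and the outer skeleton has rotation number $\frac{n'}{m'} - \ell > 0$. The strict inequalities $\frac{n}{m} < \ell < \frac{n'}{m'}$ guarantee that $[\x'\rel\y']$ is proper. By the Shift Theorem,
\[
\bH_k([\x\rel\y]) \;\cong\; \bH_{k-2\ell}([\x'\rel\y']),
\]
so it is enough to establish that $\bH_k([\x'\rel\y'])$ equals $\Z_2$ for $k=0,1$ and vanishes otherwise.

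To compute the reduced homology I would choose a Hamiltonian of the integrable form from Section~\ref{subsec:single},
\[
H(x) \;=\; F(|x|) + \phi_\delta(|x|)\, G(\arg x), \qquad G(\theta) = \epsilon \cos\theta,
\]
with $\epsilon>0$ small and $F$ a smooth radial profile satisfying $F'(r_1)/r_1 = 2\pi(\tfrac{n}{m} - \ell)$, $F'(r_2)/r_2 = 2\pi(\tfrac{n'}{m'} - \ell)$, and having a single non-degenerate critical point at some $r_0 \in (r_1, r_2)$. Then the circles $|x|=r_1, r_2$ are invariant under $X_H$ and the skeleton $\y'$ is realized as a pair of rotating periodic orbits of $X_H$ on them. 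On the annulus $\A = \{r_1 \le |x| \le r_2\}$, the signs $F'(r_1) < 0 < F'(r_2)$ give the positive flux condition $i_{X_H}\lambda > 0$ on both components of $\partial\A$, placing $H$ in the $\cH^+$ class of Section~\ref{subsec:annulus}. The two critical points $(r_0,0)$ and $(r_0,\pi)$ of $\hat H$ are the only elements of $\P_H([\x']\rel\y')$, with Conley--Zehnder indices $1$ and $0$ by the computation in the proof of Theorem~\ref{thm:annulus1} (via Remark~\ref{rmk:withmorse}). After rescaling $H \mapsto \tau H$ with $\tau$ small, the argument of \cite{SalZehn1} used in Theorem~\ref{thm:annulus1} shows that all bounded CRE solutions in the fiber are $t$-independent gradient trajectories, so the boundary operator reduces to the Morse differential on $(\A,\partial\A)$. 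This yields $\bH_*([\x'\rel\y']) \cong \bH^+_*(\A) \cong H_{*+1}(\A,\partial\A)$, and a direct computation (long exact sequence of the pair, or Lefschetz duality) gives $H_1(\A,\partial\A) = H_2(\A,\partial\A) = \Z_2$ and zero elsewhere. Combined with the initial shift this gives $P_t([\x\rel\y]) = t^{2\ell} + t^{2\ell+1}$.

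The main technical obstacle is the identification of the braid Floer homology on the disc with the annulus Floer homology: one must verify that no bounded CRE trajectory in $[\x']\rel\y'$ leaks out of the open annulus through the invariant circles $|x|=r_1, r_2$. This is handled by the properness of the braid class and Proposition~\ref{prop:isol1}: the skeletal circles act as topological barriers via the Monotonicity Lemma~\ref{prop:dislyap3}, confining every bounded solution to the interior of $\A$, which is exactly the setting of the $\cH^+$ annulus computation.
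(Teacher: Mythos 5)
Your proposal is correct and follows essentially the same route as the paper's proof: shift by $\Delta^{-2\ell}$ to reduce to a free strand of winding number zero, realize the skeleton via the integrable annulus Hamiltonian of Section~\ref{subsec:single}, observe $H \in \cH^+$ from the signs of $F'(r_1), F'(r_2)$, and invoke Theorem~\ref{thm:annulus1}. (One small slip: the Conley--Zehnder indices you assign to $(r_0,0)$ and $(r_0,\pi)$ are interchanged relative to the paper's computation, which has $\mu_\sigma = 0$ at the saddle $\theta=0$ and $\mu_\sigma = 1$ at the minimum $\theta=\pi$; this does not affect the final homology.)
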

\begin{proof}
Since $\bH_*([\x\rel\y],\Z_2) $ is independent of the representative
we consider the class $[x]\rel\y$ with $\x$ and $\y$ as defined above.
Apply $-\ell$ full twists to $\x\rel\y$: $(\tilde \x,\tilde \y) = \overline
S^{-\ell} (\x,\y)$. Then by Theorem \ref{thm:shift5}
\begin{equation}
\label{eqn:braids2}
\bH_k([\tilde\x\rel\tilde\y]) \cong \bH_{k+2\ell}([\x\rel\y]).
\end{equation}
We now compute the homology ${\rm HF}_k([\tilde\x\rel\tilde\y])$
using Theorem \ref{thm:annulus1}. The free strand $\tilde \x$, given
by $\tilde x^1(t) = r$, in $\tilde\x\rel\tilde\y$ is unlinked with the
$\tilde y^1$. Consider an explicit Hamiltonian $H(x) = F(|x|) +
\omega(|x|) G(\arg(x))$ and choose $F$ such that $F(r_1) = F(r_2) =
0$ and
\begin{equation}
\label{eqn:choices1}
\frac{F'(r_1)}{2\pi r_1} = \frac{n}{m}-\ell<0,
\quad{\rm and}\quad
\frac{F'(r_2)}{2\pi r_2} = \frac{n'}{m'}-\ell>0.
\end{equation}
Clearly $\y \in \P_H$ and the circles $|x| = r_1$ and $|x| = r_2$ are
invariant for the Hamiltonian vector field $X_H$. For $\tau>0$
sufficient small it holds that $\cM^{J_0,\tau H}([\tilde\x]\rel\y) =
\cM^{J_0,H}(\A)$, and from the boundary conditions in Eq.\
\rmref{eqn:choices1} it follows that $ H\in \cH^+$. From Theorem
\ref{thm:annulus1} we deduce that ${\rm
HF}_{0}([\tilde\x]\rel\tilde\y) \cong {\rm HF}_{0}^+(\A) = \Z_2$ and
${\rm HF}_{1}([\tilde \x]\rel\tilde\y) \cong {\rm HF}_{1}^+(\A) =
\Z_2$. This proves, using Eq.\ \rmref{eqn:braids2}, that ${\rm
HF}_{2\ell}([\x]\rel\y) = \Z_2$ and ${\rm HF}_{2\ell+1}([\x]\rel\y) =
\Z_2$, which completes the proof.
\end{proof}

\subsubsection{The case $\frac{n}{m}>\ell>\frac{n'}{m'}$}
The relative braid class $[\x\rel\y]$ with the reversed inequalities is
also a proper braid class. We have
\begin{lemma}
\label{lem:braids3}
The Floer homology is given by
$$
\bH_k([\x\rel\y],\Z_2) = \begin{cases}
  \Z_2    & \text{for  } k=2\ell-1,2\ell \\
   0   & \text{otherwise}.
\end{cases}
$$
The Poincar\'e polynomial is given by $P_t([\x\rel\y]) =
t^{2\ell-1}+t^{2\ell}$.
\end{lemma}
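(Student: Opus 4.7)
The proof will follow the same template as Lemma \ref{lem:braids1}, modifying only the choice of radial Hamiltonian so as to reverse the boundary orientation of the annulus. First I would invoke the Shift Theorem \ref{thm:shift5} with $-\ell$ full twists. Since $\x$ has a single strand, each twist shifts by $2$, so with $(\tilde{\x},\tilde{\y})=\overline{S}^{-\ell}(\x,\y)$ one has
\[
\bH_k([\tilde{\x}\rel\tilde{\y}])\cong \bH_{k+2\ell}([\x\rel\y]).
\]
Thus it suffices to compute $\bH_*([\tilde{\x}\rel\tilde{\y}])$ in degrees $-1$ and $0$ and deduce the result after shifting.

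Next I would choose the same type of cyclic Hamiltonian as in Lemma \ref{lem:braids1}, namely $H(x)=F(|x|)+\omega(|x|)G(\arg(x))$ where $F$ is chosen so that $F(r_1)=F(r_2)=0$ and, reflecting the reversed inequalities $\tfrac{n}{m}>\ell>\tfrac{n'}{m'}$,
\[
\frac{F'(r_1)}{2\pi r_1}=\frac{n}{m}-\ell>0,\qquad
\frac{F'(r_2)}{2\pi r_2}=\frac{n'}{m'}-\ell<0.
\]
This guarantees that $\y\in\P_H$ (hence so is $\tilde\y=\overline{S}^{-\ell}\y$), that the circles $|x|=r_{1,2}$ are invariant under $X_H$, and that the constant strand $\tilde{x}^1(t)=r$ is unlinked from the inner strand $\tilde y^1$. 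A direct computation of $i_{X_H}\lambda$ on $\partial\A$ (with $\lambda=i_\mathbf{n}\omega$) yields $-F'(r_1)$ on the inner circle and $F'(r_2)$ on the outer circle, both of which are now \emph{negative}; hence $H\in\cH^-(\A)$, in contrast to the $\cH^+$ case used for Lemma \ref{lem:braids1}.

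The third step is to recycle the compactness-plus-continuation argument from the proof of Theorem \ref{thm:annulus1}. For $\tau>0$ sufficiently small, every bounded solution of the Cauchy-Riemann equation for $\tau H$ is $t$-independent and contained in the annulus $\A=\{r_1\le|x|\le r_2\}$, so $\cM^{J_0,\tau H}([\tilde{\x}]\rel\tilde{\y})=\cM^{J_0,\tau H}(\A)$. Applying Theorem \ref{thm:annulus1} in the $\cH^-$ case gives
\[
\bH_k([\tilde{\x}\rel\tilde{\y}])\cong \mathrm{HF}_k^-(\A)\cong H_{k+1}(\A;\Z_2),
\]
which is $\Z_2$ for $k=-1,0$ and vanishes otherwise. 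Substituting into the shift relation then puts the nontrivial groups in degrees $k=2\ell-1$ and $k=2\ell$, yielding the stated Poincar\'e polynomial $t^{2\ell-1}+t^{2\ell}$.

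The only real subtlety, and therefore the main step to verify carefully, is the boundary-orientation calculation that places $H$ in $\cH^-$ rather than $\cH^+$: this is precisely where the reversed inequalities $\tfrac{n}{m}>\ell>\tfrac{n'}{m'}$ manifest themselves, swapping the role of $(\A,\partial\A)$ with $\A$ in the relevant long-exact/annulus computation and thus shifting the nonzero Betti degrees from $(2\ell,2\ell+1)$ to $(2\ell-1,2\ell)$. Everything else is a verbatim repetition of the argument used in Lemma \ref{lem:braids1}.
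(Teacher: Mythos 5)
Your proof matches the paper's, which itself says the argument is "identical to the proof of Lemma \ref{lem:braids1}" with the reversed boundary conditions yielding $H\in\cH^-$ and hence $\mathrm{HF}_*^-(\A)$ in place of $\mathrm{HF}_*^+(\A)$. The additional detail you supply (the explicit $i_{X_H}\lambda$ sign check on each boundary circle and the re-derivation of the shift) is a correct unpacking of what the paper leaves implicit.
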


\begin{proof}
The proof is identical to the proof of Lemma \ref{lem:braids1}.
Because the inequalities are reversed we construct a Hamiltonian
such that
$$
\frac{F'(r_1)}{2\pi r_1} = \frac{n}{m}-\ell>0,
\quad{\rm and}\quad
\frac{F'(r_2)}{2\pi r_2} = \frac{n'}{m'}-\ell<0.
$$
This yields a Hamiltonian in $\cH^-$ for which we repeat the above
argument using the homology ${\rm HF}_*^-(\A)$.
\end{proof}

\subsection{Applications to disc maps}
\label{subsec:disc-maps}
We demonstrate how these simple computed examples of $\bH_*$
yield forcing results at the level of dynamics. The following results
are not so much novel (cf. Franks' work on rotation sets) as
illustrative of how one uses a Floer-type forcing theory.

\begin{THM}
\label{thm:fixed1} Let $f:\D^2 \to \D^2$ be an area-preserving
diffeomorphism with invariant set $A\subset \D^2$ having as braid
class representative $\y$, where $[\y]$ is as described in Eq.
\rmref{eqn:def1}, with $\frac{n}{m} \not = \frac{n'}{m'}$ relatively
prime. Then, for for each $l\in \Z$ and $k\in \N$, satisfying
$$
\frac{n}{m} <\frac{l}{k}< \frac{n'}{m'},
\quad {\rm or} \quad
\frac{n}{m} > \frac{l}{k}> \frac{n'}{m'},
$$
there exists a distinct period $k$ orbit of $f$. In particular, $f$ has
infinitely many distinct periodic orbits.
\end{THM}

\begin{proof}
By Proposition \ref{prop:ham-class2} there exists a Hamiltonian $H\in
\cH(\D^2)$ such that $f=\psi_{1,H}$, where $\psi_{t,H}$ the
Hamlitonian flow generated by the Hamiltonian system $x_t =
X_H(t,x)$ on $(\D^2,\omega_0)$. Up to full twists $\Delta^2$, the
invariant  set $A$ generates a braid $\psi_{t,H}(A)$ of braid class
$[\psi_{t,H}(A)] = [\y]~{\rm mod~} \Delta^2$ with
$$
 \psi_{t,H}(A) = \tilde \y= \{\tilde \y^1,\tilde \y^2\}.
$$

We begin with the case $k=1$.
There exists an integer $N$ (depending on the choice of $H$) such
that the numbers of turns of the strands $\tilde y^1$ and $\tilde y^2$
are $\tfrac{\tilde n}{m} = \tfrac{n}{m} +N$ and $\tfrac{\tilde n'}{m'} =
\tfrac{n'}{m'} +N$ respectively. Consider a free strand $\tilde \x$ such
that $\tilde\x\rel\tilde\y  \sim (\x\rel\y)\cdot \Delta^{2N}$, with
$[\x\rel\y]$ as in Lemmas \ref{lem:braids1} and \ref{lem:braids3}, and
with $l$ satisfying the inequalities above.\footnote{Geometrically
$\tilde \x$  turns $l+N$ times around $\tilde\y^1$ and each strand in
$\tilde \y^2$ turns $\tfrac{\tilde n'}{m'}$ times around $\tilde\x$.} By
Lemmas \ref{lem:braids1} and \ref{lem:braids3} the Floer homology of
$[\x\rel\y]$ is non-trivial, and Theorem \ref{thm:shift5} implies that
$\bH_k\l([\tilde\x\rel\tilde\y\r]) \cong \bH_{k-2nN}([\x\rel\y])$.
Therefore the Floer homology of $[\tilde \x\rel\tilde\y]$ is non-trivial.
From Theorem \ref{thm:exist1} the existence of a stationary relative
braid $\tilde \x$ follows, which yields a fixed point for $f$.

For the case $k>1$,
consider the Hamiltonain $kH$; the time-1 map associated with
Hamiltonian system $x_t = X_{kH}$ is equal to $f^k$. The fixed point implied
by the proof above descends to a $k$-periodic point of $f$.
\end{proof}

\begin{remark}
\label{rmk:fixed3} As pointed out in Section \ref{sec:properties}, we
conjecture that the Floer homology $\bH_*([\x\rel\y]) = \Z_2\oplus
\Z_2$ implies the existence of at least two fixed points of different
indices. This agrees with 
the generic setting
where centers and saddles occur in pairs.
\end{remark}


\section{Remarks and future steps}
\label{sec:rems}
In this section we outline a number of remarks and future directions.
The results in this paper are a first step to a more in depth theory.

\subsection{Floer homology, Morse homology and the Conley index}
\label{subsec:FMC} The perennial problem with Floer homologies is
their general lack of computability.\footnote{Much excitement in Heegaard 
Floer homology surrounds recent breakthroughs in combinatorial formulae for 
(still challenging!) computation of examples.} We outline a strategy for
algorithmic computation of braid Floer homology.
\begin{enumerate}
\item Use Garside's Theorem and Theorem \ref{thm:shift5} to
    reduce computation to the case of positive braids.
\item Prove that braid Floer homology is isomorphic to the Conley
    braid index of \cite{GVV} in the case of positive braids.
\item Invoke the computational results in \cite{GVV}.
\end{enumerate}
Steps 1 and 3 above are in place; Step 2 is conjectural.

To be more precise, let $[\x\rel\y]$ be a proper relative braid class. It
follows from the Garside theorem that for $g\ge 0$ sufficiently large,
the braid class $[\p\rel\q]=[\x\rel\y]\cdot\Delta^{2g}$ is positive ---
there exist representatives of the braid class which are positive
braids. Given such a positive braid, its \style{Legendrian
representative} (roughly speaking, an image of the braid under a
certain planar projection, lifted back via a 1-jet extension) captures
the braid class. From \cite{GVV}, one starts with a Legendrian braid
representative and performs a spatial discretization, reducing the
braid to a finite set of points which can be reconstructed into a
piecewise-linear braid in $[\p\rel\q]$.

There is an analogous \style{homological index} for Legendrian braids,
introduced in \cite{GVV}. This index, ${\rm HC}_*([\p\rel\q])$, is
defined as the (homological) Conley index of the discretized braid
class under an appropriate class of parabolic dynamics. This index has
finite-dimensionality built in and computation of several classes of
examples have been implemented using current homology
computational code.

This index shares some features with the braid Floer homology.
Besides finite dimensionality of the index, there is a precise analogue
of the Shift Theorem for products with full twists. Even the underlying
dynamical constructs are consonant. For Legendrian braid classes the
same construction as in this paper can be carried out using a
nonlinear heat equation instead of the nonlinear CRE. Consider the
scalar parabolic equation
\begin{equation}
\label{eqn:HFe}
u_s - u_{tt} - g(t,u) = 0,
\end{equation}
where $u(s,t)$ takes values in the interval $[-1,1]$. Such equations
can be obtained as a limiting case of the nonlinear CRE. For the
function $g$ we assume the following hypotheses:
\begin{itemize}
\item [(g1)] $g\in C^\infty(\R\times \R;\R)$;
\item [(g2)] $ g(t+1,q)  = g(t,q)$  for all $(t,q)\in \R\times \R$;
\item [(g3)] $g(t,-1) = g(t,1) = 0$ for $t\in \R$.
\end{itemize}
This equation generates a local semi-flow $\psi^s$ on periodic
functions in $C^\infty(\R/\Z;\R)$. For a braid diagram $\p$ we define
the intersection number  $I(\p)$ as the total number of intersections,
and since all intersections in a Legendrian braid of this type
correspond to positive crossings, the total intersection number is
equal to the crossing number defined above. The classical
\style{lap-number property} \cite{Ang2} of nonlinear scalar heat
equations states that the number of intersections between two
graphs can only decrease as time $s\to \infty$. As before let
$[\p]_L\rel\q$ be a relative braid class fiber with skeleton $\q$; we
can choose a nonlinearity $g$ such that the skeletal strands in $\q$
are solutions of the
equation $q_{tt} + g(t,q) =0$. Let $\uu(s)\rel\q$ 
denote a local solution (in $s$) of the Eq.\
(\ref{eqn:HFe}), then
$I(\uu\rel\q)|_{s_0-\epsilon}>I(\uu\rel\q)|_{s_0+\epsilon}$,
whenever $u(s_0,t_0) = u^k(s_0,t_0)$ for some $k$, and as before we
define the sets of all bounded solutions in $[\p]_L\rel\q$, which we
denote by $\cM([\p]_L\rel\q)$. Similarly,
$\cS([\p]_L\rel\q)\subset C^\infty(\R/\Z;\R)$, the image
under the map $u\mapsto u(0,\cdot)$, is compact in the
appropriate sense. We can now build a chain complex in the usual way
which yields the Morse homology ${\rm HM}_*([\p\rel\q])$.

\begin{conjecture}
\label{conj:compute}
Let $[\x\rel\y]$ be a proper relative braid class with $\x$ having $n$
strands. Let $[\p\rel\q]=[\x\rel\y]\cdot\Delta^{2g}$ be sufficiently
twisted so as to be positive. Then,
\begin{equation}
\label{eqn:iso1}
\bH_{*-2ng}([\x\rel\y])
\cong {\rm HM}_*([\p\rel\q])
\cong {\rm HC}_*([\p\rel\q]).
\end{equation}
\end{conjecture}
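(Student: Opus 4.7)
The conjecture naturally splits into two isomorphisms. For the first, the Shift Theorem~\ref{thm:shift5} immediately gives $\bH_*([\p\rel\q]) \cong \bH_{*-2ng}([\x\rel\y])$, so it suffices to prove $\bH_*([\p\rel\q]) \cong {\rm HM}_*([\p\rel\q])$ for the positive representative. The plan is to realize $\q$ by a Legendrian representative---each strand being the $1$-jet extension of a smooth path---and to exploit a one-parameter family of Hamiltonians that interpolates between the nonlinear CRE and the scalar parabolic equation~(\ref{eqn:HFe}).

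Concretely, I would introduce mechanical Hamiltonians of the form $H_\epsilon(t,p,q) = \tfrac{1}{2\epsilon}\rho(p) + V(t,q)$, cut off so as to lie in $\cH$, chosen so that (i) the skeleton $\q$ lies in $\P_{H_\epsilon}$ for every $\epsilon>0$, and (ii) $g(t,q) = -\partial_q V$ is precisely the nonlinearity realizing $\q$ as a stationary solution of~(\ref{eqn:HFe}). The bijection between $\P_{H_\epsilon}([\p]\rel\q)$ and stationary solutions of~(\ref{eqn:HFe}) in $[\p]_L\rel\q$ is then immediate, and the Conley--Zehnder index agrees with the Morse index of the corresponding Legendrian braid modulo a uniform shift (cf.\ Remark~\ref{rmk:withmorse}). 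The main step is an adiabatic limit $\epsilon \to 0$ of the CRE~(\ref{eqn:CR1}), showing that the moduli spaces $\cM_{\x_-,\x_+}^{J_0,H_\epsilon}([\p]\rel\q)$ converge to the corresponding connecting spaces for~(\ref{eqn:HFe}), with bijections on the zero- and one-dimensional strata sufficient to identify generators and boundary operators. Uniform compactness should follow from Proposition~\ref{prop:CR4}, the common monotonicity of crossing number under both flows (Lemma~\ref{lem:dislyap3} on one side and the classical lap-number estimate on the other), and Salamon--Weber-style parabolic-from-elliptic estimates.

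For the isomorphism ${\rm HM}_*([\p\rel\q]) \cong {\rm HC}_*([\p\rel\q])$, I would discretize time: sampling at $N$ equispaced points $t_i = i/N$ reduces~(\ref{eqn:HFe}) to a gradient-like flow on a piecewise-linear braid space in $\R^N$. For $N$ sufficiently large the results of \cite{GVV} supply a finite-dimensional isolating neighborhood whose homological Conley index equals ${\rm HC}_*([\p\rel\q])$, and a continuation argument---letting $N\to\infty$ while checking that the discretization preserves the isolating property of the proper positive braid class---identifies it with ${\rm HM}_*$. The lap-number monotonicity persists under the semi-discrete flow, so no new crossing phenomena appear in the limit.

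The principal obstacle is the adiabatic step in the first isomorphism. Uniform $C^{1,\lambda}$ control on bounded CRE solutions as $\epsilon\to 0$ requires handling the fast-oscillating $p$-component of $\u$; a two-scale Ansatz combined with the a priori bound of Proposition~\ref{prop:dislyap4} (which rules out bubbling in this two-dimensional setting) should deliver it. Simultaneous transversality for both chain complexes will demand generic $V$ drawn from a shared perturbation class of $C^\infty$-small potentials supported away from $\q$, mirroring the construction in Proposition~\ref{prop:trans4}. Once these estimates are in place, the Floer and Morse chain complexes should coincide generator by generator and differential by differential, yielding the conjectured isomorphism.
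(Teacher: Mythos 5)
You have offered a proof sketch for what the paper itself explicitly labels a \emph{conjecture}: immediately before the statement, the authors write ``Steps 1 and 3 above are in place; Step 2 is conjectural,'' and after it, ``A result of this type would be crucial.'' There is no proof in the paper to compare your argument against, so the question is not whether you reproduce or diverge from the paper's argument but whether your sketch actually closes the gap the authors leave open. It does not: what you have written is a plausible research programme, and you candidly identify its principal obstacle, which is essentially the same obstacle the authors chose to leave as an open problem.

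Beyond the difficulties you flag, there are several specific places where the plan is thinner than it appears. First, the analogy with the Salamon--Weber adiabatic limit is natural, but that theory is formulated on cotangent bundles with fiberwise-quadratic Hamiltonians; here the phase space is a disc with boundary, and any mechanical family $H_\epsilon(t,p,q) = \tfrac{1}{2\epsilon}\rho(p) + V(t,q)$ must be cut off so that $H_\epsilon$ vanishes on $\partial\D^2$ and $X_{H_\epsilon}$ is tangent to it (hypotheses (h1)--(h3)). This cutoff destroys the global separation of scales in $p$ near the boundary, exactly where Proposition~\ref{prop:dislyap4} governs confinement; you would need to show the isolating neighborhood for $[\p]\rel\q$ stays strictly inside the region where $H_\epsilon$ remains mechanical, uniformly in $\epsilon$. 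Second, your appeal to Remark~\ref{rmk:withmorse} to match the Conley--Zehnder index with the parabolic Morse index is not available as stated: that remark assumes $\Vert K\Vert < 2\pi$, whereas the Hessian of $H_\epsilon$ grows like $1/\epsilon$, so the identification of gradings has to be re-derived through the adiabatic degeneration (as Salamon--Weber do), not quoted. Third, the step ${\rm HM}_* \cong {\rm HC}_*$ via time discretization is asserted by pointing at \cite{GVV} plus ``continuation,'' but \cite{GVV} stabilizes the discrete invariant only for a fixed discretized braid class; showing that the semi-discrete gradient flows converge to the PDE Morse complex --- with matching isolation, matching generators at all sufficiently fine discretizations, and a chain-level identification, not merely a homology isomorphism that could a priori be grading-shifted --- is itself a substantial argument that is not supplied. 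None of these are fatal objections to the \emph{strategy}, which is indeed the one the authors signpost in Section~\ref{subsec:FMC}; they simply confirm that the statement remains a conjecture and that your sketch, while pointing in a sensible direction, does not constitute a proof.
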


A result of this type would be crucial for computing the Floer and Morse
homology via the discrete invariants.

\subsection{Mapping tori and $J$-holomorphic curves}
\label{subsec:map-J}
In this paper we consider the Floer equations on the open symplectic
manifolds $C_n(\D^2)$. Another way to approach this problem is to
consider appropriate mapping tori. For simplicity consider the case
$n=1$ and an area-preserving mapping $f:\D^2\to \D^2$. We define
the mapping torus of $f$ by
$$
\D^2(f) \bydef \R \times \D^2/\bigl((t + 1, x)
\sim (t, f(x)) \bigr).
$$
Loops $\gamma: \R \to \D^2$ satisfying $\gamma(t+1) =
f(\gamma(t))$ for the space of twisted loops $\Omega(\D^2(f))$ and a
fixed point $x$ of $f$ can be regarded as a constant loop in
$\Omega(\D^2(f))$. Instead of considering the Floer equations in a
relative braid class we may consider $J$-holomorphic curves $u(s,t)$
satisfying $u(s,t+1) = f(u(s,t))$, $\lim_{s\to\pm\infty} u(s,\cdot) =
x_\pm$ and
$$
u_s + J(t,u) u_t = 0.
$$
For more details for general closed surfaces see \cite{DS1},
\cite{Seidel1}, \cite{CottonClay1}. This approach has various
advantages for defining braid class invariants and will be subject of
further study.

For the case $n=1$ the  nature of our problem  only requires periodic
boundary conditions on $u$. In order to study
anti-symplectomorphisms $f$, i.e. $f^*\omega = -\omega$, we can
invoke appropriate boundary conditions and go through the same
procedures to define Floer invariants.

\subsection{Further structures}
\label{subsec:fur}
In \cite{Seidel1} and \cite{CottonClay1} Floer homology for mapping
classes of  symplectomorphisms of closed surfaces are defined and
computed. Of particular interest is the work in \cite{CottonClay1} where
the Floer homology of pseudo-Anosov mapping classes is computed. In
this article we computed the Floer homology of relative braid classes
which can be realized by autonomous (integrable) Hamiltonians,
which is strongly related to mapping classes of finite type. In this
case one typically obtains homology for two consecutive indices (the
case $n=1$). Of particular interest are skeletons $\y$ that correspond
to pseudo-Anosov mapping classes. The Floer homology in this article
seeks associated proper relative braid classes $[\x\rel\y]$ for which the
Floer homology is well-defined. There are two questions that are
prominent in this setting: (1) Find the proper relative braid classes for
the given $\y$, and (2) compute the Floer homology. If we go the
route of the Sec.\ \ref{subsec:FMC} we can reduce this question to a
purely finite dimensional problem which can be solved using some
basic combinatorics and code for cubical homology. Another approach
is to use algorithms using representation of the braid group, as where
used in \cite{Jiang}, to determine the proper relative braid classes
$[\x\rel\y]$ for a given pseudo-Anosov braid $\y$. The arguments in
\cite{CottonClay1} suggest that the Floer homology of such relative
braid classes can be evaluated by considering appropriate
pseudo-Anosov representatives. Based on some earlier calculations in
\cite{GVV} we suspect that the Floer homology will be non-trivial for
exactly one index (the case $n=1$). In the case $n>1$ richer Floer
homology should be possible: a subject for further study.

\end{sloppypar}

\bibliography{gvvw-refs}{}
\bibliographystyle{plain}

\appendix


\section{Background: mapping classes of surfaces}
\label{sec:mapclass}
In this appendix, a number of well-known facts and results on
mapping classes of area-preserving diffeomorphisms of the 2-disc are
stated and proved, culminating in Proposition \ref{prop:ham-class2}.
This result can be found as, e.g., Lemma 1(b) in the paper of Boyland
\cite{Boyland2}. We include background and the proof for the sake of
readers wishing to see details in the (not so commonly stated) case of
invariant-but-not-pointwise-fixed boundaries.

\subsection{Isotopies of area forms}
\label{subsec:ar1}
Let $M$ be a smooth compact orientable 2-manifold. Two area forms
$\omega$ and $\omega'$ are \style{isotopic} if there exists a smooth
1-parameter family of area forms $\{\omega_t\}_{t\in [0,1]}$ such
that $\omega_t|_{t=0} = \omega$ and $\omega_t|_{t=1} = \omega'$,
and $\int_M \omega_t$ is constant for all $t\in [0,1]$. Two area
forms are \style{strongly isotopic} is if there exists an isotopy
$\{\psi_t\}_{t\in [0,1]}$ of $M$ such that $\psi_1^*\omega' = \omega$,
in which case it holds that $\psi_t^* \omega_t = \omega$ for all $t\in
[0,1]$ (viz. \cite{MS}).
\begin{lemma}
\label{lem:iso1}
Cohomologous area forms $\omega$, $\omega'$ on $\rD$ are strongly
isotopic.
\end{lemma}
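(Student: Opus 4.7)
The plan is to apply Moser's trick in the version adapted to manifolds with boundary. First, I would form the convex combination $\omega_t = (1-t)\omega + t\omega'$ for $t \in [0,1]$. Since both $\omega$ and $\omega'$ are positive area forms inducing the same orientation, the forms $\omega_t$ are positive area forms for every $t \in [0,1]$, and the path $t \mapsto \omega_t$ is smooth. By construction $\int_{\rD} \omega_t$ is independent of $t$, so the cohomology class is constant along the path.

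Next, I need a primitive $\alpha$ for $\omega' - \omega$ with appropriate boundary behavior. Since $\omega' - \omega$ is a closed 2-form on the disc with $\int_{\rD}(\omega'-\omega) = 0$ (the hypothesis that they are cohomologous in the appropriate sense, namely represent the same class in $H^2(\rD,\partial\rD;\mathbb{R}) \cong \mathbb{R}$), I can choose a 1-form $\alpha$ on $\rD$ with $d\alpha = \omega' - \omega$ and $i^*\alpha = 0$, where $i:\partial \rD \hookrightarrow \rD$ is the inclusion. The existence of such an $\alpha$ is a standard consequence of the long exact sequence of the pair $(\rD,\partial \rD)$ together with Poincaré--Lefschetz duality, or can be obtained concretely by writing $\omega' - \omega = f\, dp \wedge dq$ with $\int f = 0$ and solving a Neumann problem for a stream function.

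Then I would define a time-dependent vector field $X_t$ on $\rD$ by the pointwise equation
\begin{equation*}
\iota_{X_t} \omega_t = -\alpha.
\end{equation*}
Since $\omega_t$ is nondegenerate this determines $X_t$ uniquely and smoothly in $(t,x)$. The boundary condition $i^*\alpha = 0$ translates, via the nondegeneracy of $\omega_t$ in dimension two, into $X_t$ being tangent to $\partial \rD$ for all $t$ (if $v\in T\partial\rD$, then $\omega_t(X_t,v) = -\alpha(v)= 0$ forces $X_t \parallel v$, i.e.\ $X_t\in T\partial \rD$). Because $\rD$ is compact and $X_t$ is tangent to the boundary, the flow $\psi_t$ of $X_t$ exists globally on $[0,1] \times \rD$ and preserves $\rD$, giving the desired isotopy.

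Finally, the computation
\begin{equation*}
\frac{d}{dt}\,\psi_t^* \omega_t = \psi_t^*\bigl(\mathcal{L}_{X_t}\omega_t + \dot\omega_t\bigr) = \psi_t^*\bigl(d\iota_{X_t}\omega_t + (\omega'-\omega)\bigr) = \psi_t^*\bigl(-d\alpha + d\alpha\bigr) = 0
\end{equation*}
shows $\psi_t^*\omega_t = \omega$ for all $t$, and specializing to $t=1$ gives $\psi_1^*\omega' = \omega$, proving strong isotopy. The main obstacle is the second step: producing a primitive $\alpha$ that vanishes when pulled back to $\partial\rD$; the bulk of the other computations are Moser's argument verbatim and present no difficulty on the compact disc.
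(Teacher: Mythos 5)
Your proof is correct and follows essentially the same route as the paper's: pick the linear homotopy $\omega_t$, invoke Moser's trick, and reduce the boundary-tangency requirement for $X_t$ to finding a primitive of $\omega'-\omega$ with vanishing pullback to $\partial\rD$, which is then obtained by solving the Neumann problem $\Delta\phi=f$, $\partial_{\mathbf n}\phi=0$ and taking $\alpha=\partial_p\phi\,dq-\partial_q\phi\,dp$. The only cosmetic differences are that the paper works throughout in explicit coordinates on $\rD$ (writing $\omega_t=a_t\,dp\wedge dq$ and $X_t\propto\nabla\phi/a_t$), whereas you phrase the boundary condition invariantly as $i^*\alpha=0$ and deduce tangency from the nondegeneracy of $\omega_t$ in dimension two; both formulations hinge on exactly the same Neumann solvability, enabled by $\int_{\rD}(\omega'-\omega)=0$.
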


\begin{proof}
Consider the homotopy $\omega_t = \omega + t(\omega'-\omega)$.
Denote the embedding $M\cong \rD\subset \R^2$ by $\varphi$. With
respect to the area forms $\widetilde \omega_t =
(\varphi^{-1})^*\omega_t$, the embeddings $\varphi: (M,\omega_t)
\to (\rD,\widetilde\omega_t)$ are symplectomorphisms for all $t\in
[0,1]$. Since $\varphi$ is a diffeomorphism it has degree 1 and
therefore $\int_{\rD} \widetilde \omega_t = \int_M \omega_t$ are
constant and the forms $\widetilde \omega_t$ are cohomologous for
all $t\in [0,1]$.

Write $\widetilde \omega = a(x) dp\wedge dq$, $\widetilde\omega'
=a'(x)dp\wedge dq$ with $a(x), a'(x)>0$ and $\widetilde\omega_t =
a_t(x) dp\wedge dq$ with $a_t(x) = a(x) + t(a'(x)-a(x))>0$ on
$[0,1]\times \rD$. We seek a smooth isotopy $\psi_t$ in ${\rm
Diff}_0(\rD)$ such that $\psi_t^* \widetilde\omega_t =
\widetilde\omega$ for all $t\in [0,1]$. Such an isotopy can be found
via Moser's stability principle, see \cite{MS}. In order to apply Moser's
stability principle we need to find a 1-parameter family of 1-forms
$\sigma_t$ on $\rD$ such that $\frac{d\widetilde \omega_t}{dt}  =
d\sigma_t$. Define the  vector field $X_t$  via the relation $\sigma_t
= -\iota_{X_t} \omega_t$ and consider the equation $\frac{d
\psi_t}{dt} = X_t \circ \psi_t$. This gives the desired isotopy provided
$\langle X_t(x),{\bf n}\rangle =0$, where ${\bf n}$ is the outward
pointing normal on $\partial\rD$.

Set $\sigma_t = -\partial_q \phi_t(x) dp + \partial_p \phi_t(x) dq$,
where $\phi_t(x)$ is a smooth potential function on $[0,1]\times
\rD$. The vector field $X_t$ is then given by $X_t = \frac{1}{a_t(x)}
\nabla \phi_t(x)$. The differential equation for  $\phi_t$   becomes
$\frac{d a_t(x)}{dt}  = (a'-a)(x) =  \Delta \phi_t(x)$ on $[0,1] \times
\rD$. A boundary condition is found as follows. The condition $\langle
X_t(x),{\bf n}\rangle =0$ is equivalent to the condition that
$\frac{\partial \phi_t(x)}{\partial {\bf n}} =0$.

The potential $\phi_t=\phi$ satisfies the Neumann boundary value
problem for the Laplacian. Since the forms $\widetilde\omega_t$ are
cohomologous, it follows that
$$
0 = \int_{\rD}  (a'-a)(x) dp\wedge dq =
 \int_{\rD} \Delta \phi(x) dp\wedge dq  = -\int_{\partial\rD}\frac{\partial \phi(x)}{\partial {\bf n}} =0.
$$
The Neumann problem is therefore well-posed and has a unique
smooth solution $\phi$ up to an additive constant. By construction we
have that $\psi_t^* \widetilde\omega_t =  \widetilde \omega$ and the
thus $\omega$ and $\omega'$ are strongly isotopic via $\varphi^{-1}
\psi_t \varphi$.
\end{proof}

\begin{corollary}
\label{cor:iso2} Let $M\cong \rD$, then any symplectomorphism
$f:(M,\omega) \to (M,\omega)$ is conjugate to a symplectomorphism
$\widetilde f: (\rD,\omega_0) \to (\rD,\omega_0)$.
\end{corollary}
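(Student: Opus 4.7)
The plan is to produce an explicit symplectic conjugacy $\Psi:(M,\omega)\to(\rD,\omega_0)$ and then define $\widetilde f \bydef \Psi\circ f\circ \Psi^{-1}$. Since $\widetilde f^*\omega_0 = (\Psi^{-1})^* f^*\Psi^*\omega_0$, once $\Psi$ is known to satisfy $\Psi^*\omega_0=\omega$, the invariance $\widetilde f^*\omega_0=\omega_0$ follows directly from $f^*\omega=\omega$. So the whole task reduces to constructing a diffeomorphism $\Psi:M\to\rD$ with $\Psi^*\omega_0=\omega$.

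First I would fix any diffeomorphism $\varphi:M\to\rD$ (which exists by the assumption $M\cong\rD$) and push forward to obtain the area form $\widetilde\omega \bydef (\varphi^{-1})^*\omega$ on $\rD$, so that by construction $\varphi^*\widetilde\omega=\omega$. Since $f$ is symplectic for $\omega$ if and only if it is symplectic for $c\omega$ for any $c>0$, one may rescale $\omega$ at the outset so that $\int_M\omega=\int_\rD\omega_0$, and hence $\int_\rD\widetilde\omega=\int_\rD\omega_0$. The disc is contractible, so $H^2(\rD)=0$ and any two area forms on $\rD$ are cohomologous; the total-area matching is the extra condition that makes the Neumann problem in the proof of Lemma \ref{lem:iso1} solvable.

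Applying Lemma \ref{lem:iso1} to the pair $(\widetilde\omega,\omega_0)$ produces an isotopy $\{\psi_t\}\subset\text{Diff}(\rD)$ with $\psi_0=\text{id}$ and $\psi_1^*\omega_0=\widetilde\omega$. Setting $\Psi\bydef\psi_1\circ\varphi$ then yields
\[
 \Psi^*\omega_0 = \varphi^*\psi_1^*\omega_0 = \varphi^*\widetilde\omega = \omega,
\]
so $\Psi$ is the desired symplectomorphism. Conjugating $f$ by $\Psi$ completes the proof.

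Since Lemma \ref{lem:iso1} has already done the substantive analysis (Moser's stability combined with a Neumann boundary value problem ensuring the normal component of the generating vector field vanishes on $\partial\rD$), there is no real obstacle here beyond bookkeeping. The only point deserving comment is the rescaling of $\omega$ to match total areas; without this, Moser's cohomological hypothesis fails and one cannot hope to straighten $\widetilde\omega$ to $\omega_0$ by a diffeomorphism of the fixed disc $\rD$.
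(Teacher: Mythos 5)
Your proof is correct and follows the paper's argument in essence: push $\omega$ forward to $\rD$ via any diffeomorphism, invoke Lemma \ref{lem:iso1} (Moser plus the Neumann problem) to straighten the resulting form to $\omega_0$, and then conjugate $f$. The only stylistic difference is in how the total-area mismatch is handled: you rescale $\omega$ at the outset so that the pushed-forward form has the same total area as $\omega_0$, whereas the paper lets Lemma \ref{lem:iso1} produce a diffeomorphism pulling $\widetilde\omega$ back to $c_0\omega_0$ and then observes that preserving $c_0\omega_0$ is equivalent to preserving $\omega_0$. Both normalizations are correct and interchangeable; yours has the mild advantage of making the hypothesis of Moser's argument visibly satisfied before the lemma is invoked, while the paper's keeps $\omega$ untouched and absorbs the constant on the $\rD$ side.
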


\begin{proof}
let $\psi: M \to \rD$ be a diffeomorphism and $\widetilde \omega =
(\psi^{-1})^*\omega$. This yields a commutative diagram of
symplectomorphisms
$$
\begin{CD}
(M,\omega) @>\psi>> (\rD,\widetilde\omega)\\
@VVfV      @VV\psi f \psi^{-1}V\\
(M,\omega) @>\psi>> (\rD,\widetilde\omega)
\end{CD}
$$
with $(\psi f\psi^{-1})^* \widetilde \omega = (\psi^{-1})^* f^* \psi^*
\widetilde \omega = (\psi^{-1})^* f^* \omega =  (\psi^{-1})^* \omega =
\widetilde \omega$. By Lemma \ref{lem:iso1} there exists a
diffeomorphism $\varphi:\rD \to \rD$ such that $\varphi^* \widetilde
\omega = c_0\omega_0$ with $c_0 = \int_{\rD} \widetilde
\omega/\int_{\rD} \omega_0$. The mapping $\widetilde f
=\varphi^{-1} \psi f \psi^{-1}\varphi$ preserves $c_0\omega_0$ and
thus $\omega_0$ is the desired symplectomorphism.
\end{proof}

From this point on we  restrict ourselves to area-preserving
diffeomorphisms of the standard disc with $b$ inner discs removed.

\subsection{Symplectic mapping classes}
\label{subsec:sympmap1}
Two symplectomorphisms $f,g \in \symp(M,\omega)$ are
\style{symplectically isotopic} if  there exists an isotopy $\psi_t$ such
that $\psi_0 = g$, $\psi_1 = f$ and $\psi_t^*\omega = \omega$ for all
$t\in [0,1]$. The symplectic isotopy classes form a group under
composition and is called the symplectic mapping class group
$\Gamma_{\rm symp}(M,\omega) \bydef
\pi_0\bigl(\symp(M,\omega)\bigr)$. For any smooth surface $M \cong
\rD$ it follows from Corollary \ref{cor:iso2} that $\Gamma_{\rm
symp}(M,\omega) \cong \Gamma_{\rm symp}(\rD,\omega_0)$.

\begin{proposition}
\label{prop:mapclass1}
It holds that $\Gamma_{\rm symp}(\rD,\omega_0) \cong
\Gamma^+(\rD),$ where $\Gamma^+(\rD)$ is the mapping class group
of orientation-preserving diffeomorphisms of $\rD$.
\end{proposition}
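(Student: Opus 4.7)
The plan is to study the natural homomorphism
\[
\iota_*: \Gamma_{\rm symp}(\rD,\omega_0) \longrightarrow \Gamma^+(\rD)
\]
induced by the inclusion $\symp(\rD,\omega_0) \hookrightarrow \text{Diff}^+(\rD)$ (a symplectomorphism of $(\rD,\omega_0)$ is automatically orientation-preserving). I would establish that $\iota_*$ is both surjective and injective, with the two steps handled by Moser-type arguments adapted to the boundary behaviour of $\rD = \mathbb{D}^2_b$.

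For surjectivity, given $[f] \in \Gamma^+(\rD)$, the pulled-back form $f^*\omega_0$ is a smooth area form cohomologous to $\omega_0$, since $f$ is an orientation-preserving diffeomorphism and hence
\[
\int_{\rD} f^*\omega_0 = \int_{\rD}\omega_0.
\]
Lemma \ref{lem:iso1} then produces a diffeomorphism $g \in \text{Diff}^+(\rD)$, smoothly isotopic to the identity, such that $g^*(f^*\omega_0) = \omega_0$, i.e.\ $f\circ g \in \symp(\rD,\omega_0)$. Since $g$ is isotopic to $\text{id}$, we have $[f\circ g] = [f]$ in $\Gamma^+(\rD)$, so $\iota_*[f\circ g]=[f]$.

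The delicate step is injectivity. Suppose $f \in \symp(\rD,\omega_0)$ is isotopic to the identity through an isotopy $\psi_t \in \text{Diff}^+(\rD)$ with $\psi_0 = \text{id}$, $\psi_1 = f$. One may assume after a reparametrization that $\psi_t$ is constant in $t$ near $t=0$ and $t=1$. Consider the path of area forms $\omega_t := \psi_t^*\omega_0$, which has constant total integral, equals $\omega_0$ at both endpoints, and is stationary near them. I would apply the Moser construction parametrically, exactly as in the proof of Lemma \ref{lem:iso1}: the closed 2-form $\frac{d\omega_t}{dt}$ is exact and can be written as $d\sigma_t$ with $\sigma_t = -\iota_{X_t}\omega_t$ for a vector field $X_t$ tangent to $\partial\rD$, produced by solving a Neumann problem for a potential $\phi_t$ on each time slice with the compatibility condition $\int_{\rD} \frac{da_t}{dt}\,dp\wedge dq = 0$ that is guaranteed by constancy of the integral. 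Integrating the flow of $X_t$ yields diffeomorphisms $\phi_t$ with $\phi_t^*\omega_t = \omega_0$ and, crucially, $\phi_0 = \phi_1 = \text{id}$ because $\frac{d\omega_t}{dt}$ vanishes near $t=0,1$, so $X_t \equiv 0$ there. Setting $\widetilde\psi_t = \psi_t\circ\phi_t$, one has $\widetilde\psi_t^*\omega_0 = \omega_0$ for all $t$, and $\widetilde\psi_0 = \text{id}$, $\widetilde\psi_1 = f$. Hence $f$ is symplectically isotopic to the identity, proving $[f] = 0$ in $\Gamma_{\rm symp}(\rD,\omega_0)$.

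The main obstacle is ensuring that the Moser vector field $X_t$ produced in the injectivity step is everywhere tangent to $\partial\rD$ (so that the flow stays inside $\rD$ and fixes the boundary setwise) while simultaneously vanishing at the endpoints. The first issue is handled exactly by the Neumann boundary condition $\partial_{\mathbf{n}}\phi_t = 0$ used in Lemma \ref{lem:iso1}, which in the multiply connected setting is well-posed on each boundary circle thanks to the vanishing total integral; the second is secured by the preliminary reparametrization making $\psi_t$ stationary near $t=0,1$. Combining surjectivity and injectivity gives $\iota_*\colon \Gamma_{\rm symp}(\rD,\omega_0)\xrightarrow{\cong} \Gamma^+(\rD)$, as required.
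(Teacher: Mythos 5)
Your surjectivity argument is correct and, in fact, makes explicit what the paper leaves implicit (pullback of $\omega_0$ by an orientation-preserving diffeomorphism is cohomologous to $\omega_0$, then invoke Lemma \ref{lem:iso1}). The injectivity step, however, has a genuine gap at the point where you conclude $\phi_1 = \text{id}$.

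After running the Moser flow in the $t$-variable along $\omega_t = \psi_t^*\omega_0$, with $X_t$ solving $\iota_{X_t}\omega_t = -\sigma_t$, $d\sigma_t = \frac{d\omega_t}{dt}$, you obtain $\phi_t$ with $\phi_t^*\omega_t = \omega_0$ and $\phi_0 = \text{id}$. Making $\psi_t$ stationary near the endpoints forces $X_t \equiv 0$ for $t$ near $0$ and $1$, which makes the flow $\phi_t$ \emph{locally constant} near $t=1$ --- but it equals whatever the flow has accumulated over $[0,1]$, not the identity. What you actually know is that $\phi_1 \in \symp(\rD,\omega_0)$ (since $\omega_1 = \omega_0$) and $\phi_1 \in \text{Diff}_0(\rD)$ (via the path $\phi_t$). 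Concluding $\phi_1 \in \symp_0(\rD,\omega_0)$ is exactly the content of the injectivity you are trying to prove, so the argument becomes circular. The paper avoids this by running a \emph{two-parameter} Moser construction: it sets $\omega_t^s = \omega_0 + s(\omega_t - \omega_0)$ and applies Moser in the $s$-variable for each fixed $t$, producing $\chi_t^1$ with $(\chi_t^1)^*\omega_t = \omega_0$. Because $\omega_t = \omega_0$ at $t=0,1$, the family $s\mapsto\omega_t^s$ is constant there, the $s$-vector field vanishes identically, and $\chi_0^1 = \chi_1^1 = \text{id}$ holds \emph{exactly}. Then $h_t\circ\chi_t^1$ is a symplectic isotopy from $\text{id}$ to $g^{-1}f$. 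Replacing your one-parameter Moser correction with this two-parameter version closes the gap; the rest of your write-up is sound.
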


\begin{proof}
Let $f,g \in\symp(\rD,\omega_0)$ be isotopic via a symplectic isotopy
$\psi_t$, so that $g^{-1}f \in \symp_0(\rD,\omega_0)$. Next let $f,g
\in \symp(\rD,\omega_0)$ be isotopic in ${\rm Diff}(\rD)$, then
$g^{-1}f \in {\rm Diff}_0(\rD)$. Let $h_t$ be a smooth isotopy
between ${\rm id}$ and $g^{-1}f$. The isotopy $h_t$ does not
necessarily preserve $\omega_0$ and we set $\omega_t \bydef
h_t^*\omega_0$. It holds that $\omega_t =\omega_0$ at $t=0$ and
$t=1$. We claim that $\int_{\rD} \omega_t = \int_{\rD} \omega_0 =
{\rm area}(\rD)$ for all $t\in [0,1]$. Indeed, since $h_t$ is a smooth
1-parameter family of diffeomorphisms it holds that
$$
1=\deg(h_t) = \frac{1}{{\rm area}(\rD)} \int_{\rD} \omega_t.
$$
Write   $\omega_t = a_t(x) dp\wedge dq$ with $a_t(x)>0$ on
$[0,1]\times \rD$ and $a_0=a_1 =1$. Define the 2-parameter family
of cohomologous area forms $\omega_t^s = \omega_0 + s(\omega_t -
\omega_0)$ and consider the equation  $\frac{d\omega^s_t}{ds}  =
d\sigma_t^s$. Define the vector field $X_t^s=\frac{1}{a_t^s(x)}\nabla
\phi_t^s$ via the relation $\sigma_t^s  = -\iota_{X_t^s} \omega_t^s$
for some potential function $\phi_t^s: \rD \to \R$ and consider the
equation $\frac{d \chi_t^s }{ds} = X_t^s \circ \chi_t^s$. This gives the
desired isotopy provided $\langle X_t^s(x),{\bf n} \rangle  =0$ for $x
\in \partial \rD$.  The potential satisfies the differential equation
$a_t(x) -1   =  \Delta \phi_t^s(x)$ on $[0,1] \times \rD$ and $\phi_t^s
= \phi_t$. As before  we assume the Neumann boundary conditions
$\frac{\partial \phi_t(x)}{\partial {\rm n}} =0$ on $\partial\rD$. This
problem is well-posed since the forms $\omega_t$ are cohomologous,
and the Neumann problem has, up to an additive constant, a unique
smooth 1-parameter family of solutions $\phi_t$ . It is clear that
$X_t^s = \frac{1}{a_t^s(x)}\nabla \phi_t = 0$ for $t=0$ and $t=1$ and
therefore $\chi_t^1={\rm id}$ for $t=0$ and $t=1$.

Via $\chi_t^s$ we have that $(\chi_t^1)^* \omega_t = (\chi_t^1)^*
h_t^* \omega_0 = \omega_0$ and thus the isotopy $h_t \circ
\chi_t^1$ is a symplectic isotopy. We have shown now that two
symplectomorphisms $f,g \in \symp(\rD,\omega_0)$ are
symplectically isotopic if and only if they are (smoothly) isotopic,
which proves the proposition.
\end{proof}

It is a simple generalization to show that in the case of $\D^2_{b,m}$,
that is, $\D^2_b$ with a set of $m$ marked points in the interior, the
mapping that leave the set of $m$ points invariant satisfies
$\Gamma_{\rm symp}(\D^2_{b,m},\omega_0) \cong
\Gamma^+(\D^2_{b,m})$.

\subsection{Hamiltonian diffeomorphisms and isotopies}
\label{subsec:hamdiff1}
Any path $\psi_t$ in $\symp_0(\rD,\omega_0)$ satisfies the initial
value problem $\frac{d}{dt} \psi_t = X_t \circ \psi_t$, for $X_t =
\frac{d}{dt} \psi_t \circ \psi_t^{-1}$. Since $\psi_t^*\omega_0 =
\omega_0$ it holds that $d\iota_{X_t}\omega_0=0$ for all $t\in
[0,1]$. A symplectic isotopy is \style{Hamiltonian} if this closed form
is exact: if there exists $H:[0,1]\times \rD \to \R$ such that
$\iota_{X_t}\omega_0 = -dH(t,\cdot)$. In this case $\psi= \psi_1$ is
called a Hamiltonian (or exact) symplectomorphism. The subgroup of
Hamiltonian symplectomorphisms is denoted $\ham(\rD,\omega_0)$.

\begin{proposition}
\label{prop:hamiso1}
$\symp_0(\rD,\omega_0) = \ham(\rD,\omega_0)$.
\end{proposition}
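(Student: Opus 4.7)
\medskip

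The plan is to verify that every symplectic isotopy from the identity on $\rD$ is automatically generated by a (time-dependent) Hamiltonian. Since the reverse inclusion $\ham(\rD,\omega_0)\subset \symp_0(\rD,\omega_0)$ is obvious, this is the only content.

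Given $\psi\in \symp_0(\rD,\omega_0)$, pick a symplectic isotopy $\{\psi_t\}_{t\in[0,1]}$ with $\psi_0={\rm id}$, $\psi_1=\psi$, and define the generating vector field $X_t = \bigl(\tfrac{d}{dt}\psi_t\bigr)\circ \psi_t^{-1}$. A standard computation shows $\cL_{X_t}\omega_0=0$, so Cartan's formula together with $d\omega_0=0$ yields that the $1$-form $\iota_{X_t}\omega_0$ is closed. I would aim to show this form is in fact \emph{exact}, for then choosing a primitive smoothly in $t$, say $\iota_{X_t}\omega_0 = -dH(t,\cdot)$, exhibits $\psi_t$ as a Hamiltonian isotopy with Hamiltonian $H$, and one can further adjust $H$ by a (time-dependent) additive constant to enforce $H(t,\cdot)|_{\partial \rD}\equiv 0$ on each boundary component.

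The key step, and the main subtlety, is to exploit the boundary condition that $\partial\rD$ is invariant (but possibly not pointwise fixed) under $\psi_t$. Invariance of each boundary component forces $X_t$ to be tangent to $\partial \rD$ at boundary points. Since $\partial\rD$ is one-dimensional, any other tangent vector $v$ along $\partial\rD$ is a scalar multiple of $X_t$, and therefore $\omega_0(X_t,v) = 0$. Hence $\iota_{X_t}\omega_0$ \emph{restricts to zero on each boundary component}, and in particular integrates to zero over each boundary loop.

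To conclude, recall that $\rD = \D^2_b$ is a disc with $b$ inner discs removed, so $H_1(\rD;\R)\cong \R^b$ is spanned by the classes of the $b$ inner boundary components (the outer boundary being homologous to their sum). Since $\iota_{X_t}\omega_0$ is closed and its integrals over these generating cycles vanish by the previous paragraph, it represents the trivial class in $H^1(\rD;\R)$ and is therefore exact. The primitive $H(t,x)$ can be chosen to depend smoothly on $t$ (for instance by a parametrized Hodge-theoretic construction, or simply by integrating along fixed smooth paths from a basepoint); this produces the Hamiltonian we sought. The only real obstacle is the verification that $\iota_{X_t}\omega_0$ vanishes on the boundary, which as noted is a direct consequence of the $1$-dimensionality of $\partial\rD$ and the invariance hypothesis, so the argument is essentially a topological accounting once this is in hand.
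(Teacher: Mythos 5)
Your proof is correct in its essentials and takes a genuinely simpler route than the paper's argument. You establish exactness of $\iota_{X_t}\omega_0$ by a straightforward de Rham argument: tangency of $X_t$ to the $1$-dimensional boundary forces $i^*(\iota_{X_t}\omega_0)=0$, hence the periods over all boundary circles vanish; since these circles generate $H_1(\rD;\R)\cong\R^b$, the class $[\iota_{X_t}\omega_0]\in H^1(\rD;\R)$ vanishes, so the form is exact, and a primitive depending smoothly on $t$ is then easily produced (e.g.\ by integrating along fixed paths from a basepoint). The paper instead invokes the Hodge decomposition for normal $1$-forms on $\rD$, identifying the harmonic $1$-fields with $H^1(\rD,\partial\rD;\R)\cong\R^b$ and solving two auxiliary boundary-value problems (a Dirichlet problem for $h_t$ and a harmonic extension for $\tilde h_t$) to write a primitive $H=-h_t+\tilde h_t$ explicitly. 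Your argument buys elementarity and transparency; the paper's Hodge-theoretic construction buys an explicit smooth-in-$t$ primitive together with control of its boundary values (constant on each boundary component, vanishing on $\partial\D^2$), which it exploits later.

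One small inaccuracy at the end: you claim a single time-dependent additive constant can be used to make $H(t,\cdot)$ vanish on each boundary component of $\rD$. When $b\geq 1$, so $\partial\rD$ has $b+1$ components, the primitive is locally constant on $\partial\rD$ with generally distinct constant values on distinct components, and a single additive shift can kill at most one of them. Fortunately the paper's definition of $\ham(\rD,\omega_0)$ asks only for existence of a generating Hamiltonian with no boundary normalization, so this step is unnecessary for the proposition and the misstatement does not affect the validity of your argument; but you should either drop the claim or weaken it to ``normalized to vanish on one chosen boundary component.''
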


\begin{proof}
For $\psi_t \in \symp_0(\rD,\omega_0)$ a symplectic isotopy,
$\theta_t = \iota_{X_t}\omega_0$ is a closed 1-form and $i^* \theta_t
=0$, where $i: \partial \rD \to \rD$. In other words, the 1-forms
$\theta_t$ are normal with respect to $\partial \rD$. From the Hodge
decomposition theorem for 1-forms, $\theta_t = -dh_t +
\mathbf{h}_t$, where $h_t|_{\partial \rD} =0$ and $ \mathbf{h}_t$ is
a harmonic 1-field (i.e. $d \mathbf{h}_t = d^* \mathbf{h}_t =0$) with
$i^* \mathbf{h}_t =0$. By the fundamental theorem of Hodge theory
it follows that the space of harmonic 1-fields $\H^1$ is isomorphic to
$H^1(\rD,\partial \rD;\R) \cong \R^{b}$, see e.g. \cite{Morrey} and
\cite{Taylor}. In particular, the isomorphism is given via the periods.
To be more precise, let $\gamma_1,\cdots,\gamma_{b}$ be
generators for $H_1(\rD,\partial \rD;\R)$, then there exists a unique
harmonic 1-field $\mathbf{h}_t$ with prescribed periods
$\int_{\gamma_i} \mathbf{h}_t =c_i(t)\in \R$. We choose $\gamma_i$
as a connection between $\partial \D^2$ and the $i$th inner boundary
circle. The terms in the Hodge decomposition are found as follows.
Apply $d^*$, then $-\Delta h_t = d^* \theta_t$ with the Dirichlet
boundary conditions on $\partial\rD$, which provides a unique smooth
solution $h_t$. Next, set $\mathbf{h}_t = d\tilde h_t$, then, since
$\mathbf{h}_t$ is normal it holds that $\nabla \tilde h_t$ is normal to
$\partial \rD$ and therefore $\tilde h_t = {\rm const.}$ on $\partial
\rD$. Since $\mathbf{h}_t$ is harmonic, $\tilde h_t$ satisfies $\Delta
\tilde h_t =0$. If we apply Stokes' Theorem to the generators
$\gamma_i$ we obtain $c_i(t) \bydef\int_{\gamma_i} \theta_t =
\int_{\gamma_i} d\tilde h_t = \tilde h_t|_{\partial \gamma_i}$. On
$\partial \D^2$ set $\tilde h_t = 0$ and on the $i$th inner boundary
circles set $\tilde h_t = c_i(t)$. There exists a unique smooth
harmonic solution $\tilde h_t$. Since the Hodge decomposition is
unique this gives $\mathbf{h}_t$. Finally, define $H(t,\cdot) = -h_t +
\tilde h_t$ to be the desired Hamiltonian.
\end{proof}

Similarly,
$$
\symp_0(\D^2_{b,m},\omega_0) =
\ham(\D^2_{b,m},\omega_0).
$$

This result about the identity components being Hamiltonian is used
now to iterate symplectomorphisms via Hamiltonian systems. We
start with the subgroup of the symplectic mapping class group
$\Gamma_{\rm symp}(\D^2_{b,m},\omega_0)$ and let $g$ be a
representative of a mapping class. Since $\Gamma_{\rm
symp}(\D^2_{b,m},\omega_0)$ is a subgroup of
$\cB_{b+m}/Z(\cB_{b+m})$ we can represent a mapping class $[g]$ by
a braid consisting of strands and cylinders. Choose a symplectic
isotopy $g_t$ in $\symp(\D^2,\omega_0)$ tracing out a braid and not
deforming the boundary circles inside $\D^2$. Since
$\symp_0(\D^2,\omega_0) = \ham(\D^2,\omega_0)$,    the isotopy
$g_t$ is Hamiltonian and there exists a Hamiltonian $K(t,\cdot)$
which generates $g_t$. Now let $f \in [g] \in \Gamma_{\rm
symp}(\D^2_{b,m},\omega_0)$, then since
$\symp_0(\D^2_{b,m},\omega) = \ham(\D^2_{b,m},\omega)$ there
exists a Hamiltonian $\widehat H(t,\cdot)$ that generates a
Hamiltonian isotopy $\hat f_t$ between ${\rm id}$ and $g^{-1} f$.
Now extend the Hamiltonian $\widehat H$   to all of $\D^2$, then the
Hamiltonian $H(t,\cdot) = K(t,\cdot) + \widehat
H\bigl(t,(g^{-1}(\cdot)\bigr)$ generates the Hamiltonian isotopy
$g_t\hat f_t$ between ${\rm id} $ and $f$ in
$\ham(\D^2,\omega_0)$.

The above Hamiltonians $H(t,\cdot)$ can be chosen to be periodic in
$t$, i.e. $H(t+1,\cdot) = H(t,\cdot)$. Reparametrize time $t$ by
$\tau(t)$ where $\tau$ is  a smooth  increasing function of $t$ on
$[0,1]$ with support in $(0,1)$ and $\tau(0) = 0$ and $\tau(1) = 1$.
Then $\tau'(t) H(\tau(t),\cdot)$ can be extended periodically in $t$.
These consideration bring us to the following class of Hamiltonians
$\cH(\D^2)$:
\begin{list}{}{\setlength{\leftmargin}{6ex}}
\item[(h1)] $H\in C^\infty(\R\times \D^2;\R)$;
\item[(h2)] $H(t+1,x) = H(t,x)$ for all $t\in \R$ and  all $x\in \D^2$;
\item[(h3)] $H(t,x)=0$ for all $x\in {\partial\D^2}$ and for  all $t\in \R$.
\end{list}

\begin{proposition}
\label{prop:ham-class2}
Given a symplectomorphism $f  \in \symp(\D^2_{b,m},\omega_0)$, then
there exists a Hamiltonian $H \in \cH$ such that
$f = \psi_{1,H}$, where
$\psi_{t,H}$ is the Hamiltonian isotopy generated by $H$.
\end{proposition}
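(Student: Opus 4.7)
The plan is to split $f$ into two pieces according to its mapping class and then stitch together the Hamiltonians generating each piece. The groundwork is already in place: Proposition \ref{prop:mapclass1} identifies the symplectic mapping class group with the orientation-preserving mapping class group, and Proposition \ref{prop:hamiso1} identifies the identity component of $\symp$ with $\ham$. So I would proceed as follows.

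First, I would pick any convenient representative $g$ of the mapping class $[f]\in\Gamma^+(\D^2_{b,m})\cong \Gamma_{\rm symp}(\D^2_{b,m},\omega_0)$ that can be realized by a global symplectic isotopy $g_t$ on the ambient disc $\D^2$: choose a smooth loop of configurations of the $m$ marked points (and the $b$ inner boundary circles, viewed as a \emph{set}) that traces out the required braid, and extend this to a compactly-supported symplectic isotopy of $(\D^2,\omega_0)$ with $g_0=\mathrm{id}$, $g_1=g$. By Proposition \ref{prop:hamiso1} applied on $\D^2$ this isotopy is Hamiltonian, so there exists $K\in C^\infty([0,1]\times \D^2;\R)$ generating $g_t$, and since the isotopy is compactly supported in $\mathrm{int}(\D^2)$ we may take $K$ to vanish on $\partial\D^2$. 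Next, since $f$ and $g$ are in the same symplectic mapping class of $\D^2_{b,m}$, the composition $g^{-1}f$ lies in $\symp_0(\D^2_{b,m},\omega_0)$; by Proposition \ref{prop:hamiso1} again there is a Hamiltonian isotopy $\hat f_t$ from $\mathrm{id}$ to $g^{-1}f$, generated by some $\widehat H$ on $\D^2_{b,m}$ that vanishes on all boundary components. Extend $\widehat H$ smoothly by zero to a Hamiltonian on all of $\D^2$ (using a cutoff supported away from the deleted discs).

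With both pieces in hand, the standard composition formula for Hamiltonian flows gives
\[
  H(t,x) \bydef K(t,x) + \widehat H\bigl(t,g_t^{-1}(x)\bigr),
\]
and a direct check (using $g_t^*\omega_0 = \omega_0$ and the definition of the Hamiltonian vector field) shows that $H$ generates the concatenation $\psi_{t,H}=g_t\circ \hat f_t$, so that $\psi_{1,H}=g\circ(g^{-1}f)=f$. This $H$ is smooth, vanishes on $\partial\D^2$, and satisfies (h1), (h3). The only remaining issue is periodicity (h2): a priori $H$ is only defined on $[0,1]\times\D^2$ with no matching condition at the endpoints. To fix this I would reparametrize time by a smooth nondecreasing $\tau\colon[0,1]\to[0,1]$ that equals $0$ near $t=0$ and $1$ near $t=1$, and replace $H$ by $\tau'(t)H(\tau(t),x)$; this Hamiltonian generates the same time-$1$ map $f$, is flat at $t=0,1$, and therefore extends $1$-periodically in $t$ to an element of $\cH$.

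The only nontrivial obstacle is the construction of the \emph{global} representative $g$ in the first step: one must produce an ambient symplectic isotopy on $\D^2$ that realizes a prescribed mapping class of $\D^2_{b,m}$ while keeping each inner boundary circle setwise invariant. This is where the extension lemmas of the preceding subsections do the real work, since one needs to thicken the combinatorial braid data into a bona fide isotopy and then invoke Moser-type stability (as in the proof of Proposition \ref{prop:mapclass1}) to correct it to a symplectic one. The remaining calculations, namely that the composition formula yields the correct generator and that the reparametrization preserves the time-$1$ map, are routine.
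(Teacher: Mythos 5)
Your proposal is correct and takes essentially the same route as the paper: realize a representative $g$ of the mapping class by an ambient Hamiltonian isotopy $g_t$ on $\D^2$ with generator $K$, apply Proposition~\ref{prop:hamiso1} on $\D^2_{b,m}$ to get a generator $\widehat H$ for an isotopy from ${\rm id}$ to $g^{-1}f$, stitch the two via the composition formula, and reparametrize time to enforce periodicity. One small remark: your $g_t^{-1}$ in the formula $H(t,x)=K(t,x)+\widehat H(t,g_t^{-1}(x))$ is the correct time-dependent inverse; the paper writes $g^{-1}$ there, which appears to be a typo.
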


\end{document}